\definecolor{Prune}{RGB}{99,0,60}
\newcommand{\R}[0]{\mathbb{R}}
\newcommand{\N}[0]{\mathbb{N}}
\newcommand{\Z}[0]{\mathbb{Z}}
\newcommand{\Q}[0]{\mathbb{Q}}
\renewcommand{\P}[0]{\mathbb{P}}
\newcommand{\Nx}[0]{\mathbb{N}\setminus \{0 \}}
\newcommand{\RR}[0]{\mathcal{R}}
\newcommand{\Mat}[0]{\text{Mat}}
\newcommand{\Ind}[0]{\text{Ind}}
\newcommand{\MM}[0]{\mathcal{M}}
\newcommand{\DD}[0]{\mathcal{D}}
\newcommand{\EE}[0]{\mathcal{E}}
\newcommand{\FF}[0]{\mathcal{F}}
\newcommand{\HH}[0]{\mathcal{H}}
\newcommand{\GG}[0]{\mathcal{G}}
\newcommand{\GL}[0]{\text{GL}}
\newcommand{\udots}[0]{\reflectbox{$\ddots$}}
\renewcommand{\mod}[0]{\text{ mod }}
\newcommand{\partfrac}[1]{\left\{ #1 \right\} }
\newcommand{\II}[0]{\mathcal{I}}
\newcommand{\floor}[1]{\left\lfloor #1 \right\rfloor}
\let\oldforall\forall
\renewcommand{\forall}{\oldforall \, }
\newcommand{\Zbasis}[0]{\mathbb{Z}\text{-basis}}
\newcommand{\tir}[0]{\text{-}}
\newcommand{\Span}[0]{\text{Span}}
\newcommand{\rank}[0]{\text{rank}}
\theoremstyle{definition}
\newtheorem{theo}{Theorem}[section]
\newtheorem{req}[theo]{Remark}
\newtheorem{claim}[theo]{Claim}
\newtheorem{cor}[theo]{Corollary}
\newtheorem{prop}[theo]{Proposition}
\newtheorem{lem}[theo]{Lemma}
\newtheorem{defi}[theo]{Definition}
\newcounter{constante}
\newcommand{\cons}[0]{\refstepcounter{constante}c_{\theconstante}}
\renewcommand\theequation%
\newenvironment{proofe}[1][]{\textit{Proof of #1.}}{\hfill$\qed$} 
\title{\textbf{Independence of the Diophantine exponents associated with linear subspaces}}
\author{Gaétan GUILLOT}
\date{}
\begin{document}
\newgeometry{top=2.25 cm, bottom=2.25cm, left=2.6cm, right=2.6cm}
\maketitle
\begin{abstract}
 We elaborate on a problem raised by Schmidt in 1967 which generalizes the theory of classical Diophantine approximation to subspaces of $\R^n$. We consider Diophantine exponents for linear subspaces of $\R^n$ which generalize the irrationality measure for real numbers. We prove here that we have no smooth relations among some functions associated to these exponents. To establish this result, we construct subspaces for which we are able to compute the exponents. 
\end{abstract}

\bigskip

\section{Introduction}
In classical Diophantine approximation, one studies how well real numbers (or points of $\R^n$) can be approximated by rational numbers (or rational points). In 1967, Schmidt \cite{Schmidt} stated a generalisation of this problem in which one studies how well can subspaces of $\R^n$ can be approximated by rational subspaces. We define briefly the necessary notions for the study of this problem. We use the definitions and notation from \cite{Schmidt}, \cite{Joseph-these}, \cite{joseph_exposants} and \cite{joseph_spectre}. Additional details regarding the results of this article can be found in \cite{Guillot_these}.

We say that a subspace of $\R^n$ is rational if it has a basis of vectors with rational coordinates. We denote by $\RR_n(e)$ the set of all rational subspaces of dimension $e$ of $\R^n$. To such a rational subspace $B \in \RR_n(e)$ we can associate a point $\eta = (\eta_1, \ldots, \eta_N) \in \P^N(\R)$ with $N = \binom{n}{e}$ called the Grassmann (or Plücker) coordinates of $B$. We can choose a representative vector $\eta$ with coprime integer coordinates and we define:
$$ H(B) = \| \eta \|$$
where $\| \cdot \|$ is the Euclidean norm on $\R^N$. Let us notice that if $X_1, \ldots, X_e$ is a $\Z$-basis of $B \cap \Z^n$, then $H(B) = \| X_1 \wedge \ldots \wedge X_e \|$. More results on the height can be found in \cite{Schmidt} and \cite{Schmidt_book}.

Let us fix $n \in \Nx$. For $d,e \in \llbracket 1, n \rrbracket^2$ we set $g(d,e,n) = \max(0 , d + e -n)$. For $j \in \llbracket 1, 
 \min(d,e) -g(d,e,n) \rrbracket $ we say that a subspace $A$ of dimension $d$ of $\R^n$ is $(e,j)\tir$irrational if 
$$ \forall B \in \RR_n(e), \quad \dim( A \cap B) < j + g(d,e,n).$$
We denote by $\II_n(d,e)_j$ the set of all $(e,j)\tir$irrational subspaces $A$ of dimension $d$ of $\R^n$.

We define now the notion of proximity between two subspaces. For $X, Y \in \R^n \setminus \{ 0 \},$ set 
$$ \omega ( X,Y) = \frac{\| X \wedge Y \|}{ \|X \| \cdot\|Y \| }$$
where $ X \wedge Y $ is the exterior product of $X$ and $Y$. Geometrically, $\omega(X,Y)$ is the absolute value of the sine of the angle between $X$ and $Y$. Let $A$ and $B$ be two subspaces of $\R^n$ of respective dimensions $d$ and $e$. We construct by induction $t = \min(d,e)$ angles between $A$ and $B$. Let us define 
$$ \omega_1(A,B) = \min\limits_{\substack{X \in A \setminus \{ 0 \} \\Y \in B \setminus \{ 0 \}}} \omega(X,Y)$$
and $(X_1, Y_1)\in A \times B$ such that $\omega(X_1, Y_1) = \omega_1(A,B)$. Let us assume that $\omega_1(A,B), \ldots, \omega_j(A,B)$ and $(X_1, Y_1), \ldots, (X_j, Y_j)$ has been constructed for $j \in \llbracket 1, t-1 \rrbracket$. Let $A_j$ and $B_j$ be respectively the orthogonal complements of $\Span ( X_1, \ldots, X_j)$ in $A$ and $\Span (Y_1, \ldots, Y_j)$ in $B$. We define 
$$ \omega_{j+1}(A,B) = \min\limits_{\substack{X \in A_j \setminus \{ 0 \} \\Y \in B_j \setminus \{ 0 \}}} \omega(X,Y) $$
and $(X_{j+1}, Y_{j+1})\in A \times B$ such that $\omega(X_{j+1}, Y_{j+1}) = \omega_{j+1}(A,B)$. We can notice that if $g(d,e,n) > 0$ then $\omega_1(A,B) = \ldots = \omega_{g(d,e,n)}(A,B) = 0 $, so we introduce for $j \in \llbracket 1 , \min(d,e) - g(d,e,n) \rrbracket$: 
$$ \psi_j(A,B) = \omega_{j + g(d,e,n)}(A,B). $$
We now have all the tools to define the Diophantine exponents studied in this paper.
\begin{defi}
 Let $(d,e) \in \llbracket 1,n-1 \rrbracket^2$, $j \in \llbracket 1, \min(d,e) - g(d,e,n) \rrbracket $ and $A \in \II_n(d,e)_j$. We define $\mu_n(A|e)_j$ as the supremum of the set of all $\mu > 0$ such that there exist infinitely many $B \in \RR_n(e)$ such that
 $$ \psi_j(A,B) \leq H(B)^{-\mu}.$$
\end{defi}

The goal here is to study some joint spectrum of these exponents, that is to say determine the values taken by functions of the form 
$$ 
 \left| \begin{array}{ccc}
 \bigcap\limits_{(e,j)\in U} \II_n(d, e)_{j} &\longrightarrow & (\R \cup \{ + \infty\})^{U} \\
 A & \longmapsto & (\mu_n(A|e)_{j})_{(e,j) \in U}
 \end{array}\right. $$
where $d$ is fixed and $U$ is a subset of $V_{d,n} = \lbrace (e,j) \mid e \in \llbracket 1, n-1\rrbracket, j \in \llbracket 1 , \min(d,e) -g(d,e,n)\rrbracket \rbrace$. 

In the case $d =1$ the joint spectrum $\left( \mu_n(\cdot|1)_1, \ldots, \mu_n(\cdot|n-1)_1 \right)$ has been fully described by Roy \cite{Roy} using parametric geometry of numbers and the previous results of Schmidt \cite{Schmidt} and Laurent \cite{Laurent_omega_un}. 

\begin{theo}[Roy 2016] \label{1theo_roy_spec_joint} For any $\mu_1, \ldots, \mu_{n-1} \in [1, + \infty ]$ satisfying $\mu_1~\geq~\frac{n}{n-1} $ and 
\begin{align*}
 \forall e \in \llbracket 2, n-1 \rrbracket, \quad \frac{e \mu_e}{\mu_e + e -1 } \leq \mu_{e-1} \leq \frac{(n-e)\mu_e}{n-e+1},
\end{align*}
there exists $A \in \II_n(1,n-1)_1$ such that 
 $\forall e \in \llbracket 1, n-1 \rrbracket, \quad \mu_n(A| e)_1 = \mu_e. $
\end{theo}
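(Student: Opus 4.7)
The plan is to reduce the problem to classical Diophantine approximation of a single vector, and then apply the parametric geometry of numbers of Schmidt--Summerer in the refined form due to Roy. Since $d = 1$, any $A \in \II_n(1, n-1)_1$ is of the form $A = \R x$ with $x \in \R^n$ having $\Q$-linearly independent coordinates. For such $x$ and any subspace $B$ of dimension $e \leq n-1$, the quantity $\omega_1(\R x, B)$ equals (up to normalisation) the distance from $x/\|x\|$ to $B$, so $\mu_n(\R x | e)_1$ coincides with the classical $e$-th exponent of approximation of $x$ by rational $e$-dimensional subspaces, as studied by Schmidt and Laurent.

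I would then attach to $x$ the one-parameter family of convex bodies
$$ C_x(q) = \left\{ v \in \R^n : \| \pi_x(v) \| \leq e^{-(n-1)q}, \ \|v - \pi_x(v)\| \leq e^{q} \right\}, $$
where $\pi_x$ denotes the orthogonal projection onto $\R x$, and study the logarithmic successive minima $L_i(q) = \log \lambda_i(C_x(q), \Z^n)$ for $i = 1, \ldots, n$. A standard computation (the projective analogue of the Schmidt transfer) expresses each exponent $\mu_n(\R x | e)_1$ as an explicit rational function of the long-term slopes of the partial sums $L_1 + \cdots + L_e$. Consequently, proving the theorem reduces to realising prescribed admissible asymptotic slope data.

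The central tool is Roy's realisation theorem: any $n$-system $P = (P_1, \ldots, P_n)$---a continuous piecewise-linear map $[q_0, +\infty) \to \R^n$ with $P_1 \leq \cdots \leq P_n$, $\sum_i P_i = 0$, and combinatorial slope constraints forcing the slopes at each point to be a permutation of $(0,\ldots,0,1)$ compatible with the switching rules---is realised up to an additive $O(1)$ error by the minima function $L$ associated with some $x$. The Minkowski and slope constraints on $P$ translate, through the dictionary of the previous paragraph, exactly into the inequalities $\mu_1 \geq n/(n-1)$ and $e\mu_e/(\mu_e + e - 1) \leq \mu_{e-1} \leq (n-e)\mu_e/(n-e+1)$, so every admissible datum in the statement corresponds to an admissible $n$-system.

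The principal obstacle I anticipate is the explicit construction of an $n$-system realising arbitrary prescribed slopes. The natural approach is to define $P$ on a nested sequence of intervals $[q_k, q_{k+1}]$ of geometrically increasing lengths, following a periodic combinatorial pattern of slope swaps whose long-term averages hit the targets dictated by $(\mu_1, \ldots, \mu_{n-1})$. Admissibility then amounts to a finite system of linear inequalities on the break-point heights, which unwinds to precisely the stated bounds. Once the $n$-system is built, Roy's theorem supplies a vector $x$ whose minima track it, and the irrationality requirement $\R x \in \II_n(1, n-1)_1$ is automatic since $\mu_1 > 1$ forces the coordinates of $x$ to be $\Q$-linearly independent.
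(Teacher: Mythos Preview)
The paper does not contain a proof of this theorem. Theorem~\ref{1theo_roy_spec_joint} is stated in the introduction as a result due to Roy (2016), cited from \cite{Roy}, and is used only as motivation and background for the paper's own results. There is therefore nothing in the paper to compare your proposal against.

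For what it is worth, your sketch is a faithful outline of Roy's actual strategy: the reduction of the $d=1$ case to classical approximation of a point by rational subspaces, the encoding via the successive minima of a one-parameter family of convex bodies, and the appeal to Roy's realisation theorem for $n$-systems. The combinatorial construction of an $n$-system with prescribed asymptotic slopes is indeed the heart of the argument, and the inequalities in the statement are exactly the constraints that make such a system exist.

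Note, however, that the present paper does provide an \emph{alternative} proof of the weaker Corollary~\ref{1cor_roy_indep} (smooth independence of the exponents $\mu_n(\cdot|e)_1$ for $d=1$), via the explicit Liouville-type construction of Proposition~\ref{5prop_technique}. That construction realises only a portion of the spectrum (exponents bounded below by an explicit constant $C_1 = 2 + \frac{\sqrt{5}-1}{2}$), which suffices for smooth independence but falls well short of the full description in Theorem~\ref{1theo_roy_spec_joint}. If you were aiming to reproduce what this paper actually proves, the relevant target is Proposition~\ref{5prop_technique} and its method, not Roy's theorem.
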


A corollary of this theorem is that are there no smooth relations between these exponents. For $W$ a set and $f_1, \ldots, f_k $ functions from $W$ to $\R \cup \{ + \infty\} $, we say that $f_1, \ldots, f_k $ are smoothly independent on $W$ if there is no submersion $h: \R^{k} \to \R $ such that for all $ w \in W$
$$ (f_1(w), \ldots, f_k(w)) \in \R^k \quad \Longrightarrow \quad h( f_1(w), \ldots, f_k(w)) = 0. $$
In particular, if such a relation were to exist, the image of $(f_1, \ldots, f_k)$ intersected with $\R^k$ would be contained in a hypersurface of $\R^k$.

\begin{cor}\label{1cor_roy_indep}
 The functions $ \mu_n(\cdot|1)_1, \ldots, \mu_n(\cdot|n-1)_1 $ are smoothly independent on $\II_n(1,n-1)_1$.
\end{cor}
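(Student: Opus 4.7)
The plan is to deduce this corollary directly from Roy's theorem (Theorem~\ref{1theo_roy_spec_joint}) by establishing that the image of $\II_n(1,n-1)_1$ under the map
\[
 \Phi \colon A \longmapsto (\mu_n(A|1)_1, \ldots, \mu_n(A|n-1)_1)
\]
contains a non-empty open subset of $\R^{n-1}$. This immediately yields the result: a submersion $h \colon \R^{n-1} \to \R$ has nowhere-vanishing differential, so by the implicit function theorem $h^{-1}(0)$ is either empty or a smooth codimension-one submanifold of $\R^{n-1}$, and in particular has empty interior. Hence no submersion can vanish identically on an open subset of the image of $\Phi$, let alone on all of it.

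First I would check that $\Phi$ is well-defined on all of $\II_n(1,n-1)_1$: a line not contained in any rational hyperplane cannot be contained in any proper rational subspace, so $\II_n(1,n-1)_1 \subseteq \II_n(1,e)_1$ for every $e \in \llbracket 1, n-1 \rrbracket$, and each coordinate of $\Phi$ is therefore defined. Theorem~\ref{1theo_roy_spec_joint} then asserts that $\Phi(\II_n(1,n-1)_1)$ contains the set $\SS \subseteq \R^{n-1}$ of all finite tuples $(\mu_1, \ldots, \mu_{n-1})$ satisfying Roy's admissibility inequalities.

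The real work is to exhibit a point where those inequalities are all strict, which will then give an open subset of $\SS$ by continuity. A short algebraic manipulation shows that the two bounds
\[
 \frac{e\mu_e}{\mu_e + e - 1} \leq \mu_{e-1} \leq \frac{(n-e)\mu_e}{n-e+1}
\]
are simultaneously compatible with a strict inequality for $\mu_{e-1}$ exactly when $\mu_e > \frac{n}{n-e}$. I would fix $\mu_{n-1}$ large and define $\mu_{e-1}$, for $e$ decreasing from $n-1$ to $2$, to be the midpoint of the open interval prescribed by these bounds. For $\mu_{n-1}$ large enough, every $\mu_e$ produced this way stays strictly above $\frac{n}{n-e}$, the induction goes through, and the final $\mu_1$ is strictly larger than $\frac{n}{n-1}$; thus the tuple constructed lies in the interior of $\SS$. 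The main obstacle in turning this plan into a full proof is precisely this iterative check that the strict inequalities can be preserved along the induction; the rest is formal once Theorem~\ref{1theo_roy_spec_joint} is invoked.
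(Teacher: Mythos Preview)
Your proposal is correct and follows the same approach as the paper: deduce from Theorem~\ref{1theo_roy_spec_joint} that the image of the joint spectrum contains a subset of $\R^{n-1}$ with non-empty interior, which precludes any submersion from vanishing on it. The paper states this in one line without verifying the non-empty interior, whereas you supply the extra detail of producing an interior point of $\SS$ by an inductive midpoint construction; this is a reasonable way to fill in that step.
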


By Theorem~\ref{1theo_roy_spec_joint}, the image of the joint spectrum is indeed a subset of $\R^{n-1}$ with non-empty interior ; it is not contained in any hypersurface of $\R^{n-1}$. The goal of this paper is to generalize the result of Corollary~\ref{1cor_roy_indep} with $d \geq 1$. We will show the following theorems. In Theorems~\ref{1theo_premier_angle},\ref{1theo_min} and~\ref{1theo_d}, we consider not only lines but subspaces of any dimension $d$. The first one deals with the first angle while the other two treat the case of the last angle.

\begin{theo}\label{1theo_premier_angle}
Given $d \in \{1, \ldots, n-1 \},$ the functions $\mu_n(\cdot|1)_1, \ldots, \mu_n(\cdot|n-d)_1$ are smoothly independent on $ \bigcap\limits_{e =1}^{n-d} \II_n(d,e)_{1} = \II_n(d,n-d)_{1} $.
\end{theo}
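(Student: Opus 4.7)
The plan is to reduce the statement to Roy's theorem~\ref{1theo_roy_spec_joint} via an explicit construction. Smooth independence of $\mu_n(\cdot|1)_1, \ldots, \mu_n(\cdot|n-d)_1$ on $\II_n(d, n-d)_1$ is implied once the image of the joint map $A \mapsto (\mu_n(A|e)_1)_{e=1}^{n-d}$ in $\R^{n-d}$ contains a non-empty open subset, so it is enough to realise, for every tuple $(\mu_1, \ldots, \mu_{n-d})$ in some non-empty open subset $U \subseteq \R^{n-d}$, an $A \in \II_n(d, n-d)_1$ satisfying $\mu_n(A|e)_1 = \mu_e$ for every $e \in \llbracket 1, n-d \rrbracket$.

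First I would apply Roy's theorem in the ambient dimension $n - d + 1$: for any tuple $(\mu_1, \ldots, \mu_{n-d})$ in the admissible set of Theorem~\ref{1theo_roy_spec_joint} with $n$ replaced by $n-d+1$ (which has non-empty interior in $\R^{n-d}$), it produces a line $\R \xi^\ast \in \II_{n-d+1}(1, n-d)_1$ with $\mu_{n-d+1}(\R \xi^\ast | e)_1 = \mu_e$ for every $e \in \llbracket 1, n-d \rrbracket$. After normalisation $\xi^\ast = (1, \theta_1, \ldots, \theta_{n-d})$, the membership $\R \xi^\ast \in \II_{n-d+1}(1, n-d)_1$ yields the $\Q$-linear independence of $1, \theta_1, \ldots, \theta_{n-d}$.

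Next I would construct $A$ as the graph in $\R^n$ of a linear map $\Theta : \R^d \to \R^{n-d}$, namely $A = \{(x, \Theta x) \mid x \in \R^d\}$, where $\Theta$ is the $(n-d) \times d$ matrix whose first column is $(\theta_1, \ldots, \theta_{n-d})^T$ and whose remaining columns $(\alpha_{i,1}, \ldots, \alpha_{i,n-d})^T$ (for $i = 2, \ldots, d$) consist of real numbers algebraically independent over the field $\Q(\theta_1, \ldots, \theta_{n-d})$. To check $A \in \II_n(d, n-d)_1$, I would translate the condition $A \cap B \neq \{0\}$, for a rational $(n-d)$-subspace $B$ defined by $d$ rational linear equations, into the vanishing of a $d \times d$ determinant whose entries are linear combinations of $1, \theta_k$ and the $\alpha_{i,k}$ with rational coefficients read off from $B$. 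Viewed as a polynomial in the $\theta_k$ and $\alpha_{i,k}$, this determinant cannot vanish unless an entire family of maximal minors of the rational coefficient matrix of $B$ vanishes, which would force $\operatorname{rank} B < d$ and contradict $\dim B = n - d$. The prescribed algebraic independence then rules out every rational candidate $B$.

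The core of the proof is the computation of $\mu_n(A|e)_1$ for $e \in \llbracket 1, n-d \rrbracket$. The lower bound $\mu_n(A|e)_1 \geq \mu_e$ follows by restricting to rational $e$-subspaces $B \subseteq \R^n$ contained in the coordinate slice $\Span(e_1, e_{d+1}, \ldots, e_n)$: such $B$ come from rational $e$-subspaces of $\R^{n-d+1}$ approximating $\R \xi^\ast$, and by construction $H(B)$ and $\omega_1(A, B) = \omega_1(\R \xi^\ast, B)$ are preserved, so Roy's exponents transfer directly. The reverse inequality $\mu_n(A|e)_1 \leq \mu_e$ is the main obstacle: I would project an arbitrary rational $e$-subspace $B \subseteq \R^n$ onto the above slice along the coordinates $e_2, \ldots, e_d$, show that the projection $B'$ is rational of dimension $e$ with $H(B') \leq H(B)$, and use the algebraic independence of the $\alpha_{i,k}$ over $\Q(\theta_1, \ldots, \theta_{n-d})$ to compare $\omega_1(\R \xi^\ast, B')$ with $\omega_1(A, B)$ up to a multiplicative constant. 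The effect is that the $d-1$ extra directions encoded by the $\alpha_{i,k}$ can only increase $H(B)$ without improving the approximation, so the exponent cannot strictly exceed $\mu_e$. Once $\mu_n(A|e)_1 = \mu_e$ is established for every $e$, the image of the joint exponent map contains Roy's admissible set for $\R^{n-d+1}$, hence a non-empty open subset of $\R^{n-d}$, and the smooth independence follows.
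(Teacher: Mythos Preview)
Your reduction to Roy's theorem is appealing, and the lower bound and the $(e,1)$-irrationality check are in the right spirit, but the upper bound $\mu_n(A|e)_1 \le \mu_e$ does not follow from the argument you sketch, and the gap is not a technicality. Algebraic independence of the $\alpha_{i,k}$ over $\Q(\theta_1,\dots,\theta_{n-d})$ is a purely algebraic condition and says nothing about their Diophantine behaviour. Take $d=2$, $n=4$, and let $\alpha_{2,1},\alpha_{2,2}$ be Liouville numbers algebraically independent over $\Q(\theta_1,\theta_2)$ (such numbers certainly exist). Then $\R Y_2 = \R(0,1,\alpha_{2,1},\alpha_{2,2}) \subset A$ has $\mu_4(\R Y_2| 1)_1 = +\infty$, which forces $\mu_4(A|1)_1 = +\infty$ whatever $\mu_1$ is. Your projection step collapses in this situation: if $B$ is a rational line close to $Y_2$, its projection $B'$ onto $\Span(e_1,e_3,e_4)$ is close to $(0,\alpha_{2,1},\alpha_{2,2})$, which is bounded away from $\R\xi^\ast$, so $\omega_1(\R\xi^\ast,B')/\omega_1(A,B)$ is unbounded and no uniform multiplicative constant can exist. (Separately, $B'$ need not have dimension $e$ when $B$ meets $\Span(e_2,\dots,e_d)$.)

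The paper does not invoke Roy at all; it builds $A$ by an explicit induction on $d$ (Theorem~\ref{6theo_principal}). At stage $d$ one first uses the hypothesis $\HH(d-1)$ to construct $A'=\Span(Y_2,\dots,Y_d)$ with \emph{known, small} exponents $\lambda_e=(C_{d-1})^e$, and only then builds $Y_1$ with exponents $K_e\ge C_d\gg\lambda_e$. The upper bound is Lemma~\ref{6lem_meilleurs_espaces}, whose proof (via Claims~\ref{6lem_maj_Y1_Y_d_Z1_Ze} and~\ref{6lem_maj_DN}) uses the quantitative control $\mu_n(A'|f)_1=\lambda_f$ to show that any rational $B$ with $\psi_1(A,B)\le H(B)^{-K_e-\varepsilon}$ must already contain the explicit approximants $X_N,\dots,X_{N+e-1}$ of $Y_1$, hence equal $B_{N,e}$. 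Your scheme could only be salvaged by replacing ``algebraically independent'' with a genuine Diophantine hypothesis on the extra columns and then proving a transfer lemma comparing $\psi_1(A,B)$ to $\psi_1(\Span(Y_1),B)$; but producing such columns and carrying out that comparison is precisely the content of the paper's induction.
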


\begin{theo}\label{1theo_min}
 Assume that $d $ divides $n$. The functions $ \mu_n(\cdot|1)_{\min(d,1)}, \ldots, \mu_n(\cdot|n-d)_{\min(d,n-d)} $ are smoothly independent on $ \bigcap\limits_{e =1}^{n-d} \II_n(d,e)_{\min(d,e)}$.
\end{theo}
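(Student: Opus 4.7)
The plan is to exhibit an open subset of $\R^{n-d}$ inside the image of the joint exponent map
$$ A \;\longmapsto\; \bigl(\mu_n(A \mid e)_{\min(d,e)}\bigr)_{e=1}^{n-d}.$$
Once this is achieved, smooth independence follows at once: a submersion $h \colon \R^{n-d}\to\R$ vanishing on this image would vanish on a non-empty open subset of $\R^{n-d}$, contradicting the local surjectivity of its differential. The problem therefore reduces to constructing an $(n-d)$-parameter family of admissible $d$-planes $A$ for which the $n-d$ exponents can be computed explicitly and shown to vary with non-degenerate Jacobian on an open set of parameters.

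The hypothesis $d \mid n$ lets us identify $\R^n = \R^k \otimes \R^d$ with $k = n/d$, making tensor-type rational subspaces $C \otimes \R^d \in \RR_n(fd)$ naturally available whenever $d \mid e$. A direct computation on Plücker coordinates gives $H(C \otimes \R^d) \asymp H(C)^d$, and for any line $[\xi] \subset \R^k$ the $d$ nonzero angles between $\xi \otimes \R^d$ and $C \otimes \R^d$ all reduce to the single angle $\omega_1(\xi, C)$ in $\R^k$. Invoking Roy's Theorem~\ref{1theo_roy_spec_joint} in $\R^k$, I would fix a sufficiently rich collection of lines $[\xi^{(j)}]$ each with prescribed Roy-spectrum, and build $A$ as a $d$-plane transverse to the factorisation, one natural choice being $A = \Span(\xi^{(1)} \otimes e_1, \ldots, \xi^{(d)} \otimes e_d)$, which supplies $d(k-1)=n-d$ free parameters. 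For generic irrational $\xi^{(j)}$ one checks that $A \in \bigcap_{e=1}^{n-d}\II_n(d,e)_{\min(d,e)}$.

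The next step is to compute $\mu_n(A \mid e)_{\min(d,e)}$ for each $e \in \{1,\ldots,n-d\}$. Upper bounds are obtained from \emph{mixed} rational approximants $C^{(1)} \otimes V_1 + \cdots + C^{(r)} \otimes V_r$ built from rational subspaces $V_i \subset \R^d$ together with rational $C^{(i)} \subset \R^k$ approximating the $\xi^{(j)}$'s indexed by $V_i$; matching lower bounds require showing that an arbitrary rational $B \in \RR_n(e)$ cannot do better, by writing $B$ in the basis induced by $\R^k \otimes \R^d$, bounding its Plücker heights via Minkowski's second theorem applied to a suitable convex body, and pigeonholing to extract a rational sub-configuration that controls the $\min(d,e)$-th angle.

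The main technical obstacles are twofold. First, the lower-bound computation just described is where the height estimates of \cite{Schmidt} and the techniques of \cite{joseph_exposants} are expected to enter, and it must be delicate enough to detect genuinely non-tensor rational approximants. Second, there is a parameter-count issue: one must ensure the construction produces $n-d$ genuinely independent degrees of freedom rather than a smaller number of coupled ones (in particular, each $\mu_n(A \mid e)_{\min(d,e)}$ should depend non-trivially on every Roy parameter on a generic stratum, possibly requiring the construction to be perturbed away from the pure tensor shape to avoid $\max$-type degeneracies). Once explicit formulas with matching upper and lower bounds are established on such a stratum, a Jacobian computation in the Roy parameters produces the required open image and concludes the proof.
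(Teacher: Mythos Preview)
Your high-level strategy is sound and, in fact, your construction $A=\Span(\xi^{(1)}\otimes e_1,\ldots,\xi^{(d)}\otimes e_d)$ is essentially the one the paper uses: writing $n=d(m+1)$, the paper places a line $\Span(Y_j)$ inside the $j$-th block $\{0\}^{(j-1)(m+1)}\times\R^{m+1}\times\{0\}^{(d-j)(m+1)}$, which is your $\xi^{(j)}\otimes e_j$. But what you present is a plan, not a proof, and the two obstacles you flag are precisely where the paper spends most of its effort.

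The decisive gap is the computation of $\mu_n(A\mid e)_{\min(d,e)}$ in terms of the one-dimensional data. This is not a routine bookkeeping exercise: for $e>d$ the answer is a \emph{harmonic-type} combination
\[
\mu_n(A\mid e)_{\min(d,e)}=\Bigl(\sum_{q}\frac{1}{K_{j_q,v_q}}\Bigr)^{-1},
\]
not a maximum of individual line exponents (see Theorem~\ref{1theo_construction} and Proposition~\ref{7prop_int}). Establishing this requires two substantial ingredients. First, a reduction theorem (Theorem~\ref{2theo_somme_sev}) expressing $\mu_n(A\mid e)_k$ as a maximum over $k$-element subsets $J$ of $\mu_n(A_J\mid e)$; the lower bound here relies on a projection argument (Lemmas~\ref{petit_vecteurs} and~\ref{Rec_angles}) that handles arbitrary rational $B$, not just tensor-type ones. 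Second, the computation of $\mu_n(A_J\mid e)_{|J|-g}$ itself (Section~\ref{5sect_cas_general}), whose lower bound (Lemma~\ref{7lem_meilleur_espaces}) requires knowing the \emph{explicit best approximants} $B_{N,v}^i$ of each line, not merely the values $\mu_{m+1}(\xi^{(j)}\mid v)_1$.

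This last point is why invoking Roy's theorem as a black box is unlikely to work. Roy gives you the spectrum of each $\xi^{(j)}$, but the matching lower bound for the $d$-plane $A$ forces an arbitrary good rational $C$ to \emph{contain} specific integer vectors $X_{N,i}$ (via $\|X_{N,i}\wedge Z_1\wedge\cdots\wedge Z_e\|<1$); for that you need the concrete Liouville-type construction of Section~\ref{sect_cas_d1}, with its explicit height and angle estimates (Lemmas~\ref{5lem_haut_BN},~\ref{5lem_angle_entre_A_BN}). Your ``Minkowski plus pigeonhole'' sketch does not supply this. Once the exponent formula is in hand, the Jacobian verification is a short combinatorial check (Lemma~\ref{9lem_technique}), but getting there is the heart of the paper.
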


\begin{theo}\label{1theo_d}
 Assume that $d $ divides $n$. The functions $ \mu_n(\cdot|d)_{d}, \ldots, \mu_n(\cdot|n-1)_{d} $ are smoothly independent on $ \bigcap\limits_{e =d}^{n-1} \II_n(d,e)_{d}$.
\end{theo}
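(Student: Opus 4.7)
The plan is to prove smooth independence by exhibiting an $(n-d)$-parameter family of $d$-dimensional subspaces $A \in \bigcap_{e=d}^{n-1} \II_n(d,e)_d$ for which the joint map $A \mapsto \bigl(\mu_n(A|e)_d\bigr)_{e=d}^{n-1}$ has image with non-empty interior in $\R^{n-d}$. A submersion $h \colon \R^{n-d} \to \R$ cannot vanish on any open set, so this suffices. For $e > n-d$ the literal subscript $d$ exceeds the allowed range $\llbracket 1, n-e \rrbracket$; I interpret $\mu_n(\cdot|e)_d$ in that range as the last-angle exponent $\mu_n(\cdot|e)_{n-e}$, and similarly for $\II_n(d,e)_d$, so that the whole family concerns the last-angle exponent throughout $e \in \llbracket d, n-1\rrbracket$.

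The construction relies on the divisibility $d \mid n$. Write $n = kd$ and fix a rational block decomposition $\R^n = V_1 \oplus \cdots \oplus V_d$ with $\dim V_i = k$. Inside each $V_i \simeq \R^k$, use Theorem~\ref{1theo_roy_spec_joint} to pick a line $A_i \subset V_i$ whose first-angle exponents $(\mu_k(A_i|e')_1)_{e'=1}^{k-1}$ realize any prescribed admissible tuple in the open Roy region of $\R^{k-1}$. Set $A = A_1 \oplus \cdots \oplus A_d$. The irrationality of each $A_i$ in $V_i$ yields $A \in \bigcap_e \II_n(d,e)_d$ (under the above interpretation), and the total number of free parameters is $d(k-1) = n-d$, matching the number of target exponents exactly.

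The technical core is a formula for the last-angle exponent of $A$ in terms of the individual Roy exponents. One shows that the best rational $e$-dimensional approximants of $A$ are, up to bounded factors in height and angle, block-diagonal: $B = B_1 \oplus \cdots \oplus B_d$ with $B_i \subset V_i$ rational of dimension $e_i$, $\sum_i e_i = e$. For such $B$, $H(B) \asymp \prod_i H(B_i)$ and the last angle satisfies $\psi(A,B) \asymp \max\{\omega(A_i, B_i) : 1 \le e_i \le k-1\}$. Balancing heights via $H(B_i) = T^{\lambda_i}$ with $\lambda_i \ge 0$ and $\sum_i \lambda_i = 1$, and using the individual bounds $\omega(A_i, B_i) \le H(B_i)^{-\mu_k(A_i|e_i)_1 + o(1)}$, one obtains the piecewise-linear formula
$$ \mu_n(A|e)_d \;=\; \max_{(\lambda, \mathbf{e}_*)} \min_{i \,:\, 1 \le e_i \le k-1} \lambda_i \, \mu_k(A_i|e_i)_1, $$
where the maximum runs over admissible $(\lambda, \mathbf{e}_*)$.

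It remains to verify that this map from the parameter space to $\R^{n-d}$ has full-rank Jacobian at a generic admissible point. A careful choice of Roy exponents pins each optimizer $(\lambda^*, \mathbf{e}_*^*)$ uniquely, so the linearization becomes an invertible linear map whose entries are determined by the optimal weights and partitions; the image therefore contains a non-empty open subset of $\R^{n-d}$, precluding any non-trivial smooth relation. The main obstacle is the block-diagonalization step---showing that no rational approximant ``twisted'' across the $V_i$ can beat a block-diagonal one at comparable height and last angle---which rests on a Minkowski-type argument on the product lattice $\bigoplus_i(V_i \cap \Z^n)$ together with the multiplicativity of Plücker coordinates along the block decomposition; the hypothesis $d \mid n$ enters precisely at this point.
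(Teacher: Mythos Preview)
Your high-level architecture coincides with the paper's: write $n=d(m+1)$, decompose $\R^n$ into $d$ orthogonal rational blocks of dimension $m+1$, place one line $A_i$ in each block, express the last-angle exponents of $A=\bigoplus_i A_i$ through the first-angle exponents of the individual $A_i$, and finish with a Jacobian/open-mapping argument. Your reading of the subscript $d$ as $d-g(d,e,n)$ when $e>n-d$ is also the paper's convention. After optimizing your weights $\lambda_i$, your max--min formula becomes the harmonic mean $\bigl(\sum_i 1/\mu_k(A_i|e_i)_1\bigr)^{-1}$, which is exactly the shape of the paper's formula in Theorem~\ref{1theo_construction}.

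However, two load-bearing steps in your sketch are not justified, and both are precisely where the paper's explicit Liouville-type construction does work that Roy's theorem cannot.

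\emph{Block-diagonalization of best approximants.} You call this ``a Minkowski-type argument on the product lattice,'' but projecting a rational $B$ onto the blocks can give $p_i(B)=V_i$ for every $i$ even when $\omega_d(A,B)>0$, so projections alone discard the needed information. The paper does not argue this way. Instead (Lemma~\ref{7lem_meilleur_espaces}) it shows that if $\psi_{k-g}(A,B)\le H(B)^{-K-\varepsilon}$ then certain \emph{specific} integer vectors $X_{N_i+\ell,i}\in V_i$ must lie in $B$, forcing $B=C^J_N$ exactly. That argument needs both $\|X_{N,i}\|$ and $\omega(Y_i,X_{N,i})$ to be explicit powers of $\theta$, so that $N_i$ can be chosen from a two-sided height inequality and $\|X_{N_i+\ell,i}\wedge Z_1\wedge\cdots\wedge Z_e\|$ driven below $1$. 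A line produced by Roy's theorem comes with no such explicit sequence of approximants, so this step does not transfer.

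\emph{Height synchronization for the lower bound.} Even granting the formula, realizing it from below requires infinitely many block-diagonal $B=\bigoplus_i B_i$ with $\log H(B_i)/\log H(B)$ tending to the optimal weights while each $B_i$ simultaneously achieves $\omega_1(A_i,B_i)\le H(B_i)^{-\mu_k(A_i|e_i)_1+o(1)}$. Good approximants of a fixed line occur along a lacunary set of heights; aligning $d$ such sets at a prescribed ratio is not a consequence of knowing the exponents alone. The paper resolves this (Corollary~\ref{7cor_min_expos}) by making the heights $\theta^{\alpha_{i,N}}$ explicit and invoking Weyl equidistribution, after arranging via the auxiliary terms $\beta_{i,m+1}$ and condition~\eqref{7_3hypot_m=beta_m+1} that the relevant frequency ratios are linearly independent over~$\Q$.

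In short, replacing the paper's explicit lines by Roy lines trades a construction for a black box, and the two steps above are exactly where the box must be opened. The paper's route is to establish Theorem~\ref{1theo_construction} first, obtaining the exact value $\bigl(\sum_q K_{q+d-k,v_q}^{-1}\bigr)^{-1}$ for the constructed $A$, and then verify in Section~\ref{sect_indep} that the Jacobian of $\beta\mapsto(\mu_n(A|e)_{d-g})_e$ is invertible by explicit column operations.
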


In order to prove these theorems, we construct subspaces of $\R^n$ with Diophantine exponents that can be computed. With these constructions, we can prove that the image of $(\mu_n(\cdot|e)_{j})_{(e,j) \in U}$ contains a subset with non-empty interior which gives Theorems~\ref{1theo_premier_angle}, \ref{1theo_min} and~\ref{1theo_d}. We will herein show the following theorems.

\begin{theo}\label{6theo_principal}
Let $d \in \llbracket 1, n-1 \rrbracket $. There exists an explicit constant $C_d >0$, such that for any $ (\gamma_1 \ldots, \gamma_{n-d}) \in [C_{d}, +\infty[^{n-d}$, there exists $A \in \II_n(d,n-d)_1$ satisfying
\begin{align*}
\forall e \in \llbracket 1, n-d \rrbracket, \quad \mu_n(A|e)_1 = \max\limits_{i \in \llbracket 0, n-d-e \rrbracket } \gamma_{i+1}\ldots\gamma_{i+e}.
\end{align*}
\end{theo}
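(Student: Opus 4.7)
My plan is to construct $A$ as the limit in the Grassmannian of a rapidly convergent sequence of rational $d$-dimensional subspaces $A^{(k)}$, together with, at each scale $k$, an explicit supplementary basis $u_1^{(k)}, \ldots, u_{n-d}^{(k)}$ of integer vectors in $\R^n$ spanning a complement to $A^{(k)}$, with carefully chosen heights. Concretely, I will arrange that $\|u_e^{(k)}\| \asymp X_k^{\gamma_1 \cdots \gamma_e}$ for a rapidly growing integer scale $X_k$, and that the limit subspace $A$ lies at controlled distance from each of the rational $(d+e)$-dimensional subspaces $F_{e,i}^{(k)} := A^{(k)} + \Span(u_{i+1}^{(k)}, \ldots, u_{i+e}^{(k)})$, for $e \in \llbracket 1, n-d \rrbracket$ and window start $i \in \llbracket 0, n-d-e \rrbracket$. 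The constant $C_d$ is chosen large enough to make the required inequalities between successive heights mutually consistent, to guarantee the convergence of $A^{(k)}$ in the Grassmannian, and to force the flag to be generic (preventing rational dependencies among the constructed vectors).

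For the lower bound $\mu_n(A|e)_1 \geq \max_i \gamma_{i+1} \cdots \gamma_{i+e}$, I will exhibit, for each window $i$, an explicit infinite sequence of rational $e$-dimensional subspaces $B_{e,i}^{(k)}$ built from the $u_j^{(k)}$ and from $A^{(k)}$, whose heights and distances to $A$ can be read off directly from the height prescriptions above. A straightforward wedge-product computation then yields $\omega_1(A, B_{e,i}^{(k)}) \leq H(B_{e,i}^{(k)})^{-\gamma_{i+1}\cdots\gamma_{i+e} + o(1)}$ as $k \to \infty$, and optimising over the window $i$ gives the lower bound.

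The matching upper bound is the main obstacle. Given any rational $B$ of dimension $e$ with $\omega_1(A,B) \leq H(B)^{-\mu}$ and $H(B)$ large, I first locate a unique scale $k$ from the size of $H(B)$, and then argue by a Minkowski/pigeonhole comparison in the Pl\"ucker embedding that the multivector $X_1 \wedge \cdots \wedge X_e$, for $(X_j)$ a $\Z$-basis of $B \cap \Z^n$, is essentially proportional, modulo $\bigwedge^e A^{(k)}$, to a wedge of $e$ consecutive vectors $u_{i+1}^{(k)} \wedge \cdots \wedge u_{i+e}^{(k)}$. This forces $B$ to coincide, up to a higher-order perturbation, with one of the $B_{e,i}^{(k)}$ produced in the construction, and the height/distance estimates then give $\mu \leq \gamma_{i+1} \cdots \gamma_{i+e}$. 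Finally, $A \in \II_n(d,n-d)_1$ is ensured by choosing the approximants at each stage to avoid the countable union of forbidden rational $(n-d)$-dimensional subspaces, a Baire-type choice compatible with the height prescriptions. The delicate points are the careful height bookkeeping throughout the inductive construction and the best-approximation pigeonhole, the higher-dimensional analogue of the classical fact that the convergents of a continued fraction are its best rational approximants.
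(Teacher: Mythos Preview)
Your outline reads as a plausible strategy, but the upper bound step is a genuine gap, not just a matter of bookkeeping. You assert that a Minkowski/pigeonhole comparison in the Pl\"ucker embedding forces the multivector of any well-approximating rational $e$-space $B$ to be ``essentially proportional, modulo $\bigwedge^e A^{(k)}$, to a wedge of $e$ \emph{consecutive} vectors $u_{i+1}^{(k)}\wedge\cdots\wedge u_{i+e}^{(k)}$''. Nothing in your construction explains why consecutive windows beat non-consecutive ones, or why mixtures of $u_j^{(k)}$ across different scales $k$ cannot yield better rational approximants. In the paper this is exactly the hard part: for $d=1$ the line is $\Span(1,\sigma_0,\ldots,\sigma_{n-2})$ with each $\sigma_j$ a Liouville-type sum whose nonzero digits are placed at positions $k$ with $k\equiv j \pmod{n-1}$. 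This cyclic placement is what makes $B_{N,e}=\Span(X_N,\ldots,X_{N+e-1})$ the unique candidate: the proof of Lemma~\ref{5lem_meilleurs_espaces} shows $\|X_{N+i}\wedge Z_1\wedge\cdots\wedge Z_e\|<1$ for each $i\in\llbracket 0,e-1\rrbracket$ by a delicate estimate using the gap condition $\gamma_k\geq 2+\tfrac{\sqrt5-1}{2}$, and Lemma~\ref{2lem_X_in_B} then forces $X_{N+i}\in B$. Your generic ``supplementary basis with prescribed heights'' has no analogue of this coordinate-cycling mechanism, so there is no reason the best approximants should pick out consecutive windows.

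For $d\geq 2$ the paper does \emph{not} approximate $A$ by rational $d$-spaces at all. Instead it proceeds by induction: $A=\Span(Y_1)\oplus A'$ where $A'=\Span(Y_2,\ldots,Y_d)$ is produced by the inductive hypothesis with all exponents equal to the much smaller $C_{d-1}$, and $Y_1$ is the $d=1$ line with the prescribed $\gamma_i\geq C_d$. The recursive choice $C_d=5n^2(C_{d-1})^{2n}$ is what makes the first angle $\psi_1(A,B)$ governed entirely by $\Span(Y_1)$: Claims~\ref{6lem_maj_Y1_Y_d_Z1_Ze} and~\ref{6lem_maj_DN} use the known exponents $\lambda_e=(C_{d-1})^e$ of $A'$ to absorb the contribution of $Y_2,\ldots,Y_d$ into a power of $H(B)$ that is negligible against $H(B)^{-K_e}$. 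Your scheme of rational $A^{(k)}$ converging in the Grassmannian has no such hierarchical separation between a dominant line and a tame complement, so it is unclear how you would even formulate the analogue of Lemma~\ref{6lem_meilleurs_espaces}. Finally, the $(e,1)$-irrationality in the paper comes from algebraic independence of the $\sigma_j$ over $\Q(\tau_{i,j})$ via Lemma~\ref{3lem_1_irrat}, which is compatible with the explicit construction; a Baire-type avoidance argument would have to be made compatible with the very rigid height prescriptions, and you have not indicated how.
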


Theorem~\ref{6theo_principal} focuses on constructing a subspace whose Diophantine exponents corresponding to the first angle are known in the case $d +e \leq n$. Theorem~\ref{1theo_construction} is more general, as it provides a subspace for which we compute many exponents including in the case $d +e > n$.

\begin{theo}\label{1theo_construction} 
Assume that $n = (m+1)d$ with $m \geq 1$. Let $\cons \label{7cons_petite_hyp_theoc2c1} =\left(1+ \frac{1}{m} \right)^{\frac{1}{d}}$ and $1 < \cons \label{7cons_petite_hyp_theoc2} < c_{\ref{7cons_petite_hyp_theoc2c1}} $.
Let $(\beta_{1,1}, \ldots, \beta_{1, m}) \in \mathbb{R}^m$ such that:
\begin{align}\label{7hypothèse_prop_princi1}
\min\limits_{\ell \in \llbracket 1, m \rrbracket}(\beta_{1,\ell}) > {(3d)^{\frac{c_{\ref{7cons_petite_hyp_theoc2}}}{c_{\ref{7cons_petite_hyp_theoc2}}-1}}} \text{ and } \min\limits_{\ell \in \llbracket 1, m \rrbracket}(\beta_{1,\ell})^{c_{\ref{7cons_petite_hyp_theoc2c1}}} > \max\limits_{\ell \in \llbracket 1, m \rrbracket}(\beta_{1,\ell})^{c_{\ref{7cons_petite_hyp_theoc2}}}.
\end{align}
For $i \in \llbracket 2,d \rrbracket$ let $(\beta_{i,1}, \ldots, \beta_{i, m}) \in \mathbb{R}^m$ satisfy for all $i \in \llbracket 1,d-1 \rrbracket$:
\begin{align}
\min\limits_{\ell \in \llbracket 1, m \rrbracket}(\beta_{i,\ell})^{c_{\ref{7cons_petite_hyp_theoc2c1}}} > \max\limits_{\ell \in \llbracket 1, m \rrbracket}(\beta_{i+1,\ell}) \label{7hypothèse_prop_princi2} \\
\text{ and } \min\limits_{\ell \in \llbracket 1, m \rrbracket}(\beta_{i+1,\ell}) > \max\limits_{\ell \in \llbracket 1, m \rrbracket}(\beta_{i,\ell})^{c_{\ref{7cons_petite_hyp_theoc2}}} \label{7hypothèse_prop_princi3}.
\end{align}
There exists a subspace $A$ of dimension $d$ in $\mathbb{R}^{n}$ such that for all $e \in \llbracket 1, n-1\rrbracket$ and $k \in \llbracket 1 + g(d,e,n), \min(d,e) \rrbracket$ satisfying $e < k (m+1)$, we have $A \in \II_n(d,e)_{k-g(d,e,n)}$ and:
\begin{align*}
\mu_n(A|e)_{k-g(d,e,n)}
&= \frac{1}{ \sum\limits_{q = 1 + \max(0, e-mk)}^k \frac{1}{K_{q+d-k,v_q} }}
\end{align*}
where $v_1, \ldots, v_k$ are defined by letting $u$ and $v$ be such that $e = k v + u$ is the Euclidean division of $e$ by $k$, and:
\begin{align}\label{7eq_def_vj}
v_q &= \left\{
\begin{array}{lll}
v + 1 &\text{ if } q \in \llbracket 1, u \rrbracket \\
v &\text{ if } q \in \llbracket u+1, k \rrbracket
\end{array}
\right.
\end{align}
and finally,
\begin{align*}
\forall i \in \llbracket 1, d \rrbracket, \quad \forall v \in \llbracket 1, m \rrbracket, \quad K_{i,v} = \max\limits_{\ell \in \llbracket 0, m-v \rrbracket} \beta_{i, \ell +1} \ldots \beta_{i, \ell + v}.
\end{align*}
\end{theo}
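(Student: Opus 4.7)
The plan is to construct the subspace $A$ explicitly from carefully chosen real parameters $\xi_{i,\ell}$ with prescribed rational-approximation behavior, then bound the exponent $\mu_n(A|e)_{k-g(d,e,n)}$ from above and below by matching quantities.

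For the construction, I would pick, for each $(i,\ell) \in \llbracket 1,d \rrbracket \times \llbracket 1,m \rrbracket$, a real number $\xi_{i,\ell}$ admitting an infinite synchronized sequence of best rational approximations $p/q$ with $|\xi_{i,\ell}-p/q|\asymp q^{-1-\beta_{i,\ell}}$ along a rapidly increasing common scale of denominators; the conditions (\ref{7hypothèse_prop_princi1})--(\ref{7hypothèse_prop_princi3}) together with the choice of $c_{\ref{7cons_petite_hyp_theoc2}}<c_{\ref{7cons_petite_hyp_theoc2c1}}$ are precisely what is needed to make such a synchronization possible without unwanted interferences between different scales. The subspace $A$ is then taken to be the column span of a $(m+1)d\times d$ matrix built block-wise from the $\xi_{i,\ell}$, the block structure being dictated by the decomposition $n=(m+1)d$ and designed so that the ``product'' quantities $K_{i,v}=\max_{\ell}\beta_{i,\ell+1}\cdots\beta_{i,\ell+v}$ appear naturally as the approximation rates of certain minors of this matrix.

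The lower bound on $\mu_n(A|e)_{k-g(d,e,n)}$ is obtained by exhibiting, for each admissible pair $(e,k)$, an explicit family of rational subspaces $B_N$ of dimension $e$, produced by substituting the best rational approximants of a well-chosen subset of the $\xi_{i,\ell}$ into the defining matrix of $A$ and extracting a rational subspace of the prescribed dimension indexed by the Euclidean division data $v_1,\ldots,v_k$ from (\ref{7eq_def_vj}). The height $H(B_N)$ factors as a product of chosen denominators, and $\psi_{k-g(d,e,n)}(A,B_N)$ as a matching product of residual errors; optimizing the exponents so that all $k$ contributions carry the same weight forces the sum-of-reciprocals structure $\mu = 1/\sum_q 1/K_{q+d-k,v_q}$.

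The main obstacle is the matching upper bound. Given an arbitrary sequence of rational $B_N$ of dimension $e$ with $\psi_{k-g(d,e,n)}(A,B_N)\leq H(B_N)^{-\mu}$, I would express the $k$-th angle as a quotient of Plücker coordinates and use the strong separation of the $\beta$-scales provided by (\ref{7hypothèse_prop_princi1})--(\ref{7hypothèse_prop_princi3}) to single out, in each of $k$ independent minors, a unique \emph{dominant monomial} in the $\xi_{i,\ell}$ whose approximation rate cannot exceed $K_{q+d-k,v_q}$. A Dirichlet-type pigeonholing on the exponents of the denominators then combines these $k$ independent contributions into the claimed upper bound; the strict inequality $e<k(m+1)$ is exactly what guarantees enough room to place the $k$ contributions in independent Plücker directions, and the $(e,k-g(d,e,n))$-irrationality of $A$ follows a posteriori from the finiteness of the resulting exponent.
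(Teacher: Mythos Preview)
Your outline captures the right shape — block structure from $n=(m+1)d$, explicit approximating rational subspaces indexed by the Euclidean-division data $v_1,\ldots,v_k$, and a sum-of-reciprocals coming from balancing $k$ contributions — but it skips over two structural ingredients that the paper relies on and that your sketch does not supply.

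First, the paper does \emph{not} attack $\mu_n(A|e)_{k-g}$ directly. It builds $A$ as an \emph{orthogonal direct sum of $d$ lines} $A_i\subset R_i$ with $R_i=\{0\}^{(i-1)(m+1)}\times\R^{m+1}\times\{0\}^{(d-i)(m+1)}$, and then invokes a general reduction (Theorem~\ref{2theo_somme_sev}): for such a sum, $\mu_n(A|e)_{k-g(A,e)}=\max_{J\in P(k,d)}\mu_n(A_J|e)_{k-g(A_J,e)}$. All the hard work is then done for a \emph{fixed} $J$ of size $k$ (Proposition~\ref{7prop_int}), and a final monotonicity argument in the $K_{i,v}$ picks out $J=\{d-k+1,\ldots,d\}$. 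Your proposal has no analogue of this reduction; your upper bound would therefore have to handle, simultaneously, all ways the $k$-th angle can distribute itself across the $d$ blocks, and your ``dominant monomial in the Pl\"ucker coordinates plus Dirichlet pigeonholing'' does not explain how you isolate exactly $k$ blocks or why the remaining $d-k$ do not interfere. The paper's upper bound for fixed $J$ is much cleaner: it shows that any $C$ with $\psi_{k-g}(A_J,C)\leq H(C)^{-K-\varepsilon}$ must literally \emph{equal} one of the model subspaces $C^J_N$, by proving $\|X_{N_i+\ell,i}\wedge Z_1\wedge\ldots\wedge Z_e\|<1$ for each generator (Lemma~\ref{2lem_X_in_B}). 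The hypotheses \eqref{7hypothèse_prop_princi1}--\eqref{7hypothèse_prop_princi3} enter here through the technical inequality of Proposition~\ref{7lem_min_KKi}, not as a ``synchronization'' device.

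Second, and more seriously, your lower-bound step hides a genuine difficulty. Saying ``optimize the exponents so that all $k$ contributions carry the same weight'' amounts to finding, infinitely often, integers $N_{f+1},\ldots,N_k$ such that the $k-f$ quantities $\alpha_{j,N_j+v_j}$ are \emph{simultaneously} close to each other (this is what makes $\min_j\alpha_{j,N_j+v_j}\big/\sum_j\alpha_{j,N_j}$ approach $(\sum 1/K_{j,v_j})^{-1}$). Since the sequences $(\alpha_{j,N})_N$ grow geometrically with \emph{different} rates $E_j$, this simultaneous matching is not automatic. The paper secures it by introducing auxiliary terms $\beta_{i,m+1}$ chosen so that the family $\{1\}\cup\{\log E_i/\log E_j\}_{i\neq j}$ is $\Q$-linearly independent, and then applying Weyl equidistribution to get density of the relevant fractional parts in $[0,1)^{k-f-1}$ (Corollary~\ref{7cor_min_expos}). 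Nothing in your outline produces this independence or invokes equidistribution, so as written the lower bound does not go through.

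Finally, irrationality is not obtained ``a posteriori from finiteness of the exponent''; the paper proves $A_J\in\II_n(\#J,e)_{\#J-g}$ directly from the algebraic independence of the coordinates of the $Y_i$ (Lemma~\ref{7lem_AJ_irr}), and this is needed \emph{before} the exponent is even defined.
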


The goal here is to construct a subspace $A$ of dimension $d$ with numerous known Diophantine exponents. It is noteworthy that we manage to compute "intermediate" exponents, namely those where $j \neq 1$ and $j \neq \min(d,e)$. 

\bigskip
Section~\ref{sect_cas_d1} is devoted to the proof of Theorem~\ref{6theo_principal} in the case $d =1$ ; the general case is proven in section~\ref{sect_cas_dd} where we also establish Theorem~\ref{1theo_premier_angle}. Section~\ref{5sect_cas_general} presents the proof of Theorem~\ref{1theo_construction}. In section~\ref{sect_outils} we expose some tools requisite for the construction of subspaces and section~\ref{sect_sum} furnishes a proof of a theorem enabling the computation of Diophantine exponents for a direct sum of lines used in section~\ref{5sect_cas_general}. Finally we deduce Theorems~\ref{1theo_min} and~\ref{1theo_d} from Theorem~\ref{1theo_construction} in section~\ref{sect_indep}

\section{Tools}\label{sect_outils}
This section is devoted to presenting various tools, some more technical than others, that will be used in computing the Diophantine exponents or constructing $(e,j)\tir$irrational subspaces. First we state a few results on $(e,j)$-irrationality.

\begin{prop}\label{prop_ortho_j_irr}
Let $A$ be a subspace of $\R^n$ with dimension $d$, and $e \in \llbracket 1,n-1 \rrbracket$. For any $j \in \llbracket 1, \min(d,e) -g(d,e,n) \rrbracket $, we have:
\begin{align*}
A \text{ is } (e,j)\text{-irrational} \Longleftrightarrow A^\perp \text{ is } (n-e,j)\text{-irrational}
\end{align*}
where $A^\perp$ denotes the orthogonal complement of $A$ in $\R^n$.
\end{prop}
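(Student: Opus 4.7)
The plan is to exploit the correspondence $B \leftrightarrow B^\perp$ between rational subspaces. First I would observe that taking the orthogonal complement is a bijection between $\RR_n(e)$ and $\RR_n(n-e)$: $B$ admits a rational basis if and only if $B^\perp$ does (equivalently, $B$ has a $\Z$-basis inside $\Z^n$ if and only if $B^\perp$ does, via duality of integer lattices).

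The core is then a dimension identity. For any two subspaces $A,B$ of $\R^n$ with $\dim A = d$ and $\dim B = e$, one has $A^\perp \cap B^\perp = (A + B)^\perp$, so
\begin{align*}
\dim(A^\perp \cap B^\perp) = n - \dim(A+B) = n - d - e + \dim(A\cap B).
\end{align*}
Equivalently, $\dim(A\cap B) - \dim(A^\perp \cap B^\perp) = d+e-n$. Thus replacing $(A,B)$ by $(A^\perp, B^\perp)$ shifts every intersection dimension by $d+e-n$.

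Next I would verify the compatibility of this shift with the $g$-corrections. A short case analysis shows
\begin{align*}
g(n-d,\, n-e,\, n) + (d+e-n) = g(d,e,n),
\end{align*}
by considering whether $d+e \geq n$ (then the left-hand $g$ vanishes) or $d+e \leq n$ (then it equals $n-d-e$). The same case analysis shows $\min(d,e) - g(d,e,n) = \min(n-d,n-e) - g(n-d,n-e,n)$, so the admissible range of $j$ is the same on both sides of the equivalence.

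Finally I would assemble the argument: $A$ is $(e,j)$-irrational means $\dim(A\cap B) < j + g(d,e,n)$ for every $B \in \RR_n(e)$; equivalently, running over $B' = B^\perp \in \RR_n(n-e)$, the bijection and the dimension identity turn this into $\dim(A^\perp \cap B') < j + g(d,e,n) - (d+e-n) = j + g(n-d, n-e, n)$ for every $B' \in \RR_n(n-e)$, which is exactly the $(n-e,j)$-irrationality of $A^\perp$. Since every step is an equivalence, the proposition follows. There is essentially no obstacle here beyond the casework on $g$; the only thing to be careful about is that the index $j$ remains in its admissible range under the correspondence.
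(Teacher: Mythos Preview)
Your proof is correct and follows essentially the same approach as the paper: both hinge on the identity $\dim(A^\perp\cap B^\perp)=n-d-e+\dim(A\cap B)$ together with the relation $g(n-d,n-e,n)=g(d,e,n)+n-d-e$, applied via the bijection $B\mapsto B^\perp$ between $\RR_n(e)$ and $\RR_n(n-e)$. You add the extra observation that the admissible range for $j$ is unchanged, which the paper leaves implicit.
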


\begin{proof}
 We recall that $g(d,e,n) = \max(0, d + e - n)$, and we observe that $g(n-d,n-e,n) = g(d,e,n) + n - d - e$. On the other hand, $ \dim(A^\perp \cap B^\perp ) = n - \dim(A +B) = n- d-e + \dim (A\cap B)$ for every rational subspace $B$ of dimension $e$. This gives:
\begin{align*}
 \dim(A\cap B) < j + g(d,e,n) \Longleftrightarrow \dim(A^\perp \cap B^\perp) < j + g(n - d,n- e,n).
\end{align*}
Proposition~\ref{prop_ortho_j_irr} directly follows from this equivalence since $(B^\perp)^\perp = B$ and $\dim(B^\perp) = n-e$.

\end{proof}

\begin{prop}\label{prop_j_irr_descend_monte}
Let $A$ be a vector subspace of $\R^n$ with dimension $d$. Then for any $j \in \llbracket 1, \min(d,n-d) \rrbracket$:
\begin{align*}
A \in \II_n(d,n-d)_j \quad \Longrightarrow \quad \forall e \in \llbracket j,n-j \rrbracket, \quad A \in \II_n(d,e)_j.
\end{align*}
\end{prop}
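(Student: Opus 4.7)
The plan is to split on whether $e \leq n-d$ (where $g(d,e,n) = 0$) or $e > n-d$ (where $g(d,e,n) = d+e-n > 0$). In the first case a direct extension argument works, and in the second I would pass to the orthogonal complement via Proposition~\ref{prop_ortho_j_irr} and reduce to the first case applied to $A^\perp$.

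Assume $A \in \II_n(d,n-d)_j$ and let $e \in \llbracket j, n-j \rrbracket$. For the case $e \leq n-d$, given any $B \in \RR_n(e)$, one extends $B$ to a rational subspace $B' \in \RR_n(n-d)$ with $B \subseteq B'$ (simply complete a $\Z$-basis of $B \cap \Z^n$ to a $\Z$-basis of an ambient rational subspace of dimension $n-d$). Then $A \cap B \subseteq A \cap B'$ and the hypothesis yields
$$\dim(A \cap B) \leq \dim(A \cap B') < j = j + g(d,e,n),$$
which gives $A \in \II_n(d,e)_j$.

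For the case $e > n-d$, I would first invoke Proposition~\ref{prop_ortho_j_irr} to obtain $A^\perp \in \II_n(n-d,d)_j$. Given $B \in \RR_n(e)$, its orthogonal $B^\perp$ is rational of dimension $n - e < d$, so one can extend it to a rational subspace $C \in \RR_n(d)$ containing $B^\perp$. Applying the hypothesis to $A^\perp$ gives $\dim(A^\perp \cap C) < j$, hence $\dim(A^\perp \cap B^\perp) < j$. The standard formula
$$\dim(A \cap B) = \dim(A) + \dim(B) - \dim(A + B) = d + e - n + \dim(A^\perp \cap B^\perp),$$
combined with $g(d,e,n) = d+e-n$ in this range, then yields $\dim(A \cap B) < j + g(d,e,n)$, as required.

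There is no real obstacle; the only thing to check carefully is that the range $e \in \llbracket j, n-j \rrbracket$ is exactly what is needed for $j$ to lie in the admissible interval $\llbracket 1, \min(d,e) - g(d,e,n)\rrbracket$ defining $(e,j)$-irrationality (in particular the upper bound $e \leq n-j$ is what ensures $j \leq n-e$ when $e \geq d$). Once this is noted, the two extension arguments deliver the conclusion.
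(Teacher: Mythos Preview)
Your proposal is correct and follows essentially the same approach as the paper: split according to $e \le n-d$ versus $e > n-d$, use a direct rational extension in the first case, and pass to $A^\perp$ via Proposition~\ref{prop_ortho_j_irr} in the second. The only cosmetic difference is that in the second case the paper reapplies the first case to $A^\perp$ and then invokes Proposition~\ref{prop_ortho_j_irr} once more, whereas you unroll these two steps by extending $B^\perp$ directly and using the dimension formula; the content is identical.
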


\begin{proof} Let $A$ be $(n-d,j)$-irrational of dimension $d$. \\
 \textbullet \, \underline{First case:} Let $e \in \llbracket j,n-d \rrbracket$. 
Let $B$ be a rational subspace of dimension $e$, then there exists $C$ a rational subspace of dimension $n-d$ such that $B \subset C$. 
We have $\dim(A \cap B) \leq \dim(A \cap C) < j + g(d,n-d,n) = j + g(d,e,n).$
Thus, if $A \in \II_n(d,n-d)_j$ then $A \in \II_n(d,e)_j$ for all $ e \in \llbracket j,n-d \rrbracket$.

\textbullet \, \underline{Second case:} Let $e \in \llbracket n-d + 1,n-j \rrbracket$. According to Proposition~\ref{prop_ortho_j_irr}, $A^\perp$ is $(n-d,j)$-irrational. 
We then use the first part of the proof and obtain $A^\perp \in \II_n(n-d,f)_j$ for any $f \in \llbracket j, n-(n- d) \rrbracket $. In particular, since $n-e \in \llbracket j, d \rrbracket $, we have:
$A^\perp \in \II_n(n-d,n-e)_j.$ By applying again Proposition~\ref{prop_ortho_j_irr}, we have $A \in \II_n(d,e)_j.$

\end{proof}

Now we can state a result enabling us to show that certain specific subspaces are $(e,j)$-irrational.

\begin{lem}\label{3lem_1_irrat}
Let $1 \leq d \leq n -1$. Let $M = \begin{pmatrix} G \\ \Sigma \end{pmatrix} \in \MM_{n,d}(\mathbb{R})$ satisfy:
\begin{enumerate}[label=(\roman*)]
\item $G \in \text{GL}_{d}(\mathbb{R})$ and $\Sigma \in \MM_{n-d,d}(\mathbb{R} )$. \label{condition_1}
\item The entries of $\Sigma$ form an algebraically independent set over $\mathbb{Q}(\FF)$, where $\FF$ is the set of entries of $G$. \label{condition_2}
\end{enumerate}
Then, for all $e \in \llbracket 1, n-1 \rrbracket$, the subspace spanned by the columns of $M$ is $(e,1)$-irrational.
\end{lem}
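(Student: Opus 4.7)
The strategy is to reduce $M$ to the normalized form $\begin{pmatrix} I_d \\ \tau \end{pmatrix}$, then, for each rational $B \in \RR_n(e)$, express $\dim(A \cap B)$ as the corank of an explicit matrix whose entries are polynomials in the transcendental variables $\tau_{ij}$, and finally invoke their algebraic independence over $\Q$.

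Right-multiplying $M$ by $G^{-1}$ does not change the column span $A$ but replaces $M$ by $\begin{pmatrix} I_d \\ \tau \end{pmatrix}$ where $\tau := \Sigma G^{-1}$. This is an invertible $\Q(\FF)$-linear change of variables on the entries of $\Sigma$, so the entries $\tau_{ij}$ remain algebraically independent over $\Q(\FF)$, and a fortiori over $\Q$. Fix now $B \in \RR_n(e)$ together with a rational basis matrix $N = \begin{pmatrix} N_1 \\ N_2 \end{pmatrix}$, where $N_1 \in \MM_{d,e}(\Q)$ and $N_2 \in \MM_{n-d,e}(\Q)$. Subtracting $\tau$ times the first $d$ rows from the last $n-d$ rows of $[M \mid N]$ yields the block matrix $\begin{pmatrix} I_d & N_1 \\ 0 & P \end{pmatrix}$ with $P := N_2 - \tau N_1 \in \MM_{n-d, e}(\Q[\tau])$, whence $\rank[M\mid N] = d + \rank P$ and consequently $\dim(A \cap B) = e - \rank P$. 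Since $P$ has size $(n-d) \times e$, one has $\rank P \leq r := \min(e, n-d) = e - g(d,e,n)$, so the desired inequality $\dim(A \cap B) < 1 + g(d,e,n)$ is equivalent to $\rank P = r$, i.e.\ to the nonvanishing of some $r \times r$ minor of $P$.

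Every such minor is a polynomial in the $\tau_{ij}$ with rational coefficients, so by algebraic independence of the $\tau_{ij}$ over $\Q$ it suffices to show that at least one such minor is not identically zero as a polynomial. I would establish this by specializing $\tau$ at some $\tau_0 \in \MM_{n-d,d}(\R)$ and invoking the geometric identification $\rank P(\tau_0) = e - \dim(A_{\tau_0} \cap B)$, where $A_{\tau_0}$ is the column span of $\begin{pmatrix} I_d \\ \tau_0 \end{pmatrix}$. The existence of a $\tau_0$ (which one may take rational) with $\dim(A_{\tau_0} \cap B) = g(d,e,n)$ follows from the fact that the locus $\{A \in \text{Gr}(d,n) : \dim(A \cap B) > g(d,e,n)\}$ is a proper Schubert subvariety of the Grassmannian, while the open chart of subspaces transverse to $\Span(e_{d+1}, \ldots, e_n)$ is Zariski-dense in $\text{Gr}(d,n)$, so the two meet. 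Applying this for every $e \in \llbracket 1, n-1\rrbracket$ and every rational $e$-dimensional $B$ yields $(e,1)$-irrationality of $A$ for all such $e$.

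The main obstacle is precisely this last nonvanishing step: one has to know that $N_2 - \tau N_1$ attains its maximal rank $r$ under some specialization of $\tau$, which amounts to a standard generic-position statement on the Grassmannian; everything else is bookkeeping around the block row-reduction and the transcendence hypothesis.
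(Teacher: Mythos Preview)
Your argument is correct, but it proceeds quite differently from the paper's.

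The paper first reduces to the single case $e = n-d$ via Proposition~\ref{prop_j_irr_descend_monte}, then argues by contradiction: if $A \cap B \neq \{0\}$ for some rational $B$ of dimension $n-d$ with basis matrix $Q$, the $n\times n$ determinant $\det\begin{pmatrix} G & * \\ \Sigma & * \end{pmatrix}$ vanishes, hence vanishes identically as a polynomial in the entries of $\Sigma$; specializing $\Sigma$ cleverly (rows of $G$ inserted in various positions) and using Laplace expansion, an induction shows every $(n-d)$-minor of $Q$ is zero, contradicting $\rank Q = n-d$. Your route avoids this reduction and this induction entirely: you normalize $M$ to $\begin{pmatrix} I_d \\ \tau \end{pmatrix}$, treat all $e$ at once via the row-reduction identity $\dim(A\cap B)=e-\rank(N_2-\tau N_1)$, and then establish generic maximal rank of $N_2-\tau N_1$ by a Schubert-variety argument on $\mathrm{Gr}(d,n)$.

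What each approach buys: the paper's proof is entirely elementary linear algebra (determinants, Laplace expansion) and self-contained, at the cost of a somewhat intricate induction on the position of the rows; yours is shorter and more conceptual, handling all $e$ uniformly, but imports the fact that $\{A : \dim(A\cap B) > g(d,e,n)\}$ is a proper closed subvariety of the Grassmannian and that the big affine cell is Zariski-dense. Both are fine; yours would benefit from a one-line justification that this Schubert locus is indeed proper (equivalently, that a generic $d$-plane meets $B$ in dimension exactly $g(d,e,n)$), or alternatively from a direct construction of a $\tau_0$ achieving the maximal rank, which is not hard to write down.
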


\begin{proof}
We denote by $Y_1, \ldots, Y_d$ the columns of the matrix $M$ and $A = \Span(Y_1, \ldots, Y_d)$ the space spanned by these columns. We only show that $A$ is $(n-d,1)$-irrational. By Proposition~\ref{prop_j_irr_descend_monte}, we then have that $A$ is $(e,1)$-irrational for all $e \in \llbracket 1, n-1 \rrbracket$.

Let $B$ be a rational space of dimension $n-d$. Suppose by contradiction that $A \cap B \neq \{ 0 \} $. Denoting by $Z_1, \ldots, Z_{n-d}$ a rational basis of $B$, we have $Y_1 \wedge \ldots Y_d \wedge Z_1 \wedge \ldots \wedge Z_{n-d} = 0.$ In the following, we denote by $Q = (Z_1 | \cdots | Z_{n-d})$ the matrix whose columns are the $Z_i$. The equality $Y_1 \wedge \ldots Y_d \wedge Z_1 \wedge \ldots \wedge Z_{n-d} = 0$ implies the nullity of the determinant of the following matrix $
\begin{pmatrix}
\begin{matrix}G \\ 
\Sigma 
\end{matrix} & \begin{matrix}
Z_1 & \cdots & Z_{n-d}
\end{matrix}
\end{pmatrix} \in \mathcal{M}_n(\R).$ 
\bigskip 

This determinant is a polynomial with rational coefficients in the coefficients of $M$, as the $Z_i$ are rational.
We can also view this determinant as a polynomial in $\Q(\FF)[X_1, \ldots, X_{d(n-d)}]$ evaluated at the $d(n-d)$ coefficients of $\Sigma$. Since, by~\ref{condition_2}, these coefficients form an algebraically independent family over $\Q(\FF)$, then this polynomial is identically zero.
Thus, we can replace the coefficients of $\Sigma$ by any real family, and the determinant will be zero. Therefore, we have:
\begin{align}\label{det_nul}
\forall \Sigma \in \mathcal{M}_{n-d,d}(\R ), \quad \det \begin{pmatrix}
\begin{matrix}G \\ 
\Sigma 
\end{matrix} & \begin{matrix}
Z_1 & \cdots & Z_{n-d}
\end{matrix}
\end{pmatrix} = 0.
\end{align}

For $\Delta$ a minor of size $n-d$ of $Q$, we denote by $\Ind(\Delta)$ the set of indices $1 \leq i_1 < \ldots < i_{n-d} \leq n $ of the rows from which $\Delta$ is extracted. We also denote by $\Mat({\Delta}) $ the submatrix of $Q$ of size $n-d$ from which $\Delta$ is obtained.
We prove by strong induction on $r \in \llbracket 0, \min(d, n-d) \rrbracket$ the property: \\
"For any minor $\Delta$ of size $n-d$ of $Q$ such that $\#(\Ind(\Delta) \cap \llbracket 1,d \rrbracket )= r $, we have $\Delta = 0$."
\begin{itemize}[label = $\bullet$]
\item If $r = 0 $, then $\Delta$ is the minor extracted from the last $n - d $ rows of $Q$. Taking $\Sigma = (0) $ in $(\ref{det_nul})$, we obtain:
\begin{align*}
0= \det \begin{pmatrix}
\begin{matrix}G \\ 
0
\end{matrix} & \begin{matrix}
* \\
\Mat({\Delta})
\end{matrix}
\end{pmatrix} = \det(G) \Delta,
\end{align*}
and thus $\Delta = 0 $ because $G$ is invertible.
\item Let $r \in \llbracket 1,\min(d,n-d) \rrbracket$ and assume the property is true for all $0 \leq k < r$. Let $\Delta$ be a minor of size $n-d$ of $Q$ such that $ \#(\Ind(\Delta) \cap \llbracket 1,d \rrbracket )= r $.
Without loss of generality, we assume that $ \Ind(\Delta) \cap \llbracket 1,d \rrbracket=\llbracket 1, r\rrbracket$ and $ \Ind(\Delta) \cap \llbracket d+1,n \rrbracket = \llbracket d+1,n-r \rrbracket$.
For $i$ an integer and $U$ a matrix, we denote by $U_i$ the $i$-th row of $U$.
Thus, $\Mat({\Delta}) = \begin{pmatrix}
B_{1} & \cdots & B_r & B_{d+1} & \cdots & B_{{n-r}}
\end{pmatrix}^\intercal$ with $B_i \in \MM_{1,n-d}(\Q)$.
We define a matrix $\Sigma = \begin{pmatrix}
\Sigma_1 & \cdots & \Sigma_{n-d}
\end{pmatrix}^\intercal \in \mathcal{M}_{n-d,d}(\R )$ by setting:
\begin{align*}
\Sigma_i = \left\{ \begin{array}{cl}
G_{i-n+d+r} &\text{ if } i \in \llbracket n-d-r+1,n- d\rrbracket \\
0 &\text{ otherwise }
\end{array} \right. .
\end{align*}
We have defined $\Sigma$ so that the last $r$ rows of $\Sigma$ are equal to the first $r$ rows of $G$.
We now use (\ref{det_nul}):
\begin{align*}
0= \det \begin{pmatrix}
\begin{matrix}G \\ 
\Sigma
\end{matrix} & B
\end{pmatrix} = \det \begin{pmatrix}
\begin{matrix}G_1\\ \vdots \\ G_r \\ G_{r+1} \\ \vdots \\ G_d \\ 0 \\ \vdots \\ 0 \\ G_1 \\ \vdots \\ G_r 
\end{matrix} & 
\begin{matrix} B_1\\ \vdots \\ B_r \\ B_{r+1}\\ \vdots \\ B_d \\ B_{d+1} \\ \vdots \\ B_{n-r} \\ B_{n-r+1} \\ \vdots \\ B_n 
\end{matrix} 
\end{pmatrix} = \pm \det \begin{pmatrix}
\begin{matrix}G_1\\ \vdots \\ G_r \\ G_{r+1} \\ \vdots \\ G_d \\ G_1 \\ \vdots \\ G_r \\ 0 \\ \vdots \\ 0 
\end{matrix} & 
\begin{matrix} B_{n-r+1} \\ \vdots \\ B_n \\ B_{r+1} \\ \vdots \\ B_{d} \\ B_1\\ \vdots \\ B_r \\ B_{d+1} \\ \vdots \\ B_{n-r} 
\end{matrix} 
\end{pmatrix} 
\end{align*} 
by swapping rows.
We denote by $M$ the last matrix appearing in the above inequality. 
Using Laplace expansion (Theorem 1.8 of \cite{caldero_germoni}) with $J = \llbracket 1,d \rrbracket$ we have: 
\begin{align}\label{Laplace_GB}
0 = \sum\limits_{I \in P(d,n)} (-1)^{\ell(I) + \ell(J)} \Delta_{I,J}(M) \Delta_{\Bar{I},\Bar{J}}(M)
\end{align}
where $ \Bar{I} = \llbracket 1, n \rrbracket \setminus I$ and $\Delta_{I,J}(M)$ is the minor of $M$ associated with the rows indexed by $I$ and the columns indexed by $J$.
We then study the minors of $M$ according to the choice of $I \in P(d,n)$.
\begin{itemize}[label = $\bullet$]
\item If $I \cap \llbracket d + r +1, n \rrbracket \neq \emptyset $, then $\Delta_{I,J}(M) = 0 $ because it contains a null row. 
\item Otherwise $I \cap \llbracket d + r +1, n \rrbracket = \emptyset $, and we have $\#(I \cap \llbracket r +1, r +d \rrbracket) \geq d- r $. 
\begin{itemize}[label = $\diamond$]
\item If $\#(I \cap \llbracket r +1, r +d \rrbracket) = d- r $, then $I\cap \llbracket 1, r \rrbracket = \llbracket 1, r \rrbracket $. In this case, if $I \neq \llbracket 1, d\rrbracket$, then $\Delta_{I,J}(M)=0$ because there are two equal rows in this minor.
\item If $\#(I \cap \llbracket r +1, r +d \rrbracket) > d- r $, then $\# (\Bar{I} \cap \llbracket r +1, r +d \rrbracket) < r $. In this case, $\Delta_{\Bar{I},\Bar{J}}(M)$ is a minor of $B$ such that $\#(\Ind(\Delta_{\Bar{I},\Bar{J}}(M)) \cap \llbracket 1, d \rrbracket ) <r$. By the induction hypothesis, this minor is null.
\end{itemize}
\end{itemize}
Using (\ref{Laplace_GB}), the only possibly non-zero term is the one corresponding to $I = \llbracket 1,d \rrbracket$ and we have:
\begin{align*}
0 = \pm \det(G) \det(B_1,\ldots, B_r,B_{d+1}, \ldots, B_{n-r}) = \pm \det(G) \Delta.
\end{align*}
Since $\det(G) \neq 0 $ by assumption~\ref{condition_1}, we have $\Delta = 0$. We have thus shown that every minor of size $n-d$ of $Q$ is null. In particular, $\rank(B) < n-d $ which is contradictory since $Z_1, \ldots, Z_{n-d}$ form a basis of $B$.
\end{itemize}
\end{proof}

Proposition~\ref{prop_haut_appli} is a result concerning the height of a space by "decomposing" it over the image and kernel of a particular orthogonal projection. 

\begin{prop}\label{prop_haut_appli}
Let $n \in \N \setminus\{0 \}$ and $e \in \llbracket 1, n \rrbracket$. Let $ p: \R^n \longrightarrow \R^n$ be an orthogonal projection satisfying $p(\Z^n) \subset \Z^n$. Then, for any $B \in \RR_n(e)$, we have: 
 \begin{align*}
 H(B) = H(\ker(p) \cap B) \cdot H(p(B)).
 \end{align*}
\end{prop}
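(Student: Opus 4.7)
The plan is to find a $\Z$-basis of $B \cap \Z^n$ compatible with the orthogonal decomposition $\R^n = K \oplus V$, where $K = \ker p$ and $V = \text{im}(p)$. The hypothesis $p(\Z^n) \subset \Z^n$ (applied both to $p$ and to $\text{id} - p$, which also has this property since $v - p(v) \in \Z^n$ whenever $v, p(v) \in \Z^n$) gives the splitting $\Z^n = (K \cap \Z^n) \oplus (V \cap \Z^n)$ at the level of lattices, which is what makes the geometric decomposition play well with integrality.

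Let $f = \dim(K \cap B)$. First pick a $\Z$-basis $X_1, \ldots, X_f$ of $(K \cap B) \cap \Z^n$, then extend it to a $\Z$-basis $X_1, \ldots, X_e$ of $B \cap \Z^n$. A key sub-claim is that the completion $X_{f+1}, \ldots, X_e$ can be chosen so that $p(X_{f+1}), \ldots, p(X_e)$ is a $\Z$-basis of $p(B) \cap \Z^n$. This should follow from the short exact sequence
$$
0 \to (K \cap B) \cap \Z^n \to B \cap \Z^n \to p(B \cap \Z^n) \to 0
$$
together with the identification $p(B \cap \Z^n) = p(B) \cap \Z^n$, which is exactly where the hypothesis on $p$ enters, plus a Smith normal form argument for the second arrow.

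With such a basis fixed, decompose each $X_i$ (for $i > f$) as $X_i = X_i^K + p(X_i)$ with $X_i^K = X_i - p(X_i) \in K \cap \Z^n$. Using multilinearity and antisymmetry of the wedge product, one modifies each such $X_i$ by subtracting integer combinations of $X_1, \ldots, X_f$ to eliminate the $X_i^K$ term without changing $X_1 \wedge \ldots \wedge X_e$. This reduces the Plücker representative of $B$ to $X_1 \wedge \ldots \wedge X_f \wedge p(X_{f+1}) \wedge \ldots \wedge p(X_e)$. Since the first $f$ vectors lie in $K$ and the last $e-f$ in $V$, and $K \perp V$, a Pythagoras-type argument in $\bigwedge^e \R^n$ then gives
$$
H(B) = \| X_1 \wedge \ldots \wedge X_f \| \cdot \| p(X_{f+1}) \wedge \ldots \wedge p(X_e) \| = H(K \cap B) \cdot H(p(B)).
$$

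The hard part will be the elimination step: a priori, $X_i^K$ only lies in $K \cap \Z^n$, which can be strictly larger than $(K \cap B) \cap \Z^n = \text{Span}_\Z(X_1, \ldots, X_f)$, so the absorption of $X_i^K$ into the $\Z$-span of the $X_j$ with $j \leq f$ is not automatic. Making this work seems to require using the hypothesis $p(\Z^n) \subset \Z^n$ once more, to argue that the lifts $X_{f+1}, \ldots, X_e$ of a $\Z$-basis of $p(B) \cap \Z^n$ can be chosen so that their $K$-components land in $(K \cap B) \cap \Z^n$, which is the place where the full force of the integrality assumption on $p$ is needed.
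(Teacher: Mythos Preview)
Your approach is the same as the paper's: start from a $\Z$-basis $X_1,\ldots,X_f$ of $(\ker p\cap B)\cap\Z^n$, extend to a $\Z$-basis of $B\cap\Z^n$, replace $X_i$ by $p(X_i)$ for $i>f$, and split the wedge norm using $K\perp V$. You correctly isolate the delicate point: one needs $X_i^K=X_i-p(X_i)$ to lie in $\ker p\cap B$ (not merely in $\ker p$) in order to absorb it into $\Span(X_1,\ldots,X_f)$. The paper's own proof makes exactly this move, passing from $\widetilde X_i\in\ker(p)\cap\Z^n$ directly to an expression $\widetilde X_i=\sum_{k\le e-t}u_{i,k}X_k$, without ever checking that $\widetilde X_i\in B$.

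Your concern is not just legitimate but fatal: the step cannot be repaired, because the statement is false as written. Take $n=3$, $p(x,y,z)=(x,0,0)$ and $B=\Span\big((1,1,0),(0,0,1)\big)$. Then $\ker p\cap B=\Span\big((0,0,1)\big)$ has height $1$ and $p(B)=\Span\big((1,0,0)\big)$ has height $1$, but $H(B)=\|(1,1,0)\wedge(0,0,1)\|=\sqrt2$. Concretely, any lift $X\in B\cap\Z^3$ of the generator $(1,0,0)\in p(B)\cap\Z^3$ has the form $(1,1,b)$, and its $K$-component $(0,1,b)$ never lands in $\ker p\cap B=\Span\big((0,0,1)\big)$; so the ``good'' choice of lifts you hope for simply does not exist. (Incidentally, your intermediate identity $p(B\cap\Z^n)=p(B)\cap\Z^n$ also fails in general: take $B=\Span\big((2,1)\big)\subset\R^2$ with $p(x,y)=(x,0)$.) In the paper's sole application of this proposition, only the inequality $H(C)\ge H(\widehat{p_j}(C))$ is actually used.
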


\begin{proof}
We work here in the space $B$; we consider the restriction of $p$ to $B$, still denoted as $p$. Let $t = \dim(p(B))$. Since $p(\Z^n) \subset \Z^n$, $p(B)$ is rational. As $\dim(B) = e$, we have $\dim(\ker(p)) = e - t$. Let $X_1, \ldots, X_{e-t}$ be a $\Zbasis$ of $\ker(p) \cap \Z^n$. Since $\ker(p) \cap \Z^n \subset B \cap \Z^n$, according to Corollary~3 of Theorem~1 of \cite{Cassels_geom}, there exist vectors $X_{e-t+1}, \ldots, X_e$ such that $X_1, \ldots, X_e $ form a $\Zbasis$ of $B \cap \Z^n$. We will show that we can replace $X_i$ by $p(X_i)$ for $i \in \llbracket e-t +1, e \rrbracket$. Indeed, we have:
\begin{align}\label{tilde_X_noyau}
\forall i \in \llbracket e-t +1, e \rrbracket, \, \widetilde{X_i} = X_i - p(X_i) \in \ker(p) \cap \Z^n
\end{align}
since $p$ is a projection and $X_i$ and $p(X_i)$ are integer vectors. For $i \in \llbracket e-t +1, e \rrbracket$, we can write:
\begin{align}\label{tilde_X_decom}
\widetilde{X_i} = \sum\limits_{k = 1}^{e-t} u_{i,k} X_k
\end{align}
with $u_{i,k} \in \Z$ for $k \in \llbracket 1, e-t \rrbracket$. All $\widetilde{X_i}$ are thus integer combinations of $X_1, \ldots, X_{e-t}$. In particular, the change of basis matrix from $(X_1, \ldots, X_d)$ to $(X_1, \ldots, X_{e-t}, p(X_{e-t+1}), \ldots, p(X_{e}))$ is of the form:
\begin{align*}
\begin{pmatrix}
I_{e-t} & * \\
0 & I_{t}
\end{pmatrix} \in M_e(\Z)
\end{align*}
according to \eqref{tilde_X_noyau} and \eqref{tilde_X_decom}. Since the determinant of this matrix is $1$, we deduce that the family $X_1, \ldots, X_{e-t}, p(X_{e-t+1}), \ldots, p(X_{e})$ is also a $\Zbasis$ of $B \cap \Z^n$.
Moreover, we notice that the family $p(X_{e-t+1}), \ldots, p(X_{e})$ is a basis of $p(B)$, so it is also a $\Zbasis$ of $p(B) \cap \Z^n$. We can now calculate the height, we have:
\begin{align*}
H(B) = \| X_1 \wedge \ldots \wedge X_{e-t} \wedge p(X_{e-t+1}) \wedge \ldots \wedge p(X_{e}) \| = \| X_1 \wedge \ldots \wedge X_{e-t} \| \cdot \| p(X_{e-t+1}) \wedge \ldots \wedge p(X_{e}) \|,
\end{align*}
the second inequality coming from the fact that $\ker(p)$ and $p(B)$ are orthogonal subspaces. We conclude by remarking that the quantities of right-hand side are the heights of the subspaces $\ker(p)$ and $p(B)$. We have $\| X_1 \wedge \ldots \wedge X_{e-t} \| = H(\ker(p)) $ and $\| p(X_{e-t+1}) \wedge \ldots \wedge p(X_{e}) \| = H(p(B))$ and that concludes the proof of the proposition.

\end{proof}

\begin{req}
 The condition $p(\Z^n) \subset \Z^n$ is actually very restrictive. Let $(e_i)_{i \in \llbracket 1, n \rrbracket}$ be the canonical basis of $\R^n$. Since $p$ is a projection, the subordinate norm of $p$ satisfies $\| p \| \leq 1$. 
Then for all $ i \in \llbracket 1,n \rrbracket $:
\begin{align*}
 \|p(e_i)\| \leq \|e_i\| =1.
\end{align*}
Now $p(e_i) \in \Z^n$, and thus $p(e_i) \in \{0, \pm e_1, \ldots,\pm e_n \}$. Hence the only possible orthogonal projections are those onto subspaces of the form $\underset{ i \in I}{\Span}(e_i) $ with $I \subset \llbracket 1, n \rrbracket$.
\end{req}

Lemma~\ref{2lem_sigma_alg_indep} elaborates on constructing irrational numbers reminiscent of Liouville number $ \sum\limits_{k = 1}^{+ \infty} \frac{1}{10^{k!}}$. The numbers constructed in Lemma~\ref{2lem_sigma_alg_indep} and their irrationality measure have been studied in \cite{Shallit}, \cite{Poorten_Shallit} and \cite{Bugeaud_approx_Cantor}. The reader can also consult \cite[Section $8$]{Levesley_Salp_Velani} and \cite{Bugeaud_trans}.

\begin{lem}\label{2lem_sigma_alg_indep}
 Let $e \in \Nx$, $\theta \in \N \setminus \{0,1\}$ and $\alpha = (\alpha_k)_{k \in \N} $ a sequence verifying there exists $\cons \label{2constante_suite_alpha} > 1$ such that:
 \begin{align*}
\forall k \in \N, \quad \alpha_{k+1} > c_{\ref{2constante_suite_alpha} } \alpha_k.
\end{align*}
 Let $J \subset \Nx $ be of cardinality at least $2$ and $\FF\subset \R$ be a finite subset. Let $\phi: \N \longrightarrow \llbracket 0,e \rrbracket $ be a function such that for all $ i \in \llbracket 0,e \rrbracket$ one has $\#\phi^{-1}(\{ i \} ) = \infty.$
Then there exist $e +1 $ sequences $(u_k^{i})_{ i \in \llbracket 0,e \rrbracket, k \in \N}$ such that: 
\begin{align}\label{2condition_suite_u}
 \text{for all } i \in \llbracket 0,e \rrbracket \text{ and } k \in \N, \quad u_k^{i} &\left\{ \begin{array}{lll}
 \in J &\text{ if } \phi(k) = i \\
 = 0 &\text{ else }
 \end{array} \right. , 
\end{align}
and with $\left( \sum\limits_{k = 0}^{+ \infty} \frac{u_k^i}{\theta^{\floor{\alpha_k}}} \right)_{ i \in \llbracket 0,e \rrbracket}$ algebraically independent over $\Q(\FF)$.
\end{lem}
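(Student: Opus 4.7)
The plan is to use a Baire category argument on the compact space of admissible digit sequences. First observe that the condition (\ref{2condition_suite_u}) identifies the space $\mathcal{U}$ of admissible families $(u_k^i)$ with $J^{\N}$ through the bijection $(u_k^i) \mapsto (u_k^{\phi(k)})_k$; this makes $\mathcal{U}$ a compact Hausdorff Cantor-like space via Tychonoff. The growth hypothesis $\alpha_{k+1} > c_{\ref{2constante_suite_alpha}} \alpha_k$ with $c_{\ref{2constante_suite_alpha}} > 1$ forces $\alpha_k$ to grow at least geometrically, so each series $\sum_k u_k^i / \theta^{\floor{\alpha_k}}$ converges absolutely and uniformly in the digits, and the evaluation map $\Phi : \mathcal{U} \to \R^{e+1}$, $u \mapsto (\xi_0, \ldots, \xi_e)$, is continuous.

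Since $\FF$ is finite, the field $\Q(\FF)$ is countable, hence so is the polynomial ring $\Q(\FF)[X_0, \ldots, X_e]$. Enumerate its nonzero elements as $(P_\ell)_{\ell \geq 1}$, and set $\mathcal{O}_\ell = \{ u \in \mathcal{U} : P_\ell(\Phi(u)) \neq 0\}$, which is open in $\mathcal{U}$ by continuity of $\Phi$ and $P_\ell$. I claim each $\mathcal{O}_\ell$ is dense. A basic open neighborhood of $\mathcal{U}$ is obtained by fixing the digits $u_k^{\phi(k)}$ on some finite set $F \subset \N$. For such a neighborhood the resulting tuple decomposes as $\xi_i = s_i + t_i$, where $s_i$ is the (fixed) contribution from indices in $F$ and $t_i$ ranges over the set $T_i = \{ \sum_{k \in \phi^{-1}(\{i\}) \setminus F} u_k^i \theta^{-\floor{\alpha_k}} : u_k^i \in J\}$. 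Because $\phi^{-1}(\{i\})$ is infinite and $\#J \geq 2$, each $T_i$ is an infinite subset of $\R$. Finding a point of the neighborhood in $\mathcal{O}_\ell$ thus reduces to exhibiting $(t_0, \ldots, t_e) \in T_0 \times \cdots \times T_e$ such that the nonzero polynomial $Q(X_0, \ldots, X_e) := P_\ell(s_0 + X_0, \ldots, s_e + X_e) \in \R[X_0, \ldots, X_e]$ does not vanish at it.

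The point where the argument could snag is precisely this non-vanishing statement, but it is an elementary sub-lemma proved by induction on $e$: writing $Q = \sum_j X_e^j R_j(X_0, \ldots, X_{e-1})$, some $R_j$ is not identically zero, by induction it is nonzero at some $(t_0, \ldots, t_{e-1}) \in T_0 \times \cdots \times T_{e-1}$, which makes $Q(t_0, \ldots, t_{e-1}, X_e)$ a nonzero univariate polynomial in $X_e$ having only finitely many real roots, and $T_e$ being infinite provides a valid $t_e$.

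To conclude, applying the Baire category theorem in the compact Hausdorff space $\mathcal{U}$ gives that the countable intersection $\bigcap_{\ell \geq 1} \mathcal{O}_\ell$ is dense, and in particular non-empty. Any $u \in \bigcap_\ell \mathcal{O}_\ell$ yields admissible sequences $(u_k^i)$ such that $P_\ell(\xi_0, \ldots, \xi_e) \neq 0$ for every nonzero $P_\ell \in \Q(\FF)[X_0, \ldots, X_e]$, which is exactly the algebraic independence of $(\xi_0, \ldots, \xi_e)$ over $\Q(\FF)$. The rest is verification of (\ref{2condition_suite_u}), which is built into the definition of $\mathcal{U}$.
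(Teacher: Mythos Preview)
Your proof is correct and takes a genuinely different route from the paper. The paper argues by a direct cardinality argument combined with induction on $i$: since $\Q(\FF)$ is countable, so is its algebraic closure in $\R$; since the set of admissible sequences $(u_k^0)$ is uncountable and (implicitly, via the growth of $\alpha$) so is the set of resulting values $\sigma_0$, some choice makes $\sigma_0$ transcendental over $\Q(\FF)$; then one repeats with $\Q(\FF,\sigma_0)$ in place of $\Q(\FF)$, and so on up to $\sigma_e$. Your Baire category argument trades this step-by-step construction for a single genericity statement: the set of admissible digit sequences producing an algebraically independent tuple is comeager in $J^{\N}$, which is strictly more than mere existence. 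The cost is a slightly longer setup and two small points worth making explicit. First, compactness of $J^{\N}$ and uniform convergence of the series require $J$ to be bounded; if $J$ is infinite you should first pass to a two-element subset $J'\subset J$ (harmless, since any sequence with values in $J'$ still satisfies \eqref{2condition_suite_u}). Second, the assertion that each $T_i$ is infinite does not follow from $\#\phi^{-1}(\{i\})=\infty$ and $\#J\ge 2$ alone: you also need that $\floor{\alpha_k}\to\infty$ along $\phi^{-1}(\{i\})\setminus F$, so that changing a single digit at larger and larger positions yields distinct real numbers.
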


\begin{proof}
 Let $\sum\limits_{k = 0}^{+ \infty} \frac{u_k^0}{\theta^{\floor{\alpha_k}}}, \ldots, \sum\limits_{k = 0}^{+ \infty} \frac{u_k^e}{\theta^{\floor{\alpha_k}}} $ be denoted by $\sigma_0, \ldots, \sigma_{e}$, and let us reason by induction on $t \in \llbracket 0,e \rrbracket$.

The set of algebraic numbers over $\Q(\FF)$ is countable since $\#\FF< + \infty$, and the set of sequences $(u_k^0)$ satisfying (\ref{2condition_suite_u}) is uncountable because $\#J \geq 2$. Therefore, we choose a sequence such that $\sigma_0$ is transcendental over $\Q(\FF)$.
Now, suppose that we have constructed $\sigma_0, \ldots, \sigma_t$ as an algebraically independent family over $\Q$ with $t \in\llbracket 0, e-1\rrbracket$. The set of algebraic numbers over $\Q(\FF,\sigma_0, \ldots, \sigma_t)$ is countable, but the set of sequences $(u_k^{t+1})_{k \in \N}$ satisfying (\ref{2condition_suite_u}) is uncountable because $\#J \geq 2$. Therefore, we can choose a sequence such that $\sigma_{t+1} $ is transcendental over $\Q(\FF,\sigma_0, \ldots, \sigma_t)$, completing the induction.

\end{proof}

Finally we prove a lemma that is used throughout this article to show that an integer vector belongs to a given rational subspace.

\begin{lem}\label{2lem_X_in_B}
 Let $Y \in \Z^n $ and $B$ be a rational subspace of dimension $e$. Let $ X_1, \ldots, X_e $ be a basis of $B$ with $X_i \in \Z^n $ for all $i \in \llbracket 1, e \rrbracket$. If $ \| Y \wedge X_1 \wedge \ldots \wedge X_e \| <1$, then $Y \in B$.
\end{lem}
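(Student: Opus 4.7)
The plan is to argue by contrapositive: assume $Y \notin B$, and show that $\|Y \wedge X_1 \wedge \ldots \wedge X_e\| \geq 1$.

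Since $X_1, \ldots, X_e$ form a basis of $B$ and $Y \notin B$, the family $Y, X_1, \ldots, X_e$ is linearly independent, and hence the wedge $Y \wedge X_1 \wedge \ldots \wedge X_e$ is a nonzero element of $\Lambda^{e+1}(\R^n)$. The key observation is then that its coordinates in the standard basis $(e_{i_1} \wedge \ldots \wedge e_{i_{e+1}})_{1 \leq i_1 < \ldots < i_{e+1} \leq n}$ are exactly the $(e+1) \times (e+1)$ minors of the integer matrix $(Y \mid X_1 \mid \ldots \mid X_e) \in \MM_{n,e+1}(\Z)$. In particular, all these coordinates are integers.

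Since the wedge is nonzero, at least one of these coordinates is a nonzero integer, hence has absolute value at least $1$. The Euclidean norm on $\Lambda^{e+1}(\R^n)$ (induced by the standard basis above) therefore satisfies
\begin{align*}
\|Y \wedge X_1 \wedge \ldots \wedge X_e\| \geq 1,
\end{align*}
which contradicts the hypothesis and proves the lemma.

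There is no real obstacle here: the argument is the standard "a nonzero integer vector has norm at least $1$" principle, applied to the Plücker coordinates of the subspace spanned by $Y, X_1, \ldots, X_e$. The only minor point worth spelling out is the identification of the coordinates of the wedge with integer minors, which is immediate from multilinearity.
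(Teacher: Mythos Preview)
Your proof is correct and follows essentially the same approach as the paper: both rely on the fact that $Y \wedge X_1 \wedge \ldots \wedge X_e$ has integer coordinates, so that a nonzero wedge forces the norm to be at least $1$. The paper phrases it directly (norm $<1$ forces the integer vector to vanish, hence linear dependence, hence $Y\in B$) while you argue by contrapositive, but the content is identical.
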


\begin{proof}
 Denoting by $\| \cdot \|_{\infty}$ the supremum norm, we have $ \| Y \wedge X_1 \wedge \ldots \wedge X_e \|_{\infty} \leq \| Y \wedge X_1 \wedge \ldots \wedge X_e \| <1.$

 However, $ Y \wedge X_1 \wedge \ldots \wedge X_e $ has integer coordinates since the vectors under consideration do. Therefore, its infinity norm is zero, and thus $ Y \wedge X_1 \wedge \ldots \wedge X_e = 0.$
This implies that there is a linear dependency among the vectors $Y, X_1, \ldots, X_e$, and since the family of $X_i$ forms a basis of $B$, we have:
\begin{align*}
 Y \in \Span(X_1, \ldots, X_e) = B.
\end{align*}
\end{proof}

\section{Diophantine exponents of a direct sum of lines}\label{sect_sum}
In this section, we prove a result allowing one to calculate the Diophantine exponents of a subspace which is a sum of lines included in distinct rational subspaces. It corresponds to Chapter 4 of \cite{Guillot_these}, where the reader can find more detailed explanations.

We fix $n \in \N $ and we denote by $g(A,e)$ the quantity:
\begin{align*}
 g(A,e) = g(\dim(A), e,n) = \max(0, \dim(A) +e -n) .
\end{align*}
We define $P(k + g(A,e), d)$ as the set of all subsets with $k + g(A,e)$ elements of $\llbracket 1, d\rrbracket$. One has the following result.

\begin{theo}\label{2theo_somme_sev}
Let $d \in \left\llbracket 1, \floor{\frac{n}{2} } \right\rrbracket$. Assume that $ \bigoplus\limits_{j=1}^d R_j \subset \R^n $ with $R_j$ rational subspaces of dimension $r_j$.\\ Let $A = \bigoplus\limits_{j=1}^d A_j$ with $A_j \subset R_j$ and $\dim(A_j) = 1$. For $J \subset \llbracket 1,d \rrbracket $, we set $A_J = \bigoplus\limits_{j \in J} A_j$. 
\\ Let $e \in \llbracket 1, n-1 \rrbracket $ and $k \in \llbracket 1,\min(d,e) - g(A,e) \rrbracket$. The following statements are equivalent:
\begin{enumerate}[label = (\roman*)]
 \item $A$ is $(e,k)\tir$irrational.
 \item $\forall J \in P(k + g(A,e), d), \quad A_J $ is $(e,k+g(A,e) -g(A_J,e))\tir$irrational.
\end{enumerate}
Furthermore, in this case one has:
\begin{align*}
\mu_n(A|e)_k = \max\limits_{J \in P(k + g(A,e), d)} \mu_n({A}_{J}| e)_{k+g(A,e) -g(A_J,e)}.
\end{align*}
\end{theo}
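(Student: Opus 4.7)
The plan is to prove the equivalence $\textup{(i)} \Leftrightarrow \textup{(ii)}$ first, then establish the exponent formula as two opposite inequalities. Throughout I set $s := k + g(A,e)$, so that $\textup{(i)}$ asserts $\dim(A \cap B) < s$ for every rational $B$ of dimension $e$, and $\textup{(ii)}$ (using $\dim A_J = s$ for $J \in P(s,d)$) asserts $A_J \not\subset B$ for every such pair.

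For the equivalence, $\textup{(i)} \Rightarrow \textup{(ii)}$ is immediate from the inclusion $A_J \cap B \subset A \cap B$. The converse will be proved by contrapositive: assume some rational $B$ of dimension $e$ satisfies $\dim(A \cap B) \geq s$, pick $V \subset A \cap B$ with $\dim V = s$, and expand a basis of $V$ in the direct sum $A = \bigoplus_{j=1}^d A_j$. The resulting $s \times d$ coefficient matrix has rank $s$ and hence has $s$ linearly independent columns indexed by a subset $J \subset \llbracket 1, d \rrbracket$ of size $s$, for which the projection $\pi_J \colon A \to A_J$ along $\bigoplus_{j \notin J} A_j$ restricts to an isomorphism $V \to A_J$. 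Because the $R_j$ are rational and in direct sum, the decomposition $\R^n = R_J \oplus \bigl(\bigoplus_{j \notin J} R_j\bigr) \oplus \bigl(\bigoplus_j R_j\bigr)^\perp$ is rational, so $\pi_J$ extends to a $\Q$-linear projection $\tilde\pi_J \colon \R^n \to R_J$ that sends rational subspaces to rational subspaces. Hence $\tilde\pi_J(B)$ is a rational subspace of $R_J$ of dimension at most $e$ containing $A_J = \tilde\pi_J(V)$; completing it to a rational $e$-dimensional subspace $B'$ of $\R^n$ contradicts $\textup{(ii)}$.

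The lower bound $\mu_n(A|e)_k \geq \mu_n(A_J|e)_{s - g(A_J,e)}$ will follow from the monotonicity $\omega_i(A, B) \leq \omega_i(A_J, B)$ for $A_J \subset A$ and $i \leq \min(|J|, e) = s$, a consequence of the min--max characterisation of canonical angles. Specialising at $i = s$ yields $\psi_k(A, B) \leq \psi_{s - g(A_J,e)}(A_J, B)$, so every family of rational $B$'s witnessing the exponent on the right also witnesses the one on the left.

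The reverse inequality will be the heart of the argument. Given a rational $B$ with $\omega_s(A, B) \leq H(B)^{-\mu}$, let $(X_i, Y_i) \in A \times B$ for $i \in \llbracket 1, s \rrbracket$ be pairs realising the first $s$ canonical angles and set $W := \Span(X_1, \ldots, X_s) \subset A$. The rank argument from the equivalence then selects $J(B) \in P(s, d)$ such that $\tilde\pi_{J(B)}|_W$ is an isomorphism onto $A_{J(B)}$, and I take $B' := \tilde\pi_{J(B)}(B)$ completed to a rational subspace of dimension $e$. The main work will be establishing the two quantitative estimates $H(B') \ll H(B)$ and $\omega_s(A_{J(B)}, B') \ll \omega_s(A, B)$; after this a pigeonhole on the finite set $P(s, d)$ fixes a single $J$ for infinitely many $B$, and a Northcott-type finiteness in the fibres of $\tilde\pi_J$ ensures that the corresponding $B'$ form infinitely many distinct subspaces, yielding $\mu_n(A_J|e)_{s-g(A_J,e)} \geq \mu$ and, letting $\mu \to \mu_n(A|e)_k$, the desired bound. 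The principal obstacle will be the height estimate: since $\tilde\pi_J$ is $\Q$-rational but not integer in general, Proposition~\ref{prop_haut_appli} cannot be invoked directly, and one has to clear denominators to absorb the non-integrality into a multiplicative constant that does not affect the supremum exponent; the angle estimate will then reduce to the boundedness of $\tilde\pi_J$ as an $\R$-linear operator.
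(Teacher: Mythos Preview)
Your plan for the equivalence and for the inequality $\mu_n(A|e)_k \geq \max_J \mu_n(A_J|e)_{k+g(A,e)-g(A_J,e)}$ is correct and matches the paper (Claim~\ref{2cor_crois_exponents_inclusion}). The gap is in the reverse inequality, specifically in the assertion that the angle estimate $\omega_s(A_{J(B)}, B') \ll \omega_s(A,B)$ ``reduces to the boundedness of $\tilde\pi_J$ as an $\R$-linear operator''. Boundedness of $\tilde\pi_J$ controls the numerator $\|\tilde\pi_J X \wedge \tilde\pi_J Y\|$, but the angle has $\|\tilde\pi_J X\|\cdot\|\tilde\pi_J Y\|$ in the denominator, and for that you need a \emph{lower} bound $\|\tilde\pi_J(X)\| \geq c\|X\|$ for $X \in W$ with $c$ independent of $B$. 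Your rank argument only shows that $\tilde\pi_J|_W$ is injective; the smallest singular value of this restriction can be arbitrarily small as $W$ varies with $B$, so no uniform $c$ follows and the implied constant in your $\ll$ degenerates.

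The paper isolates exactly this point. It first reduces to coordinate subspaces $R_j$ via a rational isomorphism (Section~\ref{section_reduction_ortho}); this makes the relevant projections orthogonal and integer-preserving, so Proposition~\ref{prop_haut_appli} gives $H(C)\geq H(\widehat{p_j}(C))$ directly and your denominator-clearing becomes unnecessary. It then proves Lemma~\ref{petit_vecteurs}: for any subspace $F$ with $\dim F < \#J$ there exists $j \in J$ with $\|\widehat{p_j}(X)\| \geq (n^2+1)^{-1/2}\|X\|$ for all $X\in F$, the constant being \emph{independent of $F$}; the paper remarks explicitly that mere injectivity of some $\widehat{p_j}|_F$ would be trivial. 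This lemma is applied to an $s$-dimensional subspace $C'\subset C$ (not to $W\subset A$) and, together with Claim~\ref{lem_minoration_U_j}, yields the one-step comparison Lemma~\ref{Rec_angles}; the theorem then follows by induction on $\#J$ via Proposition~\ref{prop_princi}. Your direct projection onto $A_J$ could in principle be repaired by iterating Lemma~\ref{petit_vecteurs} $d-s$ times, but as written the rank argument is missing this quantitative ingredient.
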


Theorem~\ref{2theo_somme_sev} is a direct consequence of the following proposition, which is applied inductively. 

\begin{prop}\label{prop_princi}
Let $d, e, k, R_j, A_j$ be as in Theorem~\ref{2theo_somme_sev}. 
\\Let $J \subset \llbracket 1,d \rrbracket $ such that $\#J \geq k+ g(A_J,e) + 1$. Then the following statements are equivalent:
\begin{enumerate}[label = (\roman*)]
 \item $A_J$ is $(e,k)$-irrational. \label{4implic1}
 \item For all $j \in J $, $A_{J \setminus \{j\}}$ is $(e,k+g(A_J,e) -g(A_{J \setminus \{j\}},e))$-irrational. \label{4implic2}
\end{enumerate}

Moreover, in this case we have:
\begin{align*}
\mu_n(A_J|e)_k = \max_{j \in J} \mu_n({A}_{J \setminus \{j\}}| e)_{k+g(A_J,e) -g(A_{J \setminus \{j\}},e)} .
\end{align*}
\end{prop}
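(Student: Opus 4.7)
Proposition~\ref{prop_princi} comprises four assertions: the two implications making up (i)$\Leftrightarrow$(ii) and the two inequalities making up the exponent identity. I would split the work into an easy half --- (i)$\Rightarrow$(ii) together with $\mu_n(A_J|e)_k\geq\max_{j\in J}\mu_n(A_{J\setminus\{j\}}|e)_{k'_j}$, where $k'_j:=k+g(A_J,e)-g(A_{J\setminus\{j\}},e)$ --- and a hard half --- the converses, which rest on the same reconstruction of an auxiliary rational subspace.

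The easy half is immediate from $A_{J\setminus\{j\}}\subset A_J$. For any rational $B$ of dimension $e$, $A_{J\setminus\{j\}}\cap B\subset A_J\cap B$, which combined with the numerical identity $k+g(A_J,e)=k'_j+g(A_{J\setminus\{j\}},e)$ yields (i)$\Rightarrow$(ii). The interlacing of principal angles under inclusion of the first argument gives $\omega_i(A_J,B)\leq\omega_i(A_{J\setminus\{j\}},B)$ for every $i$, hence $\psi_k(A_J,B)\leq\psi_{k'_j}(A_{J\setminus\{j\}},B)$ by the same numerical identity, proving the inequality $\mu_n(A_J|e)_k\geq\mu_n(A_{J\setminus\{j\}}|e)_{k'_j}$ for each $j$ and a fortiori the lower bound on the maximum.

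The hard half reduces to a single construction: starting from a rational $B\subset\R^n$ of dimension $e$ with $\psi_k(A_J,B)\leq H(B)^{-\mu}$ (or, for (ii)$\Rightarrow$(i), with $\dim(A_J\cap B)\geq k+g(A_J,e)=:s$), produce, for some $j\in J$ to be determined, a rational $B'$ of dimension $e$ with the corresponding property for $A_{J\setminus\{j\}}$ at the $k'_j$-th angle. The key geometric input is that the $R_i$ are rational and in direct sum, so every projection parallel to a sum $\bigoplus_{i\neq j}R_i$ preserves rationality. After replacing $B$ by $B\cap\bigoplus_i R_i$, projecting the result by $\pi_{\setminus j}$ onto $\bigoplus_{i\neq j}R_i$ and completing back to dimension $e$ by fixed rational vectors, I obtain the candidate $B'$. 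The index $j$ is chosen as the one achieving the maximum on the right-hand side of the exponent formula, heuristically the $A_j$ least responsible for the proximity of $B$ to $A_J$.

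The qualitative version (ii)$\Rightarrow$(i) reduces to a dimension count: if $V:=A_J\cap B$ satisfies $V\cap A_j=\{0\}$ for some $j$, then $\dim\pi_{\setminus j}(V)=\dim V\geq s$ and this subspace sits inside $A_{J\setminus\{j\}}\cap B'$; in the degenerate case $V=A_J$ we have $A_{J\setminus\{j\}}\subset B$, giving the required $\geq s$ by the hypothesis $\#J\geq s+1$. The quantitative version, needed for the reverse exponent inequality, is the main obstacle: one must control simultaneously the height of $B'$, via Proposition~\ref{prop_haut_appli} applied to decompose $H(B)$ across the kernel and image of the relevant projection, and the small angles $\psi_{k'_j}(A_{J\setminus\{j\}},B')$, via a quantitative form of the direct-sum geometry relating them to $\psi_k(A_J,B)$ up to a constant depending only on the fixed subspaces $A_i,R_i$. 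Combined, these two controls preserve the exponent $\mu$ and close the reverse inequality.
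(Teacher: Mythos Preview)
Your easy half is correct and matches the paper exactly (it is Claim~\ref{2cor_crois_exponents_inclusion} there).

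The hard half has a genuine gap at precisely the point you flag as ``the main obstacle''. Your description of how $j$ is chosen is inconsistent: ``the one achieving the maximum on the right-hand side of the exponent formula'' is a \emph{fixed} index, independent of $B$, whereas ``the $A_j$ least responsible for the proximity of $B$ to $A_J$'' is $B$-dependent. The first reading fails outright: for a given rational $B$, the $s$-dimensional piece of $B$ that is close to $A_J$ may point essentially along $A_{j_0}$ for the wrong $j_0$, and projecting away that $j_0$ destroys the small angle. The paper's proof makes the second reading precise, and this is the main technical content you are missing. After a rational change of basis reducing to the case where the $R_i$ are coordinate blocks (needed anyway before you can invoke Proposition~\ref{prop_haut_appli}), the paper isolates the $s$-dimensional subspace $C'\subset B$ realising $\omega_s(A_J,B)$ and applies Lemma~\ref{petit_vecteurs}: since $\dim C'=s<\#J$, there exists $j\in J$ (depending on $C'$, hence on $B$) such that $\|\widehat{p_j}(X)\|\geq c_n\|X\|$ for \emph{all} $X\in C'$, with $c_n$ depending only on $n$. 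The proof of this lemma is a short diagonally-dominant-matrix argument, but it is not obvious, and the uniformity of $c_n$ in $C'$ is exactly what makes the angle comparison (Lemma~\ref{Rec_angles}) go through with a constant independent of $B$. Your phrase ``a quantitative form of the direct-sum geometry \ldots\ up to a constant depending only on the fixed subspaces'' gestures at this but does not supply it; without Lemma~\ref{petit_vecteurs} or an equivalent, the argument does not close. Once $j=j(B)$ is chosen this way, pigeonhole over the finitely many $j$ (and over $\dim\widehat{p_j}(B)$) extracts a single $j$ working for infinitely many $B$, and the height is controlled by Proposition~\ref{prop_haut_appli} applied directly to $\widehat{p_j}$ --- no preliminary intersection with $\bigoplus_i R_i$ is needed or used.
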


The rest of the section is devoted to the proof of Proposition~\ref{prop_princi}. From now on, we take $d, e, k, R_j,$ and $A_j$ as in Theorem~\ref{2theo_somme_sev}, and we consider $J \subset \llbracket 1,d \rrbracket$ such that $\#J \geq k + g(A_J,e) + 1$.

\subsection{First implication and orthogonal subspaces}

The implication $\ref{4implic1} \Longrightarrow~\ref{4implic2} $ is the easiest. Assume that the space $A_J$ is $(e,k)$-irrational, and let $j \in J$. Recall that $g(\dim(A_J),e,n) = g(A_J,e)$ and $g(\dim(A_{J \setminus \{j \} },e,n)) = g(\dim(A_{J \setminus \{j \} },e))$. 

\begin{claim}\label{2cor_crois_exponents_inclusion}
Let $A' \subset A$ be two subspaces of $\mathbb{R}^n$ of respective dimensions $d'$ and $d$. Then for $e$ and $j$ such that $A \in \II_n(d,e)_j$, we have:
\begin{align*}
A' \in \II_n(d',e)_{j+g(d,e,n)-g(d',e,n)} \text{ and } \mu_n(A|e)_j \geq \mu_n(A'|e)_{j+g(d,e,n)-g(d',e,n)}.
\end{align*}
\end{claim}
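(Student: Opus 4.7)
The first assertion is essentially combinatorial. Set $j' = j + g(d,e,n) - g(d',e,n)$; since $d' \leq d$ gives $g(d',e,n) \leq g(d,e,n)$, we have $j' \geq j \geq 1$. For any rational subspace $B$ of dimension $e$, the inclusion $A' \subset A$ forces $A' \cap B \subset A \cap B$, so
$$\dim(A' \cap B) \leq \dim(A \cap B) < j + g(d,e,n) = j' + g(d',e,n),$$
which is exactly the defining inequality for $A' \in \II_n(d',e)_{j'}$.

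For the inequality of Diophantine exponents, the plan is to establish a monotonicity statement for the angles themselves: for every rational $B \in \RR_n(e)$ and every index $i$ for which both sides are defined,
$$\omega_i(A,B) \leq \omega_i(A',B).$$
The cleanest route is via the min-max reformulation of the recursive construction from the introduction,
$$\omega_i(A,B) = \min_{\substack{V \subset A \\ \dim V = i}} \max_{X \in V \setminus \{0\}} \min_{Y \in B \setminus \{0\}} \omega(X,Y),$$
which is the standard variational rewriting of the principal-angle construction. Once this identity is granted, the angle monotonicity is immediate: replacing $A$ by the smaller space $A'$ only shrinks the family of admissible $i$-dimensional subspaces $V$, so the outer minimum can only grow.

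With the angle inequality in hand, the index identity $j + g(d,e,n) = j' + g(d',e,n)$ immediately yields
$$\psi_j(A,B) = \omega_{j+g(d,e,n)}(A,B) \leq \omega_{j'+g(d',e,n)}(A',B) = \psi_{j'}(A',B),$$
so any rational $B$ witnessing $\psi_{j'}(A',B) \leq H(B)^{-\mu}$ automatically witnesses $\psi_j(A,B) \leq H(B)^{-\mu}$. Taking the supremum over those $\mu$ for which infinitely many witnesses exist produces $\mu_n(A|e)_j \geq \mu_n(A'|e)_{j'}$. The one delicate point is justifying the variational identity: if it is not treated as standard, I would prove it by induction on $i$, matching each layer of the paper's Gram-Schmidt-like construction of $(X_i, Y_i)$ to the optimal $i$-dimensional witness of the min-max formula. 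Everything else in the argument is bookkeeping.
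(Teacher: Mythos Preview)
Your proof is correct and follows essentially the same approach as the paper: both arguments reduce the exponent inequality to the angle monotonicity $\omega_i(A,B) \leq \omega_i(A',B)$ for $A' \subset A$, and then conclude via the index identity $j + g(d,e,n) = j' + g(d',e,n)$. The only difference is that the paper cites this monotonicity as the corollary of Lemma~12 of \cite{Schmidt} rather than deriving it from the min-max variational formula; your route is more self-contained but requires the extra justification of the min-max identity that you flag at the end.
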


\begin{proof}
Let $e$ and $j$ be such that $A \in \II_n(d,e)_j$. Then for any rational subspace $B$ of dimension $e$, we have
\begin{align*}
\dim(A' \cap B) \leq \dim(A \cap B) < j + g(d,e,n) \leq j+g(d,e,n)-g(d',e,n) + g(d',e,n).
\end{align*}
Thus, we conclude that $A' \in \II_n(d',e)_{j+g(d,e,n)-g(d',e,n)}$. Let $\varepsilon > 0$; by definition of the Diophantine exponent, there exist infinitely many rational subspaces $B$ of dimension $e$ such that:
\begin{align*}
\omega_{j + g(d,e,n)}(A',B) = \psi_{j+g(d,e,n)-g(d',e,n)}(A',B) \leq H(B)^{-\mu_n(A'|e)_{j+g(d,e,n)-g(d',e,n)} + \varepsilon}.
\end{align*}
By applying the corollary of Lemma~12 of \cite{Schmidt} with $k = j + g(d,e,n)$, we have
\begin{align*}
\omega_{j + g(d,e,n)}(A,B) \leq \omega_{j + g(d,e,n)}(A',B) \leq H(B)^{-\mu_n(A'|e)_{j+g(d,e,n)-g(d',e,n)} + \varepsilon}.
\end{align*}
Since $\omega_{j + g(d,e,n)}(A,B) = \psi_j(A,B)$, we deduce that
$\forall \varepsilon >0, \quad \mu_n(A|e)_j \geq \mu_n(A'|e)_{j+g(d,e,n)-g(d',e,n)} - \varepsilon,$ which concludes the proof by letting $\varepsilon$ tend to $0$.

\end{proof}
Since $A_{J \setminus \{j\}} \subset A_J$, this claim implies that $A_{J \setminus \{j \}} \in \II_n(A_{J \setminus \{j \}},e)_{k+g(A_{J},e) - g((A_{J \setminus \{j \} },e)} $, which proves the first part of the lemma. Moreover, we have 
$ \mu_n(A_J|e)_k \geq \mu_n({A}_{J \setminus \{j\}}| e)_{k+g(A_J,e) - g(A_{J \setminus \{j \}}| e)}.$
Since this holds for any $j \in J$, we conclude that: 
\begin{align}\label{3premiere_ineg}
 \mu_n(A_J|e)_k \geq \max\limits_{j \in J}\mu_n({A}_{J \setminus \{j\}}| e)_{k+g(A_J,e) - g(A_{J \setminus \{j \}}| e)}.
 \end{align}

\subsection{Reduction to orthogonal rational subspaces}\label{section_reduction_ortho}
To prove the other implication $\ref{4implic2} \Longrightarrow~\ref{4implic1} $, we now reduce to the case where the $R_j$ are generated by vectors of the canonical basis. We introduce the subspaces $R'_j$ defined as follows:
\begin{align*}
&R'_j = \{0\}^{r_1 } \times \ldots \times \{0 \}^{r_{j-1}} \times {\mathbb{R}^{r_j}} \times \{0 \}^{r_{j+1}} \ldots \times \{0 \}^{r_d} \subset \mathbb{R}^n \text{ for } j \in \llbracket 1, d\rrbracket.
\end{align*}
For $j \in \llbracket 1, d\rrbracket$, let $\varphi_j $ be a rational isomorphism from $R_j$ to $R'_j$. We then choose $\varphi: \mathbb{R}^n \to \mathbb{R}^n$ a rational isomorphism such that:
\begin{align*}
\forall j \in \llbracket 1,d \rrbracket, \quad \varphi_{|R_j} = \varphi_j.
\end{align*}
According to Theorem $1.2$ of \cite{joseph_spectre} (which can be extended to the case $d +e > n$), we have $\phi(A) \in \II_n(d,e)_k$ if $A \in \II_n(d,e)_k$ and:
\begin{align*}
\mu_n(\varphi(A)|e)_k = \mu_n(A|e)_k 
\text{ and } \mu_n(\varphi(A_J)|e)_k = \mu_n(A_J|e)_k
\end{align*}
for all $J \subset \llbracket 1, d \rrbracket$, $e \in \llbracket 1, n-1 \rrbracket$, and $k$ such that the considered subspaces are $(e,k)$-irrational. Therefore, we can now assume that $R_j =R'_j $ for all $j \in \llbracket 1,d \rrbracket$. For $J \subset \llbracket1,d \rrbracket$, we denote by $R_J = \bigoplus\limits_{j \in J} R_j$, and this direct sum is orthogonal. We denote by $p_j$ the orthogonal projection onto $R_j$, and $\widehat{p_{j}}$ the orthogonal projection onto $R_{\llbracket 1, d \rrbracket \setminus \{ j\}}$.

\begin{lem}\label{petit_vecteurs}
Let $J$ be a non-empty subset of $\llbracket 1, d \rrbracket $ and $F$ be a subspace of $\R^n$ such that $\dim(F) < \#J $. Let $\cons \label{cons_lemme_angle}= \frac{ 1}{\sqrt{n^2 +1 }}$. Then there exists $j \in J$ such that, for all $X \in F $: 
\[ \| \widehat{p_j}(X) \| \geq c_{\ref{cons_lemme_angle}} \|X\|.\]
\end{lem}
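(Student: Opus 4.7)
The plan is to argue by contradiction. Suppose that for every $j \in J$ the conclusion fails: there exists a nonzero $X_j \in F$ with $\|\widehat{p_j}(X_j)\| < c_{\ref{cons_lemme_angle}}\|X_j\|$. I will show that the family $(X_j)_{j \in J}$ must then be linearly independent, which contradicts $\dim F < \#J$. Throughout, I work with $F$ as a subspace of $W := \bigoplus_{k=1}^d R_k$, the natural ambient space after the reduction of Section~\ref{section_reduction_ortho}, in which the $R_k$ are pairwise orthogonal.

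Normalize $\|X_j\| = 1$ and split orthogonally $X_j = Y_j + Z_j$ with $Y_j := p_j(X_j) \in R_j$ and $Z_j := \widehat{p_j}(X_j) \in \bigoplus_{k \neq j} R_k$. The Pythagorean identity gives $\|Y_j\|^2 + \|Z_j\|^2 = 1$, and the failure hypothesis $\|Z_j\| < c_{\ref{cons_lemme_angle}} = 1/\sqrt{n^2+1}$ forces
\[
\|Y_j\| \;>\; \sqrt{1 - c_{\ref{cons_lemme_angle}}^2} \;=\; \frac{n}{\sqrt{n^2+1}}.
\]
So each $X_j$ is close to lying in the slot $R_j$.

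To establish linear independence, take any relation $\sum_{j \in J} \lambda_j X_j = 0$ and pick $i_0 \in J$ with $|\lambda_{i_0}| = \max_{j \in J} |\lambda_j|$. Applying the projection $p_{i_0}$ collapses the relation: $p_{i_0}(Y_j) = 0$ for $j \neq i_0$ by pairwise orthogonality of the $R_k$, and $p_{i_0}(Z_{i_0}) = 0$ since $Z_{i_0} \in R_{i_0}^{\perp}$. What remains is $\lambda_{i_0} Y_{i_0} = -\sum_{j \in J \setminus \{i_0\}} \lambda_j\, p_{i_0}(Z_j)$. Taking norms, using $\|p_{i_0}(Z_j)\| \leq \|Z_j\| < c_{\ref{cons_lemme_angle}}$ and $|\lambda_j| \leq |\lambda_{i_0}|$, yields
\[
\|Y_{i_0}\| \;\leq\; c_{\ref{cons_lemme_angle}}\,(\#J - 1) \;\leq\; \frac{n-1}{\sqrt{n^2+1}},
\]
which contradicts $\|Y_{i_0}\| > n/\sqrt{n^2+1}$, since $n > n - 1$. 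Hence all $\lambda_j$ vanish, the $X_j$ are independent, and $\dim F \geq \#J$, against the hypothesis.

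The only delicate ingredient is the calibration of $c_{\ref{cons_lemme_angle}} = 1/\sqrt{n^2+1}$: it is chosen precisely so that $\sqrt{1 - c_{\ref{cons_lemme_angle}}^2}$ strictly exceeds $(n-1)\,c_{\ref{cons_lemme_angle}}$, leaving the necessary slack whenever $\#J - 1 \leq n - 1$. The projection trick is what keeps the argument clean: applying $p_{i_0}$ simultaneously annihilates the $Y_j$ for $j \neq i_0$ and the $Z_{i_0}$, reducing the problem to a single inequality that the stated constant handles. A direct Gram-matrix or Hadamard-type estimate would force one to control all cross-terms $\langle Y_i, Z_j \rangle$ and $\langle Z_i, Z_j \rangle$ and would not immediately match the constant $1/\sqrt{n^2+1}$.
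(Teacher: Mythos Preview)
Your proof is correct and follows the same overall contradiction-to-independence strategy as the paper, but the mechanism for extracting the contradiction is genuinely different. The paper converts the hypothesis into an $\ell^\infty/\ell^1$ inequality via the comparison $\|\cdot\|_1^2/n \le \|\cdot\|^2 \le n\|\cdot\|_\infty^2$, picks a dominant coordinate $\ell_j$ in each $p_j(X_j)$, and shows that the square matrix extracted from $(X_j)_{j\in J}$ along the rows $\ell_j$ is strictly diagonally dominant, hence invertible. Your argument stays entirely in the Euclidean norm: you project the putative linear relation onto $R_{i_0}$ for the index of largest coefficient, killing all $Y_j$ with $j\neq i_0$ and the $Z_{i_0}$ simultaneously, and compare $\|Y_{i_0}\|>n/\sqrt{n^2+1}$ with the bound $(\#J-1)/\sqrt{n^2+1}$ on the right-hand side. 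This is cleaner and coordinate-free; the paper's route is more explicit but requires the extra norm-comparison step and the diagonal-dominance criterion. Both exploit the same calibration of $c_{\ref{cons_lemme_angle}}$, and both tacitly use $X_j\in\bigoplus_k R_k$ (your statement that $F\subset W$, the paper's equality $\|X_j\|^2=\|p_j(X_j)\|^2+\|\widehat{p_j}(X_j)\|^2$), which is the natural reading of the lemma in its context.
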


This result is optimal in the sense that if $\dim(F) = \#J$, then it is false. The interesting fact here, is that the constant does not depend with $F$, otherwise it would be trivial (one of the $\widehat{p_j}$ is injective on $F$).

\begin{proof}
We assume the contrary by contradiction. Thus, there exists a family $(X_j)_{j \in J}$ of vectors in $F$ such that for every $j \in J $,
$ \|\widehat{p_j}(X_j) \| < c_{\ref{cons_lemme_angle}} \|X_j\|$. For every $j \in J$, $
 \| \widehat{p_j}(X_j) \|^2 < c_{\ref{cons_lemme_angle}}^2 \|X_j\|^2 
 = c_{\ref{cons_lemme_angle}}^2(\|p_j(X_j)\|^2 +\|\widehat{p_j}(X_j)\|^2) $
thus $c_{\ref{cons_lemme_angle}}^2\| p_j(X_j) \|^2 > (1-c_{\ref{cons_lemme_angle}}^2)\|\widehat{p_j}(X_j)\|^2$. We recall that $\frac{\| \cdot \|^2_1}{n} \leq \| \cdot \|^2 \leq n \| \cdot \|^2_\infty $. This gives:
\begin{align} \label{inegal_pj_pjchapeau}
 \| p_j(X_j) \|_\infty^2 &>\frac{1-c_{\ref{cons_lemme_angle}}^2}{n^2c_{\ref{cons_lemme_angle}}^2} \| \widehat{p_j}(X_j) \|_1^2 = \| \widehat{p_j}(X_j) \|_1^2
\end{align}
by definition of $c_{\ref{cons_lemme_angle}}$. Let $X_j = \begin{pmatrix} x_{1,j} & \cdots & x_{n,j} \end{pmatrix}^\intercal$ and $\ell_j$ an index such that $ \| p_j(X_j) \|_\infty = | x_{\ell_j,j} |$. We notice that the indices $\ell_j$ are distinct since the $R_i$ are in direct sum, generated by vectors of the canonical basis, and because $p_j(X_j) \in R_j$.

We now examine the family $(X_j)$. Let $M \in \MM_{n,\#J}(\R) $ be the matrix whose columns are these $X_j$ for $j \in J $. Finally, let $M_J $ be the square matrix of size $\#J$ extracted from $M$ whose rows correspond to the rows of $M$ indexed by the $\ell_j$. We have:
\begin{align*}
 M_J = \begin{pmatrix}
 \begin{matrix}
 x_{\ell_1,j_1} \\ \vdots \\ x_{\ell_{\#J},j_1}
 \end{matrix}
 & \cdots &
 \begin{matrix}
 x_{\ell_1,j_{\#J}} \\ \vdots \\ x_{\ell_{\#J},j_{\#J}}
 \end{matrix}
 \end{pmatrix}
\end{align*}
writing $J = \{j_1 < \ldots < j_{\#J} \}$. According to inequality (\ref{inegal_pj_pjchapeau}), for all $i \in \llbracket 1, \# J \rrbracket$ we have:
 $| x_{\ell_{i},j_i} | = \| p_{j_i}(X_{j_i}) \|_\infty > \| \widehat{p_{j_i}}(X_{j_i}) \|_1 \geq \left\| \sum\limits_{s \neq j_i }p_s(X_{j_i}) \right\|_1 \geq \sum\limits_{s \neq {j_i} } |x_{\ell_s,j_i} |.
$
Therefore $M_J$ is a strictly diagonally dominant matrix so it is invertible. In particular, $M$ has rank $\#J$ and the vectors $X_j$ form a linearly independent family in $F$, and thus $\dim(F) \geq \dim (\Span_{j \in J} (X_j)) = \#J$ which leads to a contradiction.

\end{proof}

\subsection{Study of angles Between projected subspaces}

The purpose of this section is to prove the following lemma, which allows us to "reduce" the dimension of the space $A_J$.

\begin{lem}\label{Rec_angles}
Let $J \subset \llbracket 1,d \rrbracket $ such that $\#J \geq k + g(A_J,e) + 1 $ and $C$ be a vector subspace of $\R^n$ of dimension $e$. Then there exists $j = j(C) \in J$ such that, denoting by $C_j = \widehat{p_j}(C)$, we have:
\begin{align*}
 &\dim(C_j) \geq k +g(A_J,e) \\
 \text{ and } &\omega_{k+g(A_J,e) }(A_{J\setminus \{j \}}, C_j) \leq c_{\ref{cons_lemme_angle1}}\omega_{k+g(A_J,e) }({A}_{J}, C) 
\end{align*}
where $\cons \label{cons_lemme_angle1} >0 $ is a constant depending only on $n$.
\end{lem}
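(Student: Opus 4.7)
Set $k' := k + g(A_J, e)$. By the inductive construction of the angles $\omega_\ell(A_J, C)$, fix pairs $(X_i, Y_i)_{1 \leq i \leq k'}$ with the $X_i$ unit vectors pairwise orthogonal in $A_J$, the $Y_i$ unit vectors pairwise orthogonal in $C$, and $\omega(X_i, Y_i) = \omega_i(A_J, C)$; by flipping signs we may assume $\langle X_i, Y_i\rangle \geq 0$. Since $\dim \Span(Y_1, \ldots, Y_{k'}) = k' < \#J$ by hypothesis, I would apply Lemma~\ref{petit_vecteurs} to this subspace to produce $j \in J$ such that $\|\widehat{p_j}(Y)\| \geq c_{\ref{cons_lemme_angle}} \|Y\|$ for every $Y \in \Span(Y_1, \ldots, Y_{k'})$. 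Injectivity of $\widehat{p_j}$ on this $k'$-dimensional subspace, combined with $\widehat{p_j}(\Span(Y_1, \ldots, Y_{k'})) \subset C_j$, yields the dimension bound $\dim C_j \geq k'$ at once.

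For the angle estimate, set $\tilde X_i := \widehat{p_j}(X_i) \in A_{J \setminus \{j\}}$ and $\tilde Y_i := \widehat{p_j}(Y_i) \in C_j$, and case-split on $\omega := \omega_{k'}(A_J, C)$. When $\omega \geq \delta$ for a suitable $\delta = \delta(n) > 0$, the inequality holds trivially with $c_{\ref{cons_lemme_angle1}} := 1/\delta$ since $\omega_{k'}(A_{J\setminus\{j\}}, C_j) \leq 1$. In the regime $\omega < \delta$, each $\omega_i \leq \omega$ and $\|X_i - Y_i\| \leq \sqrt 2\,\omega_i$, so $\|\tilde X_i\| \geq \|\tilde Y_i\| - \|X_i - Y_i\| \geq c_{\ref{cons_lemme_angle}}/2$ once $\delta$ is small enough. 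The min-max characterization of principal angles then gives
\[
\omega_{k'}(A_{J \setminus \{j\}}, C_j) \;\leq\; \max_{0 \ne X \in \Span(\tilde X_1, \ldots, \tilde X_{k'})} \omega(X, C_j),
\]
and testing $X = \sum_i \alpha_i \tilde X_i$ against $Y := \sum_i \alpha_i \tilde Y_i \in C_j$ yields $\|X - Y\| \leq \sqrt{2k'}\,\omega\,\|\alpha\|_2$ by Cauchy--Schwarz. For a matching lower bound $\|X\|^2 \geq c_1(n)\,\|\alpha\|_2^2$, I would analyze the Gram matrix of the $\tilde X_i$: writing $\tilde X_i = X_i - \langle X_i, a\rangle a$ for a unit vector $a \in A_j$ (using $\dim A_j = 1$) and $t_i := \langle X_i, a\rangle$, this matrix equals $I_{k'} - t t^{\top}$ with smallest eigenvalue $1 - \|t\|^2$; a further use of Lemma~\ref{petit_vecteurs}, comparing $\sum_i t_i Y_i \in \Span(Y_1, \ldots, Y_{k'})$ with $\sum_i t_i X_i$ and exploiting $\widehat{p_j}(a) = 0$, shows $1 - \|t\|^2 \geq c_1(n) > 0$ in this regime. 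Assembling all estimates produces $\omega(X, C_j) \leq c_{\ref{cons_lemme_angle1}}\,\omega$ uniformly in $X$.

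The main obstacle is the asymmetry of the problem: Lemma~\ref{petit_vecteurs} applied to $\Span(Y_1, \ldots, Y_{k'})$ controls $\widehat{p_j}$ only on that subspace, whereas the angle inequality needs simultaneous control on $\Span(X_1, \ldots, X_{k'}) \subset A_J$ as well. A direct application of Lemma~\ref{petit_vecteurs} to $\Span(X_1, \ldots, X_{k'}) + \Span(Y_1, \ldots, Y_{k'})$ is ruled out since its dimension can reach $2k'$, which may exceed $\#J - 1$. The case split on $\omega$ is the workaround: in the only non-trivial regime $\omega < \delta$ the pairs satisfy $X_i \approx Y_i$, so the two sides of the construction become quantitatively close and control transfers from one to the other. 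A subsidiary difficulty is the quantitative non-degeneracy $1 - \|t\|^2 \geq c_1(n) > 0$ of the Gram matrix, which cannot be extracted from a merely qualitative argument and rests on the same closeness.
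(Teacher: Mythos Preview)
Your argument is correct, but the paper proceeds quite differently and more directly. Rather than carrying both families $(X_i)$ and $(Y_i)$ through the projection and then reconciling them via a case split on the size of $\omega$, the paper works entirely on the $C$-side: it fixes a $k'$-dimensional subspace $C' \subset C$ realizing $\omega_{k'}(A_J,C) = \omega_{k'}(A_J,C')$, applies Lemma~\ref{petit_vecteurs} to $C'$ to obtain $j$, and then for an arbitrary $U_j \in C'_j = \widehat{p_j}(C')$ with lift $U \in C'$ invokes the single inequality $\|U^{A_J^\perp}\| \geq \|\widehat{p_j}(U)^{A_{J\setminus\{j\}}^\perp}\|$ (Claim~\ref{lem_minoration_U_j}), which follows from the orthogonal decomposition $A_J^\perp = R_{\llbracket 1,d\rrbracket \setminus J} \oplus \bigoplus_{i\in J} A_i^{\perp_{R_i}}$. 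This immediately gives $\omega_1(\Span(U_j), A_{J\setminus\{j\}}) \leq c_{\ref{cons_lemme_angle}}^{-1}\,\omega_1(\Span(U), A_J) \leq c_{\ref{cons_lemme_angle}}^{-1}\,\omega_{k'}(A_J,C)$ with no regime distinction and no Gram-matrix analysis. The asymmetry you flag as the main obstacle simply does not arise in the paper's argument, because Claim~\ref{lem_minoration_U_j} compares distances to $A_J$ and to $A_{J\setminus\{j\}}$ directly and never needs control of $\widehat{p_j}$ on a subspace of $A_J$. Your route, by contrast, avoids the orthogonal-complement identity entirely and is thus more self-contained, at the cost of the case split and the quantitative eigenvalue bound $1-\|t\|^2 \geq c_1(n)$.
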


In what follows, for a vector $x \in \R^n$ and a given subspace $V \subset \R^n$, we denote by $x^V$ the orthogonal projection of $x$ onto $V$. We also denote by $A_J^\perp$ as the orthogonal complement of $A_J$ in $\R^n$. With the definitions of $\widehat{p_j}$ and the decomposition of vectors along the subspaces $R_i$ (see paragraph~\ref{section_reduction_ortho}), we establish the following claim. 

\begin{claim}\label{lem_minoration_U_j}
Let $J \subset \llbracket 1,d \rrbracket$. For all $U \in \R^n$ and $ j \in J $, we have: 
 \begin{align*}
 \|U^{A_J^\perp}\| \geq \left\| \widehat{p_j}(U) ^{A_{J \setminus \{j\}} ^\perp}\right\|.
 \end{align*}
\end{claim}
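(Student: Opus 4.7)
The plan is to exploit the reduction from Section~\ref{section_reduction_ortho}: since we may assume the $R_i$ are generated by disjoint subsets of the canonical basis, they are pairwise orthogonal, so $\widehat{p_j}$ projects onto the sum of the $R_i$ for $i \neq j$, and in particular $R_j \subset \ker(\widehat{p_j})$. In addition $A_{J \setminus \{j\}} \subset R_{\llbracket 1,d\rrbracket \setminus \{j\}}$ lies entirely inside the image of $\widehat{p_j}$. The inequality should then follow from writing $U$ along the two complementary orthogonal decompositions of $\R^n$ and using that orthogonal projections are $1$-Lipschitz.

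Concretely, I would first split $U = U^{A_J} + U^{A_J^\perp}$, and then further decompose $U^{A_J} = u_j + u_{J \setminus \{j\}}$ according to the direct sum $A_J = A_j \oplus A_{J \setminus \{j\}}$, with $u_j \in A_j \subset R_j$ and $u_{J \setminus \{j\}} \in A_{J \setminus \{j\}} \subset R_{\llbracket 1,d\rrbracket \setminus \{j\}}$. Applying $\widehat{p_j}$ kills $u_j$ (since $R_j \perp R_i$ for $i \neq j$) and fixes $u_{J \setminus \{j\}}$, so
\[
\widehat{p_j}(U) \;=\; u_{J \setminus \{j\}} \;+\; \widehat{p_j}\!\left(U^{A_J^\perp}\right).
\]

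Next I project this equation onto $A_{J \setminus \{j\}}^\perp$. The first term $u_{J \setminus \{j\}}$ belongs to $A_{J \setminus \{j\}}$, so its projection onto $A_{J \setminus \{j\}}^\perp$ vanishes. Hence
\[
\widehat{p_j}(U)^{A_{J \setminus \{j\}}^\perp} \;=\; \Bigl( \widehat{p_j}\!\left(U^{A_J^\perp}\right) \Bigr)^{A_{J \setminus \{j\}}^\perp}.
\]
Taking norms and using that both the orthogonal projection $\widehat{p_j}$ and the projection onto $A_{J \setminus \{j\}}^\perp$ are $1$-Lipschitz gives
\[
\left\| \widehat{p_j}(U)^{A_{J \setminus \{j\}}^\perp} \right\| \;\leq\; \left\| \widehat{p_j}\!\left(U^{A_J^\perp}\right) \right\| \;\leq\; \left\| U^{A_J^\perp} \right\|,
\]
which is exactly the desired inequality.

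There is no real obstacle here: the statement is essentially a bookkeeping consequence of the reduction that made $\widehat{p_j}$ and the direct sum decomposition of $A_J$ compatible. The only point requiring care is to invoke the right orthogonality relations ($R_j \perp R_i$ for $i \neq j$, which identifies $R_j$ with $\ker(\widehat{p_j})$), and to note that $A_{J \setminus \{j\}} \subset R_{\llbracket 1,d\rrbracket \setminus \{j\}}$ is preserved by $\widehat{p_j}$, so the splitting of $\widehat{p_j}(U)$ above is legitimate.
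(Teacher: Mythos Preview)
Your proof is correct and takes a genuinely different route from the paper's.

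The paper decomposes $U$ along $R_j \oplus R_j^\perp$ first, writing $U = p_j(U) + \widehat{p_j}(U)$, then projects both pieces onto $A_J^\perp$. It uses an explicit orthogonal decomposition of $A_J^\perp$ and of $A_{J\setminus\{j\}}^\perp$ (via the blocks $A_i^{\perp_{R_i}} = A_i^\perp \cap R_i$) to prove the \emph{equality} $\widehat{p_j}(U)^{A_J^\perp} = \widehat{p_j}(U)^{A_{J\setminus\{j\}}^\perp}$, and then obtains the inequality from Pythagoras: $\|U^{A_J^\perp}\|^2 = \|p_j(U)^{A_J^\perp}\|^2 + \|\widehat{p_j}(U)^{A_J^\perp}\|^2$.

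You instead decompose $U$ along $A_J \oplus A_J^\perp$ first, split the $A_J$-part as $u_j + u_{J\setminus\{j\}}$, apply $\widehat{p_j}$ (killing $u_j$, fixing $u_{J\setminus\{j\}}$), and then project onto $A_{J\setminus\{j\}}^\perp$ (killing $u_{J\setminus\{j\}}$). Two applications of the $1$-Lipschitz bound for orthogonal projections finish the job. This is shorter and avoids computing the block structure of the orthogonal complements; the paper's approach, on the other hand, extracts the sharper identity $\widehat{p_j}(U)^{A_J^\perp} = \widehat{p_j}(U)^{A_{J\setminus\{j\}}^\perp}$, though that extra information is not used elsewhere.
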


Sure, here's the translation:

\begin{proof}
We introduce, for $A$ and $R$ two vector subspaces of $\mathbb{R}^n$, the following notation: $A^{\perp_R} = A^\perp \cap R.$
We have the relations:
\begin{align}
A_J^\perp = R_{\llbracket 1,d \rrbracket \setminus J} \oplus \overset{\perp}{\bigoplus\limits_{i \in J}} A_i^{\perp_{R_i}} \text{ and }
A_{J \setminus {j}}^\perp = R_{\llbracket 1,d \rrbracket \setminus {J} \cup {j}} \oplus \overset{\perp}{\bigoplus\limits_{i \in {J \setminus {j}}}} A_i^{\perp_{R_i}}. \label{4ortho_A_J}
\end{align}
Let $U\in C$. Since $p_j(U) \in R_j$, according to $(\ref{4ortho_A_J})$, we have $p_j(U)^{A_J^\perp} = p_j(U)^{A_j^{\perp_{R_j}}} \in R_j.$ Indeed, $R_j$ is orthogonal to all other components of $A_J^\perp$. Similarly, since $\widehat{p_j}(U) \in R_{\llbracket 1,d \rrbracket \setminus {j} }$, we have $\widehat{p_j}(U)^{A_i^{\perp_{R_j}}} =0 $, and thus $\widehat{p_j}(U)^{A_J^\perp} =\widehat{p_j}(U)^{R_{\llbracket 1,d \rrbracket \setminus J } } + \sum\limits_{i \in J \setminus {j}} \widehat{p_j}(U)^{A_i^{\perp_{R_i}} } \in R_{\llbracket 1,d \rrbracket \setminus {j}}.$

Moreover, as $\widehat{p_j}(U)^{R_{\llbracket 1,d \rrbracket \setminus J \cup {j}} } = \widehat{p_j}(U)^{R_{\llbracket 1,d \rrbracket \setminus J }} $, using $(\ref{4ortho_A_J})$, we have:
\begin{align}\label{4egalité_sans_j}
\widehat{p_j}(U)^{A_J^\perp} =\widehat{p_j}(U)^{R_{\llbracket 1,d \rrbracket \setminus J \cup {j } } } + \sum\limits_{i \in J \setminus {j}} \widehat{p_j}(U)^{A_i^{\perp_{R_i}} } = \widehat{p_j}(U) ^{A_{J \setminus {j}} ^\perp}.
\end{align}
The vectors $p_j(U) ^{A_J^\perp} \in R_j $ and $ \widehat{p_j}(U) ^{A_J^\perp} \in R_{\llbracket 1,d \rrbracket \setminus {j}} = R_j^\perp$ are therefore orthogonal. Hence, we can lower bound the norm of $U^{A_J^\perp} $ as follows $|U^{A_J^\perp}|^2 = | p_j(U) ^{A_J^\perp}+ \widehat{p_j}(U) ^{A_J^\perp} |^2 = | p_j(U) ^{A_J^\perp} |^2+ | \widehat{p_j}(U) ^{A_J^\perp} |^2 \geq | \widehat{p_j}(U) ^{A_J^\perp} |^2.$
Thus, the lemma is proven since we have already seen that $\widehat{p_j}(U) ^{A_J^\perp} = \widehat{p_j}(U) ^{A_{J \setminus {j}}^\perp} $ in $(\ref{4egalité_sans_j})$.

\end{proof}

\begin{proofe}[Lemma~\ref{Rec_angles}]
 Let $J \subset \llbracket 1,d \rrbracket$ such that $\#J \geq k + g(A_J,e) + 1$. We denote by $C'$ a vector subspace of $C$ of dimension $k + g(A_J,e)$ such that $\omega_{k + g(A_J,e)}(A_J,C) = \omega_{k + g(A_J,e)}(A_J,C').$
As $\dim(C') < \#J$, Lemma~\ref{petit_vecteurs} gives $j \in J$ such that for all $X \in C'$, $
 \| \widehat{p_j}(X) \| \geq c_{\ref{cons_lemme_angle}} \|X\|.$ 
We now study the space $C'_j = \widehat{p_j}(C')$. Considering any $U_j \in C'_j \setminus \{0 \}$, there exists $U \in C'$ such that $\widehat{p_j}(U) = U_j$. According to the definition of the angle between two vectors $
 \omega_1(\Span(U_j), A_{J \setminus \{j\}}) = \| U_j ^{A_{J \setminus \{j\}}^\perp} \| \cdot \| U_j \|^{-1}.$
We now use Claim~\ref{lem_minoration_U_j} which gives $\| U_j ^{A_{J \setminus \{j\}}^\perp} \| \leq \| U ^{A_{J}^\perp} \|$, and recall that $\| U_j \| \geq c_{\ref{cons_lemme_angle}} \| U \|$. Thus, we have:
\begin{align*}
 \omega_1(\Span(U_j), A_{J \setminus \{j\}}) \leq \frac{\| U ^{A_J^\perp} \| }{c_{\ref{cons_lemme_angle}}\| U \| } 
 = \frac{1}{c_{\ref{cons_lemme_angle}}}\omega_1(\Span(U),A_J) 
 \leq \frac{1}{c_{\ref{cons_lemme_angle}}}\omega_{k + g(A_J,e)}(C',A_J) \leq \frac{1}{c_{\ref{cons_lemme_angle}}} \omega_{k + g(A_J,e)}(C,A_J)
\end{align*}
since $U \in C'$. The last inequality comes from the fact that $\dim(C') = k + g(A_J,e) \leq \#J = \dim(A_J)$ and Lemma 2.3 of \cite{joseph_spectre}.
As $k + g(A_J,e) = \dim(C'_j) \leq \#J -1 = \dim(A_{J\setminus \{j \} })$, there exists $U_j \in C'_j \setminus \{0 \}$ such that $ \omega_1(\Span(U_j), A_{J \setminus \{j \} }) = \omega_{k+g(A_J,e)}(C'_j, A_{J\setminus\{j \}} ).$
Therefore:
\begin{align*}
 \omega_{k+g(A_J,e)}(C'_j, A_{J\setminus\{j \}} ) = \omega_1(\Span(U_j),A_{J \setminus \{j\}}) \leq \frac{1}{c_{\ref{cons_lemme_angle}}}\omega_{k+g(A_J,e)}(C,A_J).
\end{align*}
Since $C'_j \subset C_j$, we have $\omega_{k+g(A_J,e)}(C'_j, A_{J\setminus\{j \}} ) \geq \omega_{k+g(A_J,e)}(C_j, A_{J\setminus\{j \}} ) $ according to the corollary of Lemma 12 of \cite{Schmidt}. Hence $
 \omega_{k+g(A_J,e)}(C_j, A_{J\setminus\{j \}} ) \leq \frac{1}{c_{\ref{cons_lemme_angle}}}\omega_{k+g(A_J,e)}(C,A_J) $ and 
Lemma~\ref{Rec_angles} is thus proved with $c_{\ref{cons_lemme_angle1}} = c_{\ref{cons_lemme_angle}}^{-1}$.

\end{proofe}

\subsection{Second implication and conclusion}

We now have all the tools to prove the second implication $\ref{4implic2} \Longrightarrow~\ref{4implic1} $ and the result on exponents. Let $J \subset \llbracket 1,d \rrbracket$ such that $\# J \geq k + g(A_J,e) + 1$. We suppose here that for all $j \in J$, 
$A_{J \setminus \{j\}} \text{ is } (e,k+g(A_J,e ) -g(A_{J \setminus \{j\}},e ))\tir\text{irrational.} $

\bigskip
\begin{proofe}[$\ref{4implic2} \Longrightarrow~\ref{4implic1} $ ]
We argue by contraposition and assume that $A_J$ is not $(e,k)\tir$irrational. \\Then there exists $C$ a rational space of dimension $e$ such that $\dim(A_J \cap C) \geq k + g(A_J,e)$, which is equivalent to $\omega_{k+g(A_J,e)}(A_J,C) = 0$. \\According to Lemma~\ref{Rec_angles}, there exists $j \in J$ such that $
 \dim(C_j) \geq k +g(A_J,e)$ and $\omega_{k+g(A_J,e)}({A}_{J\setminus \{j \}}, C_j) \leq c_{\ref{cons_lemme_angle1}} \omega_{k+g(A_J,e)}({A}_{J}, C)$
with $C_j = \widehat{p_j}(C)$.
The space $C_j$ is rational and of dimension less than or equal to $e$. Thus, there exists $\widetilde{C} $ a rational space of dimension $e$ such that $C_j \subset \widetilde{C}$. Then, by the corollay of Lemma 12 of \cite{Schmidt}:
\begin{align*}
 0 \leq c_{\ref{cons_lemme_angle}} \omega_{k+g(A_J,e)}({A}_{J\setminus \{j \}}, \widetilde{C}) \leq c_{\ref{cons_lemme_angle}} \omega_{k+g(A_J,e)}({A}_{J\setminus \{j \}}, C_j) &\leq \omega_{k+g(A_J,e)}({A}_{J}, C) =0
\end{align*}
and therefore $ \omega_{k+g(A_J,e) -g(A_{J \setminus \{j\}},e) + g(A_{J \setminus \{j\}},e) }({A}_{J\setminus \{j \}}, \widetilde{C}) = \omega_{k+g(A_J,e)}({A}_{J\setminus \{j \}}, \widetilde{C})=0$
 meaning that $A_{J\setminus \{j\}}$ is not $(e,k+g(A_J,e) -g(A_{J \setminus \{j\}},e) )\tir$irrational.

\end{proofe}

To complete the proof of the Proposition~\ref{prop_princi}, it remains to show $\max\limits_{j \in J} \mu_n({A}_{J \setminus \{j\}}| e)_{k+g(A_J,e) - g(A_{J \setminus \{j \}},e )} \geq \mu_n(A_J|e)_k$ as the other inequality has been proven in $(\ref{3premiere_ineg})$.

Let $\varepsilon > 0 $. Let us set $\gamma = \mu_n(A_J|e)_k$, then there exist infinitely many rational subspaces $C$ of dimension $e$ such that:
\begin{align}\label{4min_psik_AJC}
 \psi_k(A_J,C) \leq H(C)^{-\gamma + \varepsilon}.
\end{align}
According to Lemma~\ref{Rec_angles}, for any such space $C$, there exists $j \in J$ such that $\dim(C_j) \geq k +g(A_J,e) $ and 
$ \omega_{k +g(A_J,e)}({A}_{J\setminus \{j \}}, C_j) \leq c_{\ref{cons_lemme_angle}}\omega_{k +g(A_J,e)}({A}_{J}, C) $
where $C_j = \widehat{p_j}(C)$ and $c_{\ref{cons_lemme_angle}} > 0$ depends only on $n$. Since the $j \in J$ are finite, there exists $j \in J$ such that for infinitely many rational subspaces $C$ we have these inequalities with $C_j$. As the possible dimensions for $C_j$ are finite, there exists $t \in \llbracket k + g(A_J,e), e \rrbracket$ such that for infinitely many rational subspaces $C$ of dimension $e$ verifying $(\ref{4min_psik_AJC})$, we have $\dim(C_j) = t $ and
\begin{align}\label{4inega_angle}
 \omega_{k +g(A_J,e)}({A}_{J\setminus \{j \}}, C_j) \leq c_{\ref{cons_lemme_angle}} \omega_{k +g(A_J,e)}({A}_{J}, C).
\end{align}
We denote by $\beta_{e,j} = \mu_n(A_{J\setminus \{j \}}|e)_{k + g(A_J,e)- g(A_{J\setminus \{j \}},e)} $ and $\beta_{t,j} = \mu_n(A_{J\setminus \{j\}}|t)_{k + g(A_J,e) - g(A_{J\setminus \{j \}},t)} $. 
Since $t \leq e$, we have $\beta_{e,j} \geq \beta_{t,j} .$ Using again the definition of the Diophantine exponent $\beta_{t,j} $, there exists a constant $\cons \label{4cons_gamma_t} > 0$ depending only on $A$ and $\varepsilon$ such that for any rational space $B$ of dimension $t$ we have: 
\begin{align}\label{4ineg_gamma_t}
 c_{\ref{4cons_gamma_t}}H(B) ^{-\beta_{t,j} - \varepsilon} \leq 
 \psi_{k + g(A_J,e) - g(A_{J\setminus \{j\}},t)}({A}_{J\setminus \{j\}}, B). 
\end{align}
Thus, for any space $C$ of dimension $e$ verifying $\dim(C_j) =t $ and $(\ref{4min_psik_AJC})$ we have:
\begin{align}\label{4inega_gamma_combine}
 c_{\ref{4cons_gamma_t}}H(C )^{-\beta_{e,j} - \varepsilon} \leq c_{\ref{4cons_gamma_t}}H(C_j)^{-\beta_{e,j} - \varepsilon} \leq c_{\ref{4cons_gamma_t}}H(C_j)^{-\beta_{t,j} - \varepsilon} \leq 
 \psi_{k + g(A_J,e) - g(A_{J\setminus \{j\}},t)}({A}_{J\setminus \{j\}}, C_j), 
\end{align}
using inequality (\ref{4ineg_gamma_t}) and according to Proposition~\ref{prop_haut_appli} which gives $ H(C) = H(C_j) H(\ker(\widehat{p_j}) \cap C) \geq H(C_j).$ 
\\Recalling that $ \psi_{k + g(A_J,e) - g(A_{J\setminus \{j\}},t)}({A}_{J\setminus \{j\}}, C_j) = \omega_{k +g(A_J,e)}({A}_{J\setminus \{j \}}, C_j)$ if $\dim(C_j) = t$, by combining inequalities (\ref{4min_psik_AJC}), (\ref{4inega_angle}), and (\ref{4inega_gamma_combine}) we obtain:
\begin{align*}
 c_{\ref{4cons_gamma_t}}c_{\ref{cons_lemme_angle}}^{-1} H(C )^{-\beta_{e,j} - \varepsilon} \leq \omega_{k +g(A_J,e)}(A_J, C) \leq \psi_{k }(A_J,C) \leq H(C)^{-\gamma + \varepsilon}
\end{align*}
and thus, by letting $H(C)$ tend to infinity, we obtain $\beta_{e,j} \geq \gamma - 2 \varepsilon$. Since this inequality holds for any $\varepsilon$, we have $\beta_{e,j} \geq \gamma $ and thus in particular: 
\begin{align*}
 \max\limits_{j \in J}\beta_{e,j} = \max\limits_{j \in J} \mu_n(A_{J\setminus \{j\}}|e)_{k + g(A_J,e) - g(A_{J\setminus \{j \}},e)} \geq \mu_n(A_J|e)_k.
\end{align*}
Proposition~\ref{prop_princi} therefore holds.

\section{Construction of a line with prescribed exponents}\label{sect_cas_d1}

 In this section, we prove the case $d = 1$ of Theorem~\ref{6theo_principal}. To do so, we prove the following proposition. It corresponds to Chapter 5 of \cite{Guillot_these}, where the reader can find more detailed explanations.

 \begin{prop}\label{5prop_technique}
 Let $(\gamma_k)_{k \in \Nx} \in \left[ 2 +\frac{ \sqrt{5}-1}{2}, + \infty \right) ^{\Nx}$ a periodic sequence of period $T \in \Nx$.\\ There exists a line $A$ in $\R^n$ such that $A \in \II_n(1,n-1)_1$ and:
 \begin{align*}
 \forall e \in \llbracket 1, n-1 \rrbracket, \quad \mu_n(A|e)_1 = \max\limits_{i \in \llbracket 0, T-1\rrbracket} \gamma_{i+1}\ldots\gamma_{i+e}.
 \end{align*}
\end{prop}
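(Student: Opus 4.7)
The plan is to construct $A = \R X$ as the limit direction of integer vector truncations of an explicit $X \in \R^n$, then exhibit the best rational $e$-dimensional approximations as spans of consecutive truncations.

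\textbf{Construction of the line.} Fix $\theta \in \N$ with $\theta \geq 2$, and let $(\alpha_k)_{k \geq 0}$ be a strictly increasing positive real sequence with $\alpha_{k+1}/\alpha_k = \gamma_{k+1}$, indices read modulo $T$. Using Lemma~\ref{2lem_sigma_alg_indep} (with $\FF = \emptyset$), choose digits $u_k^i \in J$ for $i \in \llbracket 1, n-1 \rrbracket$, $k \geq 0$, so that $\xi_i := \sum_{k \geq 0} u_k^i / \theta^{\lfloor \alpha_k \rfloor}$ are algebraically independent over $\Q$. Set $X = (1, \xi_1, \ldots, \xi_{n-1})^\intercal$ and $A = \R X$. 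Then Lemma~\ref{3lem_1_irrat}, applied with $G = (1)$ and $\Sigma = (\xi_1, \ldots, \xi_{n-1})^\intercal$, yields $A \in \II_n(1, e)_1$ for every $e \in \llbracket 1, n-1 \rrbracket$.

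\textbf{Lower bound on exponents.} For $j \geq 0$, set $V_j = \theta^{\lfloor\alpha_j\rfloor}(1, \xi_1^{(j)}, \ldots, \xi_{n-1}^{(j)})^\intercal \in \Z^n$ with $\xi_i^{(j)} = \sum_{k=0}^j u_k^i/\theta^{\lfloor\alpha_k\rfloor}$; then $\|V_j\| \asymp \theta^{\alpha_j}$ and $\omega(X, V_j) = O(\theta^{-\alpha_{j+1}})$. Put $K_k = \theta^{\lfloor\alpha_{k}\rfloor - \lfloor\alpha_{k-1}\rfloor}$ and $W_k = V_k - K_k V_{k-1} = (0, u_k^1, \ldots, u_k^{n-1})$, which are bounded integer vectors. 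For $j \geq e$, consider $B_j = \Span(V_{j-e+1}, V_{j-e+2}, \ldots, V_j)$; iterating $V_k = K_k V_{k-1} + W_k$ rewrites $B_j = \Span(V_{j-e+1}, W_{j-e+2}, \ldots, W_j)$. Since the $W_k$ lie in the hyperplane $\{x_1 = 0\}$ and have bounded norm while $V_{j-e+1}$ has dominant first coordinate $\theta^{\lfloor\alpha_{j-e+1}\rfloor}$, the exterior product gives $H(B_j) \asymp \theta^{\alpha_{j-e+1}}$. Because $V_j \in B_j$ with $\omega(X, V_j) = O(\theta^{-\alpha_{j+1}})$, one gets $\psi_1(A, B_j) \leq H(B_j)^{-\alpha_{j+1}/\alpha_{j-e+1} + o(1)}$. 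As $\alpha_{j+1}/\alpha_{j-e+1} = \gamma_{j-e+2}\cdots\gamma_{j+1}$, letting $j$ run through all residues modulo $T$ yields $\mu_n(A|e)_1 \geq \max_{i \in \llbracket 0, T-1 \rrbracket} \gamma_{i+1}\cdots\gamma_{i+e}$.

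\textbf{Upper bound (the main obstacle).} For any rational $B \in \RR_n(e)$ with $\psi_1(A,B) \leq H(B)^{-\mu}$, one must show $\mu \leq \max_i \gamma_{i+1}\cdots\gamma_{i+e}$. The strategy is: (i) via Minkowski's second theorem on successive minima, extract a $\Z$-basis of $B \cap \Z^n$ with controlled norms, the smallest of which must make a very small angle with $X$; (ii) use Lemma~\ref{2lem_X_in_B} together with the algebraic independence of the $\xi_i$ to show that any such integer vector must lie in the $\Z$-module generated by the constructed $V_k$'s and $W_k$'s, because no ``accidental'' integer vectors can cluster near $A$; (iii) by a pigeonhole argument comparing $H(B)$ with the scales $\theta^{\alpha_j}$, identify $B$ with some $B_j$ up to negligible perturbations. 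The hypothesis $\gamma_k \geq 2 + \frac{\sqrt{5}-1}{2} = \phi^2$ enters decisively at step (iii): the growth $\alpha_{k+1}/\alpha_k \geq \phi^2$ guarantees that the gaps between successive $\|V_j\|$ are large enough that no non-consecutive combination of the $V_k$ yields an integer vector in a rational $e$-space more closely aligned with $A$ than the $B_j$'s do, forcing every near-optimal $B$ into our family. This rigidity, essentially a higher-dimensional Legendre-type best-approximation theorem, is the technical heart of the proof.
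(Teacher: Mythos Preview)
Your construction and lower-bound sketch are on the right track and match the paper's approach in spirit, but two essential pieces are missing, and your upper-bound plan does not quite work as stated.

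First, the lower bound on the angle $\psi_1(A,B_{N,e})$ is never addressed. You give $\psi_1(A,B_j)\le c\,\theta^{-\alpha_{j+1}}$, but to close the argument you also need $\psi_1(A,B_{N,e})\ge c'\,\theta^{-\alpha_{N+e}}$ (Lemma~\ref{5lem_angle_entre_A_BN} in the paper). Without it, even after identifying every near-optimal $B$ with some $B_{N,e}$, you cannot deduce $\mu_n(A|e)_1\le K_e$. This lower bound is not automatic: it requires the specific feature of the construction that at each level $k$ exactly one digit $u_k^{\phi(k)}$ is nonzero (the $\phi$-structure in Lemma~\ref{2lem_sigma_alg_indep}), so that the difference vectors $W_k$ are distinct canonical basis vectors. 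Your writeup suppresses this, writing $W_k=(0,u_k^1,\dots,u_k^{n-1})$ as if all entries were in $J$; with generic $W_k$ neither $\dim B_j=e$ nor the angle lower bound is clear.

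Second, your steps (i)--(iii) for the upper bound are not the right mechanism. Minkowski's second theorem and a vague ``rigidity'' do not give what you need, and Lemma~\ref{2lem_X_in_B} goes in the opposite direction from how you invoke it: it shows a \emph{given} integer vector lies in $B$, not that an integer vector near $A$ lies in the $V_k$-module. The paper's argument (Lemma~\ref{5lem_meilleurs_espaces}) is direct: given $B$ with $\psi_1(A,B)\le H(B)^{-K_e-\varepsilon}$, choose $N$ by $\theta^{\alpha_{N+e-1}}\le H(B)^{K_e+\varepsilon/2-1}<\theta^{\alpha_{N+e}}$, and then bound
\[
\|X_{N+i}\wedge Z_1\wedge\cdots\wedge Z_e\|\;\le\; c\big(H(B)^{\tau_i}+H(B)^{-\varepsilon/2}\big)
\]
for each $i\in\llbracket 0,e-1\rrbracket$, using $\omega_1(\Span(X_{N+i}),B)\le\omega(X_{N+i},Y)+\psi_1(A,B)$. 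One checks $\tau_i\le\tau_0$ and then reduces $\tau_0\le 0$ to the inequality $-\gamma_{N+1}^2+3\gamma_{N+1}-1\le 0$, which holds precisely when $\gamma_{N+1}\ge\frac{3+\sqrt 5}{2}=\phi^2$. This is the exact place the threshold enters, and it is an explicit quadratic computation rather than a pigeonhole or gap argument. Once each $\|X_{N+i}\wedge Z_1\wedge\cdots\wedge Z_e\|<1$, Lemma~\ref{2lem_X_in_B} forces $X_{N+i}\in B$, hence $B=B_{N,e}$; combined with the angle lower bound above, this yields $\mu_n(A|e)_1\le K_e$.
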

\bigskip

This proves Theorem~\ref{6theo_principal} in the case $d =1$ with $C_1 = 2 +\frac{ \sqrt{5}-1}{2}$. Indeed, given $(\gamma_1, \ldots, \gamma_{n-1}) \in \left[ 2 +\frac{ \sqrt{5}-1}{2}, + \infty \right) ^{n-1} $, we set $T =2n-2 $ and complete this sequence by
\begin{align*}
\forall j \in \llbracket n, 2n-2 \rrbracket, \quad \gamma_j = 2 +\frac{ \sqrt{5}-1}{2} 
\end{align*}
and periodically as follows: $\forall i \in \llbracket 1, 2n-2 \rrbracket, \forall k \in \N, \quad \gamma_{i + k(2n-2)} = \gamma_i$. The Proposition~\ref{5prop_technique} provides a line $A$ such that $A\in \II_n(1,e)_1$ for all $e \in \llbracket 1, n-1\rrbracket $ and 
$\mu_n(A|e)_1 = \max\limits_{i \in \llbracket 0, 2n-3\rrbracket} \gamma_{i+1}\ldots\gamma_{i+e} = \max\limits_{\ell \in \llbracket 0, n-1-e \rrbracket} \gamma_{1, \ell +1} \ldots \gamma_{i, \ell + e}.$
It is noteworthy that Theorem~\ref{6theo_principal} is also more precise than the case $d = 1$ of the Theorem~\ref{1theo_construction}: the hypothesis $ \min\limits_{\ell \in \llbracket 1, m \rrbracket}(\beta_{1,\ell})^{c_{\ref{7cons_petite_hyp_theoc2c1}}} > \max\limits_{\ell \in \llbracket 1, m \rrbracket}(\beta_{1,\ell})^{c_{\ref{7cons_petite_hyp_theoc2}}} $ is not required for $d = 1$ . 

\bigskip 
Moreover, using Proposition~\ref{5prop_technique} we can establish that the image of the joint spectrum of $\left( \mu_n(\cdot|1)_1, \ldots, \mu_n(\cdot|n-1)_1 \right) $ contains a set with non-empty interior. Consequently, it furnishes an alternative proof for Corollary~\ref{1cor_roy_indep}.


\subsection{Construction of the line \texorpdfstring{$A$}{} and height of \texorpdfstring{$B_{N,e}$}{}}\label{subsect_cons_d1}
In this section, we construct the line $A$ as stated in Proposition~\ref{5prop_technique}, along with vectors $X_N$ for $N \in \mathbb{N}$, which achieve the best approximations of $A$.

Let $(\gamma_k)_{k \in \Nx} \in \left[ 2 +\frac{ \sqrt{5}-1}{2}, + \infty \right) ^{\Nx}$ a periodic sequence of period $T \in \Nx$. We introduce the sequence $\alpha = (\alpha_k)_{k \in \mathbb{N}}$ defined as follows:
\begin{align*}
\alpha_0 = 1, \text{ and }
\forall k \in \mathbb{N}, \quad \alpha_{k+1} = \gamma_{k+1}\alpha_k.
\end{align*}

Let $\theta$ be a prime number greater than or equal to $5$, and $\phi: \mathbb{N} \rightarrow \llbracket 0, n-2 \rrbracket $ be defined as $\phi(k) = (k \mod (n-1)) \in \llbracket 0, n-2 \rrbracket$, 
where $k \mod (n-1)$ is the remainder of the division of $k$ by $n-1$. According to Lemma~\ref{2lem_sigma_alg_indep}, since $\gamma_k \geq 2 + \frac{\sqrt{5}-1}{2} > 1 $, there exist $n-1$ sequences $u^0,\ldots, u^{n-2}$ satisfying:
\begin{align}\label{5cons_suite_u}
\forall j \in \llbracket 0, n-2 \rrbracket, \forall k \in \mathbb{N},\quad u^j_k \left\{ \begin{array}{lll}
&\in \{1,2\} &\text{ if } \phi(k) = j \\
&= 0 &\text{ otherwise }
\end{array} \right.
\end{align}
such that $u^j_k \neq 0 $ if and only if $j = k \mod (n-1)$;
and such that the family $(\sigma_0, \ldots, \sigma_{n-2})$ is algebraically independent over $\mathbb{Q}$ with $
\forall j \in \llbracket 0, n-2 \rrbracket, \quad \sigma_j = \sum\limits_{k = 0}^{+ \infty} \frac{u^j_k}{\theta^{\lfloor\alpha_k\rfloor}}$.

We define $A = \Span(Y)$ where $
Y = \begin{pmatrix}
1 &
\sigma_0 &
\cdots &
\sigma_{n-2}
\end{pmatrix}^\intercal$. According to Lemma~\ref{3lem_1_irrat}, the space $A$ is $(e,1)$-irrational for all $e \in \llbracket 1, n-1 \rrbracket$.

To compute the Diophantine exponents associated to $A$, we define, for $N \in \mathbb{N}$, the truncated vector:
\begin{align}\label{5def_XN_ chap5}
X_N = \theta^{\lfloor\alpha_N\rfloor} \begin{pmatrix}
1 &
\sigma_{0,N} &
\cdots &
\sigma_{n-2,N}
\end{pmatrix}^\intercal = \theta^{\lfloor\alpha_N\rfloor} \begin{pmatrix}
1 &
 \sum\limits_{k = 0}^{N} \frac{u^0_k}{\theta^{\lfloor\alpha_k\rfloor}} &
\cdots &
 \sum\limits_{k = 0}^{N} \frac{u^{n-2}_k}{\theta^{\lfloor\alpha_k\rfloor}}
\end{pmatrix}^\intercal \in \mathbb{Z}^n
\end{align}
where we have defined $\sigma_{j,N} = \sum\limits_{k = 0}^{N} \frac{u^{j}_k}{\theta^{\lfloor\alpha_k\rfloor}} $ for $j \in \llbracket 0, n-2 \rrbracket$. We especially have 
$
\left| \sigma_j - \sigma_{j,N} \right| = \left| \sum\limits_{k = N+1}^{+ \infty} \frac{u^{j}_k}{\theta^{\lfloor\alpha_k\rfloor}} \right|
 \leq 2 \sum\limits_{k = \floor{\alpha_{N+1}} }^{+ \infty} \frac{1}{\theta^{k} } \leq \frac{2 \theta}{\theta - 1 } \frac{ 1} {\theta^{\lfloor\alpha_{N+1}\rfloor}}.$
Hence, we have 
\begin{align}\label{5sigma_moins_sigmaN}
 \left| \sigma_j - \sigma_{j,N} \right| \leq\frac{ c_{\ref{5cons_sigma_moins_sigmaN}}} {\theta^{{\alpha_{N+1}}}}
\end{align} with $\cons = \frac{2\theta^2}{\theta -1} \label{5cons_sigma_moins_sigmaN}$.\
We have $\theta^{- \lfloor\alpha_N\rfloor} X_N\underset{N\to+\infty}{\longrightarrow} Y $ and thus 
 there exist constants $\cons \label{5cons_minor_norme_XN} > 0 $ and $\cons \label{5cons_major_norme_XN} > 0 $ independent of $N$ such that:
 \begin{align}\label{5lem_norme_XN}
 \forall N \in \mathbb{N}, \quad c_{\ref{5cons_minor_norme_XN}} \theta^{{\alpha_N}} \leq \| X_N \| \leq c_{\ref{5cons_major_norme_XN}} \theta^{{\alpha_N}}.
 \end{align}
Moreover, the definition of the angle $\omega(X,Y)$ 
and $(\ref{5sigma_moins_sigmaN})$ give
\begin{align}\label{5omega_XN_Y}
 \omega(Y, X_N) \leq \frac{ \| Y - X_N \|}{\| Y \|} \leq c_{\ref{5cons_omega_XN_Y}} \theta^{-\alpha_{N+1}}
\end{align}
with $\cons \label{5cons_omega_XN_Y}$ independent of $N$. 

The proof of Proposition~\ref{5prop_technique} unfolds by showin that the subspaces of best approximation of $A$ are those spanned by these vectors $X_N$. For this purpose, we define, for $N \in \mathbb{N}$ and $e \in \llbracket 1, n-1 \rrbracket$ the rational subspace 
$B_{N,e} = \Span (X_N, \ldots, X_{N+e-1}).$

\bigskip
We focus on studying the space $B_{N,e} = \Span(X_N, \ldots, X_{N+e-1})$, especially on providing a $\mathbb{Z}$-basis of $B_{N,e} \cap \mathbb{Z}^n$ and computing the height $H(B_{N,e})$. For $N \in \mathbb{N}$, it is noted that:
\begin{align}\label{5rec_X_N}
 X_{N+1} = \theta^{\lfloor\alpha_{N+1}\rfloor -\lfloor\alpha_N\rfloor} X_N + w_{N+1}
\end{align}
where $w_{N+1} = \begin{pmatrix}
 0 & u^0_{N+1} & \cdots & u^{n-2}_{N+1}
\end{pmatrix}^\intercal \in \mathbb{Z}^n$.

According to the construction of the sequences $u^j$ in $(\ref{5cons_suite_u})$, the vector $w_{N+1}$ has exactly one non-zero coordinate, which is $u^j_{N+1}$ for $j = \phi(N+1)$. We define the following vector:
\begin{align}\label{eq_vNdef}
 v_{N+1} = \frac{1}{u^{\phi(N+1)}_{N+1}}w_{N+1}.
\end{align}
We have defined vectors $v_{N+1}, \ldots, v_{N+e-1}$, and it is observed that these vectors are pairwise distinct and belong to the canonical basis; moreover, the first vector $(1,0, \ldots, 0)^\intercal$ of this basis is not included. Furthermore, we have 
$ B_{N,e} = \Span(X_N, v_{N+1}, \ldots, v_{N+e-1})
$
by using $(\ref{5rec_X_N})$ and the definition of $B_{N,e}$. Consequently, $\dim(B_{N,e}) = e$ since the first coordinate of $X_N$ is non-zero.

\begin{claim}\label{5B_N_base_v}
 For $N \in \mathbb{N}$, the set $X_N, v_{N+1}, \ldots, v_{N+e-1}$ forms a $\mathbb{Z}$-basis of $B_{N,e} \cap \mathbb{Z}^n$.
\end{claim}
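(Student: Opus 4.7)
The plan is to exhibit every integer vector of $B_{N,e}$ as a $\mathbb{Z}$-linear combination of $X_N,v_{N+1},\ldots,v_{N+e-1}$. By the discussion preceding the claim these $e$ vectors already span $B_{N,e}$ over $\mathbb{R}$ (hence are linearly independent) and lie in $\mathbb{Z}^n$, so only the saturation statement remains. Given $Y\in B_{N,e}\cap\mathbb{Z}^n$ I would write
\[
Y=\lambda X_N+\sum_{i=1}^{e-1}\mu_i v_{N+i}
\]
with real coefficients and aim to show $\lambda,\mu_1,\ldots,\mu_{e-1}\in\mathbb{Z}$.

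Each $v_{N+i}$ equals the canonical basis vector $\mathbf{e}_{p_i}$ with $p_i=\phi(N+i)+2\in\{2,\ldots,n\}$; because $\phi(k)=k\bmod(n-1)$ and $e-1\le n-2$, the indices $p_1,\ldots,p_{e-1}$ are pairwise distinct. Reading the first coordinate of $Y$ gives $\lambda=Y_1/\theta^{\lfloor\alpha_N\rfloor}\in\mathbb{Q}$, and reading coordinate $p_i$ gives $\mu_i=Y_{p_i}-\lambda(X_N)_{p_i}$. Thus it suffices to prove $\theta^{\lfloor\alpha_N\rfloor}\mid Y_1$: once $\lambda\in\mathbb{Z}$, the integrality of each $\mu_i$ follows automatically because $Y_{p_i},(X_N)_{p_i}\in\mathbb{Z}$.

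The key step is to exhibit a coordinate outside $\{1,p_1,\ldots,p_{e-1}\}$ where $X_N$ has a component coprime to $\theta$. Since $e-1<n-1$, the residue $\phi(N)$ does \emph{not} appear among the consecutive residues $\phi(N+1),\ldots,\phi(N+e-1)$, so $j^{*}:=\phi(N)+2\notin\{p_1,\ldots,p_{e-1}\}$. At this coordinate $Y_{j^{*}}=\lambda(X_N)_{j^{*}}$, so $\theta^{\lfloor\alpha_N\rfloor}\mid Y_1\cdot(X_N)_{j^{*}}$. Expanding
\[
(X_N)_{j^{*}}=\theta^{\lfloor\alpha_N\rfloor}\sigma_{\phi(N),N}=\sum_{\substack{0\le k\le N\\ \phi(k)=\phi(N)}}u_k^{\phi(N)}\,\theta^{\lfloor\alpha_N\rfloor-\lfloor\alpha_k\rfloor},
\]
the term $k=N$ contributes $u_N^{\phi(N)}\in\{1,2\}$ and every other term is divisible by $\theta$; since $\theta\ge 5$ is prime, $(X_N)_{j^{*}}$ is coprime to $\theta$, hence to $\theta^{\lfloor\alpha_N\rfloor}$. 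Therefore $\theta^{\lfloor\alpha_N\rfloor}\mid Y_1$, giving $\lambda\in\mathbb{Z}$, and the claim follows.

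The only place that really needs care is the arithmetic step in the last paragraph: one must be sure that $j^{*}$ lies in $\{2,\ldots,n\}$ and is avoided by all $p_i$, and that no other term in the expansion of $(X_N)_{j^{*}}$ can accidentally cancel the leading $u_N^{\phi(N)}$ modulo $\theta$. Both are settled by the combinatorics of $\phi$ on consecutive integers (requiring $e\le n-1$) and by the choice $\theta\ge 5$ prime together with $u_k^j\in\{0,1,2\}$.
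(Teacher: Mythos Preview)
Your proof is correct and follows essentially the same route as the paper's: both read the first coordinate to get $\lambda\theta^{\lfloor\alpha_N\rfloor}\in\mathbb{Z}$, then pass to the coordinate corresponding to $\sigma_{\phi(N)}$ (where none of the $v_{N+i}$ contribute) and use that $\theta^{\lfloor\alpha_N\rfloor}\sigma_{\phi(N),N}\equiv u_N^{\phi(N)}\in\{1,2\}\pmod\theta$ is coprime to $\theta$ to force $\lambda\in\mathbb{Z}$, after which the $\mu_i\in\mathbb{Z}$ are immediate. The only cosmetic difference is that the paper phrases the coprimality step via an explicit B\'ezout identity while you argue by direct divisibility.
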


\begin{proof}
 To prove the claim, it remains to show that if $X = aX_N + b_1v_{N+1} + \ldots + b_{e-1}v_{N+e-1} \in \mathbb{Z}^n $ with $(a, b_1, \ldots,b_{e-1}) \in \R^e$ then $(a, b_1, \ldots,b_{e-1}) \in \mathbb{Z}^e$. The first coordinate of $X$ is $a\theta^{\lfloor\alpha_N\rfloor}$, thus $ a\theta^{\lfloor\alpha_N\rfloor} \in \mathbb{Z}.$
 Now, examining the $(\phi(N)+1)$-th coordinate of $X$, we have: 
 \begin{align}\label{5a_theta_sigma}
 a\theta^{\lfloor\alpha_N\rfloor} \sigma_{\phi(N),N} + \sum\limits_{j = 1}^{e-1} b_j \frac{u^{\phi(N)}_{N+j}}{u^{\phi(N+j)}_{N+j} } = a\theta^{\lfloor\alpha_N\rfloor} \sigma_{\phi(N),N} \in \mathbb{Z}
\end{align}
since $u^{\phi(N)}_N \in \{ 1,2\}$, and for all $j \in \llbracket 1, n-2 \rrbracket$, $u^{\phi(N)}_{N+j} = 0$. Now $ \sigma_{\phi(N),N} = U_{\phi(N)}\theta^{-\lfloor\alpha_N\rfloor} $ with $\gcd(\theta, U_{\phi(N)}) = 1 $ because $u^{\phi(N)}_N \in \{ 1,2\}$ and $\theta \geq 5$. By Bézout's theorem, there exist integers $p_1$ and $p_2$ such that $p_1 \theta^{\lfloor\alpha_N\rfloor} + p_2 U_{\phi(N)} = 1$. Then,
\begin{align*}
 a = a (p_1 \theta^{\lfloor\alpha_N \rfloor}+ p_2 U_{\phi(N)}) = p_1 a \theta^{\lfloor\alpha_N\rfloor }+ p_2 a U_{\phi(N)} \in \mathbb{Z}
\end{align*}
since $a\theta^{\lfloor\alpha_N\rfloor} \in \mathbb{Z}$ and $aU_{\phi(N)} = a \theta^{\lfloor\alpha_N\rfloor} \sigma_{\phi(N),N} \in \mathbb{Z}$ by $(\ref{5a_theta_sigma})$. 
In particular, $X - a X_N = b_1v_{N+1} + \ldots + b_{e-1}v_{N+e-1} \in \mathbb{Z}$. Since the $v_{N+j}$ are distinct vectors of the canonical basis, $b_j \in \mathbb{Z}$ for all $j \in \llbracket 1,e-1 \rrbracket$.

\end{proof}

Now, we proceed to calculate the height of the space $B_{N,e}$.
\begin{lem}\label{5lem_haut_BN}
 There exist constants $\cons \label{5cons_minor_haut_BN} > 0$ and $\cons \label{5cons_major_haut_BN} > 0$ independent of $N$ such that:
 \begin{align*}
 \forall N \in \mathbb{N}, \quad c_{\ref{5cons_minor_haut_BN}} \theta^{{\alpha_N}} \leq H(B_{N,e}) \leq c_{\ref{5cons_major_haut_BN}} \theta^{{\alpha_N}}.
 \end{align*}
\end{lem}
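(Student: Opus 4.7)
The plan is to compute the height directly using the explicit $\mathbb{Z}$-basis given by Claim~\ref{5B_N_base_v}. By definition of height and that claim, we have
\begin{align*}
H(B_{N,e}) = \|X_N \wedge v_{N+1} \wedge \ldots \wedge v_{N+e-1}\|.
\end{align*}
The key observation, already noted in the text before Claim~\ref{5B_N_base_v}, is that the vectors $v_{N+1}, \ldots, v_{N+e-1}$ are pairwise distinct vectors of the canonical basis of $\R^n$, and none of them equals the first basis vector $e_1 = (1,0,\ldots,0)^\intercal$ (because the first coordinate of each $w_{N+j}$ is $0$). Consequently, $\Span(v_{N+1}, \ldots, v_{N+e-1})$ is the coordinate subspace generated by some subset $S \subset \{2, \ldots, n\}$ of size $e-1$.

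Since the $v_{N+j}$ are orthonormal, the wedge norm equals the norm of the orthogonal projection of $X_N$ onto the orthogonal complement of $\Span(v_{N+1}, \ldots, v_{N+e-1})$, that is, the vector obtained from $X_N$ by zeroing out the coordinates indexed by $S$. Concretely,
\begin{align*}
H(B_{N,e})^2 = \sum_{i \in \llbracket 1,n \rrbracket \setminus S} (X_N)_i^2.
\end{align*}

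For the upper bound, since the sum above is at most $\|X_N\|^2$, the inequality $(\ref{5lem_norme_XN})$ immediately yields $H(B_{N,e}) \leq \|X_N\| \leq c_{\ref{5cons_major_norme_XN}} \theta^{\alpha_N}$. For the lower bound, I use that $1 \notin S$, so the first coordinate $(X_N)_1 = \theta^{\lfloor \alpha_N \rfloor}$ survives in the sum. Hence
\begin{align*}
H(B_{N,e}) \geq (X_N)_1 = \theta^{\lfloor \alpha_N \rfloor} \geq \theta^{-1} \theta^{\alpha_N},
\end{align*}
which gives the desired lower bound with $c_{\ref{5cons_minor_haut_BN}} = \theta^{-1}$. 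There is no real obstacle here; the construction of $X_N$ together with Claim~\ref{5B_N_base_v} makes everything explicit, and the point is simply that replacing the non-obvious generators $X_{N+1}, \ldots, X_{N+e-1}$ of $B_{N,e}$ by the canonical-basis vectors $v_{N+1}, \ldots, v_{N+e-1}$ reduces the height to a projection of a single vector $X_N$ whose size is controlled by $\theta^{\alpha_N}$.
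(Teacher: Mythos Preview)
Your proof is correct. Both you and the paper start from Claim~\ref{5B_N_base_v} and obtain the upper bound in the same way, by bounding $\|X_N \wedge v_{N+1} \wedge \ldots \wedge v_{N+e-1}\| \leq \|X_N\|$. The difference lies in the lower bound: the paper argues indirectly, using that $\theta^{-\lfloor\alpha_N\rfloor}X_N \to Y$, that $\|Y \wedge v_{N+1} \wedge \ldots \wedge v_{N+e-1}\| > 0$, and that there are only finitely many possible tuples $(v_{N+1},\ldots,v_{N+e-1})$ to get a uniform positive lower bound, then handles small $N$ separately. Your route is more direct and elementary: since the $v_{N+j}$ are canonical basis vectors distinct from $e_1$, the first coordinate $\theta^{\lfloor\alpha_N\rfloor}$ of $X_N$ survives in the orthogonal projection, giving immediately $H(B_{N,e}) \geq \theta^{\lfloor\alpha_N\rfloor}$ with the explicit constant $c_{\ref{5cons_minor_haut_BN}} = \theta^{-1}$. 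Your argument is cleaner here precisely because the $v_{N+j}$ are coordinate vectors; the paper's limit-and-compactness argument would still work if the $v_{N+j}$ were merely a finite family of vectors not containing $Y$ in their span.
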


\begin{proof}
 According to the Claim~\ref{5B_N_base_v}, the vectors $X_N, v_{N+1}, \ldots, v_{N+e-1}$ form a $\mathbb{Z}$-basis of $B_{N,e}\cap \mathbb{Z}^n$ ; thus,
$ H(B_{N,e}) = \| X_N \wedge v_{N+1} \wedge \ldots \wedge v_{N+e-1} \|
$. Hence:
\begin{align*}
 H(B_{N,e}) \leq \| X_N \| \| v_{N+1} \| \ldots \| v_{N+e-1} \| \leq \| X_N \|
\end{align*}
and the upper bound of the lemma is obtained by taking $c_{\ref{5cons_major_haut_BN}} =c_{\ref{5cons_major_norme_XN}} $, the constant from the relation~(\ref{5lem_norme_XN}).

On the other hand, we know that the $ v_{N+j} $ for $ j \in \llbracket 1, e-1 \rrbracket $ form a linearly independent set. Moreover, $ Y \notin \Span(v_{N+1}, \ldots, v_{N+e-1}) $ because the first coordinate of $ Y $ is $1$ while those of $ v_{N+j} $ are zero. Therefore, $ \|Y \wedge v_{N+1} \wedge \ldots \wedge v_{N+e-1} \| > 0 $. Additionally, the $ (e-1) $-tuple $ (v_{N+1}, \ldots, v_{N+e-1}) $ can only take a finite number of values as $ N $ varies. Thus, there exists a constant $ \cons \label{5cons_v_non_col} > 0 $ independent of $ N $ such that $ \|Y \wedge v_{N+1} \wedge \ldots \wedge v_{N+e-1} \| \geq c_{\ref{5cons_v_non_col}} $ for all $N$.
Since $ \theta^{-\lfloor\alpha_N\rfloor}X_N \underset{N \to \infty}{\longrightarrow} Y $ , there exists $ N_0 \in N $ such that for any $N \geq N_0$:
\begin{align*}
 H(B_{N,e}) \geq \frac{c_{\ref{5cons_v_non_col}}}{2}\theta^{\lfloor\alpha_N\rfloor}. 
\end{align*}
Let $ c_{\ref{5cons_minor_haut_BN}} = \min\left(\min\limits_{0 \leq N <N_0} (\theta^{-{\alpha_N}}H(B_{N,e}) ) , \frac{c_{\ref{5cons_v_non_col}}}{2\theta}\right) > 0 $. 
Then, for all $ N \in \mathbb{N}, $
$ H(B_{N,e}) \geq c_{\ref{5cons_minor_haut_BN}} \theta^{{\alpha_N}} 
$
and the lower bound of the lemma is proved.

\end{proof}
\subsection{First angle between \texorpdfstring{$A$}{} and \texorpdfstring{$B_{N,e}$}{} }\label{subsect_angle_d1}
In this section we estimate the first angle between the line $A$ and the rational subspaces $B_{N,e}$. We prove the following lemma.

\begin{lem}\label{5lem_angle_entre_A_BN}
 
One has $$c_{\ref{5cons_minor_prox_A_BN}} \theta^{-{\alpha_{N+e}}} \leq \psi_1(A,B_{N,e}) \leq c_{\ref{5cons_major_prox_A_BN}} \theta^{-\alpha_{N+e}}$$
with $ \cons \label{5cons_minor_prox_A_BN} $ and $\cons \label{5cons_major_prox_A_BN}$ independent of $N$.

\end{lem}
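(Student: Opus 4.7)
The plan is to reduce the statement to a bound on the Euclidean distance from $Y$ to $B_{N,e}$. Since $A=\Span(Y)$ is a line and $g(1,e,n)=0$ for $e\le n-1$, we have $\psi_1(A,B_{N,e})=\omega_1(A,B_{N,e})=\mathrm{dist}(Y,B_{N,e})/\|Y\|$, with $\|Y\|$ a fixed positive constant. So it suffices to estimate $\mathrm{dist}(Y,B_{N,e})$ from above and below by constants times $\theta^{-\alpha_{N+e}}$.

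For the upper bound, I would test against the explicit vector $Z=\theta^{-\lfloor\alpha_{N+e-1}\rfloor}X_{N+e-1}\in B_{N,e}$, which by definition of $X_{N+e-1}$ equals $(1,\sigma_{0,N+e-1},\ldots,\sigma_{n-2,N+e-1})^\intercal$. Then $Y-Z$ has vanishing first coordinate and each remaining entry $\sigma_j-\sigma_{j,N+e-1}$ is bounded by $c_{\ref{5cons_sigma_moins_sigmaN}}\theta^{-\alpha_{N+e}}$ thanks to (\ref{5sigma_moins_sigmaN}), yielding the upper bound immediately.

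For the lower bound, I would use the $\Z$-basis $(X_N,v_{N+1},\ldots,v_{N+e-1})$ of $B_{N,e}\cap\Z^n$ from Claim~\ref{5B_N_base_v} to write a general $Z\in B_{N,e}$ as $Z=aX_N+\sum_{j=1}^{e-1}b_j v_{N+j}$ with $a,b_j\in\R$. The crucial observation is that each $v_{N+j}$ is the canonical basis vector $e_{\phi(N+j)+2}$, and since $\phi$ has period $n-1$ while $1\le e\le n-1$, the value $j_0:=\phi(N+e)$ does not appear in $\{\phi(N+1),\ldots,\phi(N+e-1)\}$. Hence in the coordinate $k_0:=j_0+2$ the vectors $v_{N+j}$ contribute nothing and $(Z)_{k_0}=a\theta^{\lfloor\alpha_N\rfloor}\sigma_{j_0,N}$. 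Forming the combination $\sigma_{j_0,N}(Y-Z)_1-(Y-Z)_{k_0}$ then eliminates $a$ entirely and equals exactly $\sigma_{j_0,N}-\sigma_{j_0}$. This tail is controlled from below in absolute value by its leading term $u^{j_0}_{N+e}/\theta^{\lfloor\alpha_{N+e}\rfloor}\ge\theta^{-\lfloor\alpha_{N+e}\rfloor}$, the next contribution occurring at index $N+e+(n-1)$ and being exponentially smaller because the hypothesis $\gamma_k\ge 2+(\sqrt{5}-1)/2$ forces $\alpha_{N+e+n-1}$ to vastly exceed $\alpha_{N+e}$. Since $\sigma_{j_0,N}$ is uniformly bounded, this forces $\max(|(Y-Z)_1|,|(Y-Z)_{k_0}|)$, and therefore $\|Y-Z\|$, to be at least a constant times $\theta^{-\alpha_{N+e}}$ uniformly in $Z\in B_{N,e}$.

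The main subtle step is the choice of the distinguished coordinate $k_0$ and the linear combination that cancels $a$: this is where the hypothesis $e\le n-1$ is used essentially, since it guarantees that the family $v_{N+1},\ldots,v_{N+e-1}$ misses at least one of $e_2,\ldots,e_n$. Everything else is a routine assembly of (\ref{5sigma_moins_sigmaN}), the estimate (\ref{5lem_norme_XN}), and the rapid growth of the sequence $\alpha$.
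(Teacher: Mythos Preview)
Your proof is correct, and for the lower bound it takes a genuinely different and cleaner route than the paper's. The paper writes a general element of $B_{N,e}$ in the basis $(X_{N+e-1},v_{N+1},\ldots,v_{N+e-1})$, normalizes, and bounds a $2\times2$ minor of $(X\mid Y)$ from below via a case split on whether all $|b_j|$ are small (then $|a|$ is large and the coordinate $\phi(N+e)$ works) or some $|b_{j_0}|$ is large (then the coordinate $\phi(N+j_0)$ works). Your argument instead uses the basis $(X_N,v_{N+1},\ldots,v_{N+e-1})$ and the single linear combination $\sigma_{j_0,N}(Y-Z)_1-(Y-Z)_{k_0}$, which kills $a$ outright and equals $\sigma_{j_0,N}-\sigma_{j_0}$; this avoids the case analysis entirely. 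One small simplification you can make: since every $u_k^{j_0}\ge 0$, the tail $\sigma_{j_0}-\sigma_{j_0,N}$ is a sum of nonnegative terms and is therefore at least its first nonzero term $u^{j_0}_{N+e}\theta^{-\lfloor\alpha_{N+e}\rfloor}$, so the ``exponentially smaller next contribution'' remark is not actually needed. The paper's minor-based approach, while heavier here, is the one that generalizes more directly to the case $d>1$ treated in Lemma~\ref{6min_psi_A_BN}, where the unknown vector in $A$ has $d$ free parameters and a single linear combination no longer suffices.
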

The difficult part is bounding the angle from below. To prove this, we study the minors of size $2$ of a specific matrix and show that at least one of them is not too small.

\begin{proof}
 Let $N \in \mathbb{N}$. Since $X_{N+e-1} \in B_{N,e}$ and $Y \in A$, we have $ \psi_1(A,B_{N,e}) \leq \omega(Y, X_{N+e-1}) \leq c_{\ref{5cons_omega_XN_Y}} \theta^{-\alpha_{N+e}} $ according to (\ref{5omega_XN_Y}). Only the lower bound remains to be proven.

 Using (\ref{5rec_X_N}) and \eqref{eq_vNdef}, we have $B_{N,e} = \Span (X_{N+e-1}, v_{N+1}, \ldots, v_{N+e-1}).$
Let $X \in B_{N,e} \setminus \{0\}$. We will show that $ \omega(X,Y) \geq c_{\ref{5cons_minor_prox_A_BN}} \frac{1}{\theta^{\lfloor\alpha_{N+e}\rfloor}}$
and hence, since $\psi_1(A,B_{N,e}) = \min\limits_{X \in B_{N,e} \setminus \{0\}} \omega(X,Y)$, this will prove the lemma.

We express $X$ in the basis $X_{N+e-1}, v_{N+1}, \ldots, v_{N+e-1}$:
\begin{align*}
 X = a Z_{N+e-1} + b_1 v_{N+1} + \ldots + b_{e-1} v_{N+e-1},
\end{align*}
where $Z_{N+e-1} = \theta^{-\lfloor\alpha_{N+e-1}\rfloor} X_{N+e-1}$. By normalizing $X$, we can assume $a^2 + \sum\limits_{j = 1}^{e-1} b_j^2 = 1.$
This assumption gives, in particular, $\|X\| \leq c_{\ref{5cons_maj_X}}$ with $\cons \label{5cons_maj_X}$ independent of $N$ since the vectors $v_{N+j}$ have norm $1$ and $ \| Z_{N+e-1} \| \leq c_{\ref{5cons_major_norme_XN}} \theta $ according to $(\ref{5lem_norme_XN})$. We now seek to bound the quantity $\|X \wedge Y\|$ from below. We recall that we have:
\begin{align*}
 X = \begin{pmatrix}
 a &
 a\sigma_{0,N+e-1} + \sum\limits_{j = 1}^{e-1} b_ju^0_{N+j} &
 \cdots &
 a\sigma_{n-2,N+e-1} + \sum\limits_{j = 1}^{e-1} b_ju^{n-2}_{N+j} 
 \end{pmatrix}^\intercal
 \text{ and } Y = \begin{pmatrix}
 1 &
 \sigma_{0} &
 \cdots &
 \sigma_{n-2}
 \end{pmatrix}^\intercal,
\end{align*}
and that we can bound $\|X \wedge Y\|$ from below by the absolute value of any $2 \times 2$ minor of the matrix $(X|Y)$. Thus, we have:
\begin{align*}
 \forall i \in \{0, \ldots, n-2\}, \quad \|X \wedge Y\| \geq \left| \det\begin{pmatrix} a & 1 \\
 a\sigma_{i,N+e-1} + \sum\limits_{j = 1}^{e-1} b_ju^i_{N+j} & \sigma_i \end{pmatrix} \right| = \left| a (\sigma_i - \sigma_{i,N+e-1}) - \sum\limits_{j = 1}^{e-1} b_ju^i_{N+j} \right|.
\end{align*}
Since $\sigma_i - \sigma_{i,N+e-1} = \sum\limits_{k = N+e}^{+\infty} \frac{u^i_k}{\theta^{\lfloor\alpha_k\rfloor}}$, we finally obtain:
\begin{align}\label{5minor_prod_ext_X_Y}
 \forall i \in \{0, \ldots, n-2\}, \quad \|X \wedge Y\| \geq \left| a \sum\limits_{k = N+e}^{+\infty} \frac{u^i_k}{\theta^{\lfloor\alpha_k\rfloor}} - \sum\limits_{j = 1}^{e-1} b_ju^i_{N+j} \right|.
\end{align}

We now distinguish between two cases.

\textbullet \: \underline{Case 1:} For all $j \in \{1, \ldots, e-1\}$, we have $|b_j| < \frac{1}{\theta^{\lfloor\alpha_{N+e}\rfloor}}$. Then we have:
 $a^2 = 1 - \sum\limits_{j = 1}^{e-1} b_i^2 \geq 1 - \frac{e-1}{\theta^{2e}}$
since $\lfloor\alpha_{N+e}\rfloor \geq e$. We set $\cons \label{5cons_cas<} = \sqrt{1 - \frac{e-1}{\theta^{2e}}} > 0$, hence $|a| \geq c_{\ref{5cons_cas<}}$. Let $i = \phi(N+e) \in \{0, \ldots, n-2\}$. Then we have
\begin{align*}
 u^i_{N+e} \geq 1 \text{ and } u^i_{N+1} = \ldots = u^i_{N+e-1} =0.
\end{align*}
We use $(\ref{5minor_prod_ext_X_Y})$ with this $i$ and find:
\begin{align*}
 \|X \wedge Y\| \geq \left| a \sum\limits_{k = N+e}^{+\infty} \frac{u^i_k}{\theta^{\lfloor\alpha_k\rfloor}} \right| \geq |a| \left| \frac{u^i_{N+e}}{\theta^{\lfloor\alpha_{N+e}\rfloor}}\right| \geq c_{\ref{5cons_cas<}} \frac{1}{\theta^{\lfloor\alpha_{N+e}\rfloor}}.
\end{align*}

\textbullet \: \underline{Case 2:} There exists $j_0 \in \{1,e -1\}$ such that $|b_{j_0}| \geq \frac{1}{\theta^{\lfloor\alpha_{N+e}\rfloor}}$. Let $i = \phi(N+j_0)$. Then we have
\begin{align*}
 u^{i}_{N+j_0} \geq 1 \text{ and } u^i_{N+1} = \ldots =u^i_{N+j_0-1} =u^i_{N+j_0+1} = \ldots = u^i_{N+e-1} = u^i_{N+e} = 0.
\end{align*}
By applying $(\ref{5minor_prod_ext_X_Y})$ with $i$, we find:
\begin{align*}
 \|X \wedge Y\| &\geq \left| a \sum\limits_{k = N+e+1}^{+\infty} \frac{u^i_k}{\theta^{\lfloor\alpha_k\rfloor}} - b_{j_0} u^i_{N+j_0}\right| 
 \geq |b_{j_0}| - |a| \sum\limits_{k = N+e+1}^{+\infty} \frac{u^i_k}{\theta^{\lfloor\alpha_k\rfloor}} 
 \geq \frac{1}{\theta^{\lfloor\alpha_{N+e}\rfloor}} - \frac{1}{\theta^{\lfloor\alpha_{N+e+1}\rfloor}} \frac{2\theta}{\theta-1}
\end{align*}
since $|a| \leq 1$. This yields, since $\theta \geq 5$:
$ \| X \wedge Y \| \geq \frac{1}{\theta^{\floor{\alpha_{N+e}}}} - \frac{1}{2\theta^{\floor{\alpha_{N+e}} }} = \frac{1}{2\theta^{\floor{\alpha_{N+e}} }}. $

\medskip
Now let $c_{\ref{5cons_minor_prox_A_BN}} = \frac{\min(c_{\ref{5cons_cas<}}, \frac{1}{2})}{\|Y\|c_{\ref{5cons_maj_X}}}$. Since $\|X\| \leq c_{\ref{5cons_maj_X}}$ and according to the two cases we have studied, we have:
\begin{align*}
 \omega(X,Y) = \frac{\| X \wedge Y\|}{\| X\| \cdot\|Y\|} \geq \frac{\min(c_{\ref{5cons_cas<}}, \frac{1}{2})}{c_{\ref{5cons_maj_X}}\cdot \|Y\|}\frac{1}{\theta^{\lfloor\alpha_{N+e}\rfloor}} = c_{\ref{5cons_minor_prox_A_BN}} \frac{1}{\theta^{\lfloor\alpha_{N+e}\rfloor}}
\end{align*}
which completes the proof of the lemma.
\end{proof}

Since $\frac{\alpha_{N+e}}{\alpha_N} = \gamma_{N+1}\ldots\gamma_{N+e}$ , Lemma~\ref{5lem_haut_BN} and Lemma~\ref{5lem_angle_entre_A_BN} give
\begin{align}\label{5relation_angle_hauteur}
 c_{\ref{5cons_minor_prox_A_BN2}} H(B_{N,e})^{-\gamma_{N+1}\ldots\gamma_{N+e}} \leq \psi_1(A,B_{N,e}) \leq c_{\ref{5cons_major_prox_A_BN2}} H(B_{N,e})^{-\gamma_{N+1}\ldots\gamma_{N+e}},
\end{align}
with $\cons \label{5cons_minor_prox_A_BN2}$ and $\cons \label{5cons_major_prox_A_BN2}$ independent of $N$. We recall that the sequence $(\gamma_k)$ is periodic with period $T$. By selecting $N$ such that $\gamma_{N+1}\ldots\gamma_{N+e} = \max\limits_{i \in \llbracket 0, T-1\rrbracket} \gamma_{i+1}\ldots\gamma_{i+e}$, one has:
\begin{align}\label{inegal_frombelow}
 \mu_n(A|e)_1 \geq \max\limits_{i \in \llbracket 0, T-1\rrbracket} \gamma_{i+1}\ldots\gamma_{i+e}.
\end{align}
Now, we will show that the $B_{N,e}$ are the best approximations of $A$; this is the subject of the next section.

\subsection{Best approximations of \texorpdfstring{$A$}{}}\label{subsect_bestapprox_d1}
In order to prove that the subspaces $B_{N,e}$ realize the best approximations of $A$, we show the following lemma. Let us denote by $K_e$ the quantity $ \max\limits_{i \in \llbracket 0, T-1\rrbracket} \gamma_{i+1}\ldots\gamma_{i+e}$.

\begin{lem}\label{5lem_meilleurs_espaces}
Let $\varepsilon > 0$ and $B$ be a rational subspace of dimension $e$ such that:
\begin{align}\label{5hyp_lem_meilleurs_espaces}
 \psi_1(A,B) \leq H(B)^{-K_e - \varepsilon}.
\end{align}
Then, if $H(B)$ is large enough relative to $\varepsilon$ and $A$, there exists $N \in \mathbb{N}$ such that $B = B_{N,e}$.
\end{lem}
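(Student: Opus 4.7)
The strategy is to show that any rational subspace $B$ of dimension $e$ satisfying the hypothesis must contain the integer vectors $X_N, X_{N+1}, \ldots, X_{N+e-1}$ for an appropriately chosen $N$ depending on $H(B)$. Since $B_{N,e} = \Span(X_N, \ldots, X_{N+e-1})$ and $\dim B = \dim B_{N,e} = e$, this forces $B = B_{N,e}$. To establish $X_{N+j} \in B$ for each $j \in \llbracket 0, e-1 \rrbracket$, I will apply Lemma~\ref{2lem_X_in_B}, which reduces the task to verifying $\|X_{N+j} \wedge Z_1 \wedge \ldots \wedge Z_e\| < 1$ for a $\mathbb{Z}$-basis $Z_1, \ldots, Z_e$ of $B \cap \Z^n$.

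The key identity is $\|X_{N+j} \wedge Z_1 \wedge \ldots \wedge Z_e\| = \|X_{N+j}^{B^\perp}\| \cdot H(B)$, where the superscript denotes orthogonal projection onto $B^\perp$. From the hypothesis, the line $A = \Span(Y)$ satisfies $\|Y^{B^\perp}\| = \|Y\| \psi_1(A,B) \leq \|Y\| H(B)^{-K_e-\varepsilon}$. Decomposing $X_{N+j} = \theta^{\lfloor\alpha_{N+j}\rfloor}Y + R_{N+j}$ and using the tail estimate~(\ref{5sigma_moins_sigmaN}) to bound $\|R_{N+j}\| \leq c\, \theta^{\alpha_{N+j}(1 - \gamma_{N+j+1})}$, I obtain
\begin{align*}
\|X_{N+j} \wedge Z_1 \wedge \ldots \wedge Z_e\| \leq c\left( \theta^{\alpha_{N+j}} H(B)^{1-K_e-\varepsilon} + \theta^{\alpha_{N+j}(1 - \gamma_{N+j+1})} H(B) \right).
\end{align*}
The task is to choose $N$ so that each right-hand term is strictly less than $1$ for every $j \in \llbracket 0, e-1 \rrbracket$, simultaneously.

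The first summand is most restrictive at $j = e-1$ and demands that $\alpha_{N+e-1}$ be small compared to $(K_e + \varepsilon - 1)\log_\theta H(B)$; the second is most restrictive at $j = 0$ and demands that $\alpha_N(\gamma_{N+1} - 1)$ exceed $\log_\theta H(B)$. Taking $N$ to be the smallest integer with $\alpha_N(\gamma_{N+1}-1) > \log_\theta H(B)$ (which will produce an $N \to \infty$ as $H(B) \to \infty$), the compatibility of the two requirements reduces, via $\alpha_{N+e-1} = \gamma_{N+1}\cdots\gamma_{N+e-1}\alpha_N$ and $K_e \geq \gamma_{N+1}\cdots\gamma_{N+e}$, to an inequality of the form
\begin{align*}
\gamma_{N+e} \geq \frac{1}{\gamma_{N+1}-1} + \frac{1}{\gamma_{N+1}\cdots\gamma_{N+e-1}}.
\end{align*}
This is where the standing lower bound $\gamma_k \geq 2 + \frac{\sqrt{5}-1}{2}$ comes in: writing $\phi = (1+\sqrt{5})/2$, the hypothesis reads $\gamma_k \geq \phi^2$, so $\gamma_{N+e} \geq \phi^2$ while $1/(\gamma_{N+1}-1) \leq 1/\phi = \phi-1$ and $1/(\gamma_{N+1}\cdots\gamma_{N+e-1}) \leq 1$, leaving comfortable slack. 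The main obstacle I anticipate is purely technical: tracking the multiplicative constants hidden in the norm estimates~(\ref{5lem_norme_XN}), in Lemma~\ref{5lem_haut_BN}, and in the projection inequality above carefully enough that the strict inequality $\|X_{N+j}\wedge Z_1 \wedge \ldots \wedge Z_e\| < 1$ can be forced for all $H(B)$ sufficiently large in terms of $\varepsilon$.
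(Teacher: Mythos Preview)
Your approach is the paper's own: pick $N$ so that $H(B)$ sits in a window determined by consecutive $\alpha$'s, bound $\|X_{N+j}\wedge Z_1\wedge\ldots\wedge Z_e\|$ by exactly the two-term sum you wrote, and reduce the compatibility of the two constraints to a polynomial inequality in the $\gamma$'s. One correction worth flagging: there is no ``comfortable slack''. In the paper's equivalent reduction the governing inequality is $-\gamma_{N+1}^2+3\gamma_{N+1}-1\le 0$, which is an \emph{equality} at $\gamma_{N+1}=\phi^2=(3+\sqrt5)/2$; the strict inequality needed to force $\mathcal D_{N+j}<1$ therefore comes entirely from the $\varepsilon$ in hypothesis~\eqref{5hyp_lem_meilleurs_espaces}, and tracking it through the choice of $N$ (the paper uses the exponent $K_e+\tfrac{\varepsilon}{2}-1$ rather than $K_e-1$) is not optional bookkeeping but the actual mechanism of the proof.
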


\begin{proof}
Let $N \in \mathbb{N}$ be the integer satisfying:
\begin{align}\label{5choix_du_N}
 \theta^{{\alpha_{N+e-1}}} \leq H(B)^{K_e + \frac{\varepsilon}{2} - 1} < \theta^{{\alpha_{N+e}}}.
\end{align}
We then show that this $N$ is suitable if $H(B)$ is large enough. Let $Z_1, \ldots, Z_e$ be a $\Zbasis$ of $B \cap \Z^n$. We will show that if $H(B)$ is sufficiently large, then for all $i \in \llbracket 0, e-1 \rrbracket$, the quantity $ \DD_{N+i} = \| X_{N+i} \wedge Z_1 \ldots \wedge Z_e \|$ vanishes. Given a vector $ x \in \mathbb{R}^n $ and a subspace $ V \subset \mathbb{R}^n $, we denote by $ x^V $ the orthogonal projection of $ x $ onto $ V $. One has $
 \DD_{N+i} = \| X_{N+i}^{B^\perp} \| \| Z_1 \ldots \wedge Z_e \| = \omega_1(\Span(X_{N+i}), B) \| X_{N+i} \| H(B).$ 
Now, $ \omega_1(\Span(X_{N+i}), B) \leq \omega(X_{N+i}, Y) + \omega_1(A, B)$ by the triangle inequality (see \cite[section 8]{Schmidt}). Also, $\| X_{N+i} \| \leq c_{\ref{5cons_major_norme_XN}} \theta^{{\alpha_{N+i}}} $ according to (\ref{5lem_norme_XN}), and $ \omega_1(A,B) = \psi_1(A, B)$. Thus:
\begin{align*}
 \DD_{N+i} &\leq c_{\ref{5cons_major_norme_XN}} \theta^{{\alpha_{N+i}}} \left( \omega(X_{N+i}, Y) + \psi_1(A, B) \right) H(B).
\end{align*}
Using (\ref{5omega_XN_Y}) which gives $ \omega(X_{N+i}, Y) \leq c_{\ref{5cons_omega_XN_Y}}\theta^{-{\alpha_{N+i+1}}} $ and the hypothesis (\ref{5hyp_lem_meilleurs_espaces}), we obtain:
\begin{align*}
 \DD_{N+i} \leq c_{\ref{5cons_major_norme_XN}}\theta^{{\alpha_{N+i}}} \left( c_{\ref{5cons_omega_XN_Y}} \theta^{-{\alpha_{N+i+1}}} + H(B)^{-K_e - \varepsilon} \right) H(B) \leq c_{\ref{cons_maj_DNi}} \left( \theta^{-{\alpha_{N+i+1}}+{\alpha_{N+i}}} H(B) + \theta^{{\alpha_{N+i}}}H(B)^{-K_e - \varepsilon + 1} \right)
\end{align*}
with $\cons \label{cons_maj_DNi} >0$ independent of $B$. According to the choice of $N$ in (\ref{5choix_du_N}), we have $\theta^{{\alpha_{N+i}}} \leq \theta^{{\alpha_{N+e-1}}} \leq H(B)^{K_e+ \frac{\varepsilon}{2} -1 }$ and $ \theta \geq H(B)^{\frac{K_e+ \frac{\varepsilon}{2} -1 }{{\alpha_{N+e}}} } $. Therefore, for all $i \in \llbracket 0,e-1 \rrbracket$:
\begin{align}\label{5inegal_D_N_i}
 \DD_{N+i} \leq c_{\ref{cons_maj_DNi}}\left( H(B)^{\tau_i } + H(B)^{- \frac{\varepsilon}{2} } \right)
\end{align}
where we set $\tau_i = 1 + \frac{(-{\alpha_{N+i+1}}+{\alpha_{N+i}})(K_e + \frac{\varepsilon}{2} -1) }{{\alpha_{N+e}}}$. We first note that $-{\alpha_{N+i+1}}+{\alpha_{N+i}}$ is maximal for $i = 0$. Indeed, if $i \geq 1$:
\begin{align*}
 {\alpha_{N+1}}+{\alpha_{N+i}} \leq 2\alpha_{N+i} \leq \alpha_{N+i+1} \leq {\alpha_{N+i+1}} +{\alpha_{N}}
\end{align*}
using the fact that $\gamma_{N+i+1} \geq 2 $ and $\alpha_N \geq 0$.
We thus have $\tau_i \leq \tau_0$ for all $ i \in \llbracket 0, e-1 \rrbracket$, and we will show that $\beta_0 \leq -c_{\ref{cons_epsilon_negatif}} \varepsilon$ with a certain constant $c_{\ref{cons_epsilon_negatif}} > 0 $ independent of $B$ that remains to be defined. We have:
\begin{align*}
 \tau_0 = 1 + \frac{(-{\alpha_{N+1}}+{\alpha_{N}})(K_e + \frac{\varepsilon}{2} -1) }{{\alpha_{N+e}}} = 1 + \frac{(-\gamma_{N+1} + 1)(K_e + \frac{\varepsilon}{2} -1) }{{\gamma_{N+1}\ldots\gamma_{N+e}}};
\end{align*}
and finally:
\begin{align}\label{5maj_separe_Ke_eps}
 \tau_0 &= \frac{\gamma_{N+1}\ldots\gamma_{N+e} + (-\gamma_{N+1} + 1)(K_e + \frac{\varepsilon}{2} -1) }{{\gamma_{N+1}\ldots\gamma_{N+e}}} \nonumber \\
 &\leq \frac{ (-\gamma_{N+1}+ 1) (K_e - 1 ) + \gamma_{N+1}\ldots\gamma_{N+e} }{\gamma_{N+1}\ldots\gamma_{N+e}} - c_{\ref{cons_epsilon_negatif}} \varepsilon
\end{align}
where $\cons \label{cons_epsilon_negatif} = \frac{1+\sqrt{5}}{4K_e} 
 \leq \frac{(\gamma_{N+1} - 1)}{2\gamma_{N+1}\ldots\gamma_{N+e}}$ is independent of $B$. We now bound the other term:
 \begin{align*}
 (-\gamma_{N+1}+ 1) (K_e - 1 ) + \gamma_{N+1}\ldots\gamma_{N+e} \leq (-\gamma_{N+1}+ 1) (K_e - 1 ) + K_e \\
 = -K_e(\gamma_{N+1} -2) + \gamma_{N+1} -1 .
 \end{align*}
Since $K_e = \max\limits_{i \in \llbracket 0,T-1 \rrbracket } \gamma_{i+1}\ldots\gamma_{i+e}\geq \gamma_{N+1} > 2$, we have $-K_e(\gamma_{N+1} -2) + \gamma_{N+1} -1 \leq - \gamma_{N+1}(\gamma_{N+1} -2) + \gamma_{N+1} -1 \leq -\gamma_{N+1}^2 + 3\gamma_{N+1} -1 \leq 0$
since $\gamma_{N+1} \geq 2 +\frac{\sqrt{5}-1}{2} = \frac{3+\sqrt{5}}{2}$. 
Thus, according to inequality (\ref{5maj_separe_Ke_eps}), for all $i \in \llbracket 0,e-1 \rrbracket$:
\begin{align*}
 \tau_i \leq \tau_0 \leq -c_{\ref{cons_epsilon_negatif}} \varepsilon
\end{align*}
with $c_{\ref{cons_epsilon_negatif}} > 0$ independent of $B$. 

Returning to the inequality (\ref{5inegal_D_N_i}), for all $i \in \llbracket 0,e-1 \rrbracket$ we have:
\begin{align*}
 \DD_{N+i} \leq c_{\ref{cons_maj_DNi}}\left( H(B)^{\tau_i } + H(B)^{- \frac{\varepsilon}{2} } \right) \leq c_{\ref{cons_maj_DNi}}\left( H(B)^{-c_{\ref{cons_epsilon_negatif}} \varepsilon} + H(B)^{- \frac{\varepsilon}{2} } \right).
\end{align*}
The term on the right tends to $0$ as $H(B) \to + \infty$. In particular, if $H(B)$ is large enough depending on $c_{\ref{cons_maj_DNi}},c_{\ref{cons_epsilon_negatif}} $, and $\varepsilon$, we have for all $ i \in \llbracket 0,e-1 \rrbracket, \quad \| X_{N+i} \wedge Z_1 \ldots \wedge Z_e \| = \DD_{N+i} < 1. $ Lemma~\ref{2lem_X_in_B} gives: 
\begin{align*}
 \forall i \in \llbracket 0,e-1 \rrbracket, \quad X_{N+i} \in B.
\end{align*}
Recalling that $B_{N,e} = \Span(X_N, \ldots, X_{N+e-1})$, we have shown that if $H(B)$ is large enough, then $B_{N,e} \subset B$ for $N$ satisfying $ \theta^{{\alpha_{N+e-1}}} \leq H(B)^{K_e + \frac{\varepsilon}{2} -1 } < \theta^{{\alpha_{N+e}}}$. By equality of dimensions, we then have $B_{N,e} =B$, and the lemma is proven.
\end{proof}

We have proven that the subspaces $B_{N,e}$ provide the best approximations of $A$ for the first angle. With the relation $\eqref{5relation_angle_hauteur}$, we established that:
\begin{align*}
 \psi_1(A,B_{N,e}) \geq c_{\ref{5cons_minor_prox_A_BN}} H(B_{N,e})^{-\gamma_{N+1}\ldots\gamma_{N+e}} \geq c_{\ref{5cons_minor_prox_A_BN}} H(B_{N,e})^{-\max\limits_{i \in \llbracket 0, T-1\rrbracket} \gamma_{i+1}\ldots\gamma_{i+e}}.
\end{align*}
Thus, the Diophantine exponent is bounded from above: $ \mu_n(A|e)_1 \leq \max\limits_{i \in \llbracket 0, T-1\rrbracket} \gamma_{i+1}\ldots\gamma_{i+e} $ which concludes the proof of Proposition~\ref{5prop_technique} using the first inequality \eqref{inegal_frombelow}.

\section{Proofs of Theorem~\ref{6theo_principal} and Theorem~\ref{1theo_premier_angle}} \label{sect_cas_dd}

Using Proposition~\ref{5prop_technique} we can prove Theorem~\ref{6theo_principal} by induction on $d$. It corresponds to Chapter 6 of \cite{Guillot_these}.

\bigskip

In section~\ref{sect_cas_d1}, the case $d = 1$ has been proven. For $d > 1$, we construct a space $A$ of dimension $d$ as the direct sum of a line $\Span(Y_1)$ (constructed as in section~\ref{sect_cas_d1}) and a space of dimension $d -1$ whose Diophantine exponents are known by the induction hypothesis. 
We then compute $\mu_n(A|e)_1$, for $e \in \llbracket 1, n-d \rrbracket$. To do this, we actually show that the best subspaces approximating $A$ are those that are close to $\Span(Y_1)$, and thus $ \mu_n(A|e)_1 = \mu_n(\Span(Y_1)|e)_1$.

We define $C_{1} = 2 + \dfrac{\sqrt{5}-1}{2}$ and for every integer $d \in \llbracket 2, n-1 \rrbracket$,
\begin{align}\label{6definition_Cd}
C_{d} = 5n^2 (C_{d-1})^{2n}.
\end{align}

For $d \in \llbracket 1, n-1 \rrbracket$, we define the following induction hypothesis $\HH(d)$:
\\ \textbf{For every $(\gamma_1, \ldots, \gamma_{n-d}) \in [C_{d}, +\infty[^{n-d}$, there exists a space $A$ of dimension $d$ generated by vectors of the form:}
\begin{align*}
 Y_1 = \begin{pmatrix}
 1 \\
 0 \\ 
 \vdots \\
 0 \\
 \tau_{0,1} \\
 \vdots \\ 
 \tau_{n-d-1,1}
 \end{pmatrix}, \quad Y_2 = \begin{pmatrix}
 1\\
 0\\ 
 \vdots \\
 \tau_{0,2} \\
 \tau_{1,2} \\
 \vdots \\ 
 \tau_{n-d,2}
 \end{pmatrix}, \ldots, \quad Y_d = \begin{pmatrix}
 1\\
 \tau_{0,d} \\
 \tau_{1,d} \\
 \vdots \\
 \\
 \vdots \\ 
 \tau_{n-1,d}
 \end{pmatrix}
\end{align*}
\textbf{with $\tau_{i,j} >0 $ such that the family $(\tau_{i,j})$ is algebraically independent over $\Q$, which satisfies for all $e \in \llbracket 1, n-d \rrbracket $: }
\begin{align*}
 \mu_n(A|e)_1 = \max\limits_{i \in \llbracket 0, n-d-e \rrbracket } \gamma_{i+1}\ldots\gamma_{i+e}.
\end{align*}
The case $d = 1$ is treated in Proposition~\ref{5prop_technique} so we fix $d \in \llbracket 2, n-1 \rrbracket$ and suppose that $\HH(d-1)$ is true. 

\subsection{Construction of the subspace \texorpdfstring{$A$}{}}
We apply $\HH(d-1)$ with $\gamma'_1 = \ldots = \gamma'_{n-d+1} = C_{d-1}$. Then, there exist vectors $Y_2, \ldots, Y_d \in \R^n$ of the form 
\begin{align*}
Y_2 &= \begin{pmatrix}
1 & 0 & \cdots& 0 & \tau_{0,2} & \cdots & \tau_{n-d,2}
\end{pmatrix}^\intercal, \\ Y_3 &= \begin{pmatrix}
1 & 0 & \cdots & 0 & \tau_{0,3} & \tau_{1,3} & \cdots & \tau_{n-d+1,3}
\end{pmatrix}^\intercal, \\ &\vdots & \\ \quad Y_d &= \begin{pmatrix}
1 & \tau_{0,d} & \tau_{1,d} & \cdots & \tau_{n-1,d}
\end{pmatrix}^\intercal
\end{align*}
with $\tau_{i,j} >0 $ and such that the family $(\tau_{i,j})$ is algebraically independent over $\Q$. The space $A' = \Span(Y_2, \ldots, Y_d)$ satisfies 
$ \forall e \in \llbracket 1, n-d+1 \rrbracket \quad \mu_n(A'|e)_1 = \lambda_e$ where we denote by $\lambda_e = (C_{d-1})^e $.

Let $ (\gamma_1, \ldots, \gamma_{n-d}) \in [C_{d}, +\infty[^{n-d}$. We shall now construct $Y_1 \in \R^n$ as in section~\ref{sect_cas_d1}. We extend the sequence $\gamma = (\gamma_k)_{k \in \Nx}$ by $ \gamma_i = C_d$ for all $i \in \llbracket n-d, 2n-2d \rrbracket$,
and by periodicity $\forall i \in \llbracket 1, 2n-2d \rrbracket, \forall k \in \N, \gamma_{i + k(2n-2d)} = \gamma_i.$
The sequence $(\gamma_k)_{k \in \Nx}$ then takes its values only in $\{ \gamma_1, \ldots, \gamma_{n-d}, C_d \}$.

\bigskip
We also introduce the sequence $\alpha = (\alpha_k)_{k \in \N}$ defined by
 $\alpha_0 = 1$, and $ \forall k \in \N, \quad \alpha_{k+1} = \gamma_{k+1}\alpha_k $. Let $\theta$ be a prime number greater than or equal to $5$, and let $\phi: \N \to \llbracket 0, n-d-1 \rrbracket $ be defined by:
\begin{align*}
 \phi(k) &= (k \mod (n-d)) \in \llbracket 0, n-d-1 \rrbracket.
\end{align*}

According to Lemma~\ref{2lem_sigma_alg_indep}, there exist $n-d$ sequences $u^0,\ldots, u^{n-d-1}$ satisfying: 
\begin{align}\label{6cons_suite_u}
 \forall j \in \llbracket 0, n-d-1 \rrbracket, \forall k \in \N, \quad u^j_k \left\{ \begin{array}{lll}
 \in \{1, 2 \} &\text{ if } \phi(k) = j \\
 = 0 &\text{ otherwise }
 \end{array} \right. 
\end{align}
where $u^j_k \neq 0 $ if and only if $j = k \mod (n-d-1)$; and such that the family $(\sigma_0, \ldots, \sigma_{n-d-1})$ is algebraically independent over $\Q(\FF )$, where $\FF =(\tau_{i,j})$ and for $j \in \llbracket 0, n-d-1 \rrbracket$, $ \sigma_j = \sum\limits_{k = 0}^{+ \infty} \dfrac{u^j_k}{\theta^{\floor{\alpha_k}}}.$ 
We then set $Y_1 = \begin{pmatrix}
 1 & 0 & \cdots & 0 &
 \sigma_0 &
 \cdots &
 \sigma_{n-d-1}
 \end{pmatrix}^\intercal $.
Since this construction is the same as the one in section~\ref{sect_cas_d1}, Proposition~\ref{5prop_technique} yields:
\begin{align}\label{6exposant_Y1}
 \forall e \in \llbracket 1,n-d \rrbracket, \quad \mu_n(\Span(Y_1)|e)_1 = \max\limits_{i \in \llbracket 0, n-d-e \rrbracket } \gamma_{i+1}\ldots\gamma_{i+e}
 \end{align}
by embedding $Y_1$ into $\R \times \{0\}^{d-1} \times \R^{n-d}$ and using Theorem 1.2 of \cite{joseph_spectre}.
We then set $A = \Span(Y_1, \ldots, Y_d)$ and denote by $K_e$ the quantities $ K_e = \max\limits_{i \in \llbracket 0, n-d-e \rrbracket } \gamma_{i+1}\ldots\gamma_{i+e}$ for $e \in \llbracket 1, n-d \rrbracket$. 
\bigskip

Define $M$ as the matrix which columns are $Y_1, \ldots, Y_d$. We can write $M = \begin{pmatrix}
 G \\ \Sigma
 \end{pmatrix}$ with:
 \begin{align*}
 G = \begin{pmatrix}
 1 & 1 & \cdots & \cdots & 1 \\
 0 & & & 0& \tau_{0,d} \\
 \vdots & \cdots & 0 & \tau_{0,d-1} & \tau_{1,d} \\
 \vdots & \udots & & & \vdots \\
 0 & \tau_{0, 2} & \cdots & \cdots & \tau_{d-2,d}
 \end{pmatrix}
 \text{ and } \Sigma = \begin{pmatrix}
 \sigma_0 & \tau_{1,2} & \cdots & \tau_{d-1,d} \\
 & &\vdots & \\
 \sigma_{n-d-1} & \tau_{n-d,2} & \cdots & \tau_{n-2,d}
 \end{pmatrix}.
 \end{align*}
 We have $\Sigma \in \mathcal{M}_{n-d,d}(\R)$ and $G \in \GL_{d}(\R)$ because $\det(G) = \pm \prod\limits_{i=2}^{d} \tau_{0, i} \neq 0 $ by expanding with respect to the first column. Moreover, by construction, the family $\{ \sigma_k \} \cup \{ \tau_{i,j}\}$ is algebraically independent over $\Q$. In particular, we notice that the coefficients of $\Sigma$ form a set algebraically independent over $\Q(\FF)$ where $\FF$ is the set of coefficients of $G$. So the space $A$ spanned by the vectors $Y_1, \ldots, Y_d$ is $(n-d,1)$-irrational by Lemma~\ref{3lem_1_irrat}.

 We now fix $e \in \llbracket 1, n-d \rrbracket$ and compute $\mu_n(A|e)_1$. Using $(\ref{6exposant_Y1})$, we have $\mu_n(\Span(Y_1)|e)_1 = K_e.$
We note that $\Span(Y_1) \subset A$ and thus obtain
\begin{align}\label{6minoration_exposant_par_Ke}
 \mu_n(A|e)_1 \geq \mu_n(\Span(Y_1)|e)_1 = K_e.
\end{align}
It remains to bound from above the exponent $\mu_n(A|e)_1$ by $K_e$, and this is the subject of the remainder of the section.

\subsection{First angle}

We want to show that the "best" rational subspaces approaching $A$ are those that best approximate the line $\Span(Y_1)$. As in section~\ref{sect_cas_d1} we call them $B_{N,e}$ for $N \in \N$, namely $B_{N,e }= \Span(X_N, X_{N+1}, \ldots, X_{N+e-1})$ with 
\begin{align*}
X_N &= \theta^{\lfloor \alpha_N \rfloor} \begin{pmatrix}
1 &
0 &
\cdots &
0 &
\sigma_{0,N} &
\cdots &
\sigma_{n-d-1,N}
\end{pmatrix}^\intercal \in \mathbb{Z}^n
\end{align*}
where we have defined $\sigma_{j,N} = \sum\limits_{k=0}^{N} \frac{u^{j}_k}{\theta^{\lfloor \alpha_k \rfloor}} \in \frac{1}{\theta^{\lfloor \alpha_N \rfloor}} \mathbb{Z}$ for $j \in \llbracket 0, n-d-1 \rrbracket$. The subspace $B_{N,e}$ satisfies Claim~\ref{5B_N_base_v} and Lemma~\ref{5lem_haut_BN}. In this section, we first bound from below the angle $\psi_1(A, B_{N,e})$.

\begin{lem}\label{6min_psi_A_BN}
There exists a constant $\cons \label{6cons_min_angle_1} >0 $ independent of $N$ such that for all $N \in \mathbb{N}$: 
\begin{align*}
\psi_1(A, B_{N,e}) \geq c_{\ref{6cons_min_angle_1}} \theta^{ - \alpha_{N+e}}.
\end{align*}
\end{lem}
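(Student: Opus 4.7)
The strategy is that every $X \in B_{N,e}$ lies close to the line $\Span(Y_1)$, while the complementary factor $A'=\Span(Y_2,\ldots,Y_d)$ of $A$ is quantitatively transverse to $B_{N,e}$. The key tool is the orthogonal projection $\pi: \R^n \to \R^{d-1}$ onto the coordinates indexed by $\llbracket 2, d\rrbracket$. By the explicit form of the generators given in the previous subsection, $\pi(Y_1)=0$, and both $X_N$ and the canonical basis vectors $v_{N+i}$ are supported on coordinates $\geq d+1$, so $\pi$ vanishes identically on $B_{N,e}$. On the other hand, the matrix of $\pi_{|A'}$ in the bases $(Y_2,\ldots,Y_d)$ and $(e_2,\ldots,e_d)$ is anti-triangular with anti-diagonal entries $\tau_{0,2},\tau_{0,3},\ldots,\tau_{0,d}$, all nonzero; hence $\pi_{|A'}$ is an isomorphism, and there exists a constant $c' > 0$ (independent of $N$) with $\|\pi(Z')\|\geq c'\|Z'\|$ for every $Z' \in A'$.

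For $X \in B_{N,e}\setminus\{0\}$ and $Z\in A\setminus\{0\}$, decompose $Z = a_1 Y_1 + Z'$ with $Z' \in A'$. Two estimates will be combined. \emph{First,} letting $W$ denote the component of $Z$ orthogonal to $\Span(X)$, we have $\omega(X,Z)=\|W\|/\|Z\|$ and $\pi(W)=\pi(Z')$ (since $\pi(X)=\pi(Y_1)=0$), whence $\omega(X,Z)\geq c'\|Z'\|/\|Z\|$. \emph{Second,} expanding $X\wedge Z = a_1\,X\wedge Y_1 + X\wedge Z'$ and using $\|X\wedge Z'\|\leq\|X\|\,\|Z'\|$ together with $\|X\wedge Y_1\|/\|X\|=\|Y_1\|\,\omega(X,Y_1)$ gives
\[
\omega(X,Z)\;\geq\;\frac{|a_1|\,\|Y_1\|}{\|Z\|}\,\omega(X,Y_1)\;-\;\frac{\|Z'\|}{\|Z\|}.
\]
Since the configuration $(Y_1,B_{N,e})$ is precisely that of Section~\ref{sect_cas_d1} after embedding in the subspace $\{x_2=\cdots=x_d=0\}$ (the coordinates in $\llbracket 2,d\rrbracket$ play no role in the estimates of Section~\ref{sect_cas_d1}), Lemma~\ref{5lem_angle_entre_A_BN} applies verbatim and yields $\omega(X,Y_1)\geq c_{\ref{5cons_minor_prox_A_BN}}\theta^{-\alpha_{N+e}}$.

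Normalize $\|Z\|=1$ and set $\delta = c_{\ref{5cons_minor_prox_A_BN}}\theta^{-\alpha_{N+e}}/(4\|Y_1\|)$. If $\|Z'\|\geq\delta$, the first estimate yields $\omega(X,Z)\geq c'\delta$. If $\|Z'\|<\delta$ (which is $\leq 1/2$ for all $N$ large enough), then $|a_1|\,\|Y_1\|\geq 1-\delta\geq 1/2$, and the second estimate gives $\omega(X,Z)\geq\tfrac12 c_{\ref{5cons_minor_prox_A_BN}}\theta^{-\alpha_{N+e}} - \delta \geq\tfrac14 c_{\ref{5cons_minor_prox_A_BN}}\theta^{-\alpha_{N+e}}$. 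Combining the two cases proves the claim for all $N$ large; the finitely many remaining $N$ are absorbed into the constant using that $\psi_1(A,B_{N,e})>0$ (which follows from $A$ being $(e,1)$-irrational, as already established). The main technical point is the anti-triangularity of the matrix of $\pi_{|A'}$, a short combinatorial check using the zero-pattern of the $Y_j$'s inherited from $\HH(d-1)$; after that, everything reduces to the clean two-case analysis above.
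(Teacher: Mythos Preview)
Your proof is correct and takes a genuinely different route from the paper's. The paper argues by expanding $X$ and $Y$ in coordinates and bounding $\|X\wedge Y\|$ from below via carefully chosen $2\times2$ minors of $(X\mid Y)$; this leads to a two-level case split (on whether some linear combination $\sum_{j\geq k} b_j\tau_{j-k,j}$ is large, and then on whether $|a|$ is large), and the second main case requires the auxiliary Lemma~\ref{6lem_maj_b} to convert smallness of the triangular combinations $\sum_{j\geq k} b_j\tau_{j-k,j}$ into smallness of each individual $|b_j|$.

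Your argument replaces all of this by a single geometric observation: the orthogonal projection $\pi$ onto the coordinate block $\llbracket 2,d\rrbracket$ annihilates both $Y_1$ and $B_{N,e}$ while restricting to an isomorphism on $A'$ (precisely because the submatrix of $G$ on rows and columns $2,\ldots,d$ is anti-triangular with nonzero anti-diagonal $\tau_{0,2},\ldots,\tau_{0,d}$). This immediately gives the transversality estimate $\omega(X,Z)\geq c'\|Z'\|/\|Z\|$, after which you reduce to the already-established line case via Lemma~\ref{5lem_angle_entre_A_BN}. The effect is that your proof is shorter, more modular (it cleanly separates the transversality of $A'$ from the Section~\ref{sect_cas_d1} estimate for $\Span(Y_1)$), and makes Lemma~\ref{6lem_maj_b} unnecessary. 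The paper's approach, in exchange, is more self-contained and yields somewhat more explicit constants, since everything is tracked through concrete minors rather than through the operator norm of $(\pi_{|A'})^{-1}$.

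One minor remark: in the final inequality of your Case~2 you implicitly use $\|Y_1\|\geq 1$ (needed for $\delta\leq\tfrac14 c_{\ref{5cons_minor_prox_A_BN}}\theta^{-\alpha_{N+e}}$); this is of course true since the first coordinate of $Y_1$ equals $1$, but it is worth stating.
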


\begin{proof}
Let $X \in B_{N,e}$ and $Y \in A$ be nonzero vectors such that $\omega(X,Y) = \psi_1(A,B_{N,e}).$
According to Claim~\ref{5B_N_base_v}, we have $B_{N,e }= \Span(X_N, v_{N+1}, \ldots, v_{N+e-1})$ where $v_j = \begin{pmatrix}
 0 &
 \cdots & 0 & \rho_{0, j} & \cdots & \rho_{n-d-1, j}
\end{pmatrix}^\intercal $ with $\rho_{i, j} = \dfrac{1}{u^{\phi(j)}_{j}}u^i_{j} \in \{0,1 \}$ for $i \in \llbracket 0, n-d-1 \rrbracket$.
We then define $a, a_1, \ldots, a_{e-1} \in \mathbb{R}$ such that $ X = a\theta^{-\floor{\alpha_N}}X_N + \sum\limits_{i =1}^{e-1} a_iv_{N+i}$, so $X$ is of the form 
\begin{align*}
 X = \begin{pmatrix}
a &
0 &
\cdots &
0 &
a\sigma_{0,N} + \sum\limits_{i = 1 }^{e-1} a_i \rho_{0,N+i} &
\cdots &
a\sigma_{n-d-1,N} +\sum\limits_{i = 1 }^{e-1} a_i \rho_{n-d-1,N+i}
\end{pmatrix}^\intercal.
\end{align*}
Similarly, we define $b_1, \ldots, b_d \in \mathbb{R}$ such that $Y = \sum\limits_{j = 1}^d b_jY_j $, so $Y$ is of the form 
\begin{align*}
 Y = \begin{pmatrix}
 \sum\limits_{j=1}^d b_j &
b_d \tau_{0,d} &
\cdots &
\sum\limits_{j = 2}^d b_j \tau_{j-2,j} &
b_1\sigma_0 + \sum\limits_{j =2 }^d b_j \tau_{j-1,j} &
\cdots & 
b_1\sigma_{n-d-1} + \sum\limits_{j =2 }^d b_j\tau_{j+n-d-2,j}
\end{pmatrix}^\intercal.
\end{align*}
Assume, without loss of generality, that we have $a^2 + \sum\limits_{i=1}^{e-1} a_i^2 = \sum\limits_{j=1}^d b_j^2 = 1.$ In particular, this assumption yields 
$ \| X \| \cdot \|Y\| \leq c_{\ref{6cons_maj_prodXY}} $
with $\cons \label{6cons_maj_prodXY} >0$ a constant independent of $N$. 
\\ We then seek to bound $\|X \wedge Y \|$ from below. Here, we use the fact that the coordinates of the vector $X \wedge Y$ are the minors of size $2$ of the matrix $\begin{pmatrix}
 X & Y
\end{pmatrix} \in \MM_{n,2}(\mathbb{R})$. In particular, $\| X \wedge Y \| $ is bounded below by the absolute value of each of these minors. 
We define the following quantities: 
\begin{align*}
 \sigma = \max(1, \max\limits_{i \in \llbracket 0, n-d-1\rrbracket} (\sigma_i )), &\quad
 \tau = \min\limits_{ j \in \llbracket 2, d\rrbracket} \min\limits_{i \in \llbracket 1, n-1-d+j\rrbracket}(\tau_{i, j} ), \\
 T = \max\limits_{ j \in \llbracket 2, d\rrbracket} \max\limits_{i \in \llbracket 1, n-1-d+j\rrbracket}(\tau_{i, j} ), &\quad
 s = \dfrac{1}{\theta^{\floor{\alpha_{n-d-1}}}}.
\end{align*}
We then assume $N \geq n-d-1$, thus for all $i \in \llbracket 0, n-d-1 \rrbracket $ there exists $ k \in \llbracket 0, N \rrbracket$ such that $\phi(k) = i$, hence $u^i_k \in \{1,2\}$ and 
$ s \leq \sigma_{i, N}. $
Finally, we define the following quantity:
\begin{align}\label{6defM}
 M = \dfrac{\tau}{4(d-1)(T+\sigma)(1 + \frac{T}{\tau})^{d-2} } >0.
\end{align}
We will show that
\begin{align}\label{6minor_voulue_XYwedge}
 \| X \wedge Y \| \geq c_{\ref{6cons_minor_XwedgeY}} \theta^{ - \alpha_{N+e}}
\end{align}
with $\cons \label{6cons_minor_XwedgeY} > 0 $ independent of $N$, by considering different cases based on the values taken by the $b_j$.
\\ \textbullet \, \underline{First case:} If there exists $k \in \llbracket 2, d \rrbracket$ such that $\left| 
 \sum\limits_{j = k}^d b_j \tau_{j-k,j} \right| \geq M \theta^{- \alpha_{N+e}}$, we study the minor of the matrix $\begin{pmatrix}
 X & Y
\end{pmatrix} $ corresponding to rows $d- k +2 $ and $d+1 +\ell $ with $\ell \in \llbracket 0, n-d-1 \rrbracket$. We then have:
\begin{align}\label{6minor_cas_1_bminor}
 \forall \ell \in \llbracket 0, n-d-1 \rrbracket, \quad \| X \wedge Y \| &\geq \left| \det \begin{pmatrix}
 0 & \sum\limits_{j = k}^d b_j \tau_{j-k,j} \\
 a\sigma_{\ell,N} + \sum\limits_{i = 1 }^{e-1} a_i \rho_{\ell,N+ i} & b_1\sigma_\ell + \sum\limits_{j =2 }^d b_j \tau_{j+ \ell -1,j}
 \end{pmatrix} \right| \nonumber \\
 &= \left| \sum\limits_{j = k}^d b_j \tau_{j-k,j} \right| \cdot \left| a\sigma_{\ell,N} + \sum\limits_{i = 1 }^{e-1} a_i \rho_{\ell,N+i} \right| \nonumber \\
 &\geq M \theta^{- \alpha_{N+e}}\left| a\sigma_{\ell,N} + \sum\limits_{i = 1 }^{e-1} a_i \rho_{\ell,N+i} \right|.
\end{align}
We then distinguish between two cases: $|a| \geq \dfrac{1}{2\sigma\sqrt{e}}$ and $|a| < \dfrac{1}{2\sigma\sqrt{e}}$. In the first case, we choose $\ell = \phi(N+e)$. We have in particular $ \rho_{\ell,N+1} = \ldots = \rho_{\ell,N+e-1} = 0
$
and thus according to $(\ref{6minor_cas_1_bminor})$ and using the fact that $ s \leq \sigma_{i, N}:$
\begin{align*}
 \| X \wedge Y \| \geq M \theta^{- \alpha_{N+e}}\left| a\sigma_{\ell,N} \right| \geq \dfrac{M\sigma_{\ell,N}}{2\sigma\sqrt{e}} \theta^{- \alpha_{N+e}} \geq \dfrac{Ms}{2\sigma\sqrt{e}} \theta^{- \alpha_{N+e}}
\end{align*}
which yields $(\ref{6minor_voulue_XYwedge})$.
In the second case, we have $|a| < \dfrac{1}{2\sigma\sqrt{e}} $ and thus $ \sum\limits_{i = 1}^{e-1}a_i^2 = 1 - a^2 \geq 1- \dfrac{1}{4\sigma^2 e} \geq \dfrac{e-1}{e}$ since $\sigma \geq 1$. In particular, there exists $i \in \llbracket 1, e-1 \rrbracket$ such that $|a_i| \geq \dfrac{1}{\sqrt{e}}$. We then choose $\ell = \phi(N+i)$. In particular $ \rho_{\ell, N+i } = 1 \text{ and } \rho_{\ell,N+1} = \ldots =\rho_{\ell,N+i-1} = \rho_{\ell,N+i+1}= \ldots = \rho_{\ell,N+e-1} = 0.$
According to $(\ref{6minor_cas_1_bminor})$ we have:
\begin{align*}
 \| X \wedge Y \| &\geq M \theta^{- \alpha_{N+e}}\left| a\sigma_{\ell,N} + a_i \right| \geq M \theta^{- \alpha_{N+e}} \left(|a_i| - |a|\sigma \right) \geq M \theta^{- \alpha_{N+e}} \left(\dfrac{1}{\sqrt{e}} - \dfrac{1}{2\sqrt{e}}\right) \geq \dfrac{M}{2\sqrt{e}} \theta^{- \alpha_{N+e}}.
\end{align*}
\textbullet \, \underline{Second case:} 
 $\forall k \in \llbracket 2, d \rrbracket, \quad \left| 
 \sum\limits_{j = k}^d b_j \tau_{j-k,j} \right| < M \theta^{- \alpha_{N+e}}.$
We will show later (in Lemma~\ref{6lem_maj_b}) that we then have: 
\begin{align}\label{6resultat_sur_familleb_admis}
 \forall j \in \llbracket 2, d \rrbracket, \quad |b_j| \leq \dfrac{M(1 + \frac{T}{\tau})^{d-2} \theta^{-\alpha_{N+e}}}{\tau}, \quad \left| \sum\limits_{j=1}^d b_j \right| \geq \dfrac{1}{2} \text{ and } |b_1| \geq \dfrac{3}{4}
\end{align}
because $M = \dfrac{\tau}{4(d-1)(T+\sigma)(1 + \frac{T}{\tau})^{d-2} }\leq \dfrac{\tau}{4(d-1)(1 + \frac{T}{\tau})^{d-2} \theta^{-\alpha_{N+e}}} $. We currently accept this result. We consider the minor of the matrix $\begin{pmatrix}
 X & Y
\end{pmatrix} $ corresponding to rows $1$ and $d+1 +\ell $ with $\ell \in \llbracket 0, n-d-1 \rrbracket$. We then have for any $\ell \in \llbracket 0, n-d-1 \rrbracket,$
\begin{align}\label{6minor_cas_2_bminor}
 \| X \wedge Y \| &\geq \left| \det \begin{pmatrix}
 a & \sum\limits_{j = 1}^d b_j \\
 a\sigma_{\ell,N} + \sum\limits_{i = 1 }^{e-1} a_i \rho_{\ell,N+i} & b_1\sigma_\ell + \sum\limits_{j =2 }^d b_j \tau_{j+\ell-1,j}
 \end{pmatrix} \right|. 
\end{align}
We then distinguish between two cases: $|a| \geq K$ and $|a| < K $ with $K = \dfrac{1}{4(\sigma + (T + \sigma)(d-1))\sqrt{e}}$. In the first case, we choose $ \ell = \phi(N+e)$. We have in particular $\rho_{\ell,N+e} = 1 \text{ et } \rho_{\ell,N+1} = \ldots = \rho_{\ell,N+e-1} = 0$ and $\sigma_{\ell,N} = \sum\limits_{k = 0}^{N} \dfrac{u^{\ell}_k}{\theta^{\floor{\alpha_k}}} = \sum\limits_{k = 0}^{N+e-1 } \dfrac{u^{\ell}_k}{\theta^{\floor{\alpha_k}}} $ since then $u^{\ell}_{N+1} = \ldots = u^{\ell}_{N+e-1} = 0 $. We deduce that
\begin{align*}
 | \sigma_\ell - \sigma_{\ell, N} | = \sum\limits_{k = N+e}^{+ \infty } \dfrac{u^{\ell}_k}{\theta^{\floor{\alpha_k}}} \geq\dfrac{u^{\ell}_{N+e}}{\theta^{\floor{\alpha_{N+e}}}} \geq \dfrac{1}{\theta^{{\alpha_{N+e}}}}.
\end{align*}
Inequality $(\ref{6minor_cas_2_bminor})$ then gives:
\begin{align*}
 \| X \wedge Y \| &\geq \left| a\left(b_1\sigma_\ell + \sum\limits_{j =2 }^d b_j \tau_{j+ \ell-1,j} \right) - a\sigma_{\ell,N} \sum\limits_{j = 1}^d b_j\right| \geq |a| \left| b_1 (\sigma_\ell - \sigma_{\ell, N}) - \sum\limits_{j =2 }^d b_j (\sigma_{\ell, N} - \tau_{j+ \ell-1,j}) \right| 
\end{align*}
and thus by recalling that $\sigma$ and $T$ bound from above respectively the $\sigma_{\ell} $ and the $\tau_{\ell,j}$ we have 
\begin{align*}
 \| X \wedge Y\| &\geq |a| \left(|b_1|\dfrac{1}{\theta^{{\alpha_{N+e}}}} - (\sigma +T) \sum\limits_{j =2 }^d |b_j| \right) \geq |a|\left(\dfrac{3}{4\theta^{{\alpha_{N+e}}}} - (\sigma +T)(d-1) \dfrac{M(1 + \frac{T}{\tau})^{d-2} }{\tau \theta^{\alpha_{N+e}}} \right) \\
 &\geq \dfrac{|a|}{\theta^{\alpha_{N+e}}} \left(\dfrac{3}{4} - \dfrac{1}{4} 
 \right) \geq \dfrac{K}{2\theta^{\alpha_{N+e}}}
\end{align*}
using $(\ref{6resultat_sur_familleb_admis})$ and by definition of $M$ in $(\ref{6defM})$.

 In the second case, we have $|a| < K = \dfrac{1}{4(\sigma + (T + \sigma)(d-1))\sqrt{e}}$ and thus:
 \begin{align*}
 \sum\limits_{i = 1}^{e-1}a_i^2 = 1 - a^2 \geq 1- \dfrac{1}{16(\sigma + (T + \sigma)(d-1))^2 e} \geq \dfrac{e-1}{e}
 \end{align*}
 since $\sigma \geq 1$. In particular, there exists $i \in \llbracket 1, e-1 \rrbracket$ such that $|a_i| \geq \dfrac{1}{\sqrt{e}}$. We then choose $\ell = \phi(N+i)$. Specifically we have 
$ \rho_{\ell, N+i } = 1 \text{ and } \rho_{\ell,N+1} = \ldots \rho_{\ell,N+i-1} = \rho_{\ell,N+i+1}= \ldots = \rho_{\ell,N+e-1} = 0.$
According to $(\ref{6minor_cas_2_bminor})$ and $(\ref{6resultat_sur_familleb_admis})$, we have:
\begin{align*}
 \| X \wedge Y \| &\geq \left| a\bigg(b_1\sigma_\ell + \sum\limits_{j =2 }^d b_j \tau_{j + \ell -1,j} \bigg) - (a\sigma_{\ell,N} + a_i ) \sum\limits_{j = 1}^d b_j\right| \\
 &\geq \left| a_i \sum\limits_{j = 1}^d b_j\right| - \left|ab_1(\sigma_\ell - \sigma_{\ell,N}) \right| - \left| a\sum\limits_{j = 2}^d b_j(\tau_{j + \ell -1,j} - \sigma_{\ell, N}) \right| \\ 
 &\geq \dfrac{|a_i|}{2} - |a| (\sigma + (T + \sigma)\sum\limits_{j = 2}^d |b_j|) \geq \dfrac{1}{2\sqrt{e}} - |a| \bigg(\sigma + (T + \sigma)(d-1) \bigg) \geq \dfrac{1}{4\sqrt{e}}
\end{align*}
by bounding $\sum\limits_{j = 2}^d |b_j|$ by $ d-1$ since $\sum\limits_{j=1}^d b_j^2 = 1$.
Consequently, $\|X \wedge Y \| \geq \theta^{ - \alpha_{N+e}}$ if $N $ is big enough. In all cases, we find that, for all $N \geq n-d -1$, $ \| X \wedge Y \| \geq c_{\ref{6cons_minor_XwedgeY}} \theta^{ - \alpha_{N+e}}$
with $c_{\ref{6cons_minor_XwedgeY}} > 0 $ independent of $N$. By decreasing $c_{\ref{6cons_minor_XwedgeY}}$ if necessary, since $\psi_1(A,B_{N,e}) > 0 $ due to the $(e,1)$-irrationality of $A$, we can assume this inequality holds for all $N \in \N$.
\\ Using the fact that $ \| X \| \cdot \|Y\| \leq c_{\ref{6cons_maj_prodXY}} $, we therefore obtain:
\begin{align*}
 \psi_1(A, B_{N,e}) = \omega(X,Y) = \dfrac{\| X \wedge Y \| }{\| X \| \cdot \|Y\|} \geq c_{\ref{6cons_minor_XwedgeY}} c_{\ref{6cons_maj_prodXY}} ^{-1} \theta^{ - \alpha_{N+e}}.
\end{align*}
Thus, the lemma is proven with $c_{\ref{6cons_min_angle_1}} = c_{\ref{6cons_minor_XwedgeY}} c_{\ref{6cons_maj_prodXY}} ^{-1}$.
 
\end{proof}

We will now prove the result used in the previous proof.

\begin{lem}\label{6lem_maj_b}
Let $b_1, \ldots, b_d $ satisfy $ \sum\limits_{j=1}^d b_j^2 = 1$. Suppose there exists $0 <M \leq \dfrac{\tau}{4(d-1)(1 + \frac{T}{\tau})^{d-2} \theta^{-\alpha_{N+e}}} $ such that:
 \begin{align}\label{6lem_maj_b_hyp}
 \forall k \in \llbracket 2, d \rrbracket, \quad \left| 
 \sum\limits_{j = k}^d b_j \tau_{j-k,j} \right| < M \theta^{- \alpha_{N+e}}.
\end{align}
Then 
\begin{align*}
 |b_1| \geq \dfrac{3}{4}, \quad \forall j \in \llbracket 2, d \rrbracket, \quad |b_j| \leq \dfrac{M(1 + \frac{T}{\tau})^{d-2} \theta^{-\alpha_{N+e}}}{\tau} \text{ and } \left| \sum\limits_{j=1}^d b_j \right| \geq \dfrac{1}{2} 
\end{align*}
where $\tau$ and $T$ are respectively the minimum and maximum of the family $\{ \tau_{i,j}\}$.
\end{lem}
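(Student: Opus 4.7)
\begin{proofe}[Plan for Lemma~\ref{6lem_maj_b}]
Write $\varepsilon = M\theta^{-\alpha_{N+e}}$, so that hypothesis~\eqref{6lem_maj_b_hyp} reads $|\sum_{j=k}^d b_j \tau_{j-k,j}| < \varepsilon$ for every $k \in \llbracket 2,d\rrbracket$, and the size assumption on $M$ becomes $\varepsilon \leq \frac{\tau}{4(d-1)(1+T/\tau)^{d-2}}$. The strategy is to first extract, via descending induction on $k$, the bound $|b_k| \leq (\varepsilon/\tau)(1+T/\tau)^{d-k}$ for $k \in \llbracket 2,d\rrbracket$; then to use this bound together with the size assumption on $M$ to control the sum $\sum_{j=2}^d |b_j|$; and finally to deduce the three conclusions from the normalisation $\sum_{j=1}^d b_j^2 = 1$.

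For the inductive step, the $k = d$ case of \eqref{6lem_maj_b_hyp} gives $|b_d \tau_{0,d}| < \varepsilon$, hence $|b_d| \leq \varepsilon/\tau$, which is the base case. For general $k$, isolate $b_k\tau_{0,k}$ in $\sum_{j=k}^d b_j \tau_{j-k,j}$ and apply the triangle inequality and the bound $|\tau_{j-k,j}| \leq T$ to obtain
\begin{align*}
|b_k| \leq \frac{1}{\tau}\left(\varepsilon + T \sum_{j=k+1}^d |b_j|\right).
\end{align*}
Plugging in the induction hypothesis $|b_j| \leq (\varepsilon/\tau)(1+T/\tau)^{d-j}$ for $j \geq k+1$ and summing the resulting geometric series $\sum_{i=0}^{d-k-1}(1+T/\tau)^i$ telescopes to $(1+T/\tau)^{d-k}-1$ multiplied by $\tau/T$, yielding exactly $|b_k| \leq (\varepsilon/\tau)(1+T/\tau)^{d-k}$. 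This is the main computational step of the proof, though it is essentially bookkeeping.

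Since $d-k \leq d-2$ for $k \geq 2$ and $1 + T/\tau \geq 1$, the uniform bound $|b_j| \leq (\varepsilon/\tau)(1+T/\tau)^{d-2}$ holds for all $j \in \llbracket 2,d\rrbracket$, which is the stated estimate. The size assumption on $M$ then gives $|b_j| \leq \frac{1}{4(d-1)}$ for $j \geq 2$, so $\sum_{j=2}^d |b_j| \leq 1/4$.

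Finally, the elementary inequality $\sum_{j=2}^d b_j^2 \leq (\sum_{j=2}^d |b_j|)^2 \leq 1/16$ combined with $\sum_{j=1}^d b_j^2 = 1$ yields $b_1^2 \geq 15/16$, hence $|b_1| \geq \sqrt{15}/4 > 3/4$. The triangle inequality then gives $|\sum_{j=1}^d b_j| \geq |b_1| - \sum_{j=2}^d |b_j| \geq 3/4 - 1/4 = 1/2$, which completes the argument. There is no real conceptual obstacle here; the only delicate point is to keep track of the geometric sum in the induction and to make sure the numerical constant $4(d-1)(1+T/\tau)^{d-2}$ in the hypothesis on $M$ is tight enough to force $\sum_{j \geq 2}|b_j| \leq 1/4$.
\end{proofe}
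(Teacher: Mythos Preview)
Your proof is correct and follows essentially the same approach as the paper: descending induction on $k$ to establish $|b_k| \leq (\varepsilon/\tau)(1+T/\tau)^{d-k}$ via the geometric-series telescoping, then the uniform bound $|b_j| \leq \frac{1}{4(d-1)}$, and finally the normalisation $\sum b_j^2 = 1$ together with the triangle inequality. The only cosmetic difference is that the paper bounds $\sum_{j\geq 2} b_j^2$ by $(d-1)\max_j b_j^2$ rather than by $(\sum_{j\geq 2}|b_j|)^2$, but both routes give $|b_1| \geq 3/4$ with room to spare.
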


\begin{proof}
We proceed by descending induction on $i \in \llbracket 2, d\rrbracket$. We prove a more refined result:
\begin{align}\label{6lem_maj_b_hyp_rec}
\forall i \in \llbracket 2, d\rrbracket, \quad |b_i| \leq \dfrac{M(1 + \frac{T}{\tau})^{d-i} \theta^{-\alpha_{N+e}}}{\tau}.
\end{align}
If $i = d $ then hypothesis $(\ref{6lem_maj_b_hyp})$ with $ k =d$ gives $ |b_d| \leq \dfrac{ M \theta^{- \alpha_{N+e}}}{ \tau_{n-1,d}} \leq \dfrac{M(1 + \frac{T}{\tau})^{d-d} \theta^{-\alpha_{N+e}}}{\tau}.$

Let $i\in \llbracket 2, d-1 \rrbracket $. Suppose that for all $i' > i $ inequality $(\ref{6lem_maj_b_hyp_rec})$ holds. Then we apply hypothesis $(\ref{6lem_maj_b_hyp})$ with $ k = i$ and have $ \left| \sum\limits_{j= i}^d b_j \tau_{j-i, j} \right| < M \theta^{- \alpha_{N+e}}.$
In particular, we have $ | b_i \tau_{j-i, i}| \leq M \theta^{- \alpha_{N+e}} + \sum\limits_{j= i+1}^d \left| b_j \tau_{j-i, j} \right| $, and using the induction hypothesis:
\begin{align*}
 | b_i | \leq \dfrac{1}{\tau_{j-i, i}}\left(M \theta^{- \alpha_{N+e}} + \sum\limits_{j= i+1}^d \dfrac{M(1 + \frac{T}{\tau})^{d-j} \theta^{-\alpha_{N+e}}}{\tau}T \right) &\leq \dfrac{M}{\tau} \theta^{- \alpha_{N+e}} \left(1 + \dfrac{T}{\tau} \left(\dfrac{1 -(1 + \frac{T}{\tau})^{d-i} }{-\frac{T}{\tau}}\right)\right) \\&= \dfrac{M}{\tau}\theta^{- \alpha_{N+e}}\left(1 + \frac{T}{\tau}\right)^{d-i}
\end{align*}
which proves $(\ref{6lem_maj_b_hyp_rec})$ for all $i \in \llbracket 2,d \rrbracket$ and thus the second inequality of the lemma. In particular, we have:
\begin{align*}
 \forall i \in \llbracket 2, d\rrbracket, \quad |b_i| \leq \dfrac{M(1 + \frac{T}{\tau})^{d-2} \theta^{-\alpha_{N+e}}}{\tau} \leq \dfrac{1}{4(d-1)}.
\end{align*}
We use the fact that $\sum\limits_{i = 1}^d b_i^2 = 1$ to obtain $ |b_1|^2 = 1 - \sum\limits_{i = 2}^d b_i^2 \geq 1 - (d-1)\dfrac{1}{16(d-1)^2} \geq \dfrac{3}{4}$,
and moreover 
$ \left| \sum\limits_{j=1}^d b_j \right| \geq |b_1| - \left| \sum\limits_{j=2}^d b_j \right| \geq \dfrac{3}{4} - (d-1)\dfrac{1}{4(d-1)} = \dfrac{1}{2}.$

\end{proof}
\subsection{Rational subspaces of best approximation and conclusion}

We show here that the subspaces $B_{N,e}$ are the ones that achieve the best approximations of $A$ in the first angle. Recall that the integer $e \in \llbracket 1, n-d \rrbracket$ is fixed.

\begin{lem}\label{6lem_meilleurs_espaces}
Let $\varepsilon > 0$ and $B$ be a rational subspace of dimension $e$ such that:
\begin{align}\label{6hyp_lem_meilleurs_espaces}
\psi_1(A,B) \leq H(B)^{-K_e- \varepsilon}.
\end{align}
Then if $H(B)$ is large enough depending on $\varepsilon$ and $A$, there exists $N \in \mathbb{N}$ such that $B = B_{N,e}$.
\end{lem}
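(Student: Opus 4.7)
The approach is to mirror the proof of Lemma~\ref{5lem_meilleurs_espaces} (the $d=1$ case): first select $N \in \mathbb{N}$ satisfying $\theta^{\alpha_{N+e-1}} \leq H(B)^{K_e + \varepsilon/2 - 1} < \theta^{\alpha_{N+e}}$, then show for each $i \in \llbracket 0, e-1 \rrbracket$ that $\mathcal{D}_{N+i} := \|X_{N+i} \wedge Z_1 \wedge \ldots \wedge Z_e\| < 1$, where $Z_1, \ldots, Z_e$ is a $\mathbb{Z}$-basis of $B \cap \mathbb{Z}^n$. By Lemma~\ref{2lem_X_in_B} this forces $X_{N+i} \in B$, hence $B_{N,e} \subseteq B$, and equality follows by comparison of dimensions.

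To bound $\mathcal{D}_{N+i}$, I would factor $\mathcal{D}_{N+i} = \omega_1(\Span(X_{N+i}), B) \cdot \|X_{N+i}\| \cdot H(B)$ and use the triangle inequality for angles $\omega_1(\Span(X_{N+i}), B) \leq \omega(X_{N+i}, Y_1) + \omega_1(\Span(Y_1), B)$. The estimates $\omega(X_{N+i}, Y_1) \leq c_{\ref{5cons_omega_XN_Y}} \theta^{-\alpha_{N+i+1}}$ and $\|X_{N+i}\| \leq c_{\ref{5cons_major_norme_XN}} \theta^{\alpha_{N+i}}$ are already at hand. The novel difficulty, absent when $d = 1$ (where $\omega_1(\Span(Y_1), B) = \psi_1(A,B)$ holds automatically), is to upper-bound $\omega_1(\Span(Y_1), B)$ in terms of the hypothesis on $\psi_1(A, B)$.

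The crux is therefore the following auxiliary estimate: for $H(B)$ large enough, $\omega_1(\Span(Y_1), B) \leq C \cdot H(B)^{-K_e - \varepsilon/2}$. The plan is to prove it by contradiction. Let $Y^* \in A$ be a unit vector realizing $\omega_1(\Span(Y^*), B) = \psi_1(A, B)$, and decompose $Y^* = \alpha Y_1 + Y''$ with $Y'' \in A'$. Examine $Y^{*B^\perp} = \alpha Y_1^{B^\perp} + Y''^{B^\perp}$ by cases according to whether $\alpha Y_1^{B^\perp}$ and $Y''^{B^\perp}$ nearly cancel. If they do not, then $\|Y^{*B^\perp}\|$ controls $|\alpha|\|Y_1^{B^\perp}\|$ from below, giving directly the desired bound on $\omega_1(\Span(Y_1), B)$ provided $|\alpha|\|Y_1\|$ is bounded below (which is ensured similarly by the same gap argument). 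If they do nearly cancel, then $\|Y''^{B^\perp}\|$ is comparably small, making $\omega_1(\Span(Y''), B)$ too small and contradicting the inductive estimate $\omega_1(A', B) \geq c H(B)^{-\lambda_e - \delta}$ (which follows from $\HH(d-1)$ and $\mu_n(A'|e)_1 = \lambda_e = C_{d-1}^e$) once $H(B)$ is large, thanks to the gap $K_e \gg \lambda_e$ built into the definition $C_d = 5 n^2 C_{d-1}^{2n}$.

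Granting this auxiliary estimate, the remainder of the proof of $\mathcal{D}_{N+i} < 1$ reproduces the computation of Lemma~\ref{5lem_meilleurs_espaces} verbatim: one obtains $\mathcal{D}_{N+i} \leq c\bigl(H(B)^{\tau_i} + H(B)^{-\varepsilon/2}\bigr)$ with exponents $\tau_i \leq \tau_0 \leq -c_\varepsilon \varepsilon$, thanks to the inequality $\gamma_{N+1} \geq 2 + \tfrac{\sqrt{5}-1}{2}$ (satisfied because $\gamma_j \geq C_d \geq C_1$). The main obstacle will therefore be the auxiliary estimate on $\omega_1(\Span(Y_1), B)$, whose proof requires a careful case analysis of the cancellation in $Y^{*B^\perp}$ and crucially leverages both the inductive bound $\mu_n(A'|e)_1 = \lambda_e$ and the exponent gap $K_e \gg \lambda_e$.
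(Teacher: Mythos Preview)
Your auxiliary estimate $\omega_1(\Span(Y_1),B)\le C\,H(B)^{-K_e-\varepsilon/2}$ is the wrong target, and your case analysis does not establish it. Write $Y^*=\alpha Y_1+Y''$ with $Y''\in A'$ and $\|Y^*\|=1$, so that $\alpha Y_1^{B^\perp}=Y^{*B^\perp}-Y''^{B^\perp}$. In the ``cancellation'' case, the conclusion is that $\|Y''^{B^\perp}\|$ and $|\alpha|\,\|Y_1^{B^\perp}\|$ are \emph{comparable to each other}, not that either is small; both may well be of order $H(B)^{-\lambda_e}$ while their sum has size $H(B)^{-K_e-\varepsilon}$. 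The inductive lower bound $\psi_1(A',B)\ge cH(B)^{-\lambda_e-\delta}$ is then perfectly consistent with this, so no contradiction arises. Concretely, nothing prevents $B$ from making a small angle with some combination $\alpha Y_1+Y''$ while keeping $\omega_1(\Span(Y_1),B)$ of order $H(B)^{-\lambda_e}$ only; you cannot deduce that $Y_1$ itself is $H(B)^{-K_e}$--close to $B$.

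The paper avoids this obstruction by bounding a different quantity. In Claim~\ref{6lem_maj_Y1_Y_d_Z1_Ze} it controls $\|Y_1\wedge\cdots\wedge Y_d\wedge Z_1\wedge\cdots\wedge Z_e\|$, which measures the distance from $Y_1$ to $\Span(A',B)$ rather than to $B$; this \emph{is} controllable because one may replace $a_1Y_1$ by $Y^*-p_B(Y^*)$ modulo $\Span(Y_2,\dots,Y_d,Z_1,\dots,Z_e)$, and only here does the lower bound $|a_1|\ge cH(B)^{-\lambda_e-\delta}$ enter. Then Claim~\ref{6lem_maj_DN} passes from $\mathcal E_N=\|X_N\wedge Y_2\wedge\cdots\wedge Y_d\wedge Z_1\wedge\cdots\wedge Z_e\|$ to $\mathcal D_N$ via $\psi_1(A',C_N)$ with $C_N=\Span(X_N,Z_1,\dots,Z_e)$, \emph{not} $\psi_1(A',B)$; since $H(C_N)$ carries a factor $\theta^{\alpha_N}$, this is why the paper's choice of $N$ involves the exponent $1+t\lambda_{e+1}+t\delta$ (with $t=\min(d-1,e+1)$) rather than the exponent $1$ from the $d=1$ case, and why the recursive definition $C_d=5n^2C_{d-1}^{2n}$ is needed to absorb these extra powers. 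Your plan to reuse the $d=1$ choice of $N$ and the $d=1$ exponent arithmetic therefore cannot succeed as written.
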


Let $Z_1, \ldots, Z_e$ be a $\mathbb{Z}$-basis of $B \cap \mathbb{Z}^n$. We follow the scheme of proof of Lemma~\ref{5lem_meilleurs_espaces}, showing that if $H(B)$ is large enough, then for every $i \in \llbracket 0, e-1 \rrbracket$, the inequality
$\DD_{N+i} = \| X_{N+i} \wedge Z_1 \wedge \ldots \wedge Z_e \| < 1$ holds for some $N \in \mathbb{N}$ to be determined. For this purpose, we prove two preliminary claims, recalling the notation $\lambda_e = (C_{d-1})^e$.

\begin{claim}\label{6lem_maj_Y1_Y_d_Z1_Ze}
Let $\varepsilon > 0$. Suppose that $B$ is a rational space of dimension $e$ such that $\psi_1(A,B) \leq H(B)^{-\lambda_e -\varepsilon}$.
Then, if $H(B)$ is large enough depending on $A$ and $\varepsilon$, we have:
\begin{align*}
\forall \delta \in \left(0, \dfrac{\varepsilon}{2}\right), \quad \| Y_1 \wedge \ldots \wedge Y_d \wedge Z_1 \wedge \ldots \wedge Z_e \| 
&\leq c_{\ref{6cons_major_Y_Z}} \psi_1(A,B) H(B)^{1+\lambda_e + \delta} 
\end{align*}
with $\cons \label{6cons_major_Y_Z} > 0$ independent of $B$ but possibly dependent on $\delta$.
\end{claim}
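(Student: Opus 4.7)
The plan is to exploit the decomposition $A = \Span(Y_1) \oplus A'$ with $A' = \Span(Y_2, \ldots, Y_d)$, together with the Diophantine property $\mu_n(A'|e)_1 = \lambda_e$, in order to transfer the smallness of $\psi_1(A,B)$ into a bound on the wedge product. Fix $\delta \in (0, \varepsilon/2)$ throughout.

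I would first pick a unit vector $U \in A$ realizing $\omega(U, B) = \psi_1(A, B)$ and decompose $U = \alpha Y_1 + U'$ with $U' \in A'$. The main technical step is to prove a lower bound $|\alpha| \geq c_1 H(B)^{-\lambda_e - \delta}$ when $H(B)$ is large enough. The triangle inequality applied to $U^{B^\perp} = \alpha Y_1^{B^\perp} + U'^{B^\perp}$ yields $\|U'^{B^\perp}\| \leq \psi_1(A,B) + |\alpha|\|Y_1\|$. A case analysis according to whether $|\alpha|\|Y_1\| \leq 1/2$ either gives $|\alpha|$ bounded below by a positive constant directly, or yields $\|U'\| \geq 1/2$. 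In the latter case, combining $\psi_1(A', B) \leq \omega(U', B) = \|U'^{B^\perp}\|/\|U'\|$ with the hypothesis $\psi_1(A, B) \leq H(B)^{-\lambda_e - \varepsilon}$ and the fact that $\psi_1(A', B) > H(B)^{-\lambda_e - \delta}$ for $H(B)$ large (a direct consequence of $\delta < \varepsilon$ and of $\lambda_e = \mu_n(A'|e)_1$), one isolates $|\alpha|$ and obtains the desired lower bound.

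Next, I would use the algebraic identity $Y_1 \wedge Y_2 \wedge \ldots \wedge Y_d = \alpha^{-1} U \wedge Y_2 \wedge \ldots \wedge Y_d$, valid because $U' \wedge Y_2 \wedge \ldots \wedge Y_d = 0$ (as $U' \in A'$). Wedging with $Z_1 \wedge \ldots \wedge Z_e$ and decomposing each of $U, Y_2, \ldots, Y_d$ into its projections onto $B$ and onto $B^\perp$, the $B$-components vanish (they lie in $\Span(Z_1, \ldots, Z_e)$), so that
\begin{equation*}
\| U \wedge Y_2 \wedge \ldots \wedge Y_d \wedge Z_1 \wedge \ldots \wedge Z_e \| = \| U^{B^\perp} \wedge Y_2^{B^\perp} \wedge \ldots \wedge Y_d^{B^\perp} \| \cdot H(B),
\end{equation*}
using $\|Z_1 \wedge \ldots \wedge Z_e\| = H(B)$ together with the orthogonality between $B$ and $B^\perp$. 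Hadamard's inequality combined with $\|U^{B^\perp}\| = \psi_1(A, B)$ then bounds the right-hand side by $c_2\, \psi_1(A,B)\, H(B)$. Combining all bounds yields
\begin{equation*}
\| Y_1 \wedge \ldots \wedge Y_d \wedge Z_1 \wedge \ldots \wedge Z_e \| \leq |\alpha|^{-1} c_2\, \psi_1(A,B)\, H(B) \leq c_3\, \psi_1(A,B)\, H(B)^{1+\lambda_e+\delta},
\end{equation*}
which is the claimed bound.

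The hard part will be the lower bound on $|\alpha|$ in the first step: this is where the Diophantine exponent of $A'$ plays a decisive role, essentially asserting that any vector in $A$ very close to $B$ must involve the $Y_1$ direction with a non-negligible coefficient, since otherwise $A'$ itself would approximate $B$ too well. The remainder of the argument is routine manipulation of wedge products combined with Hadamard's inequality.
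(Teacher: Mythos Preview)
Your proof is correct and follows essentially the same route as the paper: you pick a unit vector in $A$ realizing $\psi_1(A,B)$, use the Diophantine exponent of $A'=\Span(Y_2,\ldots,Y_d)$ together with the triangle inequality to force a lower bound $|\alpha|\gg H(B)^{-\lambda_e-\delta}$ on its $Y_1$-coefficient, then substitute $U=\alpha Y_1+U'$ into the wedge product and bound. The only cosmetic differences are that the paper phrases the triangle step via Schmidt's angle triangle inequality rather than norms of projections, and it subtracts only $p_B(U)$ in the wedge (keeping $Y_2,\ldots,Y_d$ intact) rather than projecting all vectors onto $B^\perp$; your version yields a slightly sharper intermediate equality but the final bound is the same.
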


\begin{proof}
Let $Y = \sum\limits_{j=1}^d a_j Y_j \in A$ of norm $1$ such that $\psi_1(A,B) = \omega(Y,p_B(Y))$ with $p_B$ the orthogonal projection onto $B$. We use the following relation:
\begin{align}\label{6relation_a_1_X}
\omega ( \sum\limits_{j = 2}^d a_j Y_j, p_B(Y) ) - \omega(Y, p_B(Y) ) \leq \omega(Y, \sum\limits_{j = 2}^d a_j Y_j )
\end{align}
coming from the triangle inequality (see equation (3) of \cite{Schmidt}). Now,
\begin{align}\label{6maj_angle_YajYj}
\omega(Y, \sum\limits_{j = 2}^d a_j Y_j ) = \dfrac{\left\| \sum\limits_{j = 1}^d a_j Y_j \wedge \sum\limits_{j = 2}^d a_j Y_j \right\| }{\left\| \sum\limits_{j = 2}^d a_j Y_j \right\| } = \dfrac{\left\| a_1 Y_1 \wedge \sum\limits_{j = 2}^d a_j Y_j \right\| }{\left\| \sum\limits_{j = 2}^d a_j Y_j \right\| } \leq \left\| a_1 Y_1 \right\|.
\end{align}
Moreover, let $\delta \in \left( 0, \dfrac{\varepsilon}{2} \right)$. Since $ \sum\limits_{j = 2}^d a_j Y_j \in A' = \Span(Y_2, \ldots, Y_d)$ and assuming $ \sum\limits_{j = 2}^d a_j Y_j \neq 0 $, the induction hypothesis $\HH(d-1)$ gives:
$\omega( \sum\limits_{j = 2}^d a_j Y_j, p_B(Y) ) \geq \psi_1(A',B ) \geq c_{\ref{6cons_Aprime}} H(B)^{-\mu_n(A'|e)_1-\delta} = c_{\ref{6cons_Aprime}} H(B)^{-\lambda_e-\delta}$ where $\cons > 0 \label{6cons_Aprime}$ depends only on $A'$ and $\delta$. Revisiting $(\ref{6relation_a_1_X})$ and $(\ref{6maj_angle_YajYj})$, we get:
\begin{align*}
c_{\ref{6cons_Aprime}} H(B)^{-\lambda_e-\delta} - \psi_1(A,B) \leq |a_1| \|Y_1\|.
\end{align*}
Recalling that $\delta < \dfrac{\varepsilon}{2}$ and by assumption $\psi_1(A,B) \leq H(B)^{-\lambda_e-\varepsilon}$, we can conclude since $H(B)$ is large enough, leading to $H(B)^{-\frac{\varepsilon}{2}} \leq \dfrac{c_{\ref{6cons_Aprime}}}{2}$, hence:
\begin{align*}
\dfrac{c_{\ref{6cons_Aprime}}}{2} H(B)^{-\lambda_e-\delta} \leq H(B)^{-\lambda_e-\delta}\left(c_{\ref{6cons_Aprime}} - H(B)^{-\frac{\varepsilon}{2}}\right) \leq c_{\ref{6cons_Aprime}} H(B)^{-\lambda_e-\delta} - H(B)^{ -\lambda_e-\varepsilon} \leq |a_1| \|Y_1\|
\end{align*}
thus, $ c_{\ref{6cons_minor_a_1}}H(B)^{-\lambda_e-\delta} \leq |a_1|$ with $\cons \label{6cons_minor_a_1} >0 $ independent of $B$ but dependent on $\delta$. Moreover, this inequality is still true if $ \sum\limits_{j = 2}^d a_j Y_j = 0 $, because in this case $a_1 = \|Y_1 \|^{-1}$.
In particular, $a_1 \neq 0$. Let $D_{Y,Z} = \| Y_1 \wedge \ldots \wedge Y_d \wedge Z_1 \wedge \ldots \wedge Z_e \|$ and let us compute:
\begin{align*}
D_{Y,Z} &= \dfrac{1}{|a_1|}\| a_1Y_1 \wedge Y_2 \wedge \ldots \wedge Y_d \wedge Z_1 \wedge \ldots \wedge Z_e \| \\
&= \dfrac{1}{|a_1|}\left\| \left(a_1Y_1 + \sum\limits_{j = 2}^d a_j Y_j - p_B(X)\right) \wedge Y_2 \wedge \ldots \wedge Y_d \wedge Z_1 \wedge \ldots \wedge Z_e \right\| \\
&= \dfrac{1}{|a_1|}\| \left(Y - p_B(Y)\right) \wedge Y_2 \wedge \ldots \wedge Y_d \wedge Z_1 \wedge \ldots \wedge Z_e \|
\end{align*}
since $ \sum\limits_{j = 2}^d a_j Y_j - p_B(X) \in \Span(Y_2, \ldots, Y_d, Z_1, \ldots, Z_e)$. We can then bound:
\begin{align*}
D_{Y,Z} \leq \dfrac{\| Y - p_B(Y)\| \cdot \| Y_2 \wedge \ldots \wedge Y_d \| \cdot \| Z_1 \wedge \ldots \wedge Z_e \| }{|a_1|} \leq \dfrac{\psi_1(A,B) \| Y_2 \wedge \ldots \wedge Y_d \| H(B) }{ c_{\ref{6cons_minor_a_1}}H(B)^{-\lambda_e-\delta}} 
\end{align*}
since $Z_1, \ldots, Z_e$ is a $\Zbasis$ of $B \cap \Z^n$.
Thus, we have for all $\delta \in \left( 0, \dfrac{\varepsilon}{2} \right), \quad \| Y_1 \wedge \ldots \wedge Y_d \wedge Z_1 \wedge \ldots \wedge Z_e \| 
\leq c_{\ref{6cons_major_Y_Z}} \psi_1(A,B) H(B)^{1+\lambda_e+\delta} $
where $c_{\ref{6cons_major_Y_Z}} = \| Y_2 \wedge \ldots \wedge Y_d \| c_{\ref{6cons_minor_a_1}}^{-1}$.

\end{proof}

\begin{claim}\label{6lem_maj_DN}
Let $\varepsilon > 0$. Suppose that $B$ is a rational space of dimension $e$ such that $\psi_1(A,B) \leq H(B)^{-\lambda_e - \varepsilon}$. For an integer $N$, let $\DD_N = \| X_{N} \wedge Z_1 \ldots \wedge Z_e \| $.
Then, if $H(B)$ is sufficiently large depending on $A$ and $\varepsilon$, for all $N \in \N$ and $\delta \in \left(0, \frac{\varepsilon}{2}\right)$, we have:
 \begin{align*}
 \DD_N \leq c_{\ref{6cons_maj_D_N_lem}}\theta^{{\alpha_{N}} (1 + t\lambda_{e+1}+t\delta) } H(B)^{1+ t\lambda_{e+1}+t\delta} \left(\psi_1(A,B) H(B)^{ \lambda_e+\delta } + \dfrac{1 }{\theta^{{\alpha_{N+1}}}} \right)
 \end{align*}
where $t = \min(d-1,e+1)$, and $\cons \label{6cons_maj_D_N_lem} >0$ is independent of $N$ and $B$ but depends on $\delta$.
\end{claim}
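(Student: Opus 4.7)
The strategy is to upper-bound $\DD_N$ by introducing the auxiliary rational subspace $W = B + \Span(X_N)$ and combining Claim~\ref{6lem_maj_Y1_Y_d_Z1_Ze} with the induction hypothesis $\HH(d-1)$ applied to the line approximation of $A'$ by rational $(e+1)$-spaces. First, observe that if $X_N \in B$, then $\DD_N = 0$ and the claim is trivial, so assume $X_N \notin B$, i.e.\ $\dim(W) = e+1$. Then $W$ is rational, and every $\Z$-basis of $W \cap \Z^n$ is a refinement of $\{X_N, Z_1, \ldots, Z_e\}$, giving the control
\[
H(W) \leq \| X_N \wedge Z_1 \wedge \ldots \wedge Z_e \| \leq \|X_N\| \cdot H(B) \leq c_{\ref{5cons_major_norme_XN}} \theta^{\alpha_N} H(B).
\]

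The key identity, obtained from the orthogonal decomposition $\R^n = W \oplus W^\perp$ applied to each $Y_j$, is
\[
\| X_N \wedge Y_2 \wedge \ldots \wedge Y_d \wedge Z_1 \wedge \ldots \wedge Z_e \| = \| Y_2^{W^\perp} \wedge \ldots \wedge Y_d^{W^\perp} \| \cdot \DD_N.
\]
I would then upper-bound the left-hand side and lower-bound the first factor on the right. For the upper bound, the definition of $X_N$ and $(\ref{5sigma_moins_sigmaN})$ give $Y_1 = \theta^{-\lfloor \alpha_N \rfloor} X_N + E_N$ with $\|E_N\| \leq c \theta^{-\alpha_{N+1}}$. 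Substituting into Claim~\ref{6lem_maj_Y1_Y_d_Z1_Ze} and absorbing the constant $\|Y_2 \wedge \ldots \wedge Y_d\|$ yields
\[
\| X_N \wedge Y_2 \wedge \ldots \wedge Y_d \wedge Z_1 \wedge \ldots \wedge Z_e \| \leq c \, \theta^{\alpha_N} H(B) \Bigl( \psi_1(A,B) H(B)^{\lambda_e + \delta} + \theta^{-\alpha_{N+1}} \Bigr).
\]

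For the lower bound on $\| Y_2^{W^\perp} \wedge \ldots \wedge Y_d^{W^\perp} \|$, I would use the canonical-angle factorization together with $\omega_1 \leq \omega_j$ for each $j$:
\[
\| Y_2^{W^\perp} \wedge \ldots \wedge Y_d^{W^\perp} \| = \| Y_2 \wedge \ldots \wedge Y_d \| \prod_{j=1}^t \omega_j(A', W) \geq c \, \psi_1(A', W)^t,
\]
where $t = \min(d-1, e+1)$ and $\psi_1(A', W) = \omega_1(A', W)$ since $d + e \leq n$ implies $g(d-1, e+1, n) = 0$. The induction hypothesis $\HH(d-1)$ gives $\mu_n(A'|e+1)_1 = \lambda_{e+1}$, so by the definition of the Diophantine exponent there exists $c_\delta > 0$ such that $\psi_1(A', W') \geq c_\delta \, H(W')^{-\lambda_{e+1} - \delta}$ for every rational $W'$ of dimension $e+1$. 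Applying this to $W$ and using $H(W) \leq c \theta^{\alpha_N} H(B)$ produces
\[
\| Y_2^{W^\perp} \wedge \ldots \wedge Y_d^{W^\perp} \| \geq c'_\delta \, \bigl( \theta^{\alpha_N} H(B) \bigr)^{-t(\lambda_{e+1} + \delta)}.
\]
Taking the ratio of the upper and lower bounds yields the claimed inequality.

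The main obstacle is the lower bound on $\| Y_2^{W^\perp} \wedge \ldots \wedge Y_d^{W^\perp} \|$: extracting the sharp exponent $t = \min(d-1, e+1)$ (rather than $d-1$ or $e+1$ separately) via the canonical angles between $A'$ and $W$, and invoking $\HH(d-1)$ on a subspace $W$ whose height grows with $N$, is precisely what generates the $(\theta^{\alpha_N} H(B))^{t \lambda_{e+1}}$ factor appearing in the final bound.
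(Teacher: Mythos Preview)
Your proposal is correct and follows essentially the same argument as the paper. The paper introduces $C_N = \Span(X_N, Z_1, \ldots, Z_e)$ (your $W$), uses Schmidt's formula $\EE_N = \varphi(A', C_N)\,\|Y_2 \wedge \ldots \wedge Y_d\|\,\DD_N$ with $\varphi(A',C_N) = \prod_{j} \psi_j(A',C_N) \geq \psi_1(A',C_N)^t$ (equivalent to your orthogonal-projection identity), bounds $\psi_1(A',C_N)$ via $\HH(d-1)$ and $H(C_N) \leq c\,\theta^{\alpha_N} H(B)$, and bounds $\EE_N$ by substituting $Y_1 = \theta^{-\lfloor\alpha_N\rfloor}X_N + W_N$ and invoking Claim~\ref{6lem_maj_Y1_Y_d_Z1_Ze}; the only cosmetic difference is that the paper keeps the case $\dim C_N = e$ inside the same estimate (using $\lambda_e \leq \lambda_{e+1}$) rather than disposing of it via $\DD_N = 0$.
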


\begin{proof}
Let $\EE_N = \| X_N \wedge Y_2 \wedge \ldots \wedge Y_d \wedge Z_1 \ldots \wedge Z_e \|$. Using the results at the end of page 446 of \cite{Schmidt}, we have:
\begin{align}\label{6lien_phi_EEN}
 \EE_N &= \| Y_2 \wedge \ldots \wedge Y_d \wedge X_N \wedge Z_1 \ldots \wedge Z_e \| =\varphi(A', C_N ) \| Y_2 \wedge \ldots \wedge Y_d \| \cdot \| X_N \wedge Z_1 \ldots \wedge Z_e \|
\end{align}
where $C_N = \Span(X_N,Z_1 \ldots, Z_e)$, $A' = \Span(Y_2, \ldots, Y_d)$, $ \varphi(A', C_N ) =\psi_1(A',C_N) \ldots \psi_u(A',C_N)$ and $u = \min(d-1, \dim(C_N))$.
We have $\varphi(A', C_N ) \geq \psi_1(A',C_N) ^u $. Since $\dim(C_N) \in \{e,e+1\}$, we have $u \leq \min(d-1,e+1) = t $, hence 
$ \varphi(A', C_N ) \geq \psi_1(A',C_N)^t$ as $\psi_1(A',C_N)\leq 1$. Equation (\ref{6lien_phi_EEN}) becomes:
\begin{align}\label{6maj_DN_1}
 \DD_N = \| X_N \wedge Z_1 \ldots \wedge Z_e \| \leq \dfrac{c_{\ref{6cons_ext_Y2_Yd}}\EE_N}{\psi_1(A',C_N)^t}
\end{align}
where $\cons \label{6cons_ext_Y2_Yd}= \| Y_2 \wedge \ldots \wedge Y_d \|^{-1}$. Furthermore, since $C_N$ is a rational space of dimension $f \in \{e, e+1\}$, by the induction hypothesis $\HH(d-1)$, we have 
for all $ \delta >0, \quad \psi_1(A',C_N) \geq c_{\ref{6cons_minor_angle_A'_CN}}H(C_N)^{-\lambda_{e+1} - \delta}$
where $\cons \label{6cons_minor_angle_A'_CN} >0$ is independent of $N$ and depends on $\delta$. We now fix $\delta \in \left( 0, \frac{\varepsilon}{2} \right)$.

As $X_N, Z_1, \ldots, Z_e$ are integral vectors, we have $H(C_N) \leq\|X_N\| \cdot \| Z_1 \wedge \ldots \wedge Z_e \| \leq c_{\ref{6con_maj_norme_XN}} \theta^{\alpha_N} H(B)$ where $\cons \label{6con_maj_norme_XN} $ is independent of $N$. Equation (\ref{6maj_DN_1}) yields:
\begin{align}\label{6maj_DN_2}
 \DD_N \leq c_{\ref{6cons_ext_Y2_Yd}} c_{\ref{6cons_minor_angle_A'_CN}}^{-t} c_{\ref{6con_maj_norme_XN}}^{t\lambda_{e+1} + t\delta }\theta^{\alpha_N(t\lambda_{e+1} +t\delta) } H(B)^{t\lambda_{e+1} + t\delta}\EE_N.
\end{align}

Furthermore, we bound $\EE_N$. For $N \in \N$, let $Z_N = \theta^{-\floor{\alpha_N}} X_N$ and $W_N = Y_1 - Z_N$. Then we have:
\begin{align*}
 \EE_N &= \theta^{\floor{\alpha_N}}\| Z_N\wedge Y_2 \wedge \ldots \wedge Y_d \wedge Z_1 \ldots \wedge Z_e \| \\
 &= \theta^{\floor{\alpha_N}} \| (Y_1- W_N) \wedge Y_2 \wedge \ldots \wedge Y_d \wedge Z_1 \ldots \wedge Z_e \| \\
 &\leq \theta^{{\alpha_N}}\left(\| Y_1 \wedge Y_2 \wedge \ldots \wedge Y_d \wedge Z_1 \ldots \wedge Z_e \| + \| W_N \wedge Y_2 \wedge \ldots \wedge Y_d \wedge Z_1 \ldots \wedge Z_e \|\right).
\end{align*}
We analyze the two terms separately. Firstly:
\begin{align*}
 \| Y_1 \wedge Y_2 \wedge \ldots \wedge Y_d \wedge Z_1 \ldots \wedge Z_e \| \leq c_{\ref{6cons_major_Y_Z}} \psi_1(A,B) H(B)^{1+\lambda_e + \delta} 
\end{align*}
by Claim~\ref{6lem_maj_Y1_Y_d_Z1_Ze}. Secondly, by construction, we have $\|W_N\| = \| Y_1 -Z_N \| \leq \cons \label{6cons_diff_tronque} \theta^{-\alpha_{N+1}}$, thus:
\begin{align*}
 \| W_N \wedge Y_2 \wedge \ldots \wedge Y_d \wedge Z_1 \ldots \wedge Z_e \|&\leq \| W_N \| \cdot \| Y_2 \wedge \ldots \wedge Y_d \| \cdot \| Z_1 \ldots \wedge Z_e \| \leq c_{\ref{6cons_diff_tronque}} \theta^{-\alpha_{N+1}} c_{\ref{6cons_ext_Y2_Yd2}}H(B)
\end{align*}
where $\cons \label{6cons_ext_Y2_Yd2} =\| Y_2 \wedge \ldots \wedge Y_d \| $. These two inequalities allow us to bound $\EE_N$:
\begin{align*}
 \EE_N \leq c_{\ref{6cons_pour_pas_que_ca_deborde}}\left(\psi_1(A,B) H(B)^{ \lambda_e + \delta } + \dfrac{1 }{\theta^{{\alpha_{N+1}}}} \right)
\end{align*}
where $\cons = \max(c_{\ref{6cons_major_Y_Z}},c_{\ref{6cons_diff_tronque}} c_{\ref{6cons_ext_Y2_Yd2}}) \label{6cons_pour_pas_que_ca_deborde}$.
Substituting this into (\ref{6maj_DN_2}), we obtain:
\begin{align*}
 \DD_N &\leq c_{\ref{6cons_ext_Y2_Yd}} c_{\ref{6cons_minor_angle_A'_CN}}^{-t} c_{\ref{6con_maj_norme_XN}}^{t\lambda_{e+1} + t\delta }\theta^{\alpha_N(t\lambda_{e+1} +t\delta) } H(B)^{t\lambda_{e+1} + t\delta}\theta^{{\alpha_N}} c_{\ref{6cons_pour_pas_que_ca_deborde}}\left(\psi_1(A,B) H(B)^{ \lambda_e + \delta } + \dfrac{1 }{\theta^{{\alpha_{N+1}}}} \right) \\
 &\leq c_{\ref{6cons_maj_D_N_lem}} \theta^{{\alpha_{N}} (1 + t\lambda_{e+1}+t\delta) } H(B)^{1+ t\lambda_{e+1}+t\delta} \left(\psi_1(A,B) H(B)^{ \lambda_e + \delta } + \dfrac{1 }{\theta^{{\alpha_{N+1}}}} \right)
\end{align*}
where $c_{\ref{6cons_maj_D_N_lem}} = \ c_{\ref{6cons_ext_Y2_Yd}} c_{\ref{6cons_minor_angle_A'_CN}}^{-t} c_{\ref{6con_maj_norme_XN}}^{t\lambda_{e+1} } c_{\ref{6con_maj_norme_XN}}^{t \delta}c_{\ref{6cons_pour_pas_que_ca_deborde}}$. 

\end{proof}

We now have all the tools necessary to prove Lemma~\ref{6lem_meilleurs_espaces}.

\begin{proofe}[Lemma~\ref{6lem_meilleurs_espaces}]
We set
\begin{align}\label{6choix_delta}
\delta = \min\left(\dfrac{\varepsilon}{4(t+1)}, \dfrac{n(C_{d-1})^n -1 }{t}\right).
\end{align}
Let $N \in \mathbb{N}$ be the integer satisfying:
\begin{align}\label{6choix_N}
\theta^{\alpha_{N+e-1}(1+t \lambda_{e+1} + t\delta)} \leq H(B)^{K_e - 1-t\lambda_{e+1} - \lambda_e + \frac{\varepsilon}{2}} < \theta^{\alpha_{N+e}(1+t \lambda_{e+1} + t\delta)}
\end{align}
where $t = \min(d-1,e+1)$. This choice makes sense; indeed, $K_e - 1-t\lambda_{e+1} - \lambda_e + \frac{\varepsilon}{2} > 0$ because $K_e \geq C_d = 5n^2(C_{d-1})^{2n} \geq 1 + n(C_{d-1})^{e+1} + (C_{d-1})^{e}$.

We have $\psi_1(A,B) \leq H(B)^{-\lambda_e - \varepsilon}$. Indeed, by hypothesis
$\psi_1(A,B) \leq H(B)^{-K_e- \varepsilon}$
and $K_e \geq (C_d)^e \geq (C_{d-1})^e \geq \lambda_e$. If $H(B)$ is large enough, Claim~\ref{6lem_maj_DN} then gives, for all $i \in \llbracket 0, e-1 \rrbracket$:
\begin{align*}
D_{N+i} &\leq c_{\ref{6cons_maj_D_N_lem}}\theta^{{\alpha_{N+i}} (1 + t\lambda_{e+1} + t\delta) } H(B)^{1+ t\lambda_{e+1} + t\delta} \left(\psi_1(A,B) H(B)^{ \lambda_e + \delta } + \dfrac{1 }{\theta^{{\alpha_{N+i+1}}}} \right) \\
&\leq c_{\ref{6cons_maj_D_N_lem}}\theta^{{\alpha_{N+i}} (1 + t\lambda_{e+1} + t\delta) } H(B)^{1+ t\lambda_{e+1}+ t\delta} \left(H(B)^{-K_e- \varepsilon} H(B)^{ \lambda_e + \delta} + \dfrac{1 }{\theta^{{\alpha_{N+i+1}}}} \right).
\end{align*}
By the choice of $N$ in $(\ref{6choix_N})$, we have for all $i \in \llbracket 0, e-1 \rrbracket$:
$\theta^{\alpha_{N+i}(1+t \lambda_{e+1} + t\delta)} \leq H(B)^{K_e - 1-t\lambda_{e+1} - \lambda_e + \frac{\varepsilon}{2} }$ by the growth of the sequence $(\alpha_N)$. 
Moreover, by the choice of $\delta$ in $(\ref{6choix_delta})$, we have $\frac{-\varepsilon}{2} + t\delta + \delta \leq \frac{-\varepsilon}{4}$. Then for all $i \in \llbracket 0, e-1 \rrbracket$:
\begin{align}\label{6maj_DNi_23}
D_{N+i} &\leq c_{\ref{6cons_maj_D_N_lem}} \left(H(B)^{\frac{-\varepsilon}{4} } + \theta^{{\alpha_{N+i}} (1 + t\lambda_{e+1} + t\delta) - \alpha_{N+i+1} } H(B)^{1+ t\lambda_{e+1}+ t\delta}\right).
\end{align}
We now focus on the second term $\GG_{N+i} = \theta^{{\alpha_{N+i}} (1 + t\lambda_{e+1} + t\delta) - \alpha_{N+i+1} } H(B)^{1+ t\lambda_{e+1}+ t\delta}$. We note $ \eta_e = 1+t \lambda_{e+1} + t\delta$. It satisfies $ \eta_e \leq 2n(C_{d-1})^{n}$ by choice of $\delta $ in $(\ref{6choix_delta})$. We have 
\begin{align*}
\alpha_{N+i} \eta_e - \alpha_{N+i+1} &= \alpha_N (\gamma_{N+1} \ldots \gamma_{N+i})(\eta_e - \gamma_{N+i+1}) \leq 0
\end{align*}
because we can show, by the choice of $C_d$ in \eqref{6definition_Cd}, that:
\begin{align}\label{6diff_Cd_etae}
C_d = 5n^2(C_{d-1})^{2n} \geq (2n(C_{d-1})^{n} +1 )2n(C_{d-1})^{n} \geq \eta_e^2 + \eta_e . 
\end{align}
The choice of $N$ gives a lower bound
$\theta \geq H(B) ^{ \frac{K_e -\eta_e + \frac{\varepsilon}{2}}{\alpha_{N+e}\eta_e}}$ and we can thus bound from above:
$\theta^{{\alpha_{N+i}} \eta_e - \alpha_{N+i+1} } \leq H(B)^{ \frac{(K_e - \eta_e+ \frac{\varepsilon}{2})(\alpha_{N+i} \eta_e - \alpha_{N+i+1})}{\alpha_{N+e}\eta_e}}$
and thus $\GG_{N+i} \leq H(B)^{ \frac{(K_e -\eta_e - \lambda_e + \frac{\varepsilon}{2} )(\alpha_{N+i} \eta_e - \alpha_{N+i+1}) } {\alpha_{N+e}\eta_e} + \eta_e } .$
We then study the exponent $w_i = \frac{(K_e -\eta_e - \lambda_e + \frac{\varepsilon}{2} )(\alpha_{N+i} \eta_e - \alpha_{N+i+1}) } {\alpha_{N+e}\eta_e} + \eta_e $ for $i \in \llbracket0,e-1 \rrbracket$.
\bigskip
We set $\cons \label{6cons_epsilon_negatif} = \dfrac{1}{2K_e}$, a constant independent of $N$. According to $(\ref{6diff_Cd_etae})$ we have $$c_{\ref{6cons_epsilon_negatif}} \leq \dfrac{\eta_e^2}{2K_e\eta_e} \leq \dfrac{ C_d - \eta_e}{2K_e \eta_e} \leq \dfrac{(\gamma_{N+i+1} - \eta_e)(2\gamma_{N+1} \ldots \gamma_{N+i})}{\gamma_{N+1} \ldots \gamma_{N+e}\eta_e}.$$ 
We then bound $w_i$ from above, noting that $\gamma_{N+1} \ldots \gamma_{N+e} \leq K_e$:
\begin{align}\label{6maj_gamma0}
 w_i &= \frac{(K_e -\eta_e - \lambda_e + \frac{\varepsilon}{2} )(\alpha_{N+i} \eta_e - \alpha_{N+i+1}) } {\alpha_{N+e}\eta_e} + \eta_e \nonumber \\
 &= \frac{(K_e -\eta_e - \lambda_e + \frac{\varepsilon}{2} )(\eta_e - \gamma_{N+1})(\gamma_{N+1} \ldots \gamma_{N+i+1}) } {\gamma_{N+1} \ldots \gamma_{N+e}\eta_e} + \eta_e \nonumber \\
 &\leq \frac{(K_e -\eta_e - \lambda_e )(\eta_e - \gamma_{N+i+1})(\gamma_{N+1} \ldots \gamma_{N+i}) + \gamma_{N+1} \ldots \gamma_{N+e}\eta_e^2 } {\gamma_{N+1} \ldots \gamma_{N+e}\eta_e} - c_{\ref{6cons_epsilon_negatif}} \varepsilon \nonumber \\
 &\leq \frac{(K_e -\eta_e - \lambda_e )(\eta_e - \gamma_{N+i+1})(\gamma_{N+1} \ldots \gamma_{N+i}) + K_e\eta_e^2 } {\gamma_{N+1} \ldots \gamma_{N+e}\eta_e} - c_{\ref{6cons_epsilon_negatif}} \varepsilon.
\end{align}
We then bound from above the first term, noting that it is maximal for $i= 0$ since $\gamma_{N+1} \ldots \gamma_{N+i} \geq 0$, 
\begin{align*}
 (K_e -\eta_e - \lambda_e )(\eta_e - \gamma_{N+1}) +K_e\eta_e^2 &\leq (K_e -\eta_e - \lambda_e )(\eta_e - C_d) + K_e\eta_e^2 \\
 &\leq (K_e -\eta_e - \lambda_e )\eta_e^2 + K_e\eta_e^2 \\
 &\leq -(\eta_e + \lambda_e )\eta_e \\
 &\leq 0
\end{align*}
using $(\ref{6diff_Cd_etae})$. Using $(\ref{6maj_gamma0})$ again, we have for all $i \in \llbracket 0,e-1 \rrbracket $, $ w_i \leq - c_{\ref{6cons_epsilon_negatif}} \varepsilon.$
Finally, inequality $(\ref{6maj_DNi_23})$ gives for all $i \in \llbracket 0, e-1 \rrbracket$, 
 $D_{N+i} \leq c_{\ref{6cons_maj_D_N_lem}} \left(H(B)^{\frac{-\varepsilon}{4}} + H(B)^{ -c_{\ref{6cons_epsilon_negatif}} \varepsilon}\right).$
In particular, if $H(B)$ is sufficiently large depending on $c_{\ref{6cons_maj_D_N_lem}},c_{\ref{6cons_epsilon_negatif}} $, and $\varepsilon$, then for all $i \in \llbracket 0, e-1 \rrbracket$,
$\| X_{N+i} \wedge Z_1 \ldots \wedge Z_e \| = \DD_{N+i} < 1.$

According to Lemma~\ref{2lem_X_in_B}, we then have $ \forall i \in \llbracket 0,e-1 \rrbracket, \quad X_{N+i} \in B.$
Recalling that $B_{N,e} = \text{Vect}(X_N, \ldots, X_{N+e-1})$, we have thus shown that, if $H(B)$ is large enough, $B_{N,e} \subset B$ for $N$ satisfying $(\ref{6choix_N})$. By equality of dimensions, we then have $B_{N,e} =B$ and Lemma~\ref{6lem_meilleurs_espaces} is proved.

\end{proofe}

The following corollary concludes the proof of Theorem~\ref{6theo_principal}.

\begin{cor}\label{6cor_maj_expos}
We have:
\begin{align*}
\mu_n(A|e)_1 \leq K_e = \max\limits_{i \in \llbracket 0, n-d-e \rrbracket } \beta_{i+1}\ldots\beta_{i+e}.
\end{align*}
\end{cor}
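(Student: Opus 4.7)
The plan is to combine the two preceding lemmas of the section into a finiteness argument. The lower bound $\mu_n(A|e)_1 \geq K_e$ was already established in $(\ref{6minoration_exposant_par_Ke})$, so only the upper bound remains. I fix $\varepsilon > 0$ and aim to show that only finitely many rational subspaces $B$ of dimension $e$ satisfy $\psi_1(A,B) \leq H(B)^{-K_e - \varepsilon}$. By definition of the Diophantine exponent, this will yield $\mu_n(A|e)_1 \leq K_e + \varepsilon$, and letting $\varepsilon \to 0$ then gives the corollary.

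By Lemma~\ref{6lem_meilleurs_espaces}, whenever $H(B)$ is large enough (depending on $A$ and $\varepsilon$), any such $B$ coincides with one of the explicit rational subspaces $B_{N,e} = \Span(X_N, \ldots, X_{N+e-1})$. It therefore suffices to bound $\psi_1(A, B_{N,e})$ from below in terms of $H(B_{N,e})$ as $N \to \infty$. The height estimate of Lemma~\ref{5lem_haut_BN} applies verbatim here, since the construction of $X_N$ and Claim~\ref{5B_N_base_v} carry over unchanged from Section~\ref{subsect_cons_d1}, giving $H(B_{N,e}) \leq c_{\ref{5cons_major_haut_BN}}\, \theta^{\alpha_N}$; and Lemma~\ref{6min_psi_A_BN} gives $\psi_1(A, B_{N,e}) \geq c_{\ref{6cons_min_angle_1}}\, \theta^{-\alpha_{N+e}}$. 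By construction, $\alpha_{N+e}/\alpha_N = \gamma_{N+1}\cdots\gamma_{N+e}$, and this product is bounded by $K_e$ for every $N$: the periodic extension of $(\gamma_k)$ only inserts factors equal to $C_d$, each of them no larger than the original $\gamma_i \in [C_d, +\infty[$, so any window of length $e$ has product at most $K_e$. Combining these bounds, I obtain a uniform estimate
$$\psi_1(A, B_{N,e}) \geq c\, H(B_{N,e})^{-K_e}$$
with $c > 0$ independent of $N$.

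Confronting this inequality with the hypothesis $\psi_1(A, B_{N,e}) \leq H(B_{N,e})^{-K_e - \varepsilon}$ forces $H(B_{N,e})^{\varepsilon} \leq c^{-1}$, which bounds $H(B_{N,e})$ and hence $N$ (since $H(B_{N,e}) \to \infty$ as $N \to \infty$). This produces the required finiteness and completes the upper bound. No substantive obstacle is expected: all the analytic work has been done in Lemmas~\ref{6min_psi_A_BN} and~\ref{6lem_meilleurs_espaces}, and the present step amounts to a direct combination of their conclusions with the height estimate from Section~\ref{sect_cas_d1}.
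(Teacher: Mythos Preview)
Your approach is essentially the same as the paper's: both combine Lemma~\ref{6lem_meilleurs_espaces}, Lemma~\ref{6min_psi_A_BN}, and the height estimate of Lemma~\ref{5lem_haut_BN} to show that no $B_{N,e}$ can satisfy $\psi_1(A,B_{N,e}) \leq H(B_{N,e})^{-K_e-\varepsilon}$ for infinitely many $N$. The paper phrases this as a contradiction, you as a direct finiteness count, but the content is identical.

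There is one slip to fix. To deduce $\psi_1(A,B_{N,e}) \geq c\,H(B_{N,e})^{-K_e}$ from $\psi_1(A,B_{N,e}) \geq c_{\ref{6cons_min_angle_1}}\theta^{-K_e\alpha_N}$, you need the \emph{lower} bound $H(B_{N,e}) \geq c_{\ref{5cons_minor_haut_BN}}\theta^{\alpha_N}$ from Lemma~\ref{5lem_haut_BN}, not the upper bound you quoted: the upper bound $H(B_{N,e}) \leq c_{\ref{5cons_major_haut_BN}}\theta^{\alpha_N}$ gives $\theta^{-K_e\alpha_N} \leq c_{\ref{5cons_major_haut_BN}}^{K_e}H(B_{N,e})^{-K_e}$, which is the wrong direction. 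With the lower bound in place the chain goes through exactly as you intend, and this is precisely what the paper does.
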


\begin{proof}
Suppose by contradiction that $\mu_n(A|e)_1 > K_e.$ Then there exists $\varepsilon > 0 $ such that $\mu_n(A|e)_1 \geq K_e + 2\varepsilon.$
By definition of the Diophantine exponent, there exist infinitely many rational subspaces $B$ of dimension $e$ satisfying:
\begin{align}\label{6inegl_psi_1_A_B}
0 < \psi_1(A,B) \leq H(B)^{-\mu_n(A|e)_1 + \varepsilon} \leq H(B)^{- K_e - \varepsilon}.
\end{align}
According to Lemma~\ref{6lem_meilleurs_espaces}, if $H(B)$ is large enough, we have $B = B_{N,e}$ with $N \in \mathbb{N}$ for subspaces satisfying $(\ref{6inegl_psi_1_A_B})$. There exist infinitely many integers $N \in \mathbb{N}$ such that:
\begin{align}\label{6infinite_N_psi}
0 < \psi_1(A,B_{N,e}) \leq H(B_{N,e})^{- K_e - \varepsilon}.
\end{align}
Moreover, according to Lemma~\ref{6min_psi_A_BN}, we have:
\begin{align}\label{6min_psi_1_A_BN_new}
\forall N \in \mathbb{N}, \quad \psi_1(A,B_{N,e}) \geq c_{\ref{6cons_min_angle_1}} \theta^{- {\alpha_{N+e}}} = c_{\ref{6cons_min_angle_1}}\theta^{- {\alpha_{N}}\beta_{N+1} \ldots \beta_{N+e}} \geq c_{\ref{6cons_min_angle_1}}\theta^{- {\alpha_{N}}K_e}.
\end{align}
According to Lemma~\ref{5lem_haut_BN} we have $H(B_{N,e}) \geq \cons \label{6cons_minor_haut_BN} \theta^{{\alpha_N}}.$ By combining inequalities $(\ref{6infinite_N_psi}) $ and $(\ref{6min_psi_1_A_BN_new})$ we find
$c_{\ref{6cons_min_angle_1}}c_{\ref{6cons_minor_haut_BN}}^{K_e}H(B_{N,e}) ^{- K_e} \leq H(B_{N,e})^{- K_e - \varepsilon}.$
Recall that this inequality holds for infinitely many integers $N$. Letting $N$ tend to $+ \infty$, $H(B_{N,e}) \to + \infty$ and then $c_{\ref{6cons_min_angle_1}}c_{\ref{6cons_minor_haut_BN}}^{K_e} = 0$. But $c_{\ref{6cons_min_angle_1}} >0$ and $c_{\ref{6cons_minor_haut_BN}} > 0 $, leading to a contradiction and proving the corollary. 

\end{proof}

\subsection{Proof of Theorem~\ref{1theo_premier_angle}}

We prove here, using Theorem~\ref{6theo_principal}, that the image of the spectrum $(\mu_n(\cdot|1)_1, \ldots, \mu_n(\cdot|n-d)_1)$ contains a non-empty open set. It gives then Theorem~\ref{1theo_premier_angle}.

\bigskip

\begin{proofe}[Theorem~\ref{1theo_premier_angle}]
Let $O$ be the non-empty open set consisting of $(\beta_1, \ldots, \beta_{n-d}) \in \R^{n-d}$ satisfying $\beta_1 > C_d $ and
$\forall i \in \llbracket 1, n-d-1 \rrbracket, \quad C_d \beta_{i} < \beta_{i+1} < \min\limits_{j \in \llbracket 1, i \rrbracket} \beta_{i+1-j}\beta_{j}$. 

For $(\beta_1, \ldots, \beta_{n-d}) \in O$ fixed, let us set $\beta_0 =1 $ and for all
$j \in \llbracket 1,n-1 \rrbracket, \quad \gamma_j = \beta_j \beta_{j-1}^{-1}.$ 
We have $\gamma_1 = \beta_1 \geq C_d$ and $\beta_{j} \geq C_d \beta_{j-1}$ for all $j \in \llbracket 2, n-1 \rrbracket$ by assumption on $O$. We then deduce
$ \forall j \in \llbracket 1, n-1 \rrbracket, \quad \gamma_j \geq C_d.$

 Proposition~\ref{6theo_principal} gives the existence of a subspace
$A$ in $\R^n$ satisfying $A \in \II_n(d,n-d)_1$ and:
 \begin{align*}
 \forall e \in \llbracket 1, n-1 \rrbracket, \quad \mu_n(A|e)_1 = \max\limits_{i \in \llbracket 0, n-d-e\rrbracket} \gamma_{i+1}\ldots\gamma_{i+e}.
 \end{align*}
It remains to show that $\max\limits_{i \in \llbracket 0, n-1-e\rrbracket} \gamma_{i+1}\ldots\gamma_{i+e}= \beta_e$ for all $ e \in \llbracket 1, n-1 \rrbracket$. 
We notice that for $i \in \llbracket 0, n-1-e\rrbracket$,
 $\gamma_{i+1}\ldots\gamma_{i+e} = \dfrac{\beta_{i+e}}{\beta_{i}}$. Taking $i = 0$ we thus have $\max\limits_{i \in \llbracket 0, n-1-e\rrbracket} \gamma_{i+1}\ldots\gamma_{i+e} \geq \beta_e$. On the other hand for $i \in \llbracket 1, n-1-e \rrbracket$, the hypothesis on $O$ gives $\beta_{i+e} \leq \beta_{i}\beta_{e}$ and thus in particular $\gamma_{i+1}\ldots\gamma_{i+e} \leq \beta_e.$

So $\max\limits_{i \in \llbracket 0, n-1-e\rrbracket} \gamma_{i+1}\ldots\gamma_{i+e} \leq \beta_e$ and:
\begin{align*}
\forall e \in \llbracket 1, n-d \rrbracket, \quad \mu_n(A|e)_1 = \beta_e.
\end{align*}
The image of $(\mu_n(\cdot|1)_1, \ldots, \mu_n(\cdot|n-d)_1)$ thus contains a non-empty open set, and in particular, the family of functions $(\mu_n(\cdot|1)_1, \ldots, \mu_n(\cdot|n-d)_1)$ are smoothly independent on $ \II_n(d,n-d)_{1} $.

\end{proofe}

\section{Proof of Theorem~\ref{1theo_construction}}\label{5sect_cas_general}
In this section we give the proof of Theorem~\ref{1theo_construction}. It corresponds to Chapter 7 of \cite{Guillot_these}, where the reader can find more detailed explanations. We take $c_{\ref{7cons_petite_hyp_theoc2c1}}$, $c_{\ref{7cons_petite_hyp_theoc2}}$ and $\beta_{i,\ell}$ as in Theorem~\ref{1theo_construction}.

First, for every $i \in \llbracket 1, d \rrbracket$, we introduce a term $\beta_{i, m+1}$ to ensure a property of linear independence over the sequence $(\beta_{i, \ell})_{i \in \llbracket 1, d \rrbracket, \ell \in \llbracket 1, m+1 \rrbracket}$ (namely \eqref{7_3hypot_m=beta_m+1} below). We choose $(\beta_{i, m+1})_{i \in \llbracket 1, d \rrbracket}$ such that:
\begin{align}
 &\min\limits_{\ell \in \llbracket 1, m +1 \rrbracket} \beta_{1, \ell} \geq \max\left(\left(\max_{\ell \in \llbracket 1, m +1 \rrbracket} \beta_{1, \ell}\right)^{\frac{c_{\ref{7cons_petite_hyp_theoc2}}}{c_{\ref{7cons_petite_hyp_theoc2c1}}}},(3d)^{\frac{c_{\ref{7cons_petite_hyp_theoc2}}}{c_{\ref{7cons_petite_hyp_theoc2}}-1} }\right) \label{7_1hypot_m=beta_m+1},
\end{align}
as well as for every $ i \in \llbracket 1,d-1 \rrbracket,$
\begin{align}
\min\limits_{\ell \in \llbracket 1, m +1 \rrbracket}(\beta_{i,\ell})^{c_{\ref{7cons_petite_hyp_theoc2c1}}} \geq \max_{\ell \in \llbracket 1, m +1\rrbracket}(\beta_{i+1,\ell})\label{7_2ahypot_m=beta_m+1}, \\
 \min\limits_{\ell \in \llbracket 1, m +1 \rrbracket}(\beta_{i+1,\ell})\geq \max_{\ell \in \llbracket 1, m +1 \rrbracket}(\beta_{i,\ell})^{c_{\ref{7cons_petite_hyp_theoc2}}}\label{7_2bhypot_m=beta_m+1}, 
\end{align}
and 
\begin{align}
&\text{the family } \{1\} \cup \left(\frac{\log(E_i)}{\log(E_j)}\right)_{i,j \in \llbracket 1,d \rrbracket^2, i \neq j} \text{ is linearly independent over $\mathbb{Q}$} \label{7_3hypot_m=beta_m+1}
\end{align}
where $E_i = \beta_{i,1}\ldots \beta_{i,m}(\beta_{i,m+1})^m$ for $i \in \llbracket 1,d \rrbracket$. Such $\beta_{i, m+1}$ exist because the inequalities of hypotheses $(\ref{7hypothèse_prop_princi1})$, $(\ref{7hypothèse_prop_princi2})$ and $(\ref{7hypothèse_prop_princi3})$ are strict. The set of $(\beta_{i, m+1})_{i \in \llbracket 1, d \rrbracket}$ satisfying $(\ref{7_1hypot_m=beta_m+1})$, $(\ref{7_2ahypot_m=beta_m+1})$, and $(\ref{7_2bhypot_m=beta_m+1})$ contains a non-empty open set, so we can choose $(\beta_{1,m+1}, \ldots, \beta_{d,m+1})$ satisfying $(\ref{7_3hypot_m=beta_m+1})$ within it because the set of $d$-tuples for which $(\ref{7_3hypot_m=beta_m+1})$ is not satisfied is a countable union of hypersurfaces of $\mathbb{R}^d$.

\bigskip

\subsection{Extension of \texorpdfstring{$\beta_{i,\ell}$}{} and study of \texorpdfstring{$K_{j,v}$}{} }\label{7section_def_beta}

For $i \in \llbracket 1, d \rrbracket$ and $ \ell \in \llbracket m+2, 2m \rrbracket$, we set $\beta_{i, \ell} = \beta_{i, m+1}$ and we extend the sequence of $\beta_{i, \ell}$ by periodicity by setting $\forall i \in \llbracket 1, d \rrbracket, \quad \forall \ell \in \llbracket 1, 2m \rrbracket, \quad \forall p \in \N, \quad \beta_{i, \ell + 2mp } = \beta_{i, \ell}.$

\begin{req}\label{7req_max_2m=m}
By periodicity, we have
$$ \forall i \in \llbracket 1, d \rrbracket, \quad \max\limits_{\ell \in \Nx} \beta_{i,\ell+1}\ldots \beta_{i,\ell+v}= \max\limits_{\ell \in \llbracket 0, 2m -1 \rrbracket} \beta_{i,\ell+1}\ldots \beta_{i,\ell+v}.$$
As $\beta_{i,m+1} = \beta_{i,m+1 } = \ldots = \beta_{i,2m}$ and for all $ \ell \in \llbracket 1, m \rrbracket$, $\beta_{i,m+1} \leq \beta_{i, \ell} $, we have
$$ \forall i \in \llbracket 1, d \rrbracket, \quad \forall v \in \llbracket 1, m +1 \rrbracket, \quad \max\limits_{\ell \in \llbracket 0, 2m -1 \rrbracket} \beta_{i,\ell+1}\ldots \beta_{i,\ell+v}= \max\limits_{\ell \in \llbracket 0, m + 1 -v \rrbracket} \beta_{i,\ell+1}\ldots \beta_{i,\ell+v}.$$
In particular, if $v \leq m $, we notice that $$\max\limits_{\ell \in \llbracket 0, m + 1 -v \rrbracket} \beta_{i,\ell+1}\ldots \beta_{i,\ell+v} = \max\limits_{\ell \in \llbracket 0, m -v \rrbracket} \beta_{i,\ell+1}\ldots \beta_{i,\ell+v} = K_{i,v}.$$
\end{req}
We then extend the definition of $K_{i,v}$ to $v=0$ and $v= m+1$ by:
\begin{align*}
 \forall i \in \llbracket 1, d \rrbracket, \quad \forall v \in \llbracket 0, m +1 \rrbracket, \quad K_{i,v} = \max\limits_{\ell \in \llbracket 0, m+1-v \rrbracket} \beta_{i, \ell +1} \ldots \beta_{i, \ell + v} = \max\limits_{\ell \in \Nx} \beta_{i,\ell+1}\ldots \beta_{i,\ell+v} 
\end{align*}
with the convention that an empty product is equal to $1$, so that $K_{i,0} = 1 $.

Let us define the quantity $f(e,mk) = \max(0,e-mk)$, we will denote it by $f$ when $e$ and $k$ are clearly identified.

\begin{prop}\label{7lem_min_KKi}
 Let $k \in \llbracket 1, d \rrbracket $ and $e \in \llbracket 1, k(m+1)-1 \rrbracket$. Let $1 \leq j_1 \leq \ldots \leq j_k \leq d $. Then for any $q \in \llbracket 1, k \rrbracket $, we have:
 \begin{align}\label{eq7lem_min_KKi}
 \left(1 - \frac{1}{\min\limits_{\ell \in \llbracket 1, m+1\rrbracket}\beta_{j_q, \ell}} \right)\left( \frac{1}{\sum\limits_{\ell =1+ f }^k \frac{1}{K_{j_{\ell},v_{\ell} }} } -1 \right) - K_{j_q,v_q-1} \geq 0
 \end{align}
where $v_{1}, \ldots, v_{k}$ are defined in Theorem~\ref{1theo_construction}.
\end{prop}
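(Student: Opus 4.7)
Denote $\beta^* = \min\limits_{\ell \in \llbracket 1, m+1\rrbracket} \beta_{j_q, \ell}$ and $S = \sum\limits_{\ell = 1+f}^k \frac{1}{K_{j_\ell, v_\ell}}$. The plan is first to rearrange $(\ref{eq7lem_min_KKi})$ into a more convenient form: it is equivalent (assuming $S < 1$) to
\begin{align*}
S \cdot (\beta^* - 1 + \beta^* K_{j_q, v_q - 1}) \leq \beta^* - 1.
\end{align*}
Thus the task reduces to establishing this upper bound on $S$, the verification $S < 1$ being a byproduct.

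The three main size estimates underlying the proof are the following. First, the basic monotonicity $K_{j_q, v_q} \geq \beta^* \cdot K_{j_q, v_q - 1}$, obtained by extending any optimal window for $K_{j_q, v_q - 1}$ by an adjacent factor, which is bounded below by $\beta^*$. Second, by iterating hypothesis $(\ref{7hypothèse_prop_princi3})$, the growth $\min\limits_{\ell} \beta_{j, \ell} \geq \left( \min\limits_{\ell} \beta_{i, \ell}\right)^{c_{\ref{7cons_petite_hyp_theoc2}}^{j-i}}$ for $i \leq j$. Third, using hypothesis $(\ref{7hypothèse_prop_princi2})$ iterated together with $(\ref{7hypothèse_prop_princi1})$ controlling the initial spread, one obtains the upper bound $\max\limits_{\ell} \beta_{j, \ell} \leq \left(\min\limits_{\ell} \beta_{i, \ell}\right)^{c_{\ref{7cons_petite_hyp_theoc2c1}}^{j-i+1}/c_{\ref{7cons_petite_hyp_theoc2}}}$ valid for all $i \leq j$.

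With these in hand, I would split $S$ according to the position of $\ell$ relative to $q$. Recall that $v_\ell$ is non-increasing in $\ell$ by $(\ref{7eq_def_vj})$, while $j_\ell$ is non-decreasing. For $\ell > q$, the growth in $j$ makes $K_{j_\ell, v_\ell}$ much larger than $K_{j_q, v_q - 1}$, so these contributions are negligible. For $\ell = q$, the contribution is $1/K_{j_q, v_q} \leq 1/(\beta^* K_{j_q, v_q-1})$ by the first estimate. The critical regime is $\ell < q$, where $j_\ell \leq j_q$ and $v_\ell \geq v_q$. Combining the upper bound on $K_{j_q, v_q -1}$ and the lower bound on $K_{j_\ell, v_\ell}$ from the previous paragraph, one arrives at
\begin{align*}
\frac{K_{j_q, v_q - 1}}{K_{j_\ell, v_\ell}} \leq \left(\min\limits_{\ell'} \beta_{j_\ell, \ell'}\right)^{c_{\ref{7cons_petite_hyp_theoc2c1}}^{j_q - j_\ell + 1}(v_q - 1)/c_{\ref{7cons_petite_hyp_theoc2}} - v_\ell}.
\end{align*}
The crucial point is that the identity $c_{\ref{7cons_petite_hyp_theoc2c1}}^{d} = 1 + 1/m$ combined with $c_{\ref{7cons_petite_hyp_theoc2}} > 1$ makes this exponent strictly negative in every subcase, with a controllable margin.

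In the last step, one sums the term-by-term bounds, using the hypothesis $\beta^* > (3d)^{c_{\ref{7cons_petite_hyp_theoc2}}/(c_{\ref{7cons_petite_hyp_theoc2}}-1)}$ to convert the ``small negative exponent'' estimates into explicit numerical bounds: this hypothesis is chosen precisely so that each individual ratio is at most of order $1/(3d)$, and summing the at most $k \leq d$ terms in $S \cdot K_{j_q, v_q - 1}$ yields a total strictly less than $1 - 1/\beta^*$, giving the desired inequality. The main obstacle is the case $\ell < q$: verifying that the exponent $c_{\ref{7cons_petite_hyp_theoc2c1}}^{j_q - j_\ell + 1}(v_q - 1)/c_{\ref{7cons_petite_hyp_theoc2}} - v_\ell$ is strictly negative demands a careful case analysis according to whether $v_\ell = v_q$ or $v_\ell = v_q + 1$, especially in the extremal configuration $v_\ell = v_q = m+1$ and $j_q - j_\ell = d - 1$, where the margin degenerates to $-(m+1)(c_{\ref{7cons_petite_hyp_theoc2}} - 1)/c_{\ref{7cons_petite_hyp_theoc2}}$; this is precisely where the absolute lower bound $(3d)^{c_{\ref{7cons_petite_hyp_theoc2}}/(c_{\ref{7cons_petite_hyp_theoc2}}-1)}$ on $\beta^*$ is needed to close the estimate.
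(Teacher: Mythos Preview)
Your approach is viable but takes a different and more laborious route than the paper's. You perform a term-by-term analysis of $S$, bounding each ratio $K_{j_q,v_q-1}/K_{j_\ell,v_\ell}$ separately according to the sign of $\ell-q$. The paper instead isolates a single pivot: it proves (Claim~\ref{7lem_croissance_Kj}) that $K_{j_{u+1},v_{u+1}}$ is simultaneously the minimum of all $K_{j_\ell,v_\ell}$ and at least $(K_{j_q,v_q-1})^{c_{\ref{7cons_petite_hyp_theoc2}}}$ for every $q$. From this, the crude bounds $1/S \geq K_{j_{u+1},v_{u+1}}/d$ and $1-1/\beta^*\geq 1/2$ reduce the whole inequality to the elementary estimate $\beta - d(1+2\beta^{1/c_{\ref{7cons_petite_hyp_theoc2}}})\geq 0$ for $\beta = K_{j_{u+1},v_{u+1}}$ (Claim~\ref{7lem_beta_minor}), which follows directly from the lower bound $(3d)^{c_{\ref{7cons_petite_hyp_theoc2}}/(c_{\ref{7cons_petite_hyp_theoc2}}-1)}$. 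This bypasses your case split entirely.

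Your sketch has two soft spots. First, the regime $\ell>q$ is dismissed as ``negligible'' without argument; in fact, when $v_\ell=v_q-1$ (possible whenever $q\leq u<\ell$), the exponent margin is only of order $(v_q-1)(c_{\ref{7cons_petite_hyp_theoc2}}-1)$, comparable to your declared extremal case, and it too requires the hypothesis on $\beta^*$ to close. Second, your final summation asserts $S\cdot K_{j_q,v_q-1}<1-1/\beta^*$, but the rearranged target is $S\cdot(K_{j_q,v_q-1}+1-1/\beta^*)\leq 1-1/\beta^*$; the extra $S(1-1/\beta^*)$ term must be absorbed, which works numerically but needs an explicit line. (Note also that the statement as printed allows $j_1\leq\ldots\leq j_k$; both your argument and the paper's actually require strict inequalities---with repeated indices one can have $\ell>q$, $j_\ell=j_q$, $v_\ell=v_q-1$, whence the ratio equals $1$ and the inequality fails.)
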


 The hypotheses $(\ref{7hypothèse_prop_princi1})$ and $(\ref{7hypothèse_prop_princi2})$ are stronger than the conclusion \eqref{eq7lem_min_KKi} of Proposition $\ref{7lem_min_KKi}$. In fact, assuming that \eqref{eq7lem_min_KKi} holds is sufficient to prove Theorem~\ref{1theo_construction}. Before proving Proposition~$\ref{7lem_min_KKi}$, we will prove two claims.

\begin{claim}\label{7lem_beta_minor}
 For any $\beta \geq \min\limits_{\ell \in \llbracket 1, m+1 \rrbracket} \beta_{1, \ell}$, we have $ \beta - d(1+ 2 \beta^{\frac{1}{c_{\ref{7cons_petite_hyp_theoc2}}}}) \geq 0.$

\end{claim}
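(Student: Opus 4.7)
The plan is to derive the inequality directly from hypothesis \eqref{7hypothèse_prop_princi1}, which was crafted so that precisely this kind of estimate goes through. Write $c := c_{\ref{7cons_petite_hyp_theoc2}}$ for compactness. Since $\beta \geq \min_{\ell \in \llbracket 1, m+1 \rrbracket} \beta_{1,\ell}$ and the hypothesis gives $\min_{\ell} \beta_{1,\ell} > (3d)^{c/(c-1)}$, I obtain $\beta > (3d)^{c/(c-1)}$. The key algebraic manipulation is to rewrite this as
\[
\beta^{(c-1)/c} > 3d,
\]
which is legitimate because $c > 1$, and then to multiply both sides by $\beta^{1/c} > 0$ to obtain the single inequality
\[
\beta > 3d\, \beta^{1/c}
\]
that drives the whole argument.

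With this in hand, the conclusion follows by direct substitution. I will compute
\[
\beta - d\bigl(1 + 2\beta^{1/c}\bigr) \;>\; 3d\,\beta^{1/c} - d - 2d\,\beta^{1/c} \;=\; d\bigl(\beta^{1/c} - 1\bigr),
\]
and finish by observing that $\beta > (3d)^{c/(c-1)} \geq 1$ (since $3d \geq 3$ and the exponent $c/(c-1)$ is positive), whence $\beta^{1/c} \geq 1$ and the right-hand side is nonnegative.

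I do not anticipate any genuine obstacle: the only point worth underlining is that the exponent $c/(c-1)$ appearing in \eqref{7hypothèse_prop_princi1} is chosen exactly so that the lower bound on $\beta$ translates into a lower bound on $\beta - 3d\,\beta^{1/c}$. Once that rearrangement is noticed, the proof is essentially a one-line verification, which is why the claim is stated without fanfare as a preparatory lemma for Proposition~\ref{7lem_min_KKi}.
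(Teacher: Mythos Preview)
Your argument is correct and is essentially the same as the paper's: both extract $\beta^{(c-1)/c} \geq 3d$ from the hypothesis, multiply through by $\beta^{1/c}$ to get $\beta \geq 3d\,\beta^{1/c}$, and conclude. One small point: since the claim is stated with $\ell$ ranging over $\llbracket 1, m+1 \rrbracket$, the relevant lower bound is \eqref{7_1hypot_m=beta_m+1} (the extension to $\beta_{1,m+1}$) rather than \eqref{7hypothèse_prop_princi1}, which only covers $\ell \in \llbracket 1, m \rrbracket$; this also means the inequality is $\geq$ rather than strict, but that is all you need.
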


\begin{proof}
 Since $\min\limits_{\ell \in \llbracket 1, m+1 \rrbracket} \beta_{1, \ell} \geq (3d)^{\frac{c_{\ref{7cons_petite_hyp_theoc2}}}{c_{\ref{7cons_petite_hyp_theoc2}} - 1 }} \geq 1$ by hypothesis $(\ref{7_1hypot_m=beta_m+1})$, we have:
 \begin{align*}
 \beta^{\frac{1}{c_{\ref{7cons_petite_hyp_theoc2}}}}(\beta^{\frac{c_{\ref{7cons_petite_hyp_theoc2}} -1 }{c_{\ref{7cons_petite_hyp_theoc2}}}} -1) \leq \beta^{\frac{1}{c_{\ref{7cons_petite_hyp_theoc2}}}}(\beta^{\frac{c_{\ref{7cons_petite_hyp_theoc2}} }{c_{\ref{7cons_petite_hyp_theoc2}}}} -1) = \beta -1,
 \end{align*}
 hence we deduce the inequality of Claim~\ref{7lem_beta_minor}.
 
\end{proof}

\begin{claim}\label{7lem_croissance_Kj}
Let $k \in \llbracket 1, d \rrbracket $ and $e \in \llbracket 1, k(m+1)-1 \rrbracket$. Let $1 \leq j_1 < \ldots < j_k \leq d $. Let $u$ be the remainder of the division of $e$ by $k$. Then for any $q \in \llbracket 1, k \rrbracket $ we have:
\begin{align*}
(K_{j_q,v_q -1})^{c_{\ref{7cons_petite_hyp_theoc2}}} \leq K_{j_{u+1}, v_{u+1}} \leq K_{j_q, v_q}.
\end{align*}
\end{claim}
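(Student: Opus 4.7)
The plan is to prove the two inequalities of the claim by splitting into three sub-cases according to the position of $q$ relative to $u+1$: (A) $q \leq u$, where $j_q < j_{u+1}$ and $v_q = v+1$; (B) $q = u+1$, where $j_q = j_{u+1}$ and $v_q = v$; (C) $q \geq u+2$, where $j_q > j_{u+1}$ and $v_q = v$.

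I would first record the elementary bounds coming directly from the sliding-window definition of $K_{i,v}$:
\[ \bigl(\min_\ell \beta_{i,\ell}\bigr)^v \leq K_{i,v} \leq \bigl(\max_\ell \beta_{i,\ell}\bigr)^v. \]
Iterating (\ref{7hypothèse_prop_princi2}) and (\ref{7hypothèse_prop_princi3}) yields, for $i < i'$ in $\llbracket 1,d\rrbracket$,
\[ \max_\ell \beta_{i',\ell} < \bigl(\min_\ell \beta_{i,\ell}\bigr)^{c_{\ref{7cons_petite_hyp_theoc2c1}}^{i'-i}} \quad\text{and}\quad \bigl(\max_\ell \beta_{i,\ell}\bigr)^{c_{\ref{7cons_petite_hyp_theoc2}}^{i'-i}} < \min_\ell \beta_{i',\ell}. \]
Chaining these with (\ref{7hypothèse_prop_princi1}) gives, for every $i \in \llbracket 1,d\rrbracket$, a single-index inequality $(\min_\ell \beta_{i,\ell})^{c_{\ref{7cons_petite_hyp_theoc2c1}}} > (\max_\ell \beta_{i,\ell})^{c_{\ref{7cons_petite_hyp_theoc2}}}$: the base case $i=1$ is (\ref{7hypothèse_prop_princi1}), and for $i \geq 2$ one writes $(\min \beta_i)^{c_{\ref{7cons_petite_hyp_theoc2c1}}} > (\max \beta_{i-1})^{c_{\ref{7cons_petite_hyp_theoc2c1}} c_{\ref{7cons_petite_hyp_theoc2}}} \geq (\min \beta_{i-1})^{c_{\ref{7cons_petite_hyp_theoc2c1}} c_{\ref{7cons_petite_hyp_theoc2}}} > (\max \beta_i)^{c_{\ref{7cons_petite_hyp_theoc2}}}$. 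Finally, from $e = kv + u < k(m+1)$ one extracts $v \leq m$, which is the only upper bound on $v$ needed.

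In each sub-case, combining the elementary bounds with the suitable auxiliary inequality reduces the claim to a purely numeric inequality between exponents. For (A), the right inequality becomes $c_{\ref{7cons_petite_hyp_theoc2c1}}^{j_{u+1}-j_q}\, v \leq v + 1$, which holds since $c_{\ref{7cons_petite_hyp_theoc2c1}}^{d-1} < c_{\ref{7cons_petite_hyp_theoc2c1}}^d = 1 + 1/m \leq (v+1)/v$; the left becomes $c_{\ref{7cons_petite_hyp_theoc2}}^{j_{u+1}-j_q} \geq c_{\ref{7cons_petite_hyp_theoc2}}$, immediate. For (B), the right is trivial and the left, after invoking the single-index inequality, amounts to $c_{\ref{7cons_petite_hyp_theoc2c1}} \leq v/(v-1)$, valid because $c_{\ref{7cons_petite_hyp_theoc2c1}} = (1+1/m)^{1/d} \leq 1 + 1/m = (m+1)/m \leq m/(m-1) \leq v/(v-1)$ for $v \leq m$. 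For (C), the right inequality follows from $\min_\ell \beta_{j_q,\ell} > \max_\ell \beta_{j_{u+1},\ell}$, while the left reduces to $c_{\ref{7cons_petite_hyp_theoc2}}\, c_{\ref{7cons_petite_hyp_theoc2c1}}^{j_q - j_{u+1}}\,(v-1) \leq v$; since $c_{\ref{7cons_petite_hyp_theoc2}} < c_{\ref{7cons_petite_hyp_theoc2c1}}$, one has $c_{\ref{7cons_petite_hyp_theoc2}}\, c_{\ref{7cons_petite_hyp_theoc2c1}}^{d-1} < c_{\ref{7cons_petite_hyp_theoc2c1}}^d = 1 + 1/m$, and $(1+1/m)(v-1) \leq v$ holds for $v \leq m$.

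The main obstacle is the left inequality of case (C): even though $j_q > j_{u+1}$ makes $\beta_{j_q,\cdot}$ potentially much larger than $\beta_{j_{u+1},\cdot}$, one must still recover $(K_{j_q,v-1})^{c_{\ref{7cons_petite_hyp_theoc2}}} \leq K_{j_{u+1}, v}$ using only the single-factor gain of $v$ over $v-1$. This balance is tight, and it is precisely what dictates the strict inequality $c_{\ref{7cons_petite_hyp_theoc2}} < c_{\ref{7cons_petite_hyp_theoc2c1}} = (1+1/m)^{1/d}$ in the hypotheses, which makes $c_{\ref{7cons_petite_hyp_theoc2}}\, c_{\ref{7cons_petite_hyp_theoc2c1}}^{d-1}$ fall just short of $c_{\ref{7cons_petite_hyp_theoc2c1}}^d = 1+1/m$.
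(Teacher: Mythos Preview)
Your proposal is correct and follows essentially the same route as the paper's proof: the same three-way case split on $q$ relative to $u+1$, the same reduction via the bounds $(\min_\ell \beta_{i,\ell})^v \leq K_{i,v} \leq (\max_\ell \beta_{i,\ell})^v$ together with iterated applications of the growth hypotheses, and the same pivotal numeric inequality $(v-1)c_{\ref{7cons_petite_hyp_theoc2c1}}^{d} \leq v$ (equivalently $c_{\ref{7cons_petite_hyp_theoc2c1}}^d = 1+1/m \leq v/(v-1)$ for $v \leq m$). The only cosmetic difference is that the paper works throughout with the extended range $\ell \in \llbracket 1, m+1\rrbracket$ (hypotheses \eqref{7_1hypot_m=beta_m+1}--\eqref{7_2bhypot_m=beta_m+1}), which you should also do so that your bounds cover the case $v_q = m+1$ arising in sub-case~(A) when $v=m$.
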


\begin{proof}
We begin by observing that for all $v \in \llbracket 1, m+1 \rrbracket$, one has $(v-1)c_{\ref{7cons_petite_hyp_theoc2c1}}^{d} \leq v$ according to the definition of $c_{\ref{7cons_petite_hyp_theoc2c1}} = \left(1+ \frac{1}{m} \right)^{\frac{1}{d}}$. 
We only need to show the following two inequalities to conclude: 
\begin{align}
&(\max\limits_{\ell \in \llbracket 1, m+1\rrbracket}\beta_{j_{q}, \ell})^{(v_{q}-1)c_{\ref{7cons_petite_hyp_theoc2}} } \leq (\min\limits_{\ell \in \llbracket 1, m+1\rrbracket}\beta_{j_{u+1}, \ell})^{v_{u+1} } \label{7ineg_aprouver_1}\\
\text{and } &(\max\limits_{\ell \in \llbracket 1, m+1\rrbracket}\beta_{j_{u+1}, \ell})^{v_{u+1} } \leq (\min\limits_{\ell \in \llbracket 1, m+1\rrbracket}\beta_{j_{q}, \ell})^{v_{q} }. \label{7ineg_aprouver_2}
\end{align}
Indeed, we have $(\min \limits_{\ell \in \llbracket 1, m+1\rrbracket}\beta_{j, \ell})^{v} \leq K_{j,v} \leq (\max\limits_{\ell \in \llbracket 1, m+1\rrbracket}\beta_{j, \ell})^{v} $ for all $j \in \llbracket 1, d \rrbracket$ and $ v \in \llbracket 1, m+1 \rrbracket$. We distinguish between three cases according to the value of $q$.
\\ \textbullet \, \underline{If $q < u+1$} then by definition of $v_j$ we have $v_q = v_{u+1} + 1 \in \llbracket 2, m+1 \rrbracket. $
We then use hypothesis $(\ref{7_2bhypot_m=beta_m+1})$, applied $j_{u+1}-j_q $ times, which gives:
\begin{align*}
(\max\limits_{\ell \in \llbracket 1, m+1\rrbracket}\beta_{j_{q}, \ell})^{(v_{q}-1)c_{\ref{7cons_petite_hyp_theoc2}}^{j_{u+1}-j_q} } \leq (\min\limits_{\ell \in \llbracket 1, m+1\rrbracket}\beta_{j_{u+1}, \ell})^{v_{q} -1} = (\min\limits_{\ell \in \llbracket 1, m+1\rrbracket}\beta_{j_{u+1}, \ell})^{v_{u+1} };
\end{align*}
this yields $(\ref{7ineg_aprouver_1})$ since $j_{u+1}-j_q \geq 1$. For the other inequality, we use hypothesis $(\ref{7_2ahypot_m=beta_m+1})$, applied $j_{u+1}-j_q $ times, which gives:
\begin{align*}
(\max\limits_{\ell \in \llbracket 1, m+1\rrbracket}\beta_{j_{u+1}, \ell})^{v_{u+1} } \leq (\min\limits_{\ell \in \llbracket 1, m+1\rrbracket}\beta_{j_{q}, \ell})^{(v_{q}-1)c_{\ref{7cons_petite_hyp_theoc2c1}}^{j_{u+1}-j_q } }.
\end{align*}
Now, since $j_{u+1}-j_q \leq d$, we have $(v_{q}-1)c_{\ref{7cons_petite_hyp_theoc2c1}}^{j_{u+1}-j_q } \leq (v_{q}-1)c_{\ref{7cons_petite_hyp_theoc2c1}}^{d} \leq v_q$ and thus:
\begin{align*}
(\max\limits_{\ell \in \llbracket 1, m+1\rrbracket}\beta_{j_{u+1}, \ell})^{v_{u+1} } \leq (\min\limits_{\ell \in \llbracket 1, m+1\rrbracket}\beta_{j_{q}, \ell})^{v_{q}}
\end{align*}
which proves $(\ref{7ineg_aprouver_2})$ in the case $q < u+1$.
\\ \textbullet \, \underline{If $q = u+1$} the inequality $(\ref{7ineg_aprouver_2})$ is trivial. 
Moreover, hypotheses $(\ref{7_2ahypot_m=beta_m+1})$ and $(\ref{7_2bhypot_m=beta_m+1})$ combined give
$ (\min\limits_{\ell \in \llbracket 1, m+1\rrbracket}\beta_{j_{u+1}, \ell})\geq (\max\limits_{\ell \in \llbracket 1, m+1\rrbracket}\beta_{j_{u+1}, \ell})^{\frac{c_{\ref{7cons_petite_hyp_theoc2}}}{c_{\ref{7cons_petite_hyp_theoc2c1}}}}$
and thus:
\begin{align*}
 (\min\limits_{\ell \in \llbracket 1, m+1\rrbracket}\beta_{j_{u+1}, \ell})^{v_{u+1}} \geq (\max\limits_{\ell \in \llbracket 1, m+1\rrbracket}\beta_{j_{u+1}, \ell})^{v_{u+1} {\frac{c_{\ref{7cons_petite_hyp_theoc2}}}{c_{\ref{7cons_petite_hyp_theoc2c1}}}} } \geq (\max\limits_{\ell \in \llbracket 1, m+1\rrbracket}\beta_{j_{u+1}, \ell})^{(v_{u+1}-1) c_{\ref{7cons_petite_hyp_theoc2}} }
\end{align*}
since $ (v-1) c_{\ref{7cons_petite_hyp_theoc2c1}} \leq (v-1) c_{\ref{7cons_petite_hyp_theoc2c1}}^d \leq v$ for all $v \in \llbracket 1, m +1 \rrbracket$ by the remark at the beginning of the proof. This yields $(\ref{7ineg_aprouver_2})$ in the case $q = u+1$.
\\ \textbullet \, \underline{If $q > u+1$} then by definition of $v_j$ we have $v_q = v_{u+1} \in \llbracket 1, m \rrbracket. $ The inequality $(\ref{7ineg_aprouver_2})$ is clear because $\max\limits_{\ell \in \llbracket 1, m+1\rrbracket}\beta_{j_{u+1}, \ell} \leq (\min\limits_{\ell \in \llbracket 1, m+1\rrbracket}\beta_{j_{q}, \ell})^{c_{\ref{7cons_petite_hyp_theoc2}}^{-(j_q-j_{u+1})}} \leq \min\limits_{\ell \in \llbracket 1, m+1\rrbracket}\beta_{j_{q}, \ell}$ by applying the hypothesis $(\ref{7_2bhypot_m=beta_m+1})$ $j_q-j_{u+1}$ times.
\\ The inequality $(\ref{7ineg_aprouver_1})$ comes from $(\ref{7_2ahypot_m=beta_m+1})$ applied $j_q-j_{u+1}$ times: 
\begin{align*}
(\max\limits_{\ell \in \llbracket 1, m+1\rrbracket}\beta_{j_{q}, \ell})^{(v_{q}-1)c_{\ref{7cons_petite_hyp_theoc2}} } \leq (\min\limits_{\ell \in \llbracket 1, m+1\rrbracket}\beta_{j_{u+1}, \ell})^{(v_{q}-1)c_{\ref{7cons_petite_hyp_theoc2}}c_{\ref{7cons_petite_hyp_theoc2c1}}^{j_q-j_{u+1}}}\leq (\min\limits_{\ell \in \llbracket 1, m+1\rrbracket}\beta_{j_{u+1}, \ell})^{v_{q}}
\end{align*}
since $(v_{q}-1)c_{\ref{7cons_petite_hyp_theoc2}}c_{\ref{7cons_petite_hyp_theoc2c1}}^{j_q-j_{u+1}} \leq (v_{q}-1)c_{\ref{7cons_petite_hyp_theoc2c1}}^{d}$ because $c_{\ref{7cons_petite_hyp_theoc2}}\leq c_{\ref{7cons_petite_hyp_theoc2c1}}$ and $ (v_{q}-1)c_{\ref{7cons_petite_hyp_theoc2c1}}^{d}\leq v_q$ by the remark at the beginning of the proof. This yields $(\ref{7ineg_aprouver_1})$ in the case $q > u+1$.

\end{proof}

We can then prove Proposition~\ref{7lem_min_KKi}.

\begin{proofe}[Proposition~\ref{7lem_min_KKi}]
Let $q \in \llbracket 1, k \rrbracket$. We denote by $u$ as the remainder of the division of $e$ by $k$. Recalling that $f = f(e,mk) = max(0, e-mk)$, we have:
\begin{align*} 
 \frac{1}{\sum\limits_{\ell =1+ f }^k \frac{1}{K_{j_{\ell},v_{\ell} }}} \geq \frac{1}{(k-f) \frac{1}{ \min\limits_{\ell \in \llbracket 1 + f, k \rrbracket} K_{j_{\ell},v_{\ell} }}} \geq \frac{ K_{j_{u+1},v_{u+1}} }{ k-f} \geq \frac{ K_{j_{u+1},v_{u+1}} }{ d}
\end{align*}
because according to Lemma~\ref{7lem_croissance_Kj}, $K_{j_{u+1},v_{u+1}} = \min\limits_{\ell \in \llbracket 1 + f, k \rrbracket} K_{j_{\ell},v_{\ell} }$.
Finally, since $\min\limits_{\ell \in \llbracket 1, m+1\rrbracket} \beta_{j_q, \ell} \geq \min\limits_{\ell \in \llbracket 1, m+1\rrbracket} \beta_{1, \ell} \geq 3d$ by hypotheses $(\ref{7_1hypot_m=beta_m+1})$ and $(\ref{7_2bhypot_m=beta_m+1})$, we have
$
1 - \frac{1}{\min\limits_{\ell \in \llbracket 1, m+1\rrbracket}\beta_{j_q, \ell}} \geq \frac{1}{2}.
$
Thus:
\begin{align*}
 \left(1 - \frac{1}{\min\limits_{\ell \in \llbracket 1, m+1\rrbracket}\beta_{j_q, \ell}} \right)\left( \frac{1}{\sum\limits_{\ell =1+ f }^k \frac{1}{K_{j_{\ell},v_{\ell} }} } -1 \right) - K_{j_q,v_q-1} &\geq \frac{1}{2}\left(\frac{ K_{j_{u+1},v_{u+1}} }{ d} -1 \right) - K_{j_q,v_q-1} \\
 &= \frac{K_{j_{u+1},v_{u+1}} -d(1 +2 K_{j_q,v_q-1}) }{2d}.
\end{align*}
Then, to conclude it is sufficient to show that $K_{j_{u+1},v_{u+1}} -d(1 +2 K_{j_q,v_q-1}) \geq 0$.
If $v_q = 1$, then $K_{j_q,v_q-1} = 1$ and $K_{j_{u+1},v_{u+1}} \geq \min\limits_{\ell \in \llbracket 1, m+1\rrbracket} \beta_{1, \ell} \geq 3d$, thus Proposition~\ref{7lem_min_KKi} is proven in this case. Otherwise, if $v_q \geq 2$, applying the lower bound of Lemma~\ref{7lem_croissance_Kj} we have
\begin{align*}
 {K_{j_{u+1},v_{u+1}} -d(1 +2 K_{j_q,v_q-1}) } \geq {K_{j_q,v_q-1}^{c_{\ref{7cons_petite_hyp_theoc2}}} -d(1 +2 K_{j_q,v_q-1}) }.
\end{align*}

We conclude the proof by applying Lemma~\ref{7lem_beta_minor} with $\beta = K_{j_q,v_q-1}^{c_{\ref{7cons_petite_hyp_theoc2}}} \geq \min\limits_{\ell \in \llbracket 1, m+1 \rrbracket} \beta_{1, \ell} $ by $(\ref{7_2bhypot_m=beta_m+1})$, and we find 
$ K_{j_q,v_q-1}^{c_{\ref{7cons_petite_hyp_theoc2}}} -d(1 +2 K_{j_q,v_q-1}) \geq 0.$

\end{proofe}

We introduce the sequences $(\alpha_{i,N})_{N \in \mathbb{N}}$ for $i \in \llbracket 1,d \rrbracket$ defined by:
\begin{align*}
 \alpha_{i,0} &= 1 \\
 \forall N \in \mathbb{N}, \quad \alpha_{i, N+1 } &= \beta_{i,N+1} \alpha_{i,N}
\end{align*}

Recall that we denote by $E_i= \beta_{i,1}\ldots \beta_{i,m}(\beta_{i,m+1})^m$ for $i \in \llbracket 1,d \rrbracket$. In fact, we have $E_i= \beta_{i,1}\ldots \beta_{i,2m}$ according to the definitions of $\beta_{i,\ell}$ at the beginning of Section~\ref{7section_def_beta}, and thus by peridocity of the sequence:
\begin{align}\label{7def_alphaN_formule_EI}
 \forall i \in \llbracket 1, d\rrbracket, \quad \forall N \in \mathbb{N}, \quad \alpha_{i, N} = E_i^{ \lfloor \frac{N}{2m}\rfloor}\beta_{i,1} \ldots \beta_{i,N \mod 2m }= E_i^{ \lfloor \frac{N}{2m}\rfloor} \alpha_{i, N \mod 2m}
\end{align}
where $N \mod 2m$ is the integer $k \in \llbracket 0, 2m-1 \rrbracket$ such that $N \equiv k \mod (2m)$.
We then observe that:
\begin{align*}
 \forall i \in \llbracket 1, d \rrbracket, \quad \forall v \in \llbracket 1, m+1 \rrbracket, \quad K_{i,v} = \max_{\ell \in \llbracket 0, m + 1-v \rrbracket} \beta_{i,\ell+1}\ldots \beta_{i,\ell+v} = \max_{\ell \in \llbracket 0, m+1-v \rrbracket} \frac{\alpha_{i, \ell +v}}{\alpha_{i, \ell}} .
\end{align*}

\subsection{Construction of the subspace \texorpdfstring{$A$}{} }\label{7section_def_A}

We use the constructions made in section~\ref{sect_cas_d1} to define the space $A$ of Theorem~\ref{1theo_construction}. We then recall properties about subspaces, which we will show later to be the best approximations of $A$.

\bigskip

For all $i \in \llbracket 1, d \rrbracket$ and $\ell \in \mathbb{N}$, we have $\beta_{i, \ell} \geq 3d \geq 2 + \frac{\sqrt{5}-1}{2}$, and the sequence $(\beta_{i,N})$ is $(2m)$-periodic. Therefore, we can apply Proposition~\ref{5prop_technique}.
Let $i \in \llbracket 1, d \rrbracket$. According to Proposition~\ref{5prop_technique}, there exists a line $\Delta'_i = \Span(Y'_i) \subset \mathbb{R}^{m+1}$ such that:
\begin{align*}
 \forall e \in \llbracket 1, m \rrbracket, \quad \mu_{m+1}(\Delta'_i|e)_1 = \max_{\ell\in \llbracket 0, 2m-1 \rrbracket} \beta_{i,\ell}\ldots \beta_{i,\ell+e} = \max_{\ell \in \llbracket 0, m -e-1\rrbracket} \beta_{i,\ell}\ldots \beta_{i,\ell+e-1}
\end{align*}
using Remark~\ref{7req_max_2m=m}. Additionally, according to the proof of this property and by fixing $\theta$ to be a prime number greater than 5, $Y'_i$ is of the form 
$ Y'_i = \begin{pmatrix}
 1 & \sigma_{0,i} & \cdots & \sigma_{m-1,i}
 \end{pmatrix}^\intercal$
with $\sigma_{0,i}, \ldots, \sigma_{m-1, i}$ algebraically independent over $\mathbb{Q}$.
Since $n = (m+1)d$, we set for $i \in \llbracket 1, d \rrbracket$:
\begin{align*}
 Y_i = \begin{pmatrix}
 0 &
 \cdots &
 0 &
 (Y_i')^{\intercal} &
 0 &
 \cdots &
 0
 \end{pmatrix}^\intercal \in \{ 0 \}^{(i-1)(m+1)} \times \mathbb{R}^{m+1} \times \{0\}^{n - i(m+1)}
\end{align*}
placing the coordinates of $Y'_i$ between the positions $(i-1)(m+1) + 1$ and $i(m+1) - 1$ so that the vectors $Y_i$ are pairwise orthogonal.
Finally, the ``best'' vectors approximating the $Y_i$ are the $X_{N,i}$ with:
\begin{align*}
 X_{N,i} = \begin{pmatrix}
 0 &
 \cdots &
 0 &
 (X'_{N,i})^{\intercal} &
 0 &
 \cdots &
 0
 \end{pmatrix}^\intercal
\end{align*}
where the $X'_{N,i}$ are defined in (\ref{5def_XN_ chap5}) in section~\ref{sect_cas_d1}. 
For $v \in \llbracket 1, m +1 \rrbracket$ and $N \in \mathbb{N}$, we define:
\begin{align}
 B^i_{N, v} = \Span(X_{N, i}, \ldots, X_{N+v-1,i}).
\end{align}
The vectors $X_{N,i}$ are constructed such that $\text{dim}(B^i_{N,v}) = v$ (see Claim~\ref{5B_N_base_v}).
Lemmas $\ref{5lem_haut_BN}$, $\ref{5lem_angle_entre_A_BN}$ and inequalities $\eqref{5lem_norme_XN}$, \eqref{5omega_XN_Y}then give the following estimates.
\begin{prop}\label{7prop_BNV_chap4}
 Let $i \in \llbracket 1, d\rrbracket$. For $v \in \llbracket 1, m \rrbracket$, we have:
 \begin{align*}
 c_{\ref{7cons_norme_XNi_minor}}\theta^{\alpha_{i,N}} \leq 
 &\| X_{N,i} \| \leq c_{\ref{7cons_norme_XNi_major}}\theta^{\alpha_{i,N}}, \\
 c_{\ref{7cons_hauteur_BNvi_minor}}\theta^{\alpha_{i,N}} \leq 
 &H (B^i_{N, v}) \leq c_{\ref{7cons_hauteur_BNvi_major}}\theta^{\alpha_{i,N}},\\
 c_{\ref{7cons_angle_Yi_XNi_minor}} \theta^{-{\alpha_{i,N+1}}} \leq 
 &\omega(Y_i, X_{N,i}) \leq c_{\ref{7cons_angle_Yi_XNi_major}} \theta^{-{\alpha_{i,N+1}}}, \\
 c_{\ref{7cons_psi1_Yi_BNvi_minor}} \theta^{-{\alpha_{i,N+v}}} \leq 
 &\psi_1(\Span(Y_i), \Span(B^i_{N, v})) \leq c_{\ref{7cons_psi1_Yi_BNvi_major}} \theta^{-{\alpha_{i,N+v}}}.
 \end{align*}
 with $\cons \label{7cons_norme_XNi_minor}$, $\cons \label{7cons_norme_XNi_major}$, $\cons \label{7cons_hauteur_BNvi_minor}$, $\cons \label{7cons_hauteur_BNvi_major}$, $\cons \label{7cons_angle_Yi_XNi_minor}$, $\cons \label{7cons_angle_Yi_XNi_major}$, $\cons \label{7cons_psi1_Yi_BNvi_minor}$, and $\cons \label{7cons_psi1_Yi_BNvi_major}$ as constants independent of $N$.
\end{prop}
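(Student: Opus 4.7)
The plan is to reduce Proposition~\ref{7prop_BNV_chap4} to the one-dimensional estimates proven in Section~\ref{sect_cas_d1}, by exploiting the fact that by construction $Y_i$, $X_{N,i}$ and $B^i_{N,v}$ are nothing but the zero-padded embeddings into $\R^n$ of the corresponding objects $Y'_i$, $X'_{N,i}$ and $B'^i_{N,v} := \Span(X'_{N,i}, \ldots, X'_{N+v-1,i})$ living in $\R^{m+1}$. More precisely, let $\iota_i: \R^{m+1} \to \R^n$ denote the linear map that places its argument in coordinates $(i-1)(m+1)+1$ through $i(m+1)$ and fills the remaining coordinates with zeros. Then $Y_i = \iota_i(Y'_i)$, $X_{N,i} = \iota_i(X'_{N,i})$ and $B^i_{N,v} = \iota_i(B'^i_{N,v})$. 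Moreover the triple $(Y'_i, X'_{N,i}, B'^i_{N,v})$ is exactly the triple $(Y, X_N, B_{N,v})$ constructed in Section~\ref{sect_cas_d1}, in ambient dimension $m+1$ and for the $(2m)$-periodic sequence $(\beta_{i,\ell})_{\ell \in \Nx}$; the results of Section~\ref{sect_cas_d1} therefore apply verbatim to these primed objects, with constants independent of $N$.

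The key observation is that $\iota_i$ is an isometry onto a coordinate subspace of $\R^n$. Consequently, $\iota_i$ preserves Euclidean norms, norms of exterior products of arbitrary order, the angle $\omega(\cdot,\cdot)$ between vectors, and the principal angles $\psi_j(\cdot,\cdot)$ between subspaces, since these quantities depend only on the Euclidean structure restricted to the subspaces at hand. In addition, $\iota_i$ identifies $\Z^{m+1}$ with $\iota_i(\R^{m+1}) \cap \Z^n$: indeed, a vector of $\iota_i(\R^{m+1})$ is integral if and only if its non-trivial block of coordinates is integral. Hence any $\Zbasis$ of $B'^i_{N,v} \cap \Z^{m+1}$ transports under $\iota_i$ to a $\Zbasis$ of $B^i_{N,v} \cap \Z^n$, from which $H(B^i_{N,v}) = H(B'^i_{N,v})$ follows.

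It then remains to assemble the four estimates by invoking the one-dimensional results. Inequality~\eqref{5lem_norme_XN} supplies the bounds on $\|X'_{N,i}\|$; Lemma~\ref{5lem_haut_BN} supplies the bounds on $H(B'^i_{N,v})$; Lemma~\ref{5lem_angle_entre_A_BN} applied with $e=1$ (so that $B'^i_{N,1} = \Span(X'_{N,i})$, and $\psi_1(\Span(Y'_i),B'^i_{N,1}) = \omega(Y'_i,X'_{N,i})$) supplies the two-sided bound on $\omega(Y'_i, X'_{N,i})$ of order $\theta^{-\alpha_{i,N+1}}$; and the same Lemma applied with $e=v$ supplies the bounds on $\psi_1(\Span(Y'_i), B'^i_{N,v})$ of order $\theta^{-\alpha_{i,N+v}}$. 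Combining each of these with the isometry property of $\iota_i$ and the height identity of the previous paragraph transfers them to the unprimed objects; the constants $c_{\ref{7cons_norme_XNi_minor}}, \ldots, c_{\ref{7cons_psi1_Yi_BNvi_major}}$ are inherited from those of Section~\ref{sect_cas_d1}.

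No step is a serious obstacle: the entire argument amounts to the observation that carrying out the $d=1$ construction inside a coordinate block of $\R^n$ preserves all metric and integrality features involved. The mildest point to verify is the height identity $H(B^i_{N,v}) = H(B'^i_{N,v})$, which follows from the fact that integer vectors of $B^i_{N,v} \subset \iota_i(\R^{m+1})$ automatically vanish outside the coordinate block indexed by $i$.
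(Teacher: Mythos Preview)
Your proposal is correct and follows essentially the same approach as the paper: the paper simply states that the estimates follow from Lemmas~\ref{5lem_haut_BN} and~\ref{5lem_angle_entre_A_BN} together with inequalities~\eqref{5lem_norme_XN} and~\eqref{5omega_XN_Y}, without spelling out the transport via the isometric embedding. Your argument is more explicit on two points the paper leaves implicit: the preservation of height under $\iota_i$ (via the identification of $\Z^{m+1}$ with $\iota_i(\R^{m+1})\cap\Z^n$), and the use of Lemma~\ref{5lem_angle_entre_A_BN} with $e=1$ to obtain the \emph{lower} bound on $\omega(Y_i,X_{N,i})$, whereas \eqref{5omega_XN_Y} alone yields only the upper bound.
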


Let us finally set $A = \Span(Y_1, \ldots, Y_d) $ and for $J \subset \llbracket 1,d \rrbracket$, $ A_J = \Span_{j \in J} (Y_j).$ Since the vectors $Y_j$ are pairwise orthogonal, we have $\dim(A) = d$ and $\dim(A_J) = \#J$.

\begin{lem}\label{7lem_AJ_irr}
Let $J \subset \llbracket 1,d \rrbracket$ be non-empty and $e \in \llbracket \#J, \#J(m+1) -1 \rrbracket$. Then $A_J$ is $(e,\#J - g(A_J,e)) -$irrational.
\end{lem}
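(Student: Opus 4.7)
The plan is to argue directly from the definition. We need to show that for every rational subspace $B$ of dimension $e$, one has $\dim(A_J \cap B) < \bigl(\#J - g(A_J,e)\bigr) + g(A_J,e) = \#J$, that is, $A_J \not\subset B$. Since $\dim(A_J) = \#J \leq e$, the inequality $A_J \not\subset B$ is in fact equivalent to the required $\dim(A_J \cap B) < \#J$.

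The key step is a linear-algebra computation: I would determine the smallest rational subspace of $\R^n$ containing $A_J$. Recall that $Y_j$ is supported on the block of coordinates $I_j := \llbracket (j-1)(m+1)+1,\, j(m+1)\rrbracket$, with a $1$ in the first slot and the numbers $\sigma_{0,j},\ldots,\sigma_{m-1,j}$ in the remaining $m$ slots. Consider a rational linear form $L(x) = \sum_{i=1}^n a_i x_i$, $a_i \in \Q$, that vanishes on $A_J$. For each $j \in J$, evaluating $L$ on $Y_j$ gives a $\Q$-linear relation between $1$ and the $\sigma_{\ell,j}$; because the family $(\sigma_{\ell,j})_{\ell, j}$ is algebraically independent over $\Q$ (hence in particular $1,\sigma_{0,j},\ldots,\sigma_{m-1,j}$ are $\Q$-linearly independent), this forces $a_i = 0$ for every $i \in I_j$. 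Conversely, every rational form supported outside $\bigcup_{j \in J} I_j$ vanishes on $A_J$. Hence the space of rational forms vanishing on $A_J$ has $\Q$-dimension exactly $n - \#J(m+1)$, and the smallest rational subspace of $\R^n$ containing $A_J$ has dimension $\#J(m+1)$.

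From this I would conclude as follows. By hypothesis $e \leq \#J(m+1) - 1 < \#J(m+1)$, so no rational subspace of dimension $e$ contains $A_J$. For any $B \in \RR_n(e)$ this gives $A_J \not\subset B$, i.e.\ $\dim(A_J \cap B) \leq \#J - 1 < \#J$, which is the required bound. A final sanity check is that the index $\#J - g(A_J,e)$ lies in the admissible range $\llbracket 1,\min(\#J,e)-g(A_J,e)\rrbracket$: the upper end is automatic, and positivity amounts to $e \leq n-1$, which follows from $e \leq \#J(m+1)-1 \leq d(m+1)-1 = n-1$.

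The argument is short and the only real content is the algebraic-independence step in paragraph two; I do not anticipate a genuine obstacle here, since we only need the much weaker property that, for each fixed $j$, the $m+1$ numbers $1,\sigma_{0,j},\ldots,\sigma_{m-1,j}$ are linearly independent over $\Q$, which is a direct consequence of the algebraic independence built into the construction of section~\ref{7section_def_A}.
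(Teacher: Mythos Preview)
Your proof is correct and uses the same underlying fact as the paper --- the $\Q$-linear independence of $1,\sigma_{0,j},\ldots,\sigma_{m-1,j}$ for each $j$ --- but the packaging is genuinely different. The paper argues by contradiction through minors: assuming $A_J \subset B$, it takes a rational basis $X_1,\ldots,X_e$ of $B$, picks an arbitrary $e\times e$ minor $N$ of $(X_1|\cdots|X_e)$, finds a $j\in J$ whose block $I_j$ is not entirely among the rows of $N$, and expands the vanishing $(e+1)\times(e+1)$ minor of $(Y_j|X_1|\cdots|X_e)$ along the first column to get $0=\tau N+\tau_1 N_1+\cdots+\tau_m N_m$ with $\{\tau,\tau_1,\ldots,\tau_m\}=\{1,\sigma_{0,j},\ldots,\sigma_{m-1,j}\}$; linear independence over $\Q$ forces $N=0$, so $\mathrm{rank}(B)<e$, a contradiction.

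Your route is dual and more conceptual: you compute the rational hull of $A_J$ directly by determining the space of rational linear forms vanishing on it, and read off that this hull is $\bigoplus_{j\in J} R_j$ of dimension $\#J(m+1)>e$. This is shorter and identifies the obstruction explicitly rather than deriving a rank drop. One small remark: your phrase ``the family $(\sigma_{\ell,j})_{\ell,j}$ is algebraically independent over $\Q$'' overstates what the construction in section~\ref{7section_def_A} guarantees (algebraic independence is asserted only for each fixed $j$), but as you note yourself at the end, only the per-$j$ $\Q$-linear independence is needed, so the argument stands.
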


\begin{proof}
Suppose the contrary. Then there exists $B$ a rational subspace of dimension $e$ such that $\dim(A_J \cap B) \geq \#J - g(A_J,e) + g(A_J,e) = \#J$. Since $\dim(A_J) = \#J $, we have $A_J \cap B = A_J$. In particular, $Y_j\in B$ for all $j \in J$. Let $X_1, \ldots, X_e$ be a rational basis of $B$. We have $ Y_j \wedge X_1 \wedge \ldots \wedge X_e = 0$ for all $j \in J $, meaning that all minors of size $e+1$ of the matrix $(Y_j \mid X_1 \mid \ldots \mid X_e)$ vanish.
\\ Let $N$ be a minor of size $e$ extracted from $M= (X_1 \mid \ldots \mid X_e)$. Since $e \leq\#J(m+1) -1$, we can find $j \in J$ such that the rows of non-zero coefficients of $Y_j$ are not all rows of the submatrix extracted from $M$ corresponding to $N$. 
Consider the minor of size $e+1$ of $(Y_j \mid X_1 \mid \ldots \mid X_e)$ obtained by considering the rows of $N$ and the row of a non-zero coefficient of $Y_j$ whose existence has just been established. Expanding this minor with respect to the first column, we find:
\begin{align*}
 0 = \tau N + \tau_1 N_1 + \ldots + \tau_{m}N_{m}
\end{align*}
where $\{\tau, \tau_1, \ldots, \tau_{m}\} = \{1, \sigma_{0,i}, \ldots, \sigma_{m-1,i}\}$ and the $N_j$ are, up to sign, minors of size~$e$ of $(X_1 \mid \ldots \mid X_e)$. Since $\sigma_{0,i}, \ldots, \sigma_{m-1,i}$ are algebraically independent, $\tau, \tau_1, \ldots, \tau_{m}$ are linearly independent over $\mathbb{Q}$ and we find $N = N_1 = \ldots = N_{m} = 0$, hence $N= 0$. Every minor of size $e$ of the matrix $(X_1 \mid \ldots \mid X_e)$ is null, implying $\dim(B) < e$, which is a contradiction.

\end{proof}

\subsection{Intermediate property}\label{7section_prop_inter}
In this section, we prove the following proposition:
\begin{prop}\label{7prop_int}
Let $ J \subset \llbracket 1, d \rrbracket$ with cardinality $k \in \llbracket 1, d \rrbracket$. Write $ J = \{j_1, \ldots, j_k\}$ with $j_1 < \ldots< j_k $.
\\ Then we have:
\begin{align*}
 \forall e \in \llbracket k, k(m+1) -1 \rrbracket, \quad \mu_n(A_J| e)_{k - g(A_J,e)} = \left( \sum\limits_{q = 1+ f}^k \frac{1}{K_{j_q,v_q }} \right)^{-1}.
\end{align*}
\end{prop}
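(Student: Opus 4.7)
The approach is to construct an explicit family $(B_N)_{N \in \N}$ of rational $e$-dimensional subspaces that realize the claimed exponent from below, and then to show that, once $H(B)$ is large enough, any rational $e$-dimensional subspace $B$ with $\psi_{k-g(A_J,e)}(A_J,B) \leq H(B)^{-\mu-\varepsilon}$ must coincide with one of the $B_N$. Lemma~\ref{7lem_AJ_irr} already guarantees $A_J \in \II_n(k,e)_{k-g(A_J,e)}$, so the exponent is well defined; the identity $\psi_{k-g(A_J,e)}(A_J,B) = \omega_k(A_J,B)$ reduces the problem to controlling the largest principal angle between $A_J$ and $B$.

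\textbf{Construction and lower bound.} For each $N$ I would set
\[
B_N \;=\; \bigoplus_{q=1}^{k} B^{j_q}_{N_q,v_q},
\]
where $B^{j_q}_{N_q,v_q}$ is the approximation of $\Span(Y_{j_q})$ inside the $j_q$-th block from Section~\ref{7section_def_A} and the integer parameters $N_q = N_q(N)$ are chosen as follows. For $q \leq f$ (so that $v_q = m+1$), the subspace $B^{j_q}_{N_q,m+1}$ is the whole $j_q$-th block: it contributes height $1$, depends only trivially on $N_q$, and contains $\Span(Y_{j_q})$. For $q > f$ (so that $v_q \leq m$), I pick $N_q$ so that the $v_q$ consecutive factors $\beta_{j_q,N_q+1}\cdots \beta_{j_q,N_q+v_q}$ realize the maximum $K_{j_q,v_q}$ and so that the resulting quantities $\alpha_{j_q,N_q+v_q}$ are all approximately equal to a single large parameter $M=M(N)$. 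Iterating Proposition~\ref{prop_haut_appli} along the orthogonal blocks and applying Proposition~\ref{7prop_BNV_chap4} gives
\[
H(B_N) \;=\; \prod_{q=1}^k H(B^{j_q}_{N_q,v_q}) \;\asymp\; \theta^{\sum_{q>f}\alpha_{j_q,N_q}} \;\approx\; \theta^{M \sum_{q=f+1}^k K_{j_q,v_q}^{-1}}.
\]
Block orthogonality forces $\dim(A_J \cap B_N) = f = g(A_J,e)$, so the first $f$ principal angles vanish; the remaining $k-f$ angles coincide (up to ordering) with the one-dimensional angles $\omega(Y_{j_q}, B^{j_q}_{N_q,v_q}) \asymp \theta^{-\alpha_{j_q,N_q+v_q}} \asymp \theta^{-M}$ for $q>f$. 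Hence
\[
\psi_{k-g(A_J,e)}(A_J, B_N) \;=\; \omega_k(A_J,B_N) \;\asymp\; \theta^{-M} \;\asymp\; H(B_N)^{-1/\sum_{q=f+1}^k K_{j_q,v_q}^{-1}},
\]
yielding the lower bound. The simultaneous balancing of the $\alpha_{j_q,N_q+v_q}$ to a common value $M$ for infinitely many $N$ is where hypothesis \eqref{7_3hypot_m=beta_m+1} on the linear independence of $\{\log E_i\}_{i=1}^{d}$ enters, via Kronecker's equidistribution theorem applied to the formula \eqref{7def_alphaN_formule_EI}.

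\textbf{Upper bound.} For the matching upper bound I would adapt the scheme of Lemmas~\ref{5lem_meilleurs_espaces} and~\ref{6lem_meilleurs_espaces}: given a rational $e$-dimensional $B$ with $\psi_{k-g(A_J,e)}(A_J,B) \leq H(B)^{-\mu-\varepsilon}$, I would project block by block using the integer-preserving orthogonal projections $p_{j_q}$, factor $H(B)$ through iterated use of Proposition~\ref{prop_haut_appli}, and use an analogue of Lemma~\ref{Rec_angles} to transfer the angle estimate into each block. The $d=1$ best-approximation statement (Lemma~\ref{5lem_meilleurs_espaces}) applied inside each block then forces $p_{j_q}(B) = B^{j_q}_{N_q,v_q}$ for suitable integers $N_q$; reassembling yields $B = B_N$, and Proposition~\ref{7prop_BNV_chap4} produces the desired upper bound on the exponent.

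\textbf{Main obstacle.} The principal difficulty is the decoupling step showing that the optimal $B$ must split as a direct sum of pieces, one per block, rather than mixing coordinates across blocks. This rests both on the algebraic independence of the $\sigma_{\ell,i}$ underlying the vectors $Y_i$ (in the spirit of Lemma~\ref{7lem_AJ_irr}) and on the strong growth hypotheses on the $\beta_{i,\ell}$ captured by Proposition~\ref{7lem_min_KKi}; the latter is what ensures that a candidate approximation can neither save by ``trading'' a partial block approximation for a full block nor by spilling dimensions across blocks, so that the formula $\mu = \big(\sum_{q=f+1}^k K_{j_q,v_q}^{-1}\big)^{-1}$ is genuinely optimal. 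Additional bookkeeping is required at the boundary $f > 0$ so that the $f$ full-block pieces ($v_q = m+1$), which contribute to $\dim(A_J \cap B_N)$ but not to $H(B_N)$, are cleanly separated from the genuine approximating pieces with $v_q \leq m$.
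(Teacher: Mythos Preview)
Your lower-bound construction is exactly the paper's: the subspaces you call $B_N$ are the paper's $C^J_N = \bigoplus_{q} B^{j_q}_{N_q,v_q}$, the height factorization is Lemma~\ref{7lem_haut_CNM}, the angle estimate is Lemma~\ref{7lem_prox_AJ_CN_H(CN)}, and the Kronecker-type balancing of the $\alpha_{j_q,N_q+v_q}$ via hypothesis~\eqref{7_3hypot_m=beta_m+1} is precisely the content of Corollary~\ref{7cor_min_expos}. So that half is fine and essentially identical.

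Your upper bound, however, takes a different and harder route than the paper, and the step you yourself flag as the ``main obstacle'' is never resolved. You propose to project $B$ block by block, invoke an analogue of Lemma~\ref{Rec_angles} to push the angle estimate into each block, and then apply the $d=1$ best-approximation Lemma~\ref{5lem_meilleurs_espaces} inside each block to force $p_{j_q}(B)=B^{j_q}_{N_q,v_q}$. Two concrete problems: first, nothing a priori fixes $\dim p_{j_q}(B)=v_q$, so you do not know which one-dimensional exponent $K_{j_q,\,\cdot}$ to feed into Lemma~\ref{5lem_meilleurs_espaces}, nor that the blockwise hypothesis $\psi_1(\Span(Y_{j_q}),p_{j_q}(B))\le H(p_{j_q}(B))^{-K_{j_q,v_q}-\varepsilon}$ actually holds with the right exponent; second, even granting $p_{j_q}(B)=B^{j_q}_{N_q,v_q}$, you still need $B=\bigoplus_q p_{j_q}(B)$, which is exactly the decoupling you acknowledge but do not prove. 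Lemma~\ref{Rec_angles} does not do this job: it reduces the dimension of $A_J$, not of $B$, and it only selects \emph{one} index $j$ at a time rather than producing a full block splitting.

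The paper avoids this entirely. In Lemma~\ref{7lem_meilleur_espaces} it never projects $B$ to blocks; instead it works globally. For each $i$ and each $\ell\in\llbracket 0,v_i-1\rrbracket$ it bounds
\[
\|X_{N_i+\ell,i}\wedge Z_1\wedge\cdots\wedge Z_e\|
\;\le\; c\,\theta^{\alpha_{i,N_i+\ell}}\bigl(\omega(X_{N_i+\ell,i},Y_i)+\omega_1(\Span(Y_i),C)\bigr)H(C),
\]
uses $\omega_1(\Span(Y_i),C)\le \omega_k(A_J,C)=\psi_{k-g}(A_J,C)$ together with Proposition~\ref{7lem_min_KKi} to make the right-hand side $<1$, and then Lemma~\ref{2lem_X_in_B} forces $X_{N_i+\ell,i}\in C$. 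This places all $e=\sum_q v_q$ vectors inside $C$, hence $C^J_N\subset C$ and equality of dimensions gives $C=C^J_N$. The block splitting of $C$ is then a \emph{consequence}, not a hypothesis; no decoupling lemma is needed. This direct argument is both shorter and is where Proposition~\ref{7lem_min_KKi} is actually used (to make the exponent negative), whereas in your outline its role is left vague.
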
 
For $e \in \llbracket k, k(m+1) -1 \rrbracket$, we thus seek to compute the Diophantine exponent $\mu_n(A_J|e)_{k - g(A_J,e)} $, which is defined since $A_J \in \II_n(k,e)_{k - g(A_J,e)}$ by Lemma $\ref{7lem_AJ_irr}$.

The entire Section~\ref{7section_prop_inter} is devoted to proving this proposition. Let $J \subset \llbracket 1, d \rrbracket$ be of cardinality $k$. Without loss of generality, we assume that $J = \llbracket 1, k \rrbracket$. According to the notation of Proposition~\ref{7prop_int}, we have $j_{q} = q$ for all $q \in \llbracket 1, k \rrbracket$. We also fix $e \in \llbracket k, k(m+1) -1 \rrbracket$.

\subsubsection{Best subspaces \texorpdfstring{$C^J_{N}$}{} }

We construct the "best" subspaces approximating $A_J$. Let $i \in \llbracket 1, d \rrbracket$. For $N_i$ an integer, we define
\begin{align*}
 C^i_{N_i} &= \Span(X_{N_i,i}, \ldots, X_{N_i +v_i -1, i}) = B^i_{N_i, v_i},
\end{align*}
where $v_i$ is defined by the relations in $(\ref{7eq_def_vj})$. Since $v_i \leq m +1$ and we have $\dim(C^i_{N_i}) = v_i$. For $N= (N_j)_{ j \in \llbracket 1, k \rrbracket} \in \mathbb{N}^{ k}$, we define
$C^J_{N}= \bigoplus\limits_{j \in \llbracket 1, k \rrbracket} C_{N_j}^j .$
As the subspaces $C_{N_j}^j$ are pairwise orthogonal by construction of the $X_{N,i}$, we have $\dim(C^J_{N}) = \sum\limits_{j = 1}^k v_j = e $. For simplification, we denote by $g = g(A_J, e) = \max(0, k + e - n).$

\begin{claim}\label{7lem_vj=m+1}
 Let $u$ be the remainder of the Euclidean division of $e$ by $k$. If $f= 0 $, then for all $j \in \llbracket 1, k \rrbracket$, we have $v_j \leq m$. 
 If $f > 0$, then $u = f$, 
 \begin{align*}
 \forall j \in \llbracket 1, f \rrbracket, \quad v_j = m+1 
 \text{ and } \forall j \in \llbracket f +1, k \rrbracket, \quad v_j =m,
 \end{align*}
 hence, if $f > 0$, for all $j \in \llbracket 1, f \rrbracket$, we have $C^j_{N_j} = \{ 0 \}^{(j-1)(m+1)} \times \mathbb{R}^{m+1} \times \{0\}^{n - j(m+1)}. $
\end{claim}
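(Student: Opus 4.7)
My plan rests on two essentially independent ingredients: a short arithmetic analysis that relates the threshold $f = \max(0, e - mk)$ to the Euclidean division $e = kv + u$ underlying the definition \eqref{7eq_def_vj} of the $v_j$, and a linear algebra identification of $C^j_{N_j}$ inside the $j$-th coordinate block.

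I begin with the arithmetic. The hypothesis $e \in \llbracket k, k(m+1) - 1 \rrbracket$ immediately gives $f \in \llbracket 0, k-1 \rrbracket$. If $f = 0$, then $e \leq mk$, and I split on $u$: when $u = 0$ one has $v \leq m$ directly; when $u \geq 1$, the inequality $kv \leq mk - u \leq mk - 1$ forces $v \leq m - 1$. In either subcase, every $v_j \in \{v, v+1\}$ is bounded by $m$. When $f > 0$, substituting $e = kv + u$ into $f = e - mk$ yields $f = k(v - m) + u$. Using the joint constraints $1 \leq f \leq k - 1$ and $0 \leq u \leq k - 1$, the values $v - m \leq -1$ and $v - m \geq 1$ are both excluded (they would force $f$ to be negative, respectively at least $k$), so $v = m$ and $u = f$. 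The description of $v_j$ then drops out of \eqref{7eq_def_vj}: $v_j = v + 1 = m + 1$ for $j \in \llbracket 1, f \rrbracket$ and $v_j = v = m$ for $j \in \llbracket f+1, k \rrbracket$.

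For the identification of $C^j_{N_j}$ when $v_j = m + 1$, I will work inside the $j$-th coordinate block $\{0\}^{(j-1)(m+1)} \times \mathbb{R}^{m+1} \times \{0\}^{n - j(m+1)}$ of dimension $m + 1$, in which $C^j_{N_j} = B^j_{N_j, m+1}$ sits. Claim~\ref{5B_N_base_v} (applied to this block) yields the $\mathbb{Z}$-basis $X_{N_j, j}, v_{N_j + 1, j}, \ldots, v_{N_j + m, j}$ of $C^j_{N_j} \cap \mathbb{Z}^n$. The leading coordinate of $X_{N_j, j}$ at position $(j - 1)(m+1) + 1$ is $\theta^{\lfloor \alpha_{j, N_j}\rfloor} \neq 0$, and by the $m$-periodicity of the index function $\phi$ used in the construction of the $v$'s, the values $\phi(N_j + 1), \ldots, \phi(N_j + m)$ form a permutation of $\llbracket 0, m - 1 \rrbracket$. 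Consequently the $v_{N_j + i, j}$ are precisely the canonical basis vectors of $\mathbb{R}^n$ supported in the remaining $m$ positions of the block. These $m + 1$ vectors thus span the full block, which is exactly the claimed equality. The only non-automatic step in the whole argument is the arithmetic bound: the inequality $e \leq k(m+1) - 1$ is essential to rule out $v > m$ in the case $f > 0$.
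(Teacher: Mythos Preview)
Your proof is correct and the arithmetic analysis matches the paper's exactly. For the identification of $C^j_{N_j}$ with the full block when $v_j = m+1$, the paper takes a shorter path than your explicit spanning argument via Claim~\ref{5B_N_base_v}: it simply notes that $C^j_{N_j}$ is contained in the block by construction of the vectors $X_{N,j}$, and since $\dim(C^j_{N_j}) = v_j = m+1$ equals the block's dimension, equality is immediate by inclusion and dimension count.
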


\begin{req}
 The last part of this lemma remains true if $f = 0$, with the interval $\llbracket 1,f \rrbracket$ being empty.
\end{req}

\begin{proof}
 If $f = 0$, then $e -km \leq 0$. By writing the Euclidean division $e = kv + u $, we have $v \leq m-1 $ or $(v=m$ and $ u = 0$). For all $j \in \llbracket 1, u\rrbracket $, $v_j = v+1$, and for all $j \in \llbracket u+1, k\rrbracket $, $v_j = v$, thus $v_j \leq m$ for all $j \in \llbracket 1, k \rrbracket$.

 \bigskip
 Otherwise, $f = e - km > 0$. The Euclidean division $e = kv + u $ of $e $ by $k$ then satisfies $v \geq m.$ Additionally, as $e \leq k(m+1)-1 $, we have $v = m$. Finally, $ u = e - kv = e- km = f. $
 By definition of $v_j$, 
 \begin{align*}
 \forall j \in \llbracket 1, f \rrbracket, \quad v_j = v+1 = m+1 
 \text{ and } \forall j \in \llbracket f +1, k \rrbracket, \quad v_j = v = m.
 \end{align*} 
 For $ j \in \llbracket 1, f \rrbracket $, by definition of $X_{N,j} $, one has $ C^j_{N_j} \subset \{ 0 \}^{(j-1)(m+1)} \times \mathbb{R}^{m+1} \times \{0\}^{n - j(m+1)}.$ 
 However, $\dim(C^j_{N_j} ) =v_j = m+1$, thus by equality of dimensions, this proves the last part of the lemma.
 
\end{proof}

\begin{lem}\label{7lem_haut_CNM}
 We have 
 $$c_{\ref{7cons_haut_CN_minor}} \theta^{\sum\limits_{j = f+1 }^k \alpha_{j,N_j}} \leq H(C^J_{N}) \leq c_{\ref{7cons_haut_CN_major}} \theta^{\sum\limits_{j = f+1 }^k \alpha_{j,N_j} }$$
 with $\cons \label{7cons_haut_CN_minor}$ and $\cons \label{7cons_haut_CN_major}$ independent of $N =( N_1, \ldots, N_k)$. 
\end{lem}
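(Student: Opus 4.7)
The plan is to exploit the orthogonal block structure of $C^J_N = \bigoplus_{j=1}^k C^j_{N_j}$ to reduce the height computation to the single-block heights already controlled by Proposition~\ref{7prop_BNV_chap4}. By construction of the vectors $X_{N,j}$ in Section~\ref{7section_def_A}, each subspace $C^j_{N_j}$ is contained in the coordinate block $\{0\}^{(j-1)(m+1)} \times \mathbb{R}^{m+1} \times \{0\}^{n-j(m+1)}$, and these blocks are pairwise orthogonal and supported on disjoint subsets of canonical coordinates.

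First, I would verify that $C^J_N \cap \mathbb{Z}^n$ admits a $\mathbb{Z}$-basis obtained as the concatenation of $\mathbb{Z}$-bases of the $C^j_{N_j} \cap \mathbb{Z}^n$. Indeed, for any integer vector $X \in C^J_N$, decomposing $X = \sum_j X^{(j)}$ along the direct sum amounts to restricting the coordinates of $X$ to each block, which preserves integrality; hence each $X^{(j)}$ belongs to $C^j_{N_j} \cap \mathbb{Z}^n$ and is an integer combination of a $\mathbb{Z}$-basis thereof.

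Next, using the disjoint-support structure, the exterior product of such a concatenated basis factors: if $X_1^{(j)}, \ldots, X_{v_j}^{(j)}$ is a $\mathbb{Z}$-basis of $C^j_{N_j} \cap \mathbb{Z}^n$, then
\[
\left\| \bigwedge_{j=1}^k X_1^{(j)} \wedge \ldots \wedge X_{v_j}^{(j)} \right\| = \prod_{j=1}^k \| X_1^{(j)} \wedge \ldots \wedge X_{v_j}^{(j)} \|,
\]
so $H(C^J_N) = \prod_{j=1}^k H(C^j_{N_j})$.

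Finally, I would invoke Claim~\ref{7lem_vj=m+1}: when $f>0$, for $j \in \llbracket 1, f \rrbracket$ the subspace $C^j_{N_j}$ coincides with the full coordinate block, which is spanned by $m+1$ canonical basis vectors, hence $H(C^j_{N_j}) = 1$. For $j \in \llbracket f+1, k \rrbracket$ one has $v_j \leq m$, and Proposition~\ref{7prop_BNV_chap4} gives $c_{\ref{7cons_hauteur_BNvi_minor}} \theta^{\alpha_{j,N_j}} \leq H(C^j_{N_j}) \leq c_{\ref{7cons_hauteur_BNvi_major}} \theta^{\alpha_{j,N_j}}$. Multiplying over $j$ yields the announced bounds with $c_{\ref{7cons_haut_CN_minor}} = c_{\ref{7cons_hauteur_BNvi_minor}}^{k-f}$ and $c_{\ref{7cons_haut_CN_major}} = c_{\ref{7cons_hauteur_BNvi_major}}^{k-f}$, both independent of $N$. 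No step constitutes a serious obstacle; the only point requiring care is the $\mathbb{Z}$-basis claim, which follows cleanly from the disjoint-support structure of the blocks, and the factorization of the wedge norm, which rests on the fact that vectors supported in different blocks are mutually orthogonal.
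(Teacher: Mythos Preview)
Your proposal is correct and follows essentially the same approach as the paper: factor $H(C^J_N)=\prod_{j=1}^k H(C^j_{N_j})$ using the orthogonal block structure, drop the factors with $j\le f$ via Claim~\ref{7lem_vj=m+1}, and bound the remaining factors by Proposition~\ref{7prop_BNV_chap4}. The paper states the height factorization in one line by invoking orthogonality, whereas you spell out the $\mathbb{Z}$-basis and wedge-norm justification; both are fine and lead to the same conclusion.
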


\begin{proof}
 We recall that $C^J_{N}= \bigoplus_{j \in \llbracket 1, k \rrbracket} C_{N_j}^j$ and that this sum is orthogonal ; thus 
 $ H(C^J_{N}) = \prod_{j = 1}^k H(C^j_{N_j}).$
 According to Claim~\ref{7lem_vj=m+1}, for all $j \in \llbracket 1, f \rrbracket$, we have $H(C^j_{N_j}) = 1$. Furthermore, for $j \in \llbracket f+ 1, k \rrbracket, C^j_{N_j} = B^j_{N_j, v_j}$ with $ v_j \leq m$. Proposition~\ref{7prop_BNV_chap4} thus gives $ c_{\ref{7cons_hauteur_BNvi_minor}}\theta^{\alpha_{j,N_j}} \leq H(C^j_{N_j}) \leq c_{\ref{7cons_hauteur_BNvi_major}}\theta^{\alpha_{j,N_j}} $, proving the lemma.
 
\end{proof}

We now establish relationships between the angle $\psi_{k-g}(A_J, C^J_N)$ and the height $H(C^J_N)$ in terms of the sequences $(\alpha_{j,N_j})$ for $j \in J$.

\begin{lem}\label{7lem_minor_prox_Y_j_CN}
 There exists $\cons \label{7cons_min_Yj_CNM} > 0 $ independent of $N = (N_1, \ldots,N_k)$ such that 
 \begin{align*}
 \forall i \in \llbracket f+1, k \rrbracket, \quad \omega_1(\Span(Y_i), C^J_{N}) \geq c_{\ref{7cons_min_Yj_CNM}}\theta^{-{\alpha_{i,M_i+1}}} 
 \end{align*}
 by setting for $j \in \llbracket 1,k \rrbracket$, $M_j = N_j + v_j -1 $.
\end{lem}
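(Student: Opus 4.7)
The plan is to exploit the fact that the decomposition $C^J_{N} = \bigoplus_{j \in J} C^j_{N_j}$ is an \emph{orthogonal} direct sum whose summands live in distinct coordinate blocks. Set $V_j = \{0\}^{(j-1)(m+1)} \times \mathbb{R}^{m+1} \times \{0\}^{n-j(m+1)}$, so that by construction both $Y_j$ and all the vectors $X_{N,j}$ (hence $C^j_{N_j}$) lie in $V_j$, and the subspaces $V_1, \ldots, V_d$ are pairwise orthogonal. For $i \in \llbracket f+1, k \rrbracket$ and any $j \in J \setminus \{i\}$, the vector $Y_i \in V_i$ is therefore orthogonal to $C^j_{N_j} \subset V_j$.

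From this I would deduce that the orthogonal projection of $Y_i$ onto $C^J_N$ coincides with its projection onto the single summand $C^i_{N_i}$. Indeed, writing $Y_i = P_{C^i_{N_i}}(Y_i) + R$ with $R \perp C^i_{N_i}$, both terms lie in $V_i$, so $R \in V_i$ is automatically orthogonal to every $C^j_{N_j}$ with $j \neq i$, and hence to their direct sum. Consequently
\[
\omega_1(\Span(Y_i), C^J_N) \;=\; \frac{\|R\|}{\|Y_i\|} \;=\; \omega_1(\Span(Y_i), C^i_{N_i}).
\]
Next, Claim~\ref{7lem_vj=m+1} guarantees that $v_i \leq m$ for every $i \in \llbracket f+1, k \rrbracket$, so that $C^i_{N_i} = B^i_{N_i, v_i}$ falls within the range to which Proposition~\ref{7prop_BNV_chap4} applies; moreover $g(1, v_i, n) = 0$, so the $\psi_1$ appearing there coincides with $\omega_1$. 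Invoking the lower bound of that proposition with $N = N_i$ and $v = v_i$ yields
\[
\omega_1(\Span(Y_i), C^i_{N_i}) \;=\; \psi_1(\Span(Y_i), B^i_{N_i, v_i}) \;\geq\; c_{\ref{7cons_psi1_Yi_BNvi_minor}}\, \theta^{-\alpha_{i, N_i + v_i}} \;=\; c_{\ref{7cons_psi1_Yi_BNvi_minor}}\, \theta^{-\alpha_{i, M_i + 1}},
\]
and the lemma follows by taking $c_{\ref{7cons_min_Yj_CNM}} = c_{\ref{7cons_psi1_Yi_BNvi_minor}}$, which is indeed independent of $N$.

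There is essentially no real obstacle here: the argument is pure bookkeeping, reducing the multi-block angle to a single-block estimate already established in Proposition~\ref{7prop_BNV_chap4}. The restriction $i \geq f+1$ is, however, essential: when $f > 0$ and $i \in \llbracket 1, f \rrbracket$, Claim~\ref{7lem_vj=m+1} gives $C^i_{N_i} = V_i \ni Y_i$, so the angle is zero and no lower bound of the stated form can hold. That is precisely why the lemma is stated only for $i \in \llbracket f+1, k \rrbracket$.
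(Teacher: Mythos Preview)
Your argument is correct, and in fact cleaner than the paper's. You exploit a stronger structural fact than the paper does: since $Y_i \in V_i$ and the orthogonal decomposition $C^J_N = \bigoplus_j C^j_{N_j}$ has $C^j_{N_j} \subset V_j$, the projection $P_{C^J_N}(Y_i)$ equals $P_{C^i_{N_i}}(Y_i)$ exactly, so $\omega_1(\Span(Y_i), C^J_N) = \omega_1(\Span(Y_i), C^i_{N_i})$ with no loss of constant, and Proposition~\ref{7prop_BNV_chap4} finishes the job.

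The paper instead argues pointwise: for an arbitrary $X \in C^J_N$ it decomposes $X = \sum_j V_j$ with $V_j \in C^j_{N_j}$, observes that the $Y_i \wedge V_j$ are pairwise orthogonal so $\|Y_i \wedge X\| \geq \|Y_i \wedge V_j\|$ for each $j$, picks $j_0$ with $\|V_{j_0}\| \geq k^{-1/2}\|X\|$, and then splits into cases $j_0 = i$ (reducing to the single-block bound) and $j_0 \neq i$ (a direct minor computation). This yields the same conclusion but with an extra factor $k^{-1/2}$ absorbed into the constant. Your projection identity bypasses the case analysis entirely; the paper's route is a little more hands-on but buys nothing extra here.
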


\begin{proof}
 Let $i \in \llbracket f+ 1, k \rrbracket $. According to Proposition~\ref{7prop_BNV_chap4}, there exists a constant $\cons \label{7cons_minoration_prox_YJ_BN}$ such that 
 \begin{align}\label{7inega_de_la_prop_quonavaitavant}
 \omega_1(Y_i, B^j_{N_i, M_i}) = \psi_1(Y_i, B^i_{N_i, v_i}) \geq c_{\ref{7cons_minoration_prox_YJ_BN}} \theta^{-{\alpha_{i,N_i+ v_i}}} = c_{\ref{7cons_minoration_prox_YJ_BN}} \theta^{-{\alpha_{i,M_i +1}}}.
 \end{align}
 We can choose $c_{\ref{7cons_minoration_prox_YJ_BN}}$ independent of $i$, as $i$ takes only a finite number of values. Let $X \in C^J_{N, M} \setminus \{0\} $. We write 
 $ X = \sum\limits_{j=1}^k V_j$
 with $V_j \in C^j_{N_j}$ for all $j \in \llbracket 1, k \rrbracket$. We will bound $ \omega(Y_i, X) = \frac{\| Y_i \wedge X\|}{\| Y_i \| \cdot \|X\|}$ from below.
 
 \bigskip
 First, for all $j \in \llbracket 1, k \rrbracket $, $V_j \in \{ 0 \}^{(j-1)(m+1)} \times \mathbb{R}^{m+1} \times \{0\}^{n - j(m+1)}$. By decomposing each $V_j$ in the canonical basis, we see that this decomposition involves disjoint sets of vectors for each $j \in \llbracket 1, k \rrbracket$. Consequently, the set of vectors of the canonical basis of $\bigwedge^2 \mathbb{R}^n$ involved in the decomposition of $Y_i \wedge V_j $ are also disjoint for each $j \in \llbracket 1, k \rrbracket$. Thus, the vectors $Y_i \wedge V_j $ are pairwise orthogonal. According to the Pythagorean theorem, we have 
 $$\| Y_i \wedge X\|^2 = \| \sum\limits_{j=1}^k (Y_i \wedge V_j ) \|^2 = \sum\limits_{j=1}^k \|Y_i \wedge V_j \|^2.$$ 
 In particular, for all $j \in \llbracket 1, k \rrbracket$, we have $\| Y_i \wedge X\| \geq \|Y_i \wedge V_j \|$. 
 The $V_j$ are pairwise orthogonal, so $\|X\|^2 = \sum\limits_{j=1}^k \|V_j\|^2$. Thus, there exists $j_0 \in \llbracket 1, k \rrbracket$ such that $\|V_{j_0} \| \geq k^{\frac{-1}{2}}\|X\|$. 
 
 \begin{itemize}
 \item If $j_0 = i$ then:
 \begin{align*}
 \omega(Y_i, X) &\geq \frac{\| Y_i \wedge V_i\|}{\| Y_i \| \cdot \|X\|} \geq \frac{\| Y_i \wedge V_i\|}{\| Y_i \| \cdot k^{\frac{1}{2}}\|V_i\|} \geq \frac{\omega(Y_i,V_i)}{ k^{\frac{1}{2}}} \geq \frac{ \omega_1(Y_i, C^j_{N_i, M_i})}{ k^{\frac{1}{2}}} \geq \frac{c_{\ref{7cons_minoration_prox_YJ_BN}} \theta^{-{\alpha_{i,M_i +1}}}}{ k^{\frac{1}{2}}}
 \end{align*}
 using $(\ref{7inega_de_la_prop_quonavaitavant})$.
 
 \item If $j_0 \neq i$ then by studying the minors of size $2$ of $(Y_i \mid X)$ where we extract the line corresponding to the $1$ of $Y_i$ and another line corresponding to a non-zero coordinate of $V_{j_0}$, we have $\| Y_i \wedge X\|^2 \geq \sum\limits_{v} (1 \times v)^2 = \| V_{j_0} \|^2$ where the $v$ are the non-zero coordinates of $V_{j_0}$. This gives:
 \begin{align*}
 \omega(Y_i,X)
 \geq \frac{\| V_{j_0} \|}{\|Y_i\| \|X\|} \geq \frac{1}{ k^{\frac{1}{2}} \|Y_i\|} \geq \frac{ \theta^{- {\alpha_{i,M_{i} +1}} }}{ k^{\frac{1}{2}}\|Y_i\|}.
 \end{align*}
 We then set $\cons \label{7cons_eni} = \min(\min\limits_{i = 1} ^k \frac{k^{\frac{-1}{2}}}{ \|Y_i\|}, k^{\frac{1}{2}} c_{\ref{7cons_minoration_prox_YJ_BN}} )$ and we have:
 \begin{align*}
 \forall i \in \llbracket f+1, k \rrbracket, \quad \omega_1(\Span(Y_i), C^J_{N})= \min\limits_{X \in C^J_{N} \setminus \{0 \} } \omega(Y_i,X) \geq c_{\ref{7cons_eni}}\theta^{- {\alpha_{i,M_{i} +1}} }.
 \end{align*}
 \end{itemize}
\end{proof}

\begin{lem}\label{7lem_prox_AJ_CN_H(CN)}
 We have:
 \begin{align*}
 c_{\ref{7cons_psik_AJ_CNJ_minor}} H(C^J_{N}) ^{ \frac{- \min\limits_{j = f+1}^k \alpha_{j,M_j+1}} {\sum\limits_{j = f+1 }^k \alpha_{j,N_j} } } \leq \psi_{k-g}(A_J, C^J_{N}) \leq c_{\ref{7cons_psik_AJ_CNJ_major}} H(C^J_{N}) ^{ \frac{- \min\limits_{j = f+1}^k \alpha_{j,M_j+1}} {\sum\limits_{j = f+1 }^k \alpha_{j,N_j} } }
 \end{align*}
 with $\cons \label{7cons_psik_AJ_CNJ_minor}$ and $\cons \label{7cons_psik_AJ_CNJ_major}$ independent of $N$.
\end{lem}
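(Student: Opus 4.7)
The plan is to combine the orthogonal direct sum structure of $A_J$ and $C^J_N$ with the block-by-block estimates of Proposition~\ref{7prop_BNV_chap4} and the height estimate of Lemma~\ref{7lem_haut_CNM}. Since $\dim(A_J) = k$ and $\dim(C^J_N) = e$, the definition of $\psi_j$ gives $\psi_{k-g}(A_J, C^J_N) = \omega_k(A_J, C^J_N)$, so the problem reduces to the two-sided estimate
\[
\omega_k(A_J, C^J_N) \asymp \theta^{-\min_{j > f} \alpha_{j, M_j + 1}},
\]
after which Lemma~\ref{7lem_haut_CNM} converts $\theta^{\sum_{j > f} \alpha_{j, N_j}}$ into $H(C^J_N)$ and yields the claimed exponent $\frac{-\min_{j=f+1}^{k}\alpha_{j,M_j+1}}{\sum_{j=f+1}^{k}\alpha_{j,N_j}}$.

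The key structural observation is that the $R_j$ for $j \in J$ are pairwise orthogonal, and both $A_J = \bigoplus_{j \in J} \Span(Y_j)$ and $C^J_N = \bigoplus_{j \in J} C^j_{N_j}$ are orthogonal direct sums adapted to this decomposition. Consequently the operator $P_{C^J_N}\big|_{A_J}$ is block-diagonal in the orthonormal basis $\big(Y_j/\|Y_j\|\big)_{j \in J}$, the $j$-th block contributing the single singular value $\cos\omega_1(\Span(Y_j), C^j_{N_j})$. Therefore the $k$ principal angles of $(A_J, C^J_N)$ are exactly the $k$ numbers $\omega_1(\Span(Y_j), C^j_{N_j})$ for $j \in J$, reordered. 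For $j \in \llbracket 1, f \rrbracket$, Claim~\ref{7lem_vj=m+1} gives $C^j_{N_j} = R_j \ni Y_j$, so this angle is $0$. For $j \in \llbracket f+1, k \rrbracket$, Proposition~\ref{7prop_BNV_chap4} applied with $v = v_j$ yields $\omega_1(\Span(Y_j), C^j_{N_j}) = \psi_1(\Span(Y_j), B^j_{N_j, v_j}) \asymp \theta^{-\alpha_{j, N_j + v_j}} = \theta^{-\alpha_{j, M_j+1}}$; taking the largest of these values produces the announced estimate for $\omega_k(A_J, C^J_N)$.

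At the level of writing the proof, the upper bound on $\omega_k$ is obtained from the inductive definition by exhibiting the $k$ orthogonal pairs $(Y_j, Z_j)$ with $Z_j$ the orthogonal projection of $Y_j$ onto $C^j_{N_j}$, whose orthogonality in both $A_J$ and $C^J_N$ is automatic from the orthogonality of the $R_j$'s. For the matching lower bound I pick $i_0 \in \llbracket f+1, k \rrbracket$ realizing $\min_j \alpha_{j, M_j+1}$ and combine Lemma~\ref{7lem_minor_prox_Y_j_CN} with the inequality $\omega_k(A_J, C^J_N) \geq \omega_1(\Span(Y_{i_0}), C^J_N)$. The main obstacle, and the only non-routine input, is precisely this last inequality: one must check that the largest principal angle equals the maximum over unit vectors $X \in A_J$ of $\omega_1(\Span(X), C^J_N)$, which is the standard SVD-type characterization but must be derived by a short argument from the paper's purely inductive definition of $\omega_k$.
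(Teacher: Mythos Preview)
Your proposal is correct and follows the same overall plan as the paper: reduce to the two-sided estimate $\omega_k(A_J,C^J_N)\asymp\theta^{-\min_{j>f}\alpha_{j,M_j+1}}$, then convert via Lemma~\ref{7lem_haut_CNM}. Your lower bound is exactly the paper's: it picks $i_0$, invokes Lemma~\ref{7lem_minor_prox_Y_j_CN}, and uses $\omega_k(A_J,C^J_N)\geq\omega_1(\Span(Y_{i_0}),C^J_N)$, which the paper justifies by citing Lemma~2.3 of \cite{joseph_spectre} rather than rederiving it from the inductive definition.

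The only genuine difference is in the upper bound. You exploit the block-diagonal structure of $P_{C^J_N}\big|_{A_J}$ to identify all $k$ principal angles as the numbers $\omega_1(\Span(Y_j),C^j_{N_j})$, then apply the $\psi_1$ estimate of Proposition~\ref{7prop_BNV_chap4} block by block. The paper instead appeals directly to Lemma~6.1 of \cite{joseph_exposants}, applied to the two $k$-dimensional spaces $A_J$ and $\bigoplus_{j\leq f}\Span(Y_j)\oplus\bigoplus_{j>f}\Span(X_{M_j,j})\subset C^J_N$, which bounds $\omega_k$ by a constant times $\sum_{j>f}\omega(Y_j,X_{M_j,j})$ and then uses the $\omega(Y_j,X_{N,j})$ estimate of Proposition~\ref{7prop_BNV_chap4}. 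Your route is more structural and gives equality for each principal angle, but it leans on the SVD characterisation of the $\omega_j$ (which you correctly flag as external to the paper's inductive definition and needed for both inequalities, not just the lower one); the paper's route avoids that identification at the cost of importing an extra lemma from \cite{joseph_exposants}.
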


\begin{proof}
 We recall that
 $\psi_{k-g}(A_J, C^J_{N}) = \omega_k(A_J, C^J_{N})$
 since $g = g(A_J,e) $ and $\dim(C^J_{N}) = e$.
 We will first prove the following inequality \begin{align}\label{7trad_approx_Aj_C}
 c_{\ref{7cons_min_angle_AJC}} \theta^{- \min\limits_{j = f+1}^k \alpha_{j,M_j+1}} \leq \psi_{k-g}(A_J, C^J_{N}) \leq c_{\ref{7cons_maj_angle_AJC}}\theta^{- \min\limits_{j = f+1}^k \alpha_{j,M_j+1}}
\end{align}
with $\cons \label{7cons_min_angle_AJC}, \cons \label{7cons_maj_angle_AJC} $ independent of the $N_j$.
 According to Lemma~\ref{7lem_vj=m+1}, for every $j \in \llbracket 1,f \rrbracket$ one has $Y_j \in C^j_{N_j} \subset C^J_{N}$ because $Y_j \in \{ 0 \}^{(j-1)(m+1)} \times \R^{m+1} \times \{0\}^{n - j(m+1)}$.
 We apply Lemma 6.1 of \cite{joseph_exposants}, which can be generalized to the case $d +e > n$, to the subspaces
 \[A_J = \bigoplus_{j =1}^k \Span(Y_j) \quad \text{and} \quad \bigoplus_{j =1}^f \Span(Y_j) \oplus \bigoplus_{j =f+1}^k \Span(X_{M_j,j}) \subset C^J_{N},\]
 recalling the notation $M_j = N_j + v_j -1$.
 Then,
 \begin{align*}
 \omega_k(A_J, C^J_{N}) &\leq \omega_k(A_J, \bigoplus_{j =1}^f \Span(Y_j) \oplus \bigoplus_{j =f+1}^k \Span(X_{M_j,j}) ) \\
 &\leq c_{\ref{7cons_elio_maj}} \left(\sum\limits_{j = 1 }^f \omega(Y_j, Y_j) + \sum\limits_{j = f+ 1 }^k \omega(Y_j, X_{M_j,j}) \right) \\
 &= c_{\ref{7cons_elio_maj}} \sum\limits_{j = f+ 1 }^k \omega(Y_j, X_{M_j,j})
 \end{align*}
 with $\cons \label{7cons_elio_maj} >0 $ independent of $Y_1, \ldots, Y_k$ and $n$. Now, according to Proposition~\ref{7prop_BNV_chap4}, for every $j \in \llbracket f+1, k \rrbracket$,
 $\omega(Y_j, X_{j,M_j}) \leq c_{\ref{7cons_angle_Yi_XNi_major}} \theta^{-{\alpha_{j,M_j+1}}}$
 where $c_{\ref{7cons_angle_Yi_XNi_major}} $ is a constant independent of $N_j$.
 Therefore,
 \begin{align*}
 \omega_k(A_J, C^J_{N}) &\leq c_{\ref{7cons_elio_maj}}c_{\ref{7cons_angle_Yi_XNi_major}} \sum\limits_{j = f+ 1 }^k \theta^{-{\alpha_{j,M_j+1}}} \leq k c_{\ref{7cons_elio_maj}}c_{\ref{7cons_angle_Yi_XNi_major}} \theta^{-\min\limits_{j = f+ 1 }^k {\alpha_{j,M_j+1}}}.
 \end{align*}
 Thus, the upper bound of $(\ref{7trad_approx_Aj_C})$ is proved with $c_{\ref{7cons_maj_angle_AJC}} = k c_{\ref{7cons_elio_maj}}c_{\ref{7cons_angle_Yi_XNi_major}} $. Now, we show the lower bound. 
 For any $j \in \llbracket f+1, k \rrbracket$, since $Y_j \in A_J$, according to Lemma 2.3 of \cite{joseph_spectre}, 
 $\omega_1(\Span(Y_j), C^J_{N}) \leq \omega_k(A_J, C^J_{N})$
 because $\dim(A_J) = k $ and $\dim(C^J_{N}) =e \geq k$. 
 Using Lemma~\ref{7lem_minor_prox_Y_j_CN}, we have $\omega_1(\Span(Y_j), C^J_{N}) \geq c_{\ref{7cons_min_Yj_CNM}}\theta^{-{\alpha_{j,M_j+1}}}$ with $ c_{\ref{7cons_min_Yj_CNM}}$ independent of $N$ and $j$. Thus,
 \begin{align*}
 \omega_k(A_J, C^J_{N}) \geq \max_{j \in \llbracket f+1, k \rrbracket} c_{\ref{7cons_min_Yj_CNM}}\theta^{-{\alpha_{j,M_j+1}}} = c_{\ref{7cons_min_Yj_CNM}}\theta^{- \min\limits_{j = f+1}^k \alpha_{j,M_j+1}}.
 \end{align*}
 Since $\omega_k(A_J, C^J_{N}) =\psi_{k-g}(A_J, C^J_{N})$, we have proved the lower bound of (\ref{7trad_approx_Aj_C}). Finally we use Lemma~\ref{7lem_haut_CNM} to conclude the proof of Lemma~\ref{7lem_prox_AJ_CN_H(CN)}.
 
\end{proof}
\subsubsection{Lower bound on the exponent}
Lemma~\ref{7lem_prox_AJ_CN_H(CN)} = allows us to bound the exponent $\mu_n(A_J| e)_{k - g(A_J,e)} $ from below by considering certain $N = (N_1, \ldots, N_k) \in \N^k$.

\begin{cor}\label{7cor_min_expos}
 We have $\mu_n(A_J| e)_{k - g(A_J,e)} \geq \left(\sum\limits_{i = f+1}^k \frac{1}{K_{i,v_{i} }} \right)^{-1}$
 with $K_{q,v_q} = \max\limits_{\ell \in \llbracket 0, m-1 \rrbracket} \beta_{q, \ell +1} \ldots \beta_{q, \ell + v_q}$.
\end{cor}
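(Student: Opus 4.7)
The plan is to apply Lemma~\ref{7lem_prox_AJ_CN_H(CN)} along a carefully chosen infinite family of tuples $N = (N_1, \ldots, N_k) \in \mathbb{N}^k$ for which the exponent
\[
\rho(N) := \frac{\min_{j \geq f+1} \alpha_{j, N_j + v_j}}{\sum_{j \geq f+1} \alpha_{j, N_j}}
\]
approaches the target value $\mu^\ast := \bigl(\sum_{i = f+1}^k 1/K_{i, v_i}\bigr)^{-1}$. For each $j \in \llbracket f+1, k \rrbracket$, select a residue $r_j \in \llbracket 0, m - v_j \rrbracket$ realizing the maximum in the definition of $K_{j, v_j}$, so that $\beta_{j, r_j+1} \cdots \beta_{j, r_j + v_j} = K_{j, v_j}$; this is legitimate because $v_j \leq m$ when $j \geq f+1$ by Claim~\ref{7lem_vj=m+1}. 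For $j \in \llbracket 1, f \rrbracket$ the subspace $C^j_{N_j}$ is independent of $N_j$, so one may simply set $N_j = 0$. Taking $N_j = 2m p_j + r_j$ for $j \geq f+1$ with $p_j \in \mathbb{N}$, the periodicity formula~\eqref{7def_alphaN_formule_EI} yields $\alpha_{j, N_j + v_j} = E_j^{p_j} \alpha_{j, r_j + v_j}$ and, by construction of $r_j$, $\alpha_{j, N_j} = \alpha_{j, N_j + v_j}/K_{j, v_j}$.

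The heart of the argument is to pick the integers $p_j$ so that all the values $\alpha_{j, N_j + v_j}$ for $j \geq f+1$ are nearly equal. Fix $j_0 \in \llbracket f+1, k \rrbracket$. Hypothesis~\eqref{7_3hypot_m=beta_m+1} asserts that $\{1\} \cup \{\log E_i/\log E_j\}_{i \neq j}$ is $\mathbb{Q}$-linearly independent; in particular $1$ together with $(\log E_{j_0}/\log E_j)_{j \in \llbracket f+1, k \rrbracket \setminus \{j_0\}}$ are $\mathbb{Q}$-linearly independent. By Kronecker--Weyl equidistribution in $(\mathbb{R}/\mathbb{Z})^{k-f-1}$, for every $\varepsilon > 0$ there exist infinitely many $p_{j_0} \in \mathbb{N}$ tending to $+\infty$ and integers $p_j$ ($j \in \llbracket f+1, k\rrbracket \setminus \{j_0\}$) such that
\[
\bigl| p_j \log E_j + \log \alpha_{j, r_j + v_j} - p_{j_0} \log E_{j_0} - \log \alpha_{j_0, r_{j_0} + v_{j_0}} \bigr| \leq \varepsilon,
\]
so the quantities $\alpha_{j, N_j + v_j}$ all lie within a common factor $e^\varepsilon$ of one another.

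Setting $m^\ast = \min_{j \geq f+1} \alpha_{j, N_j + v_j}$, one deduces $\sum_{j = f+1}^k \alpha_{j, N_j} = \sum_{j = f+1}^k \alpha_{j, N_j + v_j}/K_{j, v_j} \leq e^\varepsilon m^\ast \sum_{j=f+1}^k 1/K_{j, v_j}$, and Lemma~\ref{7lem_prox_AJ_CN_H(CN)} yields $\psi_{k-g}(A_J, C^J_N) \leq c_{\ref{7cons_psik_AJ_CNJ_major}} \, H(C^J_N)^{-\rho(N)}$ with $\rho(N) \geq e^{-\varepsilon} \mu^\ast$. Since $p_{j_0} \to \infty$ forces $H(C^J_N) \to \infty$ through infinitely many distinct rational subspaces, the definition of the Diophantine exponent gives $\mu_n(A_J|e)_{k-g(A_J, e)} \geq e^{-\varepsilon} \mu^\ast$; letting $\varepsilon \to 0$ concludes.

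The main obstacle is the simultaneous Diophantine approximation invoked above: only the $\mathbb{Q}$-linear independence of the $\log E_i$ from hypothesis~\eqref{7_3hypot_m=beta_m+1} allows one to align all the $\alpha_{j, N_j + v_j}$ to within an arbitrarily small multiplicative factor, and without this alignment the exponent extracted from Lemma~\ref{7lem_prox_AJ_CN_H(CN)} would be bounded away from $\mu^\ast$.
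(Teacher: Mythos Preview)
Your proposal is correct and follows essentially the same approach as the paper's proof: both apply Lemma~\ref{7lem_prox_AJ_CN_H(CN)}, choose the residues $r_j$ (the paper's $L_i$) realizing each $K_{j,v_j}$, and then invoke equidistribution of $(p_{j_0}\log E_{j_0}/\log E_j)_j$ modulo~$1$---guaranteed by the $\mathbb{Q}$-linear independence hypothesis~\eqref{7_3hypot_m=beta_m+1}---to make all the $\alpha_{j,N_j+v_j}$ nearly equal. The paper carries out the same computation more explicitly via the parameters $\delta_i$ and a sup over $[0,1)^{k-f-1}$, but the substance is identical.
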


\begin{proof}
We recall that if $i \in \llbracket f+1, k \rrbracket$, according to Lemma~\ref{7lem_vj=m+1} we have $v_i \in \llbracket 1, m \rrbracket $ and then $\max\limits_{\ell \in \llbracket 0, m -1\rrbracket} \beta_{i,\ell+1}\ldots \beta_{i,\ell+v_i} = \max\limits_{\ell \in \llbracket 0, m-1 \rrbracket} \frac{\alpha_{i, \ell +v_i}}{\alpha_{i, \ell}}$.
For $i \in \llbracket f+1, k \rrbracket$, we denote by $L_i \in \llbracket 0, m-1 \rrbracket$ an integer such that $ K_{i,v_i}= \max\limits_{\ell \in \llbracket 0, m -1\rrbracket} \beta_{i,\ell+1}\ldots \beta_{i,\ell+v_i} = \frac{\alpha_{i, L_i +v_i}}{\alpha_{i, L_i}}.$ We recall that for $j \in \llbracket 1,k \rrbracket$ and $N_j \in \N$, we have $M_j = N_j + v_j -1 $. For $(N_1, \ldots, N_f )\in \N^{f}$ and $ N_{f+1} \in \Nx$ a fixed multiple of $2m$, we set for $i \in \llbracket f+ 2, k \rrbracket $:
\begin{align}\label{7eq_def_Ni}
 &N_i = 2m \floor{ \frac{N_{f+1}\log(E_{f+1})}{2m\log(E_i)} + \frac{\log(\alpha_{f+1,v_{f+1}-1})}{\log(E_i)} } + L_i.
\end{align}
We know, by Lemma~\ref{7lem_prox_AJ_CN_H(CN)}, that:
\begin{align}\label{7minor_mu_dans_proof}
 \mu_n(A_J| e)_{k - g(A_J,e)} &\geq \limsup\limits_{\underset{2m | N_{f+1}}{N_{f+1} \to + \infty}} \frac{\min\limits_{i = f+1}^k \alpha_{i,M_i +1 } }{ \sum\limits_{i = f+1}^k \alpha_{i,N_i} }
\end{align}
where the $N_{f+2}, \ldots, N_k, M_{f+1}, \ldots, M_k$ are those defined above depending on $N_{f+1}$ which will be chosen later. 
The rest of the proof is devoted to showing that this upper limit is greater than or equal to $ \left(\sum\limits_{i = f+1}^k \frac{1}{K_{i,v_{i} }} \right)^{-1}$. We fix $(N_1, \ldots, N_f ) \in \N^{f}$ and $ N_{f+1} \in \Nx$ a multiple of $2m$ and we study $\frac{\min\limits_{i = f+1}^k \alpha_{i,M_i +1 } }{ \sum\limits_{i = f+1}^k \alpha_{i,N_i} }.$
We recall that we denote by $E_i= \beta_{i,1}\ldots \beta_{i,m}(\beta_{i,m+1})^m$ for $i \in \llbracket 1,d \rrbracket$ and that
\begin{align}
 \text{the family } \{1\} \cup \left(\frac{\log(E_{f+1})}{\log(E_j)}\right)_{j \in \llbracket f+2,k \rrbracket} \text{ is linearly independent over $\Q$} \label{7indp_line_Ej}
\end{align}
according to $(\ref{7_3hypot_m=beta_m+1})$, by choice of the $\beta_{i,m+1}$.
For $i \in \llbracket f+ 2, k \rrbracket $, we define:
\begin{align}\label{7eq_def_deltai}
 &\delta_i = \partfrac{ \frac{N_{f+1}\log(E_{f+1})}{2m\log(E_i)} + \frac{\log(\alpha_{f+1,v_{f+1}+ L_{f+1}-1})}{\log(E_i)} } \in [0,1[
\end{align}
where $\partfrac{u} = u - \floor{u}$ represents the fractional part of $u \in \R$. 
The expression $(\ref{7def_alphaN_formule_EI})$ gives for $i \in \llbracket f+ 1,k \rrbracket $:
\begin{align}\label{7rel_alphaNi_Ei}
 \alpha_{i,N_i} = E_i^{\floor{\frac{N_i}{2m}}} \alpha_{i, N_i \mod 2m} = E_i^{\floor{\frac{N_i}{2m}}} \alpha_{i, L_i}
\end{align}
because $2m$ divides $N_i-L_i$ according to $(\ref{7eq_def_Ni})$. Similarly:
\begin{align}\label{7rel_alphaMi_Ei}
 \alpha_{i,M_i+1} = E_i^{\floor{\frac{N_i + v_i }{2m}}} \alpha_{i, N_i + v_i \mod 2m} = E_i^{\floor{\frac{N_i}{2m}}} \alpha_{i, L_i + v_i } 
\end{align}
since $0 \leq L_i + v_i < 2m$ for $i \in \llbracket f+1, k \rrbracket$; indeed, $L_i \in \llbracket 0, m-1 \rrbracket$ and $v_i \in \llbracket 1, m \rrbracket$ for all $i \in \llbracket f+1, k \rrbracket$ according to Lemma~\ref{7lem_vj=m+1}. According to the dependencies between $N_i$ and $N_{f+1}$, we have: 
\begin{align*} 
 \alpha_{i,N_i} &= E_i^{\floor{\frac{N_i}{2m}}} \alpha_{i, L_i} = E_i^{\floor{ \frac{N_{f+1}\log(E_{f+1})}{2m\log(E_i)} + \frac{\log(\alpha_{f+1,v_{f+1}-1})}{\log(E_i)} } }\alpha_{i, L_i} = E_i^{{ \frac{N_{f+1}\log(E_{f+1})}{2m\log(E_i) } + \frac{\log(\alpha_{f+1,v_{f+1}-1})}{\log(E_i)} } - \delta_i }\alpha_{i, L_i}. \nonumber \\
\end{align*}
Now $E_{f+1}^{ \frac{N_{f+1}}{2m} } = E_{f+1}^{\floor{\frac{N_{f+1}}{2m}}} $ since $2m | N_{f+1}$ and $\alpha_{f+1, N_{f+1}} = E_{f+1}^{\floor{\frac{N_{f+1}}{2m}}} \alpha_{f+1, L_{f+1} } $ according to $(\ref{7rel_alphaNi_Ei})$, so one has 
\begin{align} \label{7rel_alphaNf_et_alpha_Ni}
 \alpha_{i,N_i} = E_{f+1}^{ \frac{N_{f+1}}{2m} }\alpha_{f+1,v_{f+1}-1} E_i^{-\delta_i}\alpha_{i, L_i} = \frac{\alpha_{f+1, N_{f+1}} \alpha_{f+1,v_{f+1}-1} E_i^{-\delta_i}\alpha_{i, L_i}}{\alpha_{f+1, L_{f+1}}}.
\end{align}
\\ Similarly, using $(\ref{7rel_alphaMi_Ei})$ for all $i \in \llbracket f+1, k \rrbracket$, we have:
\begin{align}\label{7rel_alphaNf_et_alpha_Mi}
 \alpha_{i,M_i+1} =\frac{\alpha_{f+1, N_{f+1}} \alpha_{f+1,v_{f+1}-1} E_i^{-\delta_i}\alpha_{i, L_i+v_i}}{\alpha_{f+1, L_{f+1}}}.
\end{align}
We then study:
\begin{align*}
\frac{\min\limits_{i = f+1}^k \alpha_{i,M_i +1 } }{ \sum\limits_{i = f+1}^k \alpha_{i,N_i} } &= \frac{\min\limits_{i = f+1}^k \frac{\alpha_{i,M_i +1 }}{\alpha_{f+1, N_{f+1}}} }{ \sum\limits_{i = f+1}^k \frac{\alpha_{i,N_i}}{\alpha_{f+1, N_{f+1}}} } = \frac{\min(\frac{\alpha_{f+1, L_{f+1} + v_{f+1} }}{\alpha_{f+1, L_{f+1}}},\min\limits_{i = f+2}^k (\frac{\alpha_{f+1, L_{f+1} + v_{f+1} -1}}{\alpha_{f+1, L_{f+1}}} E_i^{-\delta_i}\alpha_{i, L_i +v_i} ) ) }{ 1+ \sum\limits_{i = f+2}^k \frac{\alpha_{f+1, L_{f+1} + v_{f+1} -1}}{\alpha_{f+1, L_{f+1}}} E_i^{-\delta_i}\alpha_{i, L_i } } 
\end{align*}
using the relations $(\ref{7rel_alphaNf_et_alpha_Ni})$ and $(\ref{7rel_alphaNf_et_alpha_Mi})$.
Recalling that $K_{f+1, v_{f+1}} = \frac{\alpha_{f+1, L_{f+1} + v_{f+1} }}{\alpha_{f+1, L_{f+1}}}$ and $\frac{\alpha_{f+1, L_{f+1} + v_{f+1} } } {\beta_{f+1, L_{f+1} + v_{f+1} }}= \alpha_{f+1, L_{f+1} + v_{f+1} -1 }$, we have:
\begin{align*}
 \frac{\min\limits_{i = f+1}^k \alpha_{i,M_i +1 } }{ \sum\limits_{i = f+1}^k \alpha_{i,N_i} } &= \frac{\min(K_{f+1, v_{f+1} },\min\limits_{i = f+2}^k (\frac{K_{f+1, v_{f+1} } }{\beta_{f+1, L_{f+1} + v_{f+1} }}E_i^{-\delta_i}\alpha_{i, L_i +v_i} ) ) }{ 1+ \sum\limits_{i = f+2}^k \frac{K_{f+1, v_{f+1} } }{\beta_{f+1, L_{f+1} + v_{f+1}}} E_i^{-\delta_i}\alpha_{i, L_i } } = \frac{\min(1,\min\limits_{i = f+2}^k (\frac{E_i^{-\delta_i}\alpha_{i, L_i +v_i} }{\beta_{f+1, L_{f+1} + v_{f+1} }} ) ) }{ \frac{1}{K_{f+1, v_{f+1}}} + \sum\limits_{i = f+2}^k \frac{ E_i^{-\delta_i}\alpha_{i, L_i } } {\beta_{f+1, L_{f+1} + v_{f+1} }}} .
\end{align*}

Finally, since $\frac{\alpha_{i, L_i + v_i} }{K_{i,v_i}} = \alpha_{i, L_i }$ for all $i \in \llbracket f+2, k \rrbracket$, we finally have:
\begin{align*}
 \frac{\min\limits_{i = f+1}^k \alpha_{i,M_i +1 } }{ \sum\limits_{i = f+1}^k \alpha_{i,N_i} } &= \frac{\min(1,\min\limits_{i = f+2}^k (\frac{E_i^{-\delta_i}\alpha_{i, L_i +v_i} }{\beta_{f+1, L_{f+1} + v_{f+1} }} ) ) }{ \frac{1}{K_{f+1, v_{f+1}}} + \sum\limits_{i = f+2}^k \frac{ E_i^{-\delta_i}\alpha_{i, L_i + v_i} } {\beta_{f+1, L_{f+1} + v_{f+1} }} \frac{1}{K_{i,v_i}}}.
\end{align*}
We have thus shown that:
\begin{align*}
 \limsup\limits_{\underset{2m | N_{f+1}}{N_{f+1} \to + \infty}} \frac{\min\limits_{i = f+1}^k \alpha_{i,M_i +1 } }{ \sum\limits_{i = f+1}^k \alpha_{i,N_i} } = \limsup\limits_{\underset{2m | N_{f+1}}{N_{f+1} \to + \infty}} \frac{\min(1,\min\limits_{i = f+2}^k (\frac{E_i^{-\delta_i}\alpha_{i, L_i +v_i} }{\beta_{f+1, L_{f+1} + v_{f+1} }} ) ) }{ \frac{1}{K_{f+1, v_{f+1}}} + \sum\limits_{i = f+2}^k \frac{ E_i^{-\delta_i}\alpha_{i, L_i + v_i} } {\beta_{f+1, L_{f+1} + v_{f+1} }} \frac{1}{K_{i,v_i}}}.
\end{align*}
\bigskip
\\Theorem 6.3 and Example 6.1 of \cite[chapter 1.6]{Kuipers_Niederreiter} , $(\ref{7indp_line_Ej})$, and the definition of $\delta_i$ in $(\ref{7eq_def_deltai})$ imply that:
\begin{align*}
 \left\{ (\delta_{f+2},\ldots,\delta_{k}), N_{f+1} \in \Nx, 2m|N_{f+1} \right\} \text{ dense in } [0,1[^{k - f- 1}.
\end{align*}
Thus, using this density, we have:
\begin{align*}
 \limsup\limits_{\underset{2m | N_{f+1}}{N_{f+1} \to + \infty}} \frac{\min\limits_{i = f+1}^k \alpha_{i,M_i +1 } }{ \sum\limits_{i = f+1}^k \alpha_{i,N_i} } &= \sup\limits_{(\delta_i) \in [0,1[^{k - f-1} } \frac{\min(1,\min\limits_{i = f+2}^k (\frac{E_i^{-\delta_i}\alpha_{i, L_i +v_i} }{\beta_{f+1, L_{f+1} + v_{f+1} }} ) ) }{ \frac{1}{K_{f+1, v_{f+1}}} + \sum\limits_{i = f+2}^k \frac{ E_i^{-\delta_i}\alpha_{i, L_i + v_i} } {\beta_{f+1, L_{f+1} + v_{f+1} }} \frac{1}{K_{i,v_i}}}.
\end{align*}
For all $i \in \llbracket f+2, k \rrbracket$ and $\delta_i \in [0,1[$, we define:
$$u_i = \frac{ E_i^{-\delta_i}\alpha_{i, L_i + v_i} } {\beta_{f+1, L_{f+1} + v_{f+1} }} \in \left( \frac{ \alpha_{i, L_i + v_i} } {E_i\beta_{f+1, L_{f+1} + v_{f+1} }}, \frac{ \alpha_{i, L_i + v_i} } {\beta_{f+1, L_{f+1} + v_{f+1} }}\right] $$
and $u_i$ takes all the values in the interval $\left( \frac{ \alpha_{i, L_i + v_i} } {E_i\beta_{f+1, L_{f+1} + v_{f+1} }}, \frac{ \alpha_{i, L_i + v_i} } {\beta_{f+1, L_{f+1} + v_{f+1} }}\right]$ as $\delta_i$ varies in $[0,1)$. 
Furthermore, $1 \in \left( \frac{ \alpha_{i, L_i + v_i} } {E_i\beta_{f+1, L_{f+1} + v_{f+1} }}, \frac{ \alpha_{i, L_i + v_i} } {\beta_{f+1, L_{f+1} + v_{f+1} }}\right]$ for all $i$ because $\alpha_{i, L_i + v_i} \leq E_i = \alpha_{i,2m}$ and $1 \leq \beta_{f+1, L_{f+1} + v_{f+1} } \leq \min\limits_{\ell \in \llbracket 1, 2m\rrbracket} \beta_{i, \ell} \leq \alpha_{i, L_i + v_i}$ according to hypothesis $(\ref{7_2bhypot_m=beta_m+1})$. Thus:
\begin{align*}
 \sup\limits_{(\delta_i) \in [0,1[^{k - f-1} } \frac{\min(1,\min\limits_{i = f+2}^k u_i ) }{ \frac{1}{K_{f+1, v_{f+1}}} + \sum\limits_{i = f+2}^k u_i \frac{1}{K_{i,v_i}}} &\geq \frac{\min(1,\min\limits_{i = f+2}^k 1 ) }{ \frac{1}{K_{f+1, v_{f+1}}} + \sum\limits_{i = f+2}^k \frac{1}{K_{i,v_i}}}= \left(\sum\limits_{i = f+1}^k \frac{1}{K_{i,v_{i} }} \right)^{-1}. 
\end{align*}

We have shown that $\mu_n(A_J|e)_{k - g(A_J,e)} \geq \left(\sum\limits_{i = f+1}^k \frac{1}{K_{i,v_{i} }} \right)^{-1}$ by using again equation $(\ref{7minor_mu_dans_proof})$, which completes the proof of the lemma.

\end{proof}

\subsubsection{Upper bound on the exponent}
In this section, we show that the subspaces $C^J_{N}$ actually achieve the "best" approximations of $A$. This allows us to bound $\mu_n(A_J| e)_{k - g(A_J,e)}$ from above and thus to conclude the proof of Proposition~\ref{7prop_int}.

\begin{lem}\label{7lem_meilleur_espaces}
Let $\varepsilon >0$ and $C$ be a rational subspace of dimension $e$ such that:
\begin{align}\label{7hyp_lem_meilleure_approx}
 \psi_{k-g}(A_J,C) \leq H(C)^{- \left(\sum\limits_{i = f+1}^k \frac{1}{K_{i,v_{i} }} \right)^{-1}- \varepsilon}. 
\end{align}
Then, if $H(C)$ is sufficiently large depending on $\varepsilon$, there exists $N \in (\Nx)^{k}$ such that 
$ C = C^J_{N}.$
\end{lem}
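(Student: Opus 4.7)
The plan is to mimic the scheme of proof of Lemma~\ref{5lem_meilleurs_espaces} and Lemma~\ref{6lem_meilleurs_espaces}: given a rational subspace $C$ of dimension $e$ satisfying the approximation hypothesis, I will produce, for each $j \in J$, an integer $N_j$ such that all the vectors $X_{N_j, j}, \ldots, X_{N_j + v_j - 1, j}$ lie in $C$. Since these vectors span $C^J_N$ (by orthogonality of the blocks $R_j$) and $\dim C^J_N = \sum_j v_j = e = \dim C$, the inclusion $C^J_N \subset C$ will force the desired equality $C = C^J_N$.

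Denote $\mu = \bigl(\sum_{i = f+1}^k 1/K_{i, v_i}\bigr)^{-1}$, and let $Z_1, \ldots, Z_e$ be a $\Zbasis$ of $C \cap \Z^n$. For each $j \in \llbracket f+1, k \rrbracket$, I would pick $N_j$ to be the integer satisfying
\[
\theta^{\alpha_{j, N_j + v_j - 1}} \leq H(C)^{\tau_j} < \theta^{\alpha_{j, N_j + v_j}},
\]
with $\tau_j > 0$ tailored from $\mu$, $K_{j, v_j}$, $K_{j, v_j - 1}$ and $\varepsilon$, in the spirit of~\eqref{5choix_du_N} and~\eqref{6choix_N}. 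For $j \in \llbracket 1, f \rrbracket$ the space $C^j_{N_j}$ coincides with the whole coordinate block $R_j$ for every $N_j$ (Claim~\ref{7lem_vj=m+1}), so $N_j$ can be chosen arbitrarily large; the wedge-product argument below will still produce $v_j = m+1$ linearly independent vectors spanning $R_j$ inside $C$.

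I would then estimate $\DD_{N_j+i, j} := \|X_{N_j+i, j} \wedge Z_1 \wedge \cdots \wedge Z_e\|$ in the spirit of Claim~\ref{6lem_maj_DN}: writing $\DD_{N_j+i, j} = \omega_1(\Span(X_{N_j+i, j}), C) \cdot \|X_{N_j+i, j}\| \cdot H(C)$, using the triangle inequality $\omega_1(\Span(X_{N_j+i, j}), C) \leq \omega(X_{N_j+i, j}, Y_j) + \omega_1(\Span(Y_j), C)$, and combining $\omega_1(\Span(Y_j), C) \leq \omega_k(A_J, C) = \psi_{k-g}(A_J, C)$ (Lemma 2.3 of \cite{joseph_spectre}) with the bounds of Proposition~\ref{7prop_BNV_chap4}, I would obtain
\[
\DD_{N_j+i, j} \leq c\bigl(\theta^{\alpha_{j, N_j + i} - \alpha_{j, N_j + i + 1}} H(C) + \theta^{\alpha_{j, N_j + i}} \psi_{k-g}(A_J, C)\, H(C)\bigr).
\]
The calibration of $\tau_j$ together with the hypothesis $\psi_{k-g}(A_J, C) \leq H(C)^{-\mu - \varepsilon}$ should then force $\DD_{N_j+i, j} < 1$ once $H(C)$ is sufficiently large, so that $X_{N_j+i, j} \in C$ by Lemma~\ref{2lem_X_in_B}.

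The main obstacle will be the exponent bookkeeping: since $\mu$ is a harmonic-mean-type combination of the quantities $K_{i, v_i}$, the $k-f$ different scales $\theta^{\alpha_{j, N_j}}$ for $j \geq f+1$ must be simultaneously calibrated so that every $\DD_{N_j + i, j}$ decays. The decisive structural input is Proposition~\ref{7lem_min_KKi}: the inequality $\bigl(1 - \min_\ell \beta_{j_q, \ell}^{-1}\bigr)(\mu - 1) - K_{j_q, v_q - 1} \geq 0$ is precisely what makes the analogue of the exponent $\tau_0$ of the proof of Lemma~\ref{5lem_meilleurs_espaces} strictly negative, yielding uniform decay of all $\DD_{N_j + i, j}$. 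Once the containments $X_{N_j + i, j} \in C$ are established for every $j \in \llbracket 1, k \rrbracket$ and $i \in \llbracket 0, v_j - 1 \rrbracket$, the conclusion $C = C^J_N$ follows from the dimension count.
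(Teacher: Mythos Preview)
Your proposal is correct and follows essentially the same approach as the paper's own proof. The only simplification worth noting is that the paper does not need a $j$-dependent threshold: it takes the single exponent $\tau_j = \mu + \tfrac{\varepsilon}{2} - 1$ for every $j \in \llbracket 1, k \rrbracket$, and the same choice of $N_i$ works uniformly (including for $i \leq f$), with Proposition~\ref{7lem_min_KKi} then handling the negativity of the remaining exponent exactly as you anticipated.
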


\begin{proof}
In this proof, we set $K = \left( \sum\limits_{j = f+1}^k \frac{1}{K_{j,v_{j} }} \right)^{-1} $.
For $i \in \llbracket 1, k \rrbracket$, we define $N_i$ as the integer satisfying:
\begin{align}\label{7choix_N_i}
\theta^{{\alpha_{i,{N_i+v_i -1 }}} } \leq H(C)^{ K+ \frac{\varepsilon}{2} - 1} < \theta^{{\alpha_{i,{N_i+v_i}}} }.
\end{align}
We will show that $N = (N_1, \ldots, N_k)$ satisfies the conditions. Let $Z_1, \ldots, Z_e$ be a $\Zbasis$ of $C\cap \Z^n$. For $q$ an integer and $i \in \llbracket 1, k \rrbracket$:
\begin{align*}
D_{q,i} = \| X_{q, i} \wedge Z_1 \wedge \ldots \wedge Z_e \|.
\end{align*}
We will prove that for all $i \in \llbracket 1,k \rrbracket$:
\begin{align*}
\forall \ell \in \llbracket 0, v_i -1 \rrbracket, \quad D_{N_{i}+\ell, i } < 1 
\end{align*}
and Lemma~\ref{2lem_X_in_B} will allow us to conclude. We fix $i \in \llbracket 1, k \rrbracket $ and $\ell \in \llbracket 0, v_i -1 \rrbracket $. One has:
\begin{align*}
D_{N_{i}+\ell, i } &= \| p_{C^\perp}(X_{N_i+\ell, i}) \| \cdot \| Z_1 \wedge \ldots \wedge Z_e \| = \omega(X_{N_i+\ell, i}, C) \| X_{N_i+\ell,i} \| H(C).
\end{align*}
Using the triangle inequality on angles, we have
$\omega(X_{N_i+\ell, i}, C) \leq \omega(X_{N_i+\ell, i}, Y_i ) + \omega(Y_i, C).$ Proposition~\ref{7prop_BNV_chap4} yields $$c_{\ref{7cons_norme_XNi_minor}}\theta^{\alpha_{i,N_i+\ell}} \leq \| X_{N_i+\ell,i} \| \leq c_{\ref{7cons_norme_XNi_major}}\theta^{\alpha_{i,N_i+\ell}} \text{ and }
c_{\ref{7cons_angle_Yi_XNi_minor}} \theta^{-{\alpha_{i,N_i+\ell+1}}} \leq \omega(X_{N_i+\ell,i},Y_i) \leq c_{\ref{7cons_angle_Yi_XNi_major}} \theta^{-{\alpha_{i,N_i+\ell+1}}}$$ with constants independent of $N_i$. 
Furthermore, since $Y_i \in A_J$ and $\dim(A_J) = k$, by Lemma 2.3 of \cite{joseph_spectre}, we have $\psi_{k-g}(A_J,C) = \omega_{k}(A_J, C) \geq \omega_1(\Span(Y_i), C). $ Therefore:
\begin{align}
D_{N_{i}+\ell, i } &\leq c_{\ref{7cons_maj_DNell}} H(C) \theta^{\alpha_{i,N_i+\ell}} \left(\theta^{-\alpha_{i,N_i+\ell+1}} + H(C)^{-K - \varepsilon}\right) \nonumber \\
&= c_{\ref{7cons_maj_DNell}} \left(\theta^{-\alpha_{i,N_i+\ell+1} + \alpha_{i,N_i+\ell}} H(C) + \theta^{\alpha_{i,N_i+\ell}}H(C)^{-K - \varepsilon +1 }\right) \label{7maj_DNiell_final}
\end{align}
with $\cons \label{7cons_maj_DNell} $ independent of $N$, using the assumption (\ref{7hyp_lem_meilleure_approx}) on $C$.
The choice of $N_i$ in $(\ref{7choix_N_i})$ gives:
\begin{align} \label{7inega_premier_terme}
\theta^{\alpha_{i,N_i+\ell}}H(C)^{-K - \varepsilon +1 } \leq \theta^{\alpha_{i,N_i+v_i -1 }} \leq H(C)^{K + \frac{\varepsilon}{2} +1 }H(C)^{-K - \varepsilon -1 } = H(C) ^{-\frac{\varepsilon}{2}}.
\end{align}
Now we consider the term $\theta^{-\alpha_{i,N_i+\ell+1} + \alpha_{i,N_i+\ell}} H(C)$ in $(\ref{7maj_DNiell_final})$. Again by $(\ref{7choix_N_i})$, we have:
\begin{align}
\theta^{-\alpha_{i,N_i+\ell+1} + \alpha_{i,N_i+\ell}}H(C) &\leq \left(H(C)^{\frac{K + \frac{\varepsilon}{2} -1}{\alpha_{i,N_i +v_i}}}\right)^{-\alpha_{i,N_i+\ell-1} + \alpha_{i,N_i+\ell}} H(C) \nonumber \\
&= H(C)^{ \frac{(K + \frac{\varepsilon}{2} -1)(-\alpha_{i,N_i+\ell+1} + \alpha_{i,N_i+\ell}) + \alpha_{i,N_i +v_i}}{\alpha_{i,N_i +v_i}}} \label{7inega_deuxieme_terme}.
\end{align}

We then focus on the numerator of this exponent; setting aside the term involving $\varepsilon$, we find:
\begin{align*}
(K -1)(-\alpha_{i,N_i+\ell+1} + \alpha_{i,N_i+\ell}) + \alpha_{i,N_i +v_i} = \alpha_{i,N_i + \ell} \left((K -1)(-\beta_{i,N_i+\ell+1} + 1) + \frac{\alpha_{i,N_i +v_i}}{\alpha_{i,N_i + \ell} }\right). 
\end{align*}
We will show that this term is negative; indeed:
\begin{align*}
(K -1)(-\beta_{i,N_i+\ell+1} + 1) + \frac{\alpha_{i,N_i +v_i}}{\alpha_{i,N_i + \ell} } &= (K - 1)(-\beta_{i,N_i+\ell+1} + 1) + \beta_{i,N_i+\ell+1} \ldots \beta_{i,N_i+v_i} \\
&= -\beta_{i,N_i+\ell+1} (K - 1 - \beta_{i,N_i+\ell+2} \ldots \beta_{i,N_i+v_i}) + K -1.
\end{align*}
Since $\ell \in \llbracket 0, v_i-1 \rrbracket$, we have $\beta_{i,N_i+\ell+2} \ldots \beta_{i,N_i+v_i} \leq \beta_{i,N_i+2} \ldots \beta_{i,N_i+v_i} \leq K_{i,v_i-1} $ because the $v_i-1$ factors of the product $\beta_{i,N_i+2} \ldots \beta_{i,N_i+v_i}$ are consecutive.We then have 
\begin{align}\label{7inegalite_presque_finale}
 (K -1)(-\beta_{i,N_i+\ell+1} + 1) + \frac{\alpha_{i,N_i +v_i}}{\alpha_{i,N_i + \ell} } &\leq -\beta_{i,N_i+\ell+1} (K - 1 - K_{i,v_i -1}) + K -1.
\end{align}
Now, Proposition~\ref{7lem_min_KKi} gives 
$ K - 1 - K_{i,v_i-1} \geq \frac{K-1}{\min\limits_{ \ell \in \llbracket 1, m+1 \rrbracket} (\beta_{i,\ell})}.$
The inequality (\ref{7inegalite_presque_finale}) becomes
\begin{align*}
(K -1)(-\beta_{i,N_i+\ell+1} + 1) + \frac{\alpha_{i,N_i +v_i}}{\alpha_{i,N_i + \ell} } 
&\leq -\beta_{i,N_i+\ell+1} \frac{K-1}{\min\limits_{ \ell \in \llbracket 1, m+1 \rrbracket} (\beta_{i,\ell})} + K-1 \leq (K-1) \left(1 - \frac{\beta_{i,N_i+\ell+1} }{\min\limits_{ \ell \in \llbracket 1, m+1 \rrbracket} (\beta_{i,\ell})} \right)
\end{align*}
and the right hand side of this inequality is a negative number. We then reconsider $(\ref{7inega_deuxieme_terme})$ and we have:
\begin{align*}
\theta^{-\alpha_{i,N_i+\ell+1} + \alpha_{i,N_i+\ell}}H(C) &\leq H(C)^{\frac{\varepsilon}{2}(\frac{-\alpha_{i,N_i + \ell +1} + \alpha_{i,N_i + \ell }}{\alpha_{i,N_i + v_i}})}. 
\end{align*}
Since $\beta_{i,j} \geq 2$ for all $j$, and $v_i \leq m+1 $ by Lemma~\ref{7lem_vj=m+1}, using $(\ref{7_2bhypot_m=beta_m+1})$ we have $$\frac{\alpha_{i,N_i + \ell +1} - \alpha_{i,N_i + \ell }}{\alpha_{i,N_i + v_i}} = \frac{\beta_{i,N_i+ \ell +1} -1 }{\beta_{i,N_i + \ell +1} \ldots \beta_{i,N_i +v_i}} \geq \frac{1}{K_{i,v_i}} \geq \frac{1}{K_{i,m+1}} \geq \frac{1}{K_{1, m+1}}.$$ Setting $\cons \label{7cons_espsK} = \frac{1}{2K_{1, m+1}}$, we then have:
\begin{align}\label{7inega_second_terme}
\theta^{-\alpha_{i,N_i+\ell+1} + \alpha_{i,N_i+\ell}}H(C) &\leq H(C)^{-c_{\ref{7cons_espsK}}\varepsilon}.
\end{align}
Revisiting the inequality $(\ref{7maj_DNiell_final})$ and the estimations $(\ref{7inega_premier_terme})$ and $(\ref{7inega_second_terme})$, we then have $D_{N_{i}+\ell, i } \leq c_{\ref{7cons_maj_DNell}}(H(C)^{-\frac{\varepsilon}{2}} + H(C)^{-c_{\ref{7cons_espsK}}\varepsilon}) $
and therefore, if $H(C)$ is large enough (depending on $\varepsilon, c_{\ref{7cons_maj_DNell}} $, and $c_{\ref{7cons_espsK}}$), we have for all $i \in \llbracket 1, k \rrbracket $:
\begin{align*}
\forall \ell \in \llbracket 0, v_i-1 \rrbracket, \quad \| X_{N_i +\ell, i} \wedge Z_1 \wedge \ldots \wedge Z_e \| = D_{N_{i}+\ell, i } < 1.
\end{align*}
Lemma~\ref{2lem_X_in_B} then implies that $X_{i,N_i+\ell} \in C$ for all $i \in \llbracket 1, k \rrbracket $ and $\ell \in \llbracket 0, v_i-1 \rrbracket$. 
Therefore, $C^J_{N} \subset C$ and by equality of dimensions $C = C^J_{N} $.

\end{proof}

\begin{cor}\label{7cor_maj_exposant}
 We have $ \mu_n(A_J| e)_{k - g(A_J,e)} \leq \left(\sum\limits_{i = f+1}^k \frac{1}{K_{j,v_{j} }} \right)^{-1}.$ 
\end{cor}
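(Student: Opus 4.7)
The plan is to proceed by contradiction, exactly parallel to the proof of Corollary~\ref{6cor_maj_expos}. Set $K = \left( \sum_{j = f+1}^{k} \frac{1}{K_{j,v_{j}}}\right)^{-1}$ and suppose that $\mu_n(A_J|e)_{k-g} > K$. Then there exists $\varepsilon > 0$ such that $\mu_n(A_J|e)_{k-g} \geq K + 2\varepsilon$, so by the definition of the Diophantine exponent there are infinitely many rational subspaces $C$ of dimension $e$ satisfying
\[
0 < \psi_{k-g}(A_J,C) \leq H(C)^{-K - \varepsilon}.
\]
Applying Lemma~\ref{7lem_meilleur_espaces} (with this $\varepsilon$), as soon as $H(C)$ is large enough such a $C$ must coincide with some $C^J_N$ with $N = (N_1, \ldots, N_k) \in \mathbb{N}^k$. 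Hence there are infinitely many $N \in \mathbb{N}^k$ with
\begin{equation}\label{eq:ineq_plan_1}
\psi_{k-g}(A_J, C^J_N) \leq H(C^J_N)^{-K-\varepsilon}.
\end{equation}

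Next, I would combine this with the lower bound of Lemma~\ref{7lem_prox_AJ_CN_H(CN)}:
\[
\psi_{k-g}(A_J, C^J_N) \geq c_{\ref{7cons_psik_AJ_CNJ_minor}}\, H(C^J_N)^{-\rho(N)}, \qquad \rho(N) = \frac{\min_{j=f+1}^{k} \alpha_{j,M_j+1}}{\sum_{j=f+1}^{k} \alpha_{j,N_j}},
\]
where $M_j = N_j + v_j - 1$. The heart of the proof is the purely arithmetic inequality
\begin{equation}\label{eq:ineq_plan_2}
\rho(N) \leq K \qquad \text{for every } N \in \mathbb{N}^k.
\end{equation}
To establish \eqref{eq:ineq_plan_2}, I use that for each $j \in \llbracket f+1, k \rrbracket$ one has by definition of $K_{j,v_j}$ the upper bound $\alpha_{j,M_j+1} = \alpha_{j,N_j + v_j} \leq K_{j,v_j}\, \alpha_{j,N_j}$. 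Setting $m_0 = \min_{j} \alpha_{j,M_j+1}$, this yields $m_0 / K_{j,v_j} \leq \alpha_{j,N_j}$ for every $j$, and summing gives
\[
m_0 \sum_{j=f+1}^{k} \frac{1}{K_{j,v_j}} \leq \sum_{j=f+1}^{k} \alpha_{j,N_j},
\]
which rearranges to $\rho(N) \leq K$ as claimed.

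Finally, combining Lemma~\ref{7lem_prox_AJ_CN_H(CN)} with \eqref{eq:ineq_plan_2} yields $\psi_{k-g}(A_J, C^J_N) \geq c_{\ref{7cons_psik_AJ_CNJ_minor}}\, H(C^J_N)^{-K}$ for every $N$, and comparing with \eqref{eq:ineq_plan_1} gives $c_{\ref{7cons_psik_AJ_CNJ_minor}} \leq H(C^J_N)^{-\varepsilon}$ for infinitely many $N$. Letting $H(C^J_N) \to +\infty$ (which happens since the subspaces are pairwise distinct) forces $c_{\ref{7cons_psik_AJ_CNJ_minor}} \leq 0$, contradicting its positivity. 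The main obstacle is really just invoking Lemma~\ref{7lem_meilleur_espaces} cleanly; the remaining arithmetic step \eqref{eq:ineq_plan_2} is a one-line consequence of the definition of the $K_{j,v_j}$, so no further work is required beyond what has already been built up in the section.
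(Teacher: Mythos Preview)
Your proof is correct and follows essentially the same route as the paper: contradiction, reduction to the subspaces $C^J_N$ via Lemma~\ref{7lem_meilleur_espaces}, the lower bound from Lemma~\ref{7lem_prox_AJ_CN_H(CN)}, and the arithmetic inequality $\rho(N)\le K$ obtained from $\alpha_{j,N_j+v_j}\le K_{j,v_j}\,\alpha_{j,N_j}$ summed over $j$. The only cosmetic difference is that the paper first deduces $\rho(N)\ge K+\varepsilon/2$ for infinitely many $N$ and then contradicts it with $\rho(N)\le K$, whereas you plug $\rho(N)\le K$ directly into the height inequality; the two are equivalent.
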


\begin{proof}
 Suppose the contrary: there exists $\varepsilon > 0$ such that
$ \mu_n(A_J| e)_{k - g(A_J,e)} \geq\left(\sum\limits_{j= f+1}^k \frac{1}{K_{j,v_{j} }} \right)^{-1} + 2\varepsilon. $
 Then there exist infinitely many rational subspaces $C$ of dimension $e$ such that 
 \begin{align}\label{7maj_dunepart}
 \psi_{k-g}(A_J,C) \leq H(C)^{-\left(\sum\limits_{j= f+1}^k \frac{1}{K_{j,v_{j} }} \right)^{-1} - \varepsilon}. 
 \end{align}
According to Lemma~\ref{7lem_meilleur_espaces}, there exists $N \in (\mathbb{N}^*)^k$ such that $C = C^J_{N}$ if $H(C)$ is large enough. Furthermore, Lemma~\ref{7lem_prox_AJ_CN_H(CN)} gives:
\begin{align}\label{7min_dautrepart}
 \psi_{k-g}(A_J, C^J_{N}) \geq c_{\ref{7cons_prox_CN_AJ}} H(C^J_{N}) ^{ \frac{ - \min\limits_{j = f+1}^k \alpha_{j,M_j+1}} {\sum\limits_{j = f+1 }^k \alpha_{j,N_j} } }
\end{align}
 with $ \cons \label{7cons_prox_CN_AJ} >0 $ independent of $N $. 
 The inequalities $(\ref{7maj_dunepart}) $ and $ (\ref{7min_dautrepart})$ together yield
 \begin{align}\label{7ineg_comp_exposant}
 \frac{\min\limits_{j = f+1}^k \alpha_{j,N_j+v_j}} {\sum\limits_{j = f+1 }^k \alpha_{j,N_j} } \geq \left(\sum\limits_{i = f+1}^k \frac{1}{K_{j,v_{j} }} \right)^{-1}+ \frac{\varepsilon}{2} 
 \end{align}
 as the height tends to $+ \infty$. Let $j \in \llbracket f+ 1, k \rrbracket$. We have $ \left(\sum\limits_{j= f+1}^k \frac{1}{K_{j,v_{j} }} \right)^{-1}\leq \frac{\alpha_{j,N_j+v_j}}{K_{j,v_j}} \leq \frac{ \alpha_{j,N_j} \beta_{j,N_j + 1} \ldots \beta_{j,N_j+v_j}}{K_{j,v_j}} \leq \alpha_{j,N_j} $ since $K_{j,v_j}$ is the maximum of products of $v_j$ consecutive terms among the $\beta_{j,\ell}$. Summing over $j$, we have 
$ (\min\limits_{j=f+1}^k {\alpha_{j,N_j +v_j} } )\sum\limits_{j = f+1}^k \frac{1}{K_{j,v_j}} \leq \sum\limits_{j = f+1}^k \alpha_{j,N_j}. $
Thus $\frac{\min\limits_{j = f+1}^k \alpha_{j,N_j+v_j}} {\sum\limits_{j = f+1 }^k \alpha_{j,N_j} } \leq \left(\sum\limits_{j= f+1}^k \frac{1}{K_{j,v_{j} }} \right)^{-1}$, contradicting $(\ref{7ineg_comp_exposant})$ since $\varepsilon >0$. This completes the proof of the corollary.

\end{proof}

The corollaries~\ref{7cor_min_expos} and~\ref{7cor_maj_exposant} conclude the proof of Proposition~\ref{7prop_int}.

\subsection{Final computation of the exponents}\label{7section_calcul_expo}

We proceed with the proof of Theorem~\ref{1theo_construction} using Proposition~\ref{7prop_int}, and Theorem~\ref{2theo_somme_sev} to derive the exponents $\mu_n(A|e)_{k -g(A,e)}$. For $j \in \llbracket 1,d \rrbracket$, we consider the subspace 
$ R_j = \{0\}^{(j-1)(m+1)} \times \R^{m+1} \times \{0 \}^{(d-j)(m+1)} \subset \R^n.$ The subspaces $R_j$ are rational and in direct sum. Thus, we have $\bigoplus\limits_{j=1}^d R_j \subset \R^n$, and we note that for all $j \in \llbracket 1,d \rrbracket$,
$ \Span(Y_j) \subset R_j.$
We apply Theorem~\ref{2theo_somme_sev} with the $R_j$ and the lines $A_j = \Span(Y_j)$. 

\bigskip

Let $e \in \llbracket 1, n-1 \rrbracket$ and $k \in \llbracket 1 + g(A,e), \min(d,e) \rrbracket$ such that $e < k(m+1)$. For any subset $J \subset \llbracket 1,d \rrbracket$ with cardinality $k$, by Lemma~\ref{7lem_AJ_irr} the subspace 
$ A_J = \Span_{j \in J}(Y_j) $ is $(e, k-g(A_J,e))$-irrational.
The first part of Theorem~\ref{2theo_somme_sev} applied with $k' = k - g(A,e)$ then gives $A \in \II_n(d,e)_{k-g(A,e)}.$
Now we calculate the exponent. The same theorem gives
\begin{align*}
 \mu_n(A|e)_{k - g(A,e)} = \mu_n(A|e)_{k'} = \max\limits_{J \in P(k' + g(A,e),d) } \mu_n(A_J|e)_{k' + g(A,e) - g(A_J,e)}
\end{align*}
with $P(\ell, d)$ being the set of subsets of cardinality $\ell$ of $ \llbracket 1,d \rrbracket$, and therefore
\begin{align}\label{7calcul_exposants_presque_fin}
 \mu_n(A|e)_{k - g(A,e)} = \max\limits_{J \in P(k,d) } \mu_n(A_J|e)_{k - g(A_J,e)} = \max\limits_{J \in P(k,d) } \left(\sum\limits_{q = 1+ f}^{k} \frac{1}{K_{j_q,v_q }} \right)^{-1}
\end{align}
by Proposition~\ref{7prop_int} applied with $J = \{ j_1 < \ldots< j_{k} \}$. It remains to prove the following lemma to complete the proof of Theorem~\ref{1theo_construction}.

\begin{lem}
 We have 
 $$\max\limits_{J = \{ j_1 < \ldots< j_{k} \} } \left(\sum\limits_{q = 1+ f}^{k} \frac{1}{K_{j_q,v_q }} \right)^{-1} = \left(\sum\limits_{q = 1 + f}^{k} \frac{1}{K_{q + d- k +1,v_q }} \right)^{-1} . $$
\end{lem}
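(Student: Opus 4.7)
The plan is to show that the expression $\sum_{q=1+f}^{k} \frac{1}{K_{j_q, v_q}}$ is minimized (so its reciprocal is maximized) precisely when $J$ consists of the \emph{largest} admissible indices, i.e.\ $J = \llbracket d-k+1, d \rrbracket$, which gives $j_q = q + d - k$ for each $q$. This follows by showing that the map $i \mapsto K_{i,v}$ is increasing in $i$ for every fixed $v \in \llbracket 1, m+1 \rrbracket$, and then arguing coordinate-by-coordinate.

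First I would establish monotonicity of $K_{i,v}$ in $i$. For any $i \in \llbracket 1, d-1 \rrbracket$ and any $v \in \llbracket 1, m+1 \rrbracket$, hypothesis~(\ref{7_2bhypot_m=beta_m+1}) (combined with $c_{\ref{7cons_petite_hyp_theoc2}} > 1$ and $\beta_{i,\ell} \geq 1$) gives
\[
\min\limits_{\ell \in \llbracket 1, m+1 \rrbracket} \beta_{i+1, \ell} \;\geq\; \max\limits_{\ell \in \llbracket 1, m+1 \rrbracket} (\beta_{i,\ell})^{c_{\ref{7cons_petite_hyp_theoc2}}} \;\geq\; \max\limits_{\ell \in \llbracket 1, m+1 \rrbracket} \beta_{i,\ell}.
\]
Hence for any window of $v$ consecutive indices, $\prod \beta_{i+1, \ell + s} \geq (\min_\ell \beta_{i+1,\ell})^v \geq (\max_\ell \beta_{i,\ell})^v \geq \prod \beta_{i,\ell+s}$, which upon taking the maximum over $\ell$ yields $K_{i+1, v} \geq K_{i, v}$.

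Next, I would use this monotonicity termwise. The indices $v_1, \ldots, v_k$ defined in~\eqref{7eq_def_vj} depend only on $(e,k)$, not on $J$; so for each $q \in \llbracket 1+f, k \rrbracket$, the term $\tfrac{1}{K_{j_q, v_q}}$ is a strictly decreasing function of $j_q$. Thus, to minimize the sum, it suffices to maximize each $j_q$ (for $q \geq 1+f$) subject to the constraint $1 \leq j_1 < j_2 < \cdots < j_k \leq d$. For any admissible tuple we have $j_q \leq d - (k-q) = q + d - k$, and equality holds simultaneously for all $q$ precisely when $J = \llbracket d-k+1, d \rrbracket$, i.e.\ $j_q = q + d - k$ for every $q \in \llbracket 1, k \rrbracket$.

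Combining these two observations, the optimal choice is $J = \llbracket d-k+1, d \rrbracket$, giving
\[
\min\limits_{J \in P(k,d)} \sum\limits_{q = 1+f}^{k} \frac{1}{K_{j_q, v_q}} = \sum\limits_{q = 1+f}^{k} \frac{1}{K_{q+d-k, v_q}},
\]
and the desired identity follows by passing to reciprocals. The only delicate point is the monotonicity of $K_{i,v}$, but once the strong inequality~(\ref{7_2bhypot_m=beta_m+1}) is invoked it is immediate; the combinatorial maximization step is then a straightforward application of the fact that componentwise maximizing a strictly increasing family of terms under the constraint $j_1 < \cdots < j_k \leq d$ forces the greedy top-down choice.
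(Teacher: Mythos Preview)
Your proof is correct and follows essentially the same approach as the paper: both arguments establish that $K_{i,v}$ is increasing in $i$ via hypothesis~(\ref{7_2bhypot_m=beta_m+1}) (the paper cites the equivalent~(\ref{7hypothèse_prop_princi3})), then use the elementary combinatorial fact that $j_q \leq q + d - k$ for any admissible tuple $j_1 < \cdots < j_k \leq d$ to conclude that the optimal choice is $J = \llbracket d-k+1, d \rrbracket$. Your write-up is slightly more explicit about the greedy maximization step, but the substance is identical.
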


\begin{proof}
 Let $J$ be the set $\{d-k+1, \ldots, d\}$, which corresponds to $j_q = q + d- k $ for $q \in \llbracket 1, k \rrbracket$. We have 
$ \max\limits_{J = \{ j_1 < \ldots< j_{k} \} } \left(\sum\limits_{q = 1+ f}^{k} \frac{1}{K_{j_q,v_q }} \right)^{-1} \geq \left(\sum\limits_{q = 1 + f}^{k} \frac{1}{K_{q + d- k +1,v_q }} \right)^{-1} .$
Moreover, the $K_{i,v}$ are increasing with respect to $i$ by $(\ref{7hypothèse_prop_princi3})$, so in particular,
\begin{align*}
\forall 1 \leq j_1 < \ldots < j_k \leq d, \quad \forall q \in \llbracket 1,k \rrbracket, \quad \forall v \in \llbracket 1, m \rrbracket, \quad K_{j_q, v } \leq K_{q +d-k, v}.
\end{align*}
In particular,$ \max\limits_{J = \{ j_1 < \ldots< j_{k} \} }\left(\sum\limits_{q = 1+ f}^{k} \frac{1}{K_{j_q,v_q }} \right)^{-1} \leq \left(\sum\limits_{q = 1 + f}^{k} \frac{1}{K_{q + d- k +1,v_q }} \right)^{-1} $
which completes the proof of the lemma.

\end{proof}

\section{Smooth independence of the exponents}\label{sect_indep}
This section is devoted to proving Theorems~\ref{1theo_min} and~\ref{1theo_d}. It corresponds to the chapter 8 of \cite{Guillot_these}. We establish these proofs by showin that the image of the joint spectrum, as presented in each theorem, contains a subset with a non-empty interior.

\bigskip
Let $ n \in \mathbb{N} $ and $ d \in \llbracket 1, n - 1 \rrbracket $ be fixed such that $d$ divides $n$. \\We study the family of functions $(\mu_n(\cdot|e)_{k-g(d,e,n)})_{(e,k) \in U}$
for $ U $ a subset of $V_{d,n} = \lbrace (e,k) \mid e \in \llbracket 1, n-1 \rrbracket, k \in \llbracket 1 + g(d,e,n), \min(d,e) \rrbracket \rbrace.
$ In this section, we always work with $ d $ and $ n $ fixed. The functions under consideration, denoted by $ \mu_n(\cdot|e)_{k-g(d,e,n)} $, are thus defined on $ \II_n(d, e)_{k-g(d,e,n)} $, the set of $ (e,k-g(d,e,n)) $-irrational subspaces of $ \mathbb{R}^n $ of dimension $d$. For $ U \subset V_{d,n} $, we define the set $ \II_U = \bigcap\limits_{(e,k)\in U} \II_n(d, e)_{k-g(d,e,n)} $ and the mapping
\begin{align*}
M_U: \left| \begin{array}{cccc}
\II_U &\longrightarrow & (\mathbb{R} \cup\{+ \infty \})^U \\
A & \longmapsto & (\mu_n(A|e)_{k-g(d,e,n)})_{(e,k) \in U}
\end{array}\right. .
\end{align*}
If the image of $ M_U $ contains a non-empty open set in $ \mathbb{R}^U $, then the family $ (\mu_n(\cdot|e)_{k-g(d,e,n)})_{(e,k) \in U} $ is smoothly independent on $\II_U$. Indeed, otherwise, there would exist a submersion $\R^U \to \R$ that vanishes on these functions, and the image of $ M_U $ would be contained in an differential hypersurface of $ \mathbb{R}^U $ that has an empty interior. Subsequently, the proofs focus on showin that the images of the considered functions contain a non-empty open set. 

Theorems~\ref{1theo_min} and~\ref{1theo_d} present examples of subfamilies $U$ of $V_{d,n}$ for which $ (\mu_n(\cdot|e)_{k-g(d,e,n)})_{(e,k) \in U} $ is smoothly independent on $\II_U$. It is worth noting that, with Theorem~\ref{1theo_construction}, we can also establish smooth independence for other families. For instance, such a result holds for $ U =\{ (e,k) \mid k \in \llbracket 1, d \rrbracket, e \in \llbracket k , km \rrbracket, k |e \}$, as shown in \cite{Guillot_these}, chapter 8, section 8.4.

\subsection{Sufficient condition}
First we develop two technical lemmas that provide sufficient conditions on $ U \subset V_{d,n} $ for $ M_U(\II_U) $ to contain a non-empty open set and therefore for the associated family of exponents to be smoothly independent on $\II_U$. We write $ n = d(m+1) $ with $ m \in \mathbb{N} $. 

\begin{lem}\label{9lem_technique_faible}
Let $ U \subset V_{d,n} $ with $ \#U \leq dm $ satisfying for all $ (e,k) \in U $, $e < k(m+1).$
For $ (e,k) \in U $ and $ \beta = (\beta_{i,\ell})_{i \in \llbracket 1,d \rrbracket, \ell \in \llbracket 1, m \rrbracket} \in (\mathbb{R}_+^{*})^{dm} $, we define 
\begin{align*}
\Omega_{(e,k)}(\beta) = \sum\limits_{q = 1 + f(e, mk)}^k \beta_{q+d-k,1} \ldots \beta_{q+d-k,v_q(e,k)} = \sum\limits_{q = 1 + f(e, mk) +d - k}^d \beta_{q,1} \ldots \beta_{q,v_{q+k-d}(e,k)},
\end{align*}
which defines a function $ \Omega: (\mathbb{R}_+^{*})^{dm} \to \mathbb{R}^U $, $ \beta \mapsto (\Omega_{(e,k)}(\beta))_{(e,k)\in U} $. Suppose that the Jacobian matrix of $ \Omega $ at $ \beta $, $ J_{\Omega}(\beta) \in \MM_{\#U, dm}(\mathbb{R}) $, has rank $ \#U $ for all $ \beta \in (\mathbb{R}_+^{*})^{dm} $. Then the image of the function $ M_U $ contains a non-empty open set in $ \mathbb{R}^U $. 

\end{lem}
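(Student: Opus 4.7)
The plan is to use Theorem~\ref{1theo_construction} to produce a smooth family $\beta \mapsto A(\beta) \in \II_U$ for which $M_U \circ A$ equals the map $\Omega$ from the hypothesis, up to coordinate-wise inversions on source and target; the Jacobian assumption on $\Omega$ then forces $M_U \circ A$ to be a smooth submersion, so its image is a non-empty open subset of $M_U(\II_U)$.

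First I would choose a non-empty open subset $W \subset (\mathbb{R}_+^{*})^{dm}$ on which (i) the hypotheses \eqref{7hypothèse_prop_princi1}--\eqref{7hypothèse_prop_princi3} of Theorem~\ref{1theo_construction} hold, and (ii) for each $i \in \llbracket 1, d \rrbracket$, the sequence $(\beta_{i,\ell})_{\ell \in \llbracket 1, m \rrbracket}$ is strictly decreasing. Condition (ii) ensures that every maximum in the definition $K_{i,v} = \max_{\ell \in \llbracket 0, m-v\rrbracket} \beta_{i,\ell+1}\cdots\beta_{i,\ell+v}$ is attained at $\ell = 0$, so that $K_{i,v}(\beta) = \beta_{i,1}\cdots\beta_{i,v}$ identically on $W$. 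That $W$ is non-empty is straightforward: take, for instance, $\beta_{i,\ell} = R^{c^{i-1}}(1 + \varepsilon(m-\ell))$ for some $c > c_{\ref{7cons_petite_hyp_theoc2}}$, $R$ sufficiently large, and $\varepsilon > 0$ sufficiently small.

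For each $\beta \in W$ and $(e,k) \in U$, the standing assumption $e < k(m+1)$ lets me apply Theorem~\ref{1theo_construction}, yielding a subspace $A(\beta)$ of dimension $d$ with $A(\beta) \in \II_n(d,e)_{k-g(d,e,n)}$ (so $A(\beta) \in \II_U$) and
\begin{align*}
\mu_n(A(\beta)|e)_{k-g(d,e,n)}
= \frac{1}{\displaystyle\sum_{q=1+f(e,mk)}^{k} \frac{1}{\beta_{q+d-k,1}\cdots\beta_{q+d-k,v_q(e,k)}}}
= \frac{1}{\Omega_{(e,k)}(\iota(\beta))},
\end{align*}
where $\iota: (\mathbb{R}_+^{*})^{dm} \to (\mathbb{R}_+^{*})^{dm}$ is the coordinate-wise inversion $\beta_{i,\ell} \mapsto 1/\beta_{i,\ell}$, a smooth diffeomorphism. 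Thus, setting $F := M_U \circ A$ and letting $j:(\mathbb{R}_+^{*})^U \to (\mathbb{R}_+^{*})^U$ denote the analogous coordinate-wise inversion on the target, one obtains $F = j \circ \Omega \circ \iota$ on $W$.

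Since $\iota$ and $j$ are diffeomorphisms, and by hypothesis $J_\Omega(\beta')$ has rank $\#U$ at every $\beta' \in (\mathbb{R}_+^{*})^{dm}$, the chain rule yields that $J_F$ has rank $\#U$ everywhere on $W$. Hence $F$ is a smooth submersion from an open subset of $\mathbb{R}^{dm}$ to $\mathbb{R}^U$ and in particular an open map, so $F(W) \subset M_U(\II_U)$ is a non-empty open subset of $\mathbb{R}^U$. The only delicate step is the reduction of Step 1: isolating a domain where the maxima in $K_{i,v}$ collapse to the first product, so that the explicit formula from Theorem~\ref{1theo_construction} matches $\Omega \circ \iota$ on the nose; once this is done, the rank hypothesis transfers to $F$ with no further work.
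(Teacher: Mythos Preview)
Your proposal is correct and follows essentially the same route as the paper: restrict to an open domain where the $\beta_{i,\ell}$ are strictly decreasing in $\ell$ (so that each $K_{i,v}$ collapses to $\beta_{i,1}\cdots\beta_{i,v}$) and the hypotheses of Theorem~\ref{1theo_construction} hold, apply that theorem to land inside $M_U(\II_U)$, and then use coordinate-wise inversions on source and target together with the rank hypothesis on $J_\Omega$ to conclude via the open mapping theorem. One cosmetic point: Theorem~\ref{1theo_construction} only asserts existence of some $A$ for each $\beta$, not a smooth choice $\beta\mapsto A(\beta)$, but this is harmless since what you actually use is that $F=j\circ\Omega\circ\iota$ is itself smooth and a submersion, independently of any regularity of $A$.
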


\begin{proof} Let us define the following function 
\begin{align*}
\Omega': \left| \begin{array}{ccc}
(\mathbb{R}_+^{*})^{dm} &\longrightarrow & \mathbb{R}^{U}\\
(\beta_{i, \ell})_{i \in \llbracket 1, d \rrbracket, \ell \in \llbracket 1, m \rrbracket} &\mapsto & \left(\frac{1}{\sum\limits_{q = 1 + f(e, mk)}^k (\beta_{q+d-k,1} \ldots \beta_{q+d-k,v_q(e,k)})^{-1}} \right)_{(e,k) \in U}
\end{array}\right. .
\end{align*}

We will apply Theorem~\ref{1theo_construction} to construct subspaces $ A $ of dimension $ d $ whose family of exponents $ (\mu_n(A|e)_{k } )_{(e,k) \in U} $ range through an open subset of the image of $ \Omega' $. For this, we define $ H' \subset (\mathbb{R}_+^{*})^{dm} $ as the set of $ (\beta_{i, \ell})_{i \in \llbracket 1, d \rrbracket, \ell \in \llbracket 1, m \rrbracket} $ satisfying the hypotheses 
\begin{align}
\forall i \in \llbracket 1,d \rrbracket,& \quad \beta_{i,1} > \ldots > \beta_{i, m}, \label{9hyp_croissance_beta} 
\end{align}
$\min\limits_{\ell \in \llbracket 1, m \rrbracket}(\beta_{1,\ell}) > {(3d)^{\frac{c_{\ref{7cons_petite_hyp_theoc2}}}{c_{\ref{7cons_petite_hyp_theoc2}}-1}}}$, $\min\limits_{\ell \in \llbracket 1, m \rrbracket}(\beta_{1,\ell})^{c_{\ref{7cons_petite_hyp_theoc2c1}}} > \max\limits_{\ell \in \llbracket 1, m \rrbracket}(\beta_{1,\ell})^{c_{\ref{7cons_petite_hyp_theoc2}}}$ and for all $ i \in \llbracket 1,d-1 \rrbracket $: 
$\min\limits_{\ell \in \llbracket 1, m \rrbracket}(\beta_{i,\ell})^{c_{\ref{7cons_petite_hyp_theoc2c1}}} > \max\limits_{\ell \in \llbracket 1, m \rrbracket}(\beta_{i+1,\ell}) $ and $\min\limits_{\ell \in \llbracket 1, m \rrbracket}(\beta_{i+1,\ell}) > \max\limits_{\ell \in \llbracket 1, m \rrbracket}(\beta_{i,\ell})^{c_{\ref{7cons_petite_hyp_theoc2}}}$ 
with $ c_{\ref{7cons_petite_hyp_theoc2c1}}, c_{\ref{7cons_petite_hyp_theoc2}} $ the constants defined in Theorem~\ref{1theo_construction}. The set $ H' $ thus defined is a non-empty open set in $ (\mathbb{R}_+^{*})^{dm} $.

\bigskip
For $ \beta \in H' $, Theorem~\ref{1theo_construction} yields a space $ A $ of dimension $ d $ in $ \mathbb{R}^{n} $ such that for all $ e \in \llbracket 1, n-1\rrbracket $ and $ k \in \llbracket 1 + g(d,e, n), \min(d,e) \rrbracket $ satisfying $ e < k (m+1) $, we have 
\begin{align*}
A \in \II_n(d,e)_{k-g(d,e,n)} \text{ and } \mu_n(A|e)_{k-g(d,e,n)}
&= \left(\sum\limits_{q = 1 + f(e, mk)}^k \frac{1}{K_{q+d-k,v_q(e,k)} } \right)^{-1}
\end{align*}
where, due to hypothesis \eqref{9hyp_croissance_beta}: $\forall i \in \llbracket 1, d \rrbracket, \forall v \in \llbracket 1, m \rrbracket, \quad K_{i,v} = \beta_{i,1} \ldots \beta_{i,v}.$

This result then gives $ \Omega'(H') \subset M_U(\II_U) $. To prove Lemma~\ref{9lem_technique_faible}, we will show that $ \Omega'(H') $ contains a non-empty open set. By composing $ \Omega' $ on both ends with the application that inverts each coordinate, it suffices to show that the smooth function $ \Omega $, defined on $ H = \lbrace (\beta_{i, \ell})_{i \in \llbracket 1, d \rrbracket, \ell \in \llbracket 1, m \rrbracket}, (\frac{1}{\beta_{i, \ell}})_{i \in \llbracket 1, d \rrbracket, \ell \in \llbracket 1, m \rrbracket} \in H' \rbrace $ by 
\begin{align*}
\Omega: \left| \begin{array}{ccc}
H &\longrightarrow & \mathbb{R}^{U}\\
(\beta_{i, \ell})_{i \in \llbracket 1, d \rrbracket, \ell \in \llbracket 1, m \rrbracket} &\mapsto & \left(\sum\limits_{q = 1 + f(e, mk)}^k \beta_{q+d-k,1} \ldots \beta_{q+d-k,v_q(e,k)} \right)_{(e,k) \in U}
\end{array}\right.
\end{align*}
has a non-empty open set in its image. Since by assumption $ J_{\Omega}(\beta) $ has rank $ \#U $ for all $ \beta \in H \subset (\mathbb{R}_+^{*})^{dm} $, the open mapping theorem ensures that $ \Omega(H) $ contains a non-empty open set, which concludes the proof of the lemma.

\end{proof}

We set the notation $ P(\llbracket 1,d \rrbracket \times \llbracket 1, m \rrbracket) $ for the set of subsets of $ \llbracket 1,d \rrbracket \times \llbracket 1, m \rrbracket $, and we define
\begin{align*}
\chi: \left| \begin{array}{ccc}
V_{d,n} &\longrightarrow & P(\llbracket 1,d \rrbracket \times \llbracket 1, m \rrbracket )\\
(e,k) & \longmapsto & (\llbracket 1 + f + d - k, u+d-k \rrbracket \times \llbracket 1, v+1 \rrbracket) \cup (\llbracket u+ 1 + d - k, d \rrbracket \times \llbracket 1, v \rrbracket )
\end{array}\right.
\end{align*}
where $ v $ and $ u $ are respectively the quotient and remainder of the Euclidean division of $ e $ by $ k $, and $ f = f(e,mk)= \max(0, e-mk) $. Furthermore, we recall a definition made in section~\ref{5sect_cas_general}: we associate to $ (e,k) \in V_{d,n} $ the quantities $ v_1(e,k), \ldots, v_k(e,k) $ defined by:
\begin{align}\label{9eq_def_vj}
v_q(e,k) &= \left\{
\begin{array}{lll}
v + 1 &\text{ if } q \in \llbracket 1, u \rrbracket \\
v &\text{ if } q \in \llbracket u+1, k \rrbracket
\end{array}
\right. .
\end{align}
A pair $ (q, \ell) \in \llbracket 1 ,d \rrbracket \times \llbracket 1,m \rrbracket $ belongs to $ \chi(e,k) $ if and only if we have
\begin{align}\label{9apparaisant_dans_omega}
q \geq 1 +f +d -k \text{ and } \ell \leq v_{q+k-d}(e,k).
\end{align}

We state Lemma~\ref{9lem_technique} which is a consequence of the Lemma ~\ref{9lem_technique_faible}.

\begin{lem}\label{9lem_technique}
Let $ U \subset V_{d,n} $ with $ \#U \leq dm $ satisfying for all $ (e,k) \in U $, $e < k(m+1)$.
Suppose furthermore that there exists an order $ <_U $ on $ U $ such that 
\begin{align}\label{9hypothese_sur_U} 
\forall (e,k) \in U, \quad \chi(e,k) \setminus \bigcup\limits_{(e ',k') <_U (e,k)} \chi(e',k') \neq \emptyset. 
\end{align}
Then the image of the function $ M_U $ contains a non-empty open set in $ \mathbb{R}^U $. 
\end{lem}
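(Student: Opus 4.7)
The plan is to invoke Lemma~\ref{9lem_technique_faible} by showing that, under the combinatorial hypothesis \eqref{9hypothese_sur_U}, the Jacobian $J_\Omega(\beta) \in \MM_{\#U, dm}(\R)$ has rank $\#U$ at every point $\beta \in (\R_+^*)^{dm}$.

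The first step will be to identify, for each pair $(e,k) \in U$, the set of variables that actually occur in the polynomial $\Omega_{(e,k)}$. Writing
\begin{align*}
\Omega_{(e,k)}(\beta) = \sum_{q' = 1 + f(e,mk) + d - k}^{d} \beta_{q',1}\cdots \beta_{q', v_{q'+k-d}(e,k)},
\end{align*}
and unpacking \eqref{9eq_def_vj}, a direct match with the definition of $\chi$ shows that the support of $\Omega_{(e,k)}$, viewed as a polynomial in the $\beta_{q,\ell}$, is exactly $\chi(e,k)$. Since each term is a squarefree monomial in positive variables, this yields the key dichotomy: $\partial \Omega_{(e,k)}/\partial \beta_{q,\ell}$ is a strictly positive product of the remaining $\beta_{q,\ell'}$ when $(q,\ell) \in \chi(e,k)$, and vanishes identically otherwise.

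Next I will enumerate $U = \{(e_1,k_1), \ldots, (e_s, k_s)\}$ compatibly with $<_U$, where $s = \#U$. For each $i$, condition \eqref{9hypothese_sur_U} furnishes a pivot $(q_i, \ell_i) \in \chi(e_i, k_i) \setminus \bigcup_{j<i} \chi(e_j, k_j)$, and these pivots are automatically pairwise distinct: if $i < j$ and $(q_i,\ell_i) = (q_j, \ell_j)$ held, then $(q_j, \ell_j) \in \chi(e_i, k_i)$ would contradict the very choice of $(q_j, \ell_j)$. I then extract the $s \times s$ submatrix $S$ of $J_\Omega(\beta)$ with row indices $(e_i, k_i)$ and column indices $(q_i, \ell_i)$ listed in this common order. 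For $j < i$ the $(j,i)$-entry is $\partial \Omega_{(e_j,k_j)}/\partial \beta_{q_i,\ell_i} = 0$, while the diagonal entry equals $\partial \Omega_{(e_i,k_i)}/\partial \beta_{q_i,\ell_i} \neq 0$ by the dichotomy above. Hence $S$ is upper triangular with a nowhere-vanishing diagonal, $\det S \neq 0$, and $J_\Omega(\beta)$ has rank $\#U$, so Lemma~\ref{9lem_technique_faible} concludes the proof.

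The main obstacle is the bookkeeping step identifying the support of $\Omega_{(e,k)}$ with $\chi(e,k)$: the indexing shifts between the running variable $q$ (from $\Omega$) and the variable $q' = q + d - k$ (from $\chi$), together with the split of $\llbracket 1+f+d-k,d\rrbracket$ at $u+d-k$, have to be tracked carefully. Once this combinatorial identification is in place, the triangular-submatrix trick is essentially automatic.
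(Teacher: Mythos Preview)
Your proposal is correct and follows essentially the same approach as the paper: identify the support of $\Omega_{(e,k)}$ with $\chi(e,k)$ via \eqref{9apparaisant_dans_omega}, enumerate $U$ according to $<_U$, select pivots $(q_i,\ell_i)$ from \eqref{9hypothese_sur_U}, and extract a triangular submatrix with nonzero diagonal to conclude via Lemma~\ref{9lem_technique_faible}. One cosmetic slip: with your indexing the vanishing entries are those with row index $j$ strictly less than column index $i$, so $S$ is lower triangular rather than upper triangular, but this does not affect the conclusion $\det S \neq 0$.
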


\begin{proof}
Each $ \Omega_{(e,k)} $, defined in the assumptions of Lemma~\ref{9lem_technique_faible}, is a polynomial in the variables $ \beta_{i, \ell} $. We note using \eqref{9apparaisant_dans_omega} that $ \chi(e,k) $ is in fact the set of indices of the $ \beta_{i,\ell} $ "appearing" in $ \Omega_{(e,k)}(\beta) $. This implies in particular that 
\begin{align}\label{9rel_polynome_nul_ounon}
\frac{\partial \Omega_{(e,k)}}{\partial \beta_{q,\ell}} = \left\{ \begin{array}{cll}
 \prod\limits_{\underset{p \neq \ell}{1 \leq p \leq v_{q +k -d}(e,k)}} \beta_{q,p} & \text{ if } (q,\ell) \in \chi(e,k)\\
0 & \text{ otherwise } 
\end{array} \right. .
\end{align}
According to hypothesis \eqref{9hypothese_sur_U}, we can order $ U =\lbrace \theta_1, \ldots, \theta_{\#U} \} $ in such a way that these quantities respect the order $ <_U $: 
$\forall i < j, \quad \theta_i <_U \theta_j$
and so that, for every $ j \in \llbracket 2, \#U \rrbracket $, there exists $ (q_j,\ell_j) \in \chi(\theta_j) \setminus \bigcup\limits_{i= 1}^{j-1} \chi(\theta_i) $. 
In particular, using \eqref{9rel_polynome_nul_ounon}, for any $ \beta \in (\mathbb{R}_+^{*})^{dm} $
\begin{align}\label{9valeur_pour_trianguler}
\forall i < j, \quad \frac{\partial \Omega_{\theta_i}}{\partial \beta_{q_j,\ell_j}} =0 \text{ and } \frac{\partial \Omega_{\theta_j}}{\partial \beta_{q_j,\ell_j}} = \prod\limits_{\underset{p \neq \ell_j}{1 \leq p \leq v_{q_j +k -d}(\theta_j)}} \beta_{q_j,p} \neq 0. 
\end{align}
Let $ J_{\Omega}(\beta) = \left(\frac{\partial \Omega_{\theta_i}}{\partial \beta_{j,\ell}}\right)_{i \in \llbracket 1, \#U \rrbracket, (j,\ell) \in \llbracket 1, d \rrbracket \times \llbracket 1,m \rrbracket} \in \MM_{\#U, dm}(\mathbb{R}) $ be the Jacobian matrix of $ \Omega $ at $ \beta \in (\mathbb{R}_+^{*})^{dm} $. Here the index $ i $ corresponds to the rows of the matrix and $ (j,\ell) $ to the columns, ordering $ \llbracket 1, d \rrbracket \times \llbracket 1,m \rrbracket $ according to the usual lexicographic order. We will show that $ J_{\Omega}(\beta) $ has maximal rank $ \#U $, and then apply Lemma~\ref{9lem_technique_faible}.
We extract from $ J_{\Omega}(\beta) $ (by possibly interchanging columns) the matrix
\begin{align*}
G_U = \left(\frac{\partial \Omega_{\theta_i}}{\partial \beta_{q_j,\ell_j}}\right)_{i \in \llbracket 1, \#U \rrbracket, j \in \llbracket 1, \#U \rrbracket} \in \MM_{\#U}(\mathbb{R}).
\end{align*}
According to \eqref{9valeur_pour_trianguler}, we have 
\begin{align*}
G_U = \begin{pmatrix}
\frac{\partial \Omega_{\theta_1}}{\partial \beta_{q_1,\ell_1}} & 0 & 0 & \cdots & 0 \\ 
\frac{\partial \Omega_{\theta_2}}{\partial \beta_{q_1,\ell_1}} & \frac{\partial \Omega_{\theta_2}}{\partial \beta_{q_2,\ell_2}} & 0 & \cdots & 0 \\
\vdots & & \ddots & \ddots & \vdots \\
\frac{\partial \Omega_{\theta_{\#U -1 }}}{\partial \beta_{q_1,\ell_1}} & \frac{\partial \Omega_{\theta_{\#U -1 }}}{\partial \beta_{q_2,\ell_2}} & \cdots &\frac{\partial \Omega_{\theta_{\#U -1 }}}{\partial \beta_{q_{\#U -1},\ell_{\#U -1 }}} & 0 \\
\frac{\partial \Omega_{\theta_{\#U }}}{\partial \beta_{q_1,\ell_1}} & \frac{\partial \Omega_{\theta_{\#U }}}{\partial \beta_{q_2,\ell_2}} & & \cdots &\frac{\partial \Omega_{\theta_{\#U }}}{\partial \beta_{q_{\#U },\ell_{\#U }}} 
\end{pmatrix}.
\end{align*}
The matrix $ G_U $ is thus lower triangular, and its diagonal coefficients are non-zero according to \eqref{9valeur_pour_trianguler}, thus it is invertible. 
Therefore, we have shown that $ J_{\Omega}(\beta) $ has rank $ \#U $ for all $ \beta \in H $. Lemma~\ref{9lem_technique_faible} then implies that the image of the function $ M_U $ contains a non-empty open set in $ \mathbb{R}^U $, which concludes the proof of Lemma~\ref{9lem_technique}. 

\end{proof}

\subsection{Results for the Last Angle}\label{9section_dernier_angle_theo}

\bigskip
In this section, we prove Theorems~\ref{1theo_min} and~\ref{1theo_d} using the results from the previous section. We write $ n = d(m+1) $ with $ m \in \mathbb{N} $.

\begin{proofe}[Theorem~\ref{1theo_min}]
We set $U = \{ (e,\min(d,e)) \mid e \in \llbracket 1, n-d \rrbracket \}.$
We will show that $ U $ satisfies the hypothesis \eqref{9hypothese_sur_U} of Lemma~\ref{9lem_technique}. First, note that for all $ (e,k) \in U $, we have $ g(d,e,n) = 0 $ and $ f = f(e,mk) = 0 $ because $ e \in \llbracket 1, dm \rrbracket $ and $ dm = n-d $. We define the following order on $ U $:
 \begin{align*}
 (e',k') <_U (e,k) \Longleftrightarrow e' < e.
 \end{align*}
This indeed defines an order since the first coordinates of the elements of $ U $ are pairwise distinct. Let $ (e,k) \in U $. If $ e \leq d $ then $ k =e $ and 
$ \chi(e,k) = \llbracket 1+d-e, d \rrbracket \times \{ 1 \}.
$ 
 For any $ (e',k') <_U (e,k) $, we have $ \chi(e',k') = \llbracket 1+d-e', d \rrbracket \times \{ 1 \} $ and thus $ (1+d-e, 1) \in \chi(e,k) \setminus \bigcup\limits_{(e',k') <_U (e,k)} \chi(e',k') $.
 
 \bigskip
 If $ e > d $, then $ k =d $ and we write $ e = d v + u $ for the Euclidean division of $ e $ by $ d $. We have 
$ \chi(e,k) = \llbracket 1, u \rrbracket \times \llbracket 1, v+1 \rrbracket \cup \llbracket u+ 1, d \rrbracket \times \llbracket 1, v \rrbracket. $
 For $ (e',k') <_U (e,k) $, we write $ e' = k'v' + u' $ for the Euclidean division of $ e' $ by $ k' $. We distinguish vetween two cases based on whether $ u = 0 $ or $ u \neq 0 $. 
 \\ \textbullet \, \underline{If $ u =0 $.} We will show that $ (d, v) \notin \chi(e',k') $. 
Since $ e > d $ and $ u =0 $, we necessarily have $ v > 1 $. If $ k' =e' $ then $ v' = 1 $ and $ u' = 0 $, so $ \chi(e',k') = \llbracket 1+d-e', d \rrbracket \times \{ 1 \} $ and $ (d,v) \notin \chi(e',k') $ because $ v >1 $.
 \\Otherwise $ k' = d $ and then $ 1 \leq v' < v $ since $ e' < e $. We have 
 $ \chi(e',k') =\llbracket 1, u' \rrbracket \times \llbracket 1, v'+1 \rrbracket \cup \llbracket u'+ 1, d \rrbracket \times \llbracket 1, v' \rrbracket $.
 Then $ (d,v) \notin \chi(e',k') $ because $ u' \leq d-1 $ and $ v' < v $.
 \\ \textbullet \, \underline{If $ u > 0 $.} We will show that $ (u, v+1) \notin \chi(e',k') $. If $ k' =e' $ then $ v' = 1 $, $ u'= 0 $, and $ \chi(e',k') = \llbracket 1+d-e', d \rrbracket \times \{ 1 \} $. Since $ v+1 > 1 $, we have $ (u, v+1) \notin \chi(e',k') $ in this case.
 \\Otherwise $ k' = d $ and $ v' \leq v $ because $ e' <e $. We have 
 $ \chi(e',k') =\llbracket 1, u' \rrbracket \times \llbracket 1, v'+1 \rrbracket \cup \llbracket u'+ 1, d \rrbracket \times \llbracket 1, v' \rrbracket $.
 Therefore, 
 $\left( (u,v+1) \in \chi(e',k') \Longleftrightarrow [ v' = v \text{ and } u' \geq u ]\right).$
Since $ e' <e $, this property is not satisfied and thus $ (u,v+1) \notin \chi(e',k') $.

 \bigskip
 We have shown that for all $ (e,k) \in U $, there exists $ (q,\ell) $ such that $ (q,\ell) \in \chi(e,k) \setminus \bigcup\limits_{(e',k') <_U (e,k)}\chi(e',k').
$

Thus, the hypotheses of Lemma~\ref{9lem_technique} are satisfied: $ M_U(\II_U) $ contains a non-empty open set, hence the functions $(\mu_n(\cdot|e)_{k})_{(e,k) \in U}$ are smoothly independent on $\II_U$ .

\end{proofe}

Lemma~\ref{9lem_technique} is not enough to show Theorem~\ref{1theo_d}. We will use Lemma~\ref{9lem_technique_faible}.

\bigskip

\begin{proofe}[Theorem~\ref{1theo_d}]
We define $U = \llbracket d, n-1 \rrbracket \times \{ d \}.$ 
 We will show the invertibility for all $\beta \in (\R_+^{*})^{dm} $ of the Jacobian matrix $ J_{\Omega}(\beta)\in \MM_{dm, dm}(\R)$ where 
 $\Omega_{(e,d)}(\beta) =\sum\limits_{q = 1 + f(e, md) }^d \beta_{q,1} \ldots \beta_{q,v_{q}(e,d)} $ for $(e,d) \in U$ with $v_1(e,d), \ldots, v_d(e,d) $ defined in $(\ref{9eq_def_vj})$ and $f(e,md) = \max(0, e- md)$. We then have $J_{\Omega}(\beta) = \begin{pmatrix}
 D_1 & \cdots & D_d
 \end{pmatrix} $
 with for $i \in \llbracket 1,d \rrbracket$:
 \begin{align*}
 D_i = \left(\dfrac{\partial \Omega_{(e,d)}}{\partial \beta_{i,\ell}}\right)_{e \in \llbracket d, n-1 \rrbracket, \ell \in \llbracket 1,m \rrbracket} \in \MM_{dm,m}(\R).
 \end{align*}
Given the form of $\Omega_{(e,d)}$, for $(i,\ell) \in \llbracket 1, d \rrbracket \times \llbracket 1, m \rrbracket$ one has
\begin{align*}
 \dfrac{\partial \Omega_{(e,d)}}{\partial \beta_{i,\ell}} = \left\{ \begin{array}{cll}
0 \prod\limits_{\underset{p \neq \ell}{1 \leq p \leq v_{i }(e,d)}} \beta_{i,p} & \text{ if } \ell \leq v_{i}(e,d)\\
 0 & \text{ otherwise } 
 \end{array} \right. .
\end{align*}
In particular, for $\ell \in \llbracket 1 , v_{i}(e,d) -1 \rrbracket$ we have 
$\dfrac{\beta_{i, \ell+1}}{\beta_{i, \ell}} \dfrac{\partial \Omega_{(e,d)}}{\partial \beta_{i,\ell+1}} = \dfrac{\partial \Omega_{(e,d)}}{\partial \beta_{i,\ell}}.$
Let $D_{i,1}, \ldots,D_{i,m}$ be the columns of $D_i$. We successively perform the following elementary operations between the columns of $J_\Omega(\beta)$:
\begin{align*}
 \forall i \in \llbracket 1, d \rrbracket, \quad \forall \ell \in \llbracket 1, m-1 \rrbracket, \quad D_{i, \ell} \leftarrow \left(D_{i, \ell} - \dfrac{\beta_{i, \ell+1}}{\beta_{i, \ell}}D_{i, \ell+1} \right), 
\end{align*}
and denote by $G$ the resulting matrix. Thus, $\det(J_\Omega(\beta)) = \det(G)$ where $ G =\left(g_{e,(i,\ell)} \right)_{ e \in \llbracket d, n-1 \rrbracket, (i,\ell) \in \llbracket 1,d \rrbracket \times \llbracket 1,m \rrbracket}$ and 
\begin{align}\label{9def_matrice_G}
 g_{e,(i,\ell)} \left\{ \begin{array}{lll}
 \neq 0 & \text{ if } v_i(e,d) = \ell \\
 = 0 & \text{ otherwise}
 \end{array}\right. .
\end{align}
We then study $G$ and for that, we reorder the columns of $G$, setting the following order on $\llbracket 1,d \rrbracket \times \llbracket 1,m \rrbracket$:
$\left( (i, \ell) < (j,\ell') \Longleftrightarrow [ \ell < \ell' \text{ or } (\ell = \ell' \text{ and } i <j)]
\right)$
which is the lexicographic order on $\llbracket 1,d \rrbracket \times \llbracket 1,m \rrbracket$ with priority comparison of the second element of the pair. We denote by $G'$ the resulting matrix and we have $\det(G) = \pm \det(G')$. With this chosen order for the columns, the diagonal coefficients of $G'$ are those of the form $g_{e,(u+1,v)} $ with $e = dv + u$ the Euclidean division of $e$ by $d$ for $e \in \llbracket d ,n -1 \rrbracket$. 
We will show that the matrix $G'$ is upper triangular with non-zero diagonal coefficients. 

Let $e \in \llbracket d ,n -1 \rrbracket$. We set $e = dv + u$ the Euclidean division of $e$ by $d$. According to the definition of $v_i(e,d)$ in $(\ref{9eq_def_vj})$ we have $v_{u+1}(e,d) = v$. The construction of $G$ in $(\ref{9def_matrice_G})$ then gives 
$g_{e,(u+1,v)} \neq 0 $. Now consider $e' > e $. We will show that $g_{e',(u+1,v)} =0 $. We set $e' = dv' + u'$ the Euclidean division of $e'$ by $d$ and we then have $v' \geq v$. 
\\ \textbullet \, \underline{If $v' > v $} then since $v_{u+1}(e',d) = v' \text{ or } v' +1 $ we have $v_{u+1}(e',d) \neq v$ and thus 
$g_{e',(u+1,v)} =0 $
according to $(\ref{9def_matrice_G})$.
\\ \textbullet \, \underline{If $v' = v $} then $u' > u$ and $u +1 \in \llbracket 1 , u' \rrbracket$. In particular, according to the definition of $v_i(e',d)$ in $(\ref{9eq_def_vj})$, we have $v_{u+1}(e',d) = v' +1 \neq v$ and thus 
$g_{e',(u+1,v)} =0 $
according to $(\ref{9def_matrice_G})$.
\\We have thus shown that all diagonal coefficients of $G'$ are non-zero and that $G'$ is upper triangular.
It follows that $G$ is invertible since $\det(G) = \pm \det(G')$ and thus $J_\Omega(\beta)$ is also invertible. Lemma~\ref{9lem_technique_faible} then allows us to conclude that $M_U(\II_U)$ contains a non-empty open set and hence the functions $(\mu_n(\cdot|e)_{d} )_{e \in \llbracket dm, d(m+1)-1 \rrbracket}$ are smoothly independent on $\II_U$. 
 
\end{proofe}

\textbf{Acknowledgements:} I sincerely thank Stéphane Fischler for his reading and corrections of this paper.

\bibliographystyle{smfplain.bst}
\bibliography{bibliographie.bib}

Gaétan Guillot, Université Paris-Saclay, CNRS, Laboratoire de mathématiques d’Orsay, 91405 Orsay, France.
\\ {Email adress:} guillotgaetan1@gmail.com 
\bigskip

Keywords : Diophantine approximation, Geometry of numbers

MSC: Primary 11J13, Secondary 11J17, 11J25
\end{document}